%Don't forget to delete
%showkeys\overfullrule
%\cmt, \ber, \ered
%\bjn\ejn
%\date

\documentclass[12pt, leqno]{amsart}

\setlength{\textwidth}{16.5cm} \setlength{\textheight}{21cm}
\setlength{\oddsidemargin}{0.0cm} \setlength{\evensidemargin}{0.0cm}

\usepackage{graphicx}
\usepackage{amssymb,amsmath,amsthm,amscd}
\usepackage{mathrsfs}
\usepackage{lscape}
\usepackage{enumerate}
\usepackage{upgreek}
\usepackage[usenames,dvipsnames]{color}
\usepackage[colorlinks=true, pdfstartview=FitV,
 linkcolor=blue,citecolor=blue,urlcolor=blue]{hyperref}
\usepackage{tikz}
\tikzset{dynkdot/.style={circle,draw,scale=.38}}
\usepackage{comment}
\usepackage{xspace}
\usepackage{marginnote}
\usepackage[normalem]{ulem}  % for strikeout \sout
\usepackage[all]{xy}

%\CompileMatrices

\DeclareUnicodeCharacter{2212}{-}

\allowdisplaybreaks[3]

\newcommand{\nc}{\newcommand}
\numberwithin{equation}{section}

\newenvironment{red}{\relax\color{red}}{\relax}

\newenvironment{green}{\relax\color{green}}{\hspace*{.5ex}\relax}

\newcommand{\gbr}{\begin{green}}
\newcommand{\egr}{\end{green}}

%\newcommand{\ber}{\begin{red}}
%\newcommand{\er}{\end{red}}
%\nc{\bjn}{\begin{jaune}}
%\nc{\ejn}{\end{jaune}}
%\newcommand{\beb}{\begin{blue}}
%\newcommand{\eb}{\end{blue}}
%\newcommand{\bem}{\begin{magenta}}
%\newcommand{\eem}{\end{magenta}}

\newcommand{\berm}[1]{\begin{red}{}\marginnote{\fbox{\scshape\lowercase{M}%
}}#1}  % Masaki

\newcommand{\bero}[1]{\begin{red}{}\marginnote{\fbox{\scshape\lowercase{O}}}%
#1}

\newcommand{\berno}[1]{\begin{red}{}\marginnote{\fbox{\scshape\lowercase{newO}}}}

\nc{\qR}[1]{q_{\mspace{-2mu}\raisebox{-.8ex}{${\scriptstyle{#1}}$}}}
\newcommand{\isoto}[1][]{\mathop{\xrightarrow%
[{\raisebox{.3ex}[0ex][.3ex]{$\scriptstyle{#1}$}}]%
{{\raisebox{-.6ex}[0ex][-.6ex]{$\mspace{2mu}\sim\mspace{2mu}$}}}}}

\theoremstyle{plain}
\newtheorem{lemma}{Lemma}[section]

\newtheorem{proposition}[lemma]{Proposition}
\newtheorem{theorem}[lemma]{Theorem}

\newtheorem{corollary}[lemma]{Corollary}

\newtheorem{convention}{Convention}

\theoremstyle{definition}
\newtheorem{remark}[lemma]{Remark}
\newtheorem{example}[lemma]{Example}

\newtheorem{definition}[lemma]{Definition}

\newcommand{\srt}[1]{{\langle #1 \rangle}}
\newcommand{\ssrt}[1]{{( #1 )}}

%%%%%%%%%%%%%%%%%%%%%%%%% math operations %%%%%%%%%%%%%%%%%%%%
\newcommand{\head}{{\operatorname{hd}}}
\newcommand{\soc}{{\operatorname{soc}}}
\newcommand{\dg}{{\operatorname{deg}}}

\newcommand{\ssqcup}{\mathop{\mbox{\normalsize$\bigsqcup$}}\limits}
\newcommand{\hconv}{\mathbin{\scalebox{.9}{$\nabla$}}}

\newcommand{\sconv}{\mathbin{\scalebox{.9}{$\Delta$}}}

\newcommand{\seteq}{\mathbin{:=}}
\newcommand{\conv}{\mathop{\mathbin{\mbox{\large $\circ$}}}}
\newcommand{\soplus}{\mathop{\mbox{\normalsize$\bigoplus$}}\limits}

\newcommand{\tens}{\mathop\otimes}

%%%%%%%%%%%%%%%%%%%%%%%%% Categories %%%%%%%%%%%%%%%%%%%%%%%%%%
\newcommand{\Mod}{\text{-}\mathrm{Mod}}
\newcommand{\gmod}{\text{-}\mathrm{gmod}}

\newcommand{\smod}{\text{-}\mathrm{mod}}
\newcommand{\Rep}{{\rm Rep}}

%%%%%%%%%%%%%%%%%%%%%%%%% rmed character %%%%%%%%%%%%%%%%%%%%%%%%%%
\newcommand{\rmQ}{\mathrm{Q}}
\newcommand{\rmR}{\mathrm{R}}

%%%%%%%%%%%%%%%%%%%%%%%%% Lie algebras %%%%%%%%%%%%%%%%%%%%%%%%%%
\newcommand{\g}{\mathfrak{g}}
\newcommand{\h}{\mathfrak{h}}
\newcommand{\n}{\mathfrak{n}}

%%%%%%%%%%%%%%%%%%%%%%%%% Basic Rings %%%%%%%%%%%%%%%%%%%%%%%%%%
\newcommand{\R}{\mathbb{R}}
\newcommand{\C}{\mathbb{C}}
\newcommand{\Q}{\mathbb{Q}}
\newcommand{\Z}{\mathbb{Z}}

%%%%%%%%%%%%%%%%%%%%%%%%% abbr. (var) greek letters %%%%%%%%%%%%%%%%%%%%%%%%%%
\newcommand{\al}{\alpha}
\newcommand{\ep}{\epsilon}
\newcommand{\la}{\lambda}
\newcommand{\be}{\beta}
\newcommand{\ga}{\gamma}
\newcommand{\ve}{\varepsilon}
\newcommand{\La}{\Lambda}

%%%%%%%%%%%%%%%%%%%%%%%%% abbr. Upgreek letters %%%%%%%%%%%%%%%%%%%%%%%%%%
\newcommand{\upep}{\upepsilon}
\newcommand{\upal}{\upalpha}
\newcommand{\upbe}{\upbeta}
\newcommand{\upga}{\upgamma}

\newcommand{\upve}{\upvarepsilon}

%%%%%%%%%%%%%%%%%%%%%%%%%%%%math symbols%%%%%%%%%%%%%%%%%%%%%%
\newcommand{\wt}{{\rm wt}}
\newcommand{\hd}{{\rm hd}}

\newcommand{\tr}{{\rm tr}}
\newcommand{\rev}{{\rm rev}}
\newcommand{\het}{{\rm ht}}
\newcommand{\mul}{{\rm mul}}
\newcommand{\supp}{{\rm supp}}
\newcommand{\de}{\mathfrak{d}}
\newcommand{\Dynkin}{\triangle}
\newcommand{\Dynkinv}{\Dynkin_{   \vee}}
\newcommand{\Dynkins}{{\Dynkin_{  \sigma}}}
\newcommand{\Dynkintv}{\Dynkin_{   \widetilde{\vee}}}
\newcommand{\bDynkin}{{\mathop{\mathbin{\mbox{\large $\blacktriangle$}}}}} %{\blacktriangle}
\newcommand{\lf}{[\hspace{-0.3ex}[}
\newcommand{\rf}{]\hspace{-0.3ex}]}

\newcommand{\im}{\imath}
\newcommand{\jm}{\jmath}

\newcommand{\lt}{{\mathrm{l}}}
\newcommand{\rt}{{\mathrm{r}}}

\newcommand*\circled[1]{ \fontsize{6}{6}\selectfont \tikz[baseline=(char.base)]{
  \node[shape=circle,draw,inner sep=0.4pt] (char) {#1};} \fontsize{12}{12}\selectfont }

%%%%%%%%%%%%%%%%%%%%%%%%%%%%%% HOM  %%%%%%%%%%%%%%%%%%%%%%%%%%%%%%%%
\newcommand{\Hom}{\operatorname{Hom}}

\newcommand{\res}{\mathrm{res}}

\newcommand{\HOM}{\mathrm{H{\scriptstyle OM}}}

%%%%%%%%%%%%%%%%%%%%%%% tilded characters %%%%%%%%%%%%%%%%%%%%%%%%%%%%%%%%%%%

\newcommand{\tw}{\widetilde{w}}

\newcommand{\tde}{\widetilde{\de}}

\newcommand{\tsfC}{{\widetilde{\osfC}}} %{{\widetilde{\sfC}}}

\newcommand{\tuB}{\widetilde{\usfB}}
\newcommand{\tfb}{\widetilde{\sfb}}

\newcommand{\tLa}{\widetilde{\Lambda}}

%%%%%%%%%%%%%%%%%%%%%%% hatted characters %%%%%%%%%%%%%%%%%%%%%%%%%%%%%%%%%%%

\newcommand{\hPhi}{\widehat{\Phi}}

\newcommand{\hg}{{\widehat{\g}}}

%%%%%%%%%%%%%%%%%%%%%%% overlined characters %%%%%%%%%%%%%%%%%%%%%%%%%%%%%%%%%%%
\newcommand{\ov}[1]{{ \overline{#1}   }}

\newcommand{\oQ}{{\overline{Q}}}

\newcommand{\osfB}{\overline{\mathsf{B}}}
\newcommand{\usfB}{\underline{\mathsf{B}}}
\newcommand{\osfC}{\mathsf{C{\scriptstyle q}}} %{\overline{\mathsf{C}}}
\newcommand{\oxi}{\overline{\xi}}

\newcommand{\oim}{{\overline{\im}}}

%%%%%%%%%%%%%%%%%%%%%%% underlined characters %%%%%%%%%%%%%%%%%%%%%%%%%%%%%%%%%%%

\newcommand{\usfC}{{\mathsf{C\ms{1mu}{\scriptstyle t}}}}%{\underline{\mathsf{C}}}
\newcommand{\uw}{{\underline{w}}}

\newcommand{\us}{\underline{s}}

\newcommand{\up}{\underline{p}}
\newcommand{\um}{{\underline{m}}}

\newcommand{\uQ}{\underline{Q}}
\newcommand{\uxi}{\underline{\xi}}

%%%%%%%%%%%%%%%%%%%%%%% fraked characters %%%%%%%%%%%%%%%%%%%%%%%%%%%%%%%%%%%
\newcommand{\frakp}{\mathfrak{p}}

\newcommand{\pibe}{\frakp_{i,\be}}

%%%%%%%%%%%%%%%%%%%%%%% sfed characters %%%%%%%%%%%%%%%%%%%%%%%%%%%%%%%%%%%
\newcommand{\sfC}{\mathsf{C}}
\newcommand{\sfS}{\mathsf{S}}
\newcommand{\sfc}{\mathsf{c}}
\newcommand{\sfn}{\mathsf{n}}

\newcommand{\sfD}{\mathsf{D}}
\newcommand{\sfP}{\mathsf{P}}
\newcommand{\sfQ}{\mathsf{Q}}
\newcommand{\sfW}{\mathsf{W}}

\newcommand{\sfh}{{\ms{1mu}\mathsf{h}}}

\newcommand{\sfV}{\mathsf{V}}
\newcommand{\sfM}{\mathsf{M}}
\newcommand{\sfN}{\mathsf{N}}

\newcommand{\sfb}{\mathsf{b}}
\newcommand{\sfg}{\mathsf{g}}
\newcommand{\sfd}{\ms{1mu}\mathsf{d}}

%%%%%%%%%%%%%%%%%%%%%%% bbed characters %%%%%%%%%%%%%%%%%%%%%%%%%%%%%%%%%%%
\newcommand{\bbA}{\mathbb{A}}

%%%%%%%%%%%%%%%%%%%%%%% bfed characters %%%%%%%%%%%%%%%%%%%%%%%%%%%%%%%%%%%

\newcommand{\bfB}{\mathbf{B}}
\newcommand{\bfk}{\mathbf{k}}

\newcommand{\bfg}{\mathbf{g}}
\newcommand{\bfn}{\mathbf{n}}

%%%%%%%%%%%%%%%%%%%%%%% caled characters %%%%%%%%%%%%%%%%%%%%%%%%%%%%%%%%%%%

\newcommand{\calQ}{\mathcal{Q}}

\newcommand{\calI}{\mathcal{I}}

\newcommand{\calW}{\mathcal{W}}

\newcommand{\calS}{\mathcal{S}}

%%%%%%%%%%%%%%%%%%%%%%% scred characters %%%%%%%%%%%%%%%%%%%%%%%%%%%%%%%%%%%

\newcommand{\scrC}{\mathscr{C}}
\newcommand{\scrF}{\mathscr{F}}
%%%%%%%%%%%%%%%%%%%%%%% tted characters %%%%%%%%%%%%%%%%%%%%%%%%%%%%%%%%%%%

\newcommand{\ttb}{\mathtt{b}}

\newcommand{\tto}{\mathtt{o}}
\newcommand{\ttp}{\mathtt{p}}

\newcommand{\ttO}{\mathtt{O}}
\newcommand{\To}[1][{\hspace{2ex}}]{\xrightarrow{\,#1\,}}

\newlength{\mylength}
\setlength{\mylength}{\textwidth} \addtolength{\mylength}{-20ex}

\newcommand{\zm}{z_{\Ma}}
\newcommand{\Ma}{\widehat{\sfM}}
\newcommand{\Na}{\widehat{\sfN}}
%%%%%%%%%%%%%%%%%%%%%%%%%%%%%%% R-matices %%%%%%%%%%%%%%%%%%%%%%%%%%%%%%%%%
\newcommand{\Rnorm}{R^{{\rm norm}}}
\newcommand{\Rr}{\mathbf{r}}

%%%%%%%%%%%%%%%bold symbol%%%%%%%%%%%%%%%
\newcommand{\pair}[1]{ \boldsymbol{\langle  \hspace{-0.6ex} \langle} {#1} \boldsymbol{ \rangle\hspace{-0.6ex} \rangle}   }
\newcommand{\seq}[1]{ \st{#1}_{\beta\in\Phi^+}   }

\newcommand{\hDynkin}{\widehat{\Dynkin}}

%\newcommand{\tDynkin}{\widetilde{\Dynkin}}

%%%%%%%%%%%left deco%%%%%
\newcommand{\sxi}{{}^\upsigma\hspace{-.4ex}\xi}
\newcommand{\vxi}{{}^\vee\hspace{-.4ex}\xi}
\newcommand{\tvxi}{{}^{\widetilde{\vee}}\hspace{-.4ex}\xi}
\newcommand{\sg}{{}^\upsigma \hspace{-.4ex}\sfg}
\newcommand{\vg}{{}^\vee \hspace{-.4ex}\sfg}
\newcommand{\sI}{{}^\upsigma \hspace{-.4ex}I}
\newcommand{\vuxi}{{}^\vee  \hspace{-.4ex}  \uxi }

%%%%%%%%%%%%%%%%%%%%%%%%%%%%%%% re-naming%%%%%%%%%%%%%%%%%%%%%%%%%%%%%%

\newcommand{\cm}{\sfC}
\newcommand{\wl}{\sfP}
\newcommand{\rl}{\sfQ}

\newcommand{\cwl}{\wl^\vee}

\newcommand{\weyl}{\sfW}
\newcommand{\lan}{\langle}
\newcommand{\ran}{\rangle}

\newcommand{\sprt}[1]{ \fontsize{8}{8}\selectfont \left( \begin{matrix} #1  \end{matrix} \right) \fontsize{12}{12}\selectfont }

%%%%%%%%%%%%%%%%%%%%%%%%%% re-package of enumerations%%%%%%%%%%%%%%%%%%%
\newcommand{\ee}{\end{enumerate}}
\newcommand{\ben}{\begin{enumerate}[{\rm (1)}]}
\newcommand{\bnum}{\begin{enumerate}[{\rm (i)}]}
\newcommand{\bnump}{\begin{enumerate}[{\rm (i)$'$}]}
\newcommand{\bna}{\begin{enumerate}[{\rm (a)}]}
\newcommand{\bnA}{\begin{enumerate}[{\rm (A)}]}
\newcommand{\bc}{\begin{cases}}
\newcommand{\ec}{\end{cases}}

\renewcommand{\preceq}{\preccurlyeq}
\renewcommand{\ge}{\geqslant}
\renewcommand{\le}{\leqslant}

\nc{\bl}{\bigl(}
\nc{\br}{\bigr)}
\nc{\st}[1]{\left\{#1\right\}}
\nc{\ake}[1][1ex]{\rule[-#1]{0ex}{1ex}}
\nc{\akew}[1][1ex]{\rule[-1ex]{#1}{0ex}}
\nc{\akeu}[1][1ex]{\rule[#1]{0ex}{1ex}}
\nc{\cl}{\colon}
\nc{\qt}[1]{\quad\text{#1}}
\nc{\cor}{\bfk}
\nc{\ms}{\mspace}
\nc{\ro}{{\rm(}}
\nc{\rfm}{{\rm)}\xspace}

\newenvironment{myequation}
{\relax\setlength{\arraycolsep}{1pt}\begin{eqnarray}}
{\end{eqnarray}}
\newenvironment{myequationn}
{\relax\setlength{\arraycolsep}{1pt}\begin{eqnarray*}}
{\end{eqnarray*}}

\nc{\cc}{{\ms{1mu}\mathbf{c}}}
\nc{\eq}{\begin{myequation}}
\nc{\eneq}{\end{myequation}}
\nc{\eqn}{\begin{myequationn}}
\nc{\eneqn}{\end{myequationn}}
\nc{\pr}[1][{\cc}]{$#1$-pair\xspace}
\nc{\prq}[1][{[Q]}]{$#1$-pair\xspace}
%\nc{\pr}{root pair\xspace}
\nc{\prs}[1][{\cc}]{$#1$-pairs\xspace}
\nc{\prqs}[1][{[Q]}]{$#1$-pairs\xspace}

\nc{\ex}{exponent\xspace}
\nc{\exs}{exponents\xspace}
\nc{\Ex}{\Z_{\ge0}^{\ms{8mu}\Phi^+}}
\nc{\on}{\operatorname}
\nc{\Proof}{\begin{proof}}
\nc{\QED}{\end{proof}}
\nc{\wdt}{\widetilde}
\nc{\oi}{\wdt{i}}
\nc{\oj}{\wdt{j}}
\nc{\ok}{\wdt{k}}

\newenvironment{myarray}
{\relax\renewcommand{\arraystretch}{1.3}\begin{array}}
{\end{array}}

\nc{\ba}{\begin{myarray}}
\nc{\ea}{\end{myarray}}
\nc{\Lemma}{\begin{lemma}}
\nc{\enlemma}{\end{lemma}}
\nc{\pbw}[1][{\cc}]{r_{#1}}
\nc{\qtq}[1][{and}]{\quad\text{#1}\quad}
\nc{\sym}{\mathfrak{S}}
\nc{\monoto}{\rightarrowtail}
\nc{\vphi}{\varphi}
\renewcommand{\Im}{\on{Im}}
\nc{\noi}{\noindent}
\nc{\snoi}{\smallskip\noi}
\nc{\id}{{\mathrm{id}}}

\usepackage[colorinlistoftodos]{todonotes}

\setlength{\marginparwidth}{2cm}

\title[$t$-quantized Cartan matrix and R-matrices]{$t$-quantized Cartan matrix and R-matrices for cuspidal modules over quiver Hecke algebras}

\date{2023, February 15}

\author[M. Kashiwara]{Masaki Kashiwara}
\thanks{The research of M.\ Kashiwara
was supported by Grant-in-Aid for Scientific Research (B)
20H01795, Japan Society for the Promotion of Science.}
\address[M. Kashiwara]{
Kyoto University Institute for Advanced Study,
Research Institute for Mathematical Sciences, Kyoto University,
Kyoto 606-8502, Japan \& Korea Institute for Advanced Study, Seoul 02455, Korea }
\email{masaki@kurims.kyoto-u.ac.jp}

\author[S.-j.~Oh]{Se-jin Oh}
\thanks{ The research of S.-j.\ Oh was supported by the Ministry of Education of the Republic of Korea and the National Research Foundation of Korea (NRF-2022R1A2C1004045).}
\address[S.-j.~Oh]{Ewha Womans University Seoul, 52 Ewhayeodae-gil, Daehyeon-dong, Seodaemun-gu, Seoul, South Korea}
\email{sejin092@gmail.com}

\begin{document}

\begin{abstract} As every simple module of a quiver Hecke algebra appears
as the image of the $\rmR$-matrix defined on the convolution product of certain cuspidal modules,
knowing the $\Z$-invariants of the $\rmR$-matrices between cuspidal modules is quite significant.
In this paper, we prove that the $(q,t)$-Cartan matrix specialized at $q=1$ of \emph{any} finite type, called the \emph{$t$-quantized Cartan matrix},
inform us of the invariants of R-matrices. To prove this, we use combinatorial AR-quivers associated with Dynkin quivers and their properties as crucial ingredients.
\end{abstract}

\setcounter{tocdepth}{1}

\maketitle \tableofcontents

\section{Introduction}

Let $\g$ be a finite-dimensional simple Lie algebra, $\Dynkin$ its Dynkin diagram, $\sfC =(\sfc_{i,j})_{i,j \in I}$ its Cartan matrix.
In \cite{FR98}, Frenkel-Reshetikhin introduced the two-parameters deformation $\sfC(q,t)$ of the Cartan matrix $\sfC$
to define a deformation $\calW_{q,t}(\g)$ of the $\calW$-algebra $\calW$ associated with $\g$. Interestingly, it comes to light that $\calW_{q,t}(\g)$ \emph{interpolates}
the representation theories of the quantum affine algebra $U'_q(\widehat{\g})$ of untwisted type and its Langlands dual $U'_q(\widehat{\g}^L)$ in the following sense. Let
$\scrC_\hg$ be the category of finite dimensional $U_q(\widehat{\g})$-modules and $K(\scrC_\hg)$ its Grothendieck ring. Then the specialization of
$\calW_{q,t}(\g)$ at $t=1$ recovers $K(\scrC_\hg)$ while the one at $q=\exp(\pi i/r)$ is expected to recover  $K(\scrC_{\hg^L})$, where $r$ is the racing number of $\g^L$ \cite{FR99,FM01,FH11}.

In the representation theory of $U'_q(\widehat{\g})$ over the 
 base field $\bfk =\overline{\Q(q)}$, the \emph{$\rmR$-matrices} play a central role as every simple module in $\scrC_\hg$ appears as the image of the $\rmR$-matrix defined on a certain tensor product of a special family of simple modules, called the \emph{fundamental modules} \cite{AK97, Kas02, VV02}. Let us explain this in a more precise way.
The $\rmR$-matrices are constructed as intertwining operators between modules in $\scrC_\hg$ satisfying the Yang-Baxter equation.  For a simple module $M \in \scrC_\hg$, there exists a unique finite family
$ \st{(i_k,a_k) \in I \times \bfk^\times }_{1 \le k \le r}$ and the corresponding fundamental modules $\{ V_{i_k}(a_k) \}_{1 \le k \le r}$ such that $M$ is an image of the composition of $\rmR$-matrices
$$
\Rr
:  V_{i_r}(a_r) \otimes \cdots \otimes V_{i_2}(a_2) \otimes V_{i_1}(a_1) \to V_{i_1}(a_1) \otimes V_{i_2}(a_2) \otimes \cdots \otimes V_{i_r}(a_r),
$$
where
$$
\Rr_{t,s} :  V_{i_t}(a_t) \otimes V_{i_s}(a_s) \to  V_{i_s}(a_s) \otimes V_{i_t}(a_t)
$$
 denotes the $\rmR$-matrix between  $V_{i_t}(a_t)$ and $V_{i_t}(a_t)$,
\begin{align*}
\Rr =   (\Rr_{2, 1}) \circ \cdots \circ ( \Rr_{r-1, 1}  \circ \cdots \circ \Rr_{r-1, r-2} )  \circ ( \Rr_{r, 1}  \circ \cdots \circ \Rr_{r, r-1} ),
\end{align*}
and
$a_{s}/a_t$ is \emph{not} a root of the \emph{denominator} of the \emph{normalized $\rmR$-matrix}  $\Rnorm_{V_{i_t}(1),V_{i_s}(1)_z}$, which is a polynomial in $\bfk[z]$ and denoted by $d_{i_t,i_s}(z)$.
Furthermore, the denominator of $\Rnorm_{V_{i_t}(1),V_{i_s}(1)_z}$ indicates whether $ V_{i_t}(a_t) \otimes V_{i_s}(a_s)$ is simple or not (we refer~\cite{KKOP19C} for more detail about the normalized $\rmR$-matrix  and its denominator).
Thus the study of the $\rmR$-matrices and denominators between fundamental modules is one of the first step to investigate the representation theory of quantum affine algebras.  
Surprisingly,
 it is proved that the matrix $(d_{i,j}(z) )_{i,j \in I}$ can read from the inverse  $\tsfC(q) \seteq \osfC(q)^{-1}$ of $\osfC(q) \seteq \sfC(q,t)|_{t=1}$
by using the geometry of the graded quiver varieties in type $ADE$ \cite{Fuj19} and using the root system for a simply-laced type $\sfg$
(see~\eqref{eq: Gg} below)  in any finite type $\g$ \cite{HL15,OS19A,FO21}.
We remark here that $( d_{i,j}(z) )_{i,j \in I}$ was explicitly calculated in \cite{KKK13b,DO94,KMN2,Oh15,OS19,Fuj19}.

We call $\osfC(q)$  the \emph{quantum Cartan matrix}. Note that $\tsfC(q)$ is ubiquitously utilized
in the representation theory of $U_q'(\widehat{\g})$. For instances, it plays crucial roles in the $q$-character theory invented in \cite{FR99,FM01} and in
the construction of the quantum Grothendieck ring $K_{t}(\scrC_\hg)$ invented in \cite{Nak04,VV02A,H04}. As we mentioned, to compute $\tsfC(q)$ of type $\g$, we need to consider its corresponding simply-laced Lie algebra $\sfg$ given in the below:
\begin{align} \label{eq: Gg}
(\g,\sfg) =  (ADE_n,ADE_{n}),  \  (B_n,A_{2n-1}),
\ (C_n,D_{n+1}),\  (F_4,E_6),\  (G_2,D_4).
\end{align}
Then it is revealed that the nature of the representation theory of $U_q'(\widehat{\g})$ is simply-laced even though $\g$ is not simply-laced. For instances,
the quantum Grothendieck rings $K_{t}(\scrC^\calQ_\hg)$ of the \emph{heart subcategories} $\scrC_\hg^\calQ$  of $\scrC_\hg$ are isomorphic to
the integral form of the \emph{unipotent quantum coordinate ring} $A_q(\sfn)$ of
$U_q(\sfg)$ \cite{HL15,KO19,OS19},  and the blocks of $\scrC_\hg$ is parameterized by the root lattice $\sfQ_\sfg$ of $\sfg$ \cite{CM05, KKOP22,FO21}.
Here, (A) the heart subcategory $\scrC_\hg^\calQ$ depends on the choice of a $\rmQ$-datum $\calQ=(\Dynkin_\sfg,\sigma,\xi)$ of $\g$, which consists of (i)
the Dynkin diagram $\Dynkin_\sfg$ of type $\sfg$, (ii) the Dynkin diagram automorphism $\sigma$ which yields $\Dynkin_\g$ as the $\sigma$-orbits of
$\Dynkin_\sfg$ and (iii) a height function $\xi:I \to \Z$ satisfying certain conditions (\cite[Definition 3.5]{FO21}),  (B) $\scrC_\hg^\calQ$ is   characterized by the smallest subcategory of $\scrC_\hg$
(i) containing $\sfV_\calQ \seteq \{ V_{i}((-q)^{p}) \}$, where $(i,p)$
ranges over a certain subset of $I \times \Z$ which
is the set of vertices $(\Gamma_\calQ)_0$ of \emph{combinatorial AR-quiver} $\Gamma_\calQ$ of $\calQ$, and (ii) stable by taking subquotients, extensions and tensor products \cite{HL15,KO19,OS19}.

 \medskip

The \emph{quiver Hecke algebra} $R$, introduced by Khovanov-Lauda \cite{KL09} and Rouquier \cite{R08} independently, is a $\Z$-graded $\bfk$-algebra, whose finite-dimensional graded module category $R\gmod$ also categorifies the integral form of $A_q(\bfn)$ of the quantum group $U_q(\bfg)$ associated with a symmetrizable Kac-Moody algebra $\bfg$. When $\bfg$ is symmetric and  the base field
$\bfk$ is of characteristic 0, the self-dual simple modules in $R\gmod$ categorifies the distinguished basis $\bfB^{{\rm up}}$ \cite{VV09,R11}, which is called the \emph{upper global basis}
(equivalent to Lusztig's \emph{dual canonical basis}).  For a finite simple Lie algebra $\g$,
every simple module can be obtained as the image of $\rmR$-matrices of a certain ordered \emph{convolution product $\conv$} of \emph{cuspidal modules} (\S~\ref{subsec: cuspidal}), where the cuspidal modules categorify the dual PBW-vectors \cite{Kato12,McNa15} (see also \cite{KR09,HMD12}). Let us explain this
more precisely.
For a \emph{commutation class} $\cc$ of reduced expressions of $\uw_0$ of the longest element $w_0$   of Weyl group $\weyl_\g$ (\S~\ref{subsec: convex}), one can define the associated set  of cuspidal modules $\sfS_{\cc} = \{ S_{\cc}(\be) \}_{\be \in \Phi_\g^+}$, which consists of \emph{affreal} simple modules (Definition~\ref{def: affreal}) and is parameterized by the set of positive roots $\Phi_\g^+$.
Then, every simple $R$-module $M$  can be obtained as the image of a distinguished homomorphism, which is also called the $\rmR$-matrix and satisfy the Yang-Baxter equation (see \cite{KKK18A} for more detail), defined an ordered convolution product of modules in $\sfS_{\cc}$.
Namely, there exists a unique $\um =(m_\ell,\ldots,m_1) \in \Z_{\ge 0}^{\Phi_\g^+}$ such that
$M$ is the image of the composition of $\rmR$-matrices  (\S~\ref{subsec: cuspidal})
$$
\Rr
:  S_{\cc}(\be_\ell)^{ \circ m_\ell} \conv \cdots \conv S_{\cc}(\be_2)^{\circ m_2} \conv S_{\cc}(\be_1)^{\circ m_1}
 \to S_{\cc}(\be_1)^{\circ m_1} \conv S_{\cc}(\be_2)^{\circ m_2} \conv \cdots \conv S_{\cc}(\be_\ell)^{\circ m_\ell},
$$
where
$$
\Rr_{t,s} :  S_{\cc}(\be_t)    \conv S_{\cc}(\be_s) \to  S_{\cc}(\be_s)  \conv S_{\cc}(\be_t)
$$
 denotes the $\rmR$-matrix between  $S_{\cc}(\be_t)$ and  $S_{\cc}(\be_s)$, and
\begin{align*}
\Rr =   (\Rr_{2, 1}^{\circ m_{2} m_{1}}) \circ \cdots \circ ( \Rr_{r-1, 1}^{\circ m_{r-1} m_{1}}  \circ \cdots \circ \Rr_{r-1, r-2}^{\circ m_{r-1} m_{r-2}} )  \circ ( \Rr_{r, 1}^{\circ m_r m_{1}}  \circ \cdots \circ \Rr_{r, r-1}^{\circ m_r m_{r-1}} ).
\end{align*}
Here the homogeneous degree $\La(S_{\cc}(\be_t)  ,S_{\cc}(\be_s))$ $(t>s)$ of the $\rmR$-matrix $\Rr_{t,s}$ coincides with $-(\be_t,\be_s)$. As in quantum affine algebra
cases, for each $\Rr_{t,s}$, we have the $\Z_{\ge 0}$-invariant of the $\rmR$-matrix
$$\de(S_{\cc}(\be_t)  ,S_{\cc}(\be_s)) \seteq \frac{1}{2}\bl\La(S_{\cc}(\be_t)  ,S_{\cc}(\be_s)) + \La(S_{\cc}(\be_s)  ,S_{\cc}(\be_t)) \br$$
indicating whether $S_{\cc}(\be_t) \conv S_{\cc}(\be_s)$ is simple or not. Note that, in general, computing $\de(M,N)$ is quite difficult, when
$M$ and $N$ are simple $R$-modules such that one of them is affreal.

 \medskip

In \cite{KKK18A}, Kang-Kashiwara-Kim constructed the \emph{generalized Schur-Weyl functor} $\scrF_J$
from the category $R^J\gmod$ of the quiver Hecke algebra $R^J$
to the category $\scrC_\hg$.
Here the quiver Hecke algebra $R^J$ and the monoidal functor $\scrF_J$
are determined by the \emph{duality datum} (\cite[Section 4]{KKOP21C}), a set of real simple modules $\{ V^j \}_{j \in J}$ in $\scrC_\hg$, and the denominators of $\rmR$-matrices $(\Rr_{V^i,V^j})_{i,j \in J}$.
When the duality datum \emph{arises} from a $\rmQ$-datum $\calQ$ for $\g$, the quiver Hecke algebra $R^\calQ$ is associated with $\sfg$ and the functor $\scrF_\calQ$ gives an equivalence of categories $\scrC^\calQ_\hg$ and $R^\sfg\smod$ 
for each pair $(\g,\sfg)$ in~\eqref{eq: Gg} and any $\rmQ$-datum $\calQ$ of $\g$ \cite{KKK13b,KKKO16,KO19,OS19,Fuj17,Nao21,KKOP21C}. Note that $\scrF_\calQ$ (i)
preserves $\rmR$-matrices in both categories, their invariants and (ii) sends
$\scrC_\hg$ to the set of  fundamental modules $\sfV_\calQ$ to
$\sfS_\calQ  \seteq  \{ S_\calQ(\be)\seteq S_{[\calQ]}(\be) \}_{\be \in \Phi^+_\g}$,
where $[\calQ]$ is a commutation class of $w_0$ \emph{adapted to $\calQ$}. Hence we can obtain information about $\de( S_{\calQ}(\al), S_{\calQ}(\be) )$ in $R^\sfg\gmod$ from $\tsfC(q)$ of $\g$. Thus, roughly speaking, we can understand $\tsfC(q)$ as the denominator  formulae among the cuspidal modules in $\sfS_\calQ$.

To sum up,  the quantum Cartan matrices $\osfC(q)$, which are the specialization of $\sfC(q,t)$ at $t=1$, or rather their inverses $\tsfC(q)$ \emph{control}  the representation theories of quantum affine algebras $U'_q(\hg)$   for all $\g$ and the ones of quiver Hecke algebras $R^\sfg$ for simply-laced types $\sfg$:
 \begin{align} \label{eq: right wing}
\raisebox{2.3em}{ \xymatrix@!C=9.3ex@R=1ex{  &&&&  \scrC_\hg  & \text{ for any $\g$}\\
&&\sfC(q,t) \ar@{.>}[r]^{t=1} & \osfC(q) \ar@{~>}[dr]_{\text{controls}}\ar@{~>}[ur]^{\text{controls}}\\
 &&&&\Rep(R^\sfg)  & \text{ only for $\sfg$}.
 }}
\end{align}

In this paper, \emph{regardless of type,}
we prove that (a) the matrix  $\sfC(q,t)|_{q=1}$, denoted by $\usfC(t)$ and called the \emph{$t$-quantized Cartan matrix},  
and (b) a (combinatorial) AR-quiver $\Gamma_Q$ associated with any Dynkin quiver $Q=(\Dynkin_\g,\xi)$ inform us of the $\Z$-invariants of $\rmR$-matrices and related properties for pairs of cuspidal modules  $(S_{Q}(\al), S_{Q}(\be))$ over the quiver Hecke algebra $R^\g$, where $[Q]$ is the commutation class of $w_0 \in \weyl_\g$ \emph{adapted to $Q$}
(Definition~\ref{def: adapted}).
Since $\usfC(t)$ can be obtained from $\osfC(q)$ by just replacing $q$ with $t$
when $\g$ is simply-laced, we can say that $\usfC(t)$ controls $\scrC_{\widehat{\sfg}}$ and hence  we can complete the left wing of~\eqref{eq: right wing} as follows:
 \begin{align} \label{eq: 2 sided wing}
\raisebox{2.3em}{ \xymatrix@!C=9.3ex@R=1ex%
{\Rep(R^\g) &&&&  \scrC_\hg  & \text{ for any $\g$}\\
&\ar@{~>}[dl]^-(.4){\;\text{controls}}\ar@{~>}[ul]_(.4){\text{controls}} \usfC(t)  &\sfC(q,t) \ar@{.>}[l]_{q=1}\ar@{.>}[r]^{t=1} & \osfC(q) \ar@{~>}[dr]_(.4){\text{controls}}\ar@{~>}[ur]^{\text{controls\;}}\\
\scrC_{\widehat{\sfg}}  &&&&\Rep(R^\sfg)  & \text{ only for $\sfg$}.
 }}
\end{align}

As we explained, when $\g$ is of simply-laced type and $Q$ is a Dynkin quiver of the same type, $\usfC(t) = \osfC(q)|_{q=t}$ indicates whether
$S_{Q}(\al) \conv S_{Q}(\be)$  is simple or not. More precisely, via the bijection $ I \times \Z \supset \hDynkin_0 \overset{\phi_Q}{\longrightarrow} \Phi_\g^+ \times \Z$, we have
$$ \de( S_{Q}(\be) \conv S_{Q}(\al))
= \text{ the coefficient of $t^{|p-s|-1}$ in $\tsfC(t)_{i,j}$},
$$
where $\phi_Q^{-1}(\al,0)=(i,p)$ and $\phi_Q^{-1}(\be,0)=(j,s)$. That is, by only reading (I) relative positions of $\al$ and $\be$ in $\Gamma_Q$ and (II)
the coefficients of $\tsfC(t)_{i,j}$, we can  obtain the $\Z$-invariant $\de(S_{Q}(\be) ,S_{Q}(\al))$. This kind of strategy was achieved in
\cite{Oh16,Oh17,Oh18,OS19A} by developing statistics on $\Gamma_\calQ$, which also depends only on the relative positions of $\al$ and $\be$ in $\Gamma_\calQ$.

\smallskip

We apply the aforementioned strategy to non simply-laced type $\g$ also.
For the purpose, in the previous paper \cite{KO22} of the authors, we extended the notion for Dynkin quivers $Q$ to the non simply-laced types.
We then constructed the combinatorial AR-quiver $\Gamma_Q$  for $Q$ of any type $\g$,
which generalizes the classical AR-quiver $\Gamma_Q$ of the path algebra $\C Q$ for a simply-laced type $Q$ in an aspect of combinatorics. More precisely,
we proved that (i) there are reduced expressions $\uw_0$'s of $w_0$ adapted to $Q$ and such reduced expressions form the commutation class $[Q]$, and (ii) we introduced
the combinatorial AR-quiver $\Gamma_Q$ which realizes the \emph{convex partial order} $\prec_Q$ on the set of positive roots $\Phi^+_\g$ (see~\eqref{eq: convex partial order}
and Theorem~\ref{thm: OS17}).
We also understand $\Gamma_Q$  as a \emph{heart full-subquiver} by constructing the \emph{repetition quiver} $\hDynkin=(\hDynkin_0,\hDynkin_1)$, which depends \emph{only} on $\g$ and whose set of vertices $\hDynkin_0$ is in bijection with $\Phi_\g^+ \times \Z$. We remark here that, when $\g$ is of simply-laced type, $\hDynkin$
is isomorphic to the AR-quiver of the bounded derived category of the path algebra $\C Q$ \cite{Ha87} and  $\hDynkin_0$ is a subset of $I \times \Z$.
One of the main theorems in \cite{KO22} is that the inverse $\tuB(t)$ of $\usfB(t) \seteq \usfC(t) \times \sfD^{-1}$  can be read from any AR-quiver $\Gamma_Q$  of the same type $\g$.  Here $\sfD={\rm diag}(\sfd_i \ | \ i \in I)$
is a diagonal matrix symmetrizing the Cartan matrix $\sfC$.

\smallskip

In this paper, we first show that
$\tuB(t)$ of type $\g$ can be obtained from $\tuB(t)$ of type $\sfg$ via the Dynkin diagram folding $\sigma$ on $\Dynkin_\sfg$ yielding $\Dynkin_\g$ (Theorem~\ref{thm: tbij folding}). Since  $\tuB(t)$ of type $\sfg$ can be read from any $\Gamma_Q$ of the same type $\sfg$, Theorem~\ref{thm: tbij folding} tells us that
combinatorial properties of $\Gamma_Q$ of type $\g$ are related  to the ones of AR-quivers of simply-laced type. In Section~\ref{sec: Labeling BCFG}, we show that
the labeling of $\Gamma_Q$ of type $\g=B_n$, $C_n$, $F_4$ and $G_2$ via $\Phi^+_\g$ can be obtained from the one of $\Gamma_Q$ of type
$\sfg' = D_{n+1}$, $A_{2n-1}$, $E_6$ and $D_4$  via $\Phi^+_{\sfg'}$ by simple surgery, respectively:
\begin{align} \label{eq: Gg'}
(\g,\sfg') =    \  (C_n,A_{2n-1}), \ (B_n,D_{n+1}),\  (F_4,E_6),\  (G_2,D_4).
\end{align}
Note that $\g$ is realized as a non-trivial Lie subalgebra of $\sfg'$ via the Dynkin diagram folding $\sigma$ (\cite[Proposition 7.9]{Kac}). Based on the combinatorial results in Section~\ref{sec: Labeling BCFG}, we show that the statistics, defined in \cite{Oh16,Oh17,Oh18} for simply-laced type,
is also well-defined and does not depend on the choice of Dynkin quivers for any type. Hence we can obtain the set of polynomials, called the \emph{degree polynomials} $(\de_{i,j}(t))_{i,j \in I} \subset \Z_{\ge 0}[t]$ of any type $\g$ (Proposition~\ref{eq: d well-defined}).   Then we show in  Theorem~\ref{thm:Main} that
\begin{align}\label{eq: relation}
\de_{i,j}(t) + \delta_{i,j^*}\,  \sfd_i\, t^{\sfh-1} = \tuB_{i,j}(t) + t^\sfh \tuB_{i,j^*}(t),
\end{align}
when $\g$ is of classical type or  type $E_6$ (see Remark~\ref{rmk: remained type} for the remained types). Here $\sfh$ is the Coxeter number of $\g$ and $*:I \to I$
is an involution on $I$ induced by the longest element $w_0$.

\smallskip

\noindent \textbf{Main Theorem}\;(Theorem~\ref{thm: determimant}).\ Let $Q$ be a Dynkin quiver of any type $\g$. Then, for any $\al,\be \in \Phi^+_\g$, we have
\begin{align} \label{eq: de = tde}
\de(S_{Q}(\al),S_{Q}(\be)) = \text{ the coefficient of $t^{|p-s|-1}$ in $\tuB_{i,j}(t)$},
\end{align}
where $\phi_Q(i,p)=(\al,0)$ and $\phi_Q(j,s)=(\be,0)$. Furthermore, we have the following:
\bna
\item The head of $S_{Q}(\al) \conv S_{Q}(\be)$ is simple and isomorphic to
a convolution product of mutually commuting cuspidal modules in $\sfS_{Q}$, under the assumption that
$\be \not\prec_Q \al$.
\item If the coefficient of $t^{|p-s|-1}$ in $\de_{i,j}(t)$ is strictly larger than $\max(\sfd_i,\sfd_j)$, then the composition length $\ell(S_{Q}(\al) \conv S_{Q}(\be))$ of $S_{Q}(\al) \conv S_{Q}(\be)$ is strictly larger than $2$.
\item If the coefficient of $t^{|p-s|-1}$ in $\de_{i,j}(t)$ is equal to $\max(\sfd_i,\sfd_j)$, $\ell(S_{Q}(\al) \conv S_{Q}(\be))$ is  $2$.
\item If the coefficient of $t^{|p-s|-1}$ in $\de_{i,j}(t)$ is equal to $0$, $S_{Q}(\al) \conv S_{Q}(\be)$ is  simple.
\ee

\smallskip

We prove \textbf{Main Theorem} by using (i) the combinatorial properties of AR-quivers investigated in the sections~\ref{Sec: classical AR},~\ref{sec: Labeling BCFG}, (ii) the study
on dual PBW-vectors and their categorifications in \cite{BKM12} and (iii) the theories related to $\rmR$-matrices developed in \cite{KKKO18,KKOP18}  by authors and their collaborators.

\smallskip

Note that \textbf{Main Theorem} tells that, for a Dynkin quiver $Q=(\Dynkin,\xi)$ with a \emph{source} $i$, we have
\begin{align} \label{eq: reflection}
\de(S_{Q}(\al),S_{Q}(\be)) = \de(S_{s_iQ}(s_i\al),S_{s_iQ}(s_i\be)) \quad \text{
if $\al\not=\al_i$ and $\be \not= \al_i$,}
\end{align}
where $\al_i$ is the $i$-th simple root and $s_iQ=(\Dynkin,s_i\xi)$ is \emph{another} Dynkin quiver obtained from $Q$ (see~\eqref{eq: si height}).
Since $\sfS_{Q}$ and $\sfS_{s_iQ}$ categorify two different families of dual PBW-vectors, which are connected via the braid group action of Lusztig and Saito, we can expect an existence of
a functor proving~\eqref{eq: reflection}. When $\g$ is of simply-laced type, Kato constructed such a functor in \cite{Kato12,Kato17}, called \emph{Saito reflection functor} by using a geometric approach.
We remark here that, comparing with simply-laced type,
there is no fully-developed geometry for non simply-laced type $\g$, and hence we can not apply a geometric approach in \cite{VV09,Kato12,Kato17,Fuj19} at this moment.
Thus we adopt an algebro-combinatorial approach in this paper.

\medskip

This paper is organized as follows. In Section~\ref{Sec: backgroud}, we review the necessary backgrounds of this paper. In particular, we recall the notion of Dynkin quiver and the statistics defined on the pairs of positive roots with respect to the convex partial orders on $\Phi^+$. In Section~\ref{Sec: t-Cartan}, We first review the results in \cite{KO22} about the computation of $\tuB(t)=(\tuB_{i,j}(t))$ as a matrix with entries in $\Z( \hspace{-.3ex} (t) \hspace{-.3ex}  )$.
Then we prove that  $\tuB(t)$ of non simply-laced type $\g$ can be obtained from
the one of simply-laced type $\sfg$.  In Section~\ref{Sec: classical AR}, we review the results in~\cite{Oh16,Oh17,Oh18} about the statistics and the degree polynomials
of AR-quivers $\Gamma_Q$'s when $Q$ is of simply-laced type. In Section~\ref{sec: Labeling BCFG}, we develop the BCFG-analogues of Section~\ref{Sec: classical AR}
and observe the similarities among $\Gamma_Q$'s of different types. In particular, we show that the labeling of AR-quivers of non simply-laced type via $\Phi^+$
can be obtained from the one of  simply-laced type by simple surgery. In Section~\ref{sec: Degree poly}, we define the notion of degree polynomials for each Dynkin quiver $Q$
and show that it does not depend on the choice of $Q$ and hence well-defined for any finite type $\g$. Then we show that~\eqref{eq: relation} holds for classical type and $E_6$.
We allocate a lot of parts of this paper to Section~\ref{Sec: classical AR},~\ref{sec: Labeling BCFG} and ~\ref{sec: Degree poly} to exhibit many computations and examples.
 In Section~\ref{Sec: quiver Hecke}, we recall the quiver Hecke algebra, the $\rmR$-matrix and related $\Z$-invariants related to $\rmR$-matrix. Also we review the categorification of PBW-vectors via cuspidal modules.   In Section~\ref{sec: Cuspidal}, we prove \textbf{Main Theorem}  by a case-by-case method
based on the results in the previous sections. More precisely, we classify pairs $(\al,\be)$ by relative positions in $\Gamma_Q$ and hence statistics, and prove~\eqref{eq: de = tde}
according to the  relative position and the type of $\g$.

\begin{convention}  Throughout this paper, we follow the following conventions.
\ben
\item For a statement $\mathtt{P}$, $\delta(\mathtt{P})$ is $1$ or $0$
 whether $\mathtt{P}$ is true or not.  In particular, we set $\delta_{i,j}
\seteq \delta(i = j)$ $($Kronecker's delta$)$.
\item For a finite set $A$, we denote by $|A|$ the number of elements in $A$.
\item We sometimes use $\bDynkin$
for a Dynkin diagram of non-simply-laced type
to distinguish it from a Dynkin diagram of simply-laced type.

\end{enumerate}
\end{convention}

\section{Backgrounds} \label{Sec: backgroud}
In this section, we review the necessary backgrounds of this paper.

\subsection{Finite Cartan data}  Let $I$ be an index set. A \emph{Cartan datum} is a quintuple
$$(\sfC,\sfP,\Pi,\sfP^\vee,\Pi^\vee)$$ consisting of
\ben
\item a generalized symmetrizable Cartan matrix $\sfC=(\sfc_{i,j})_{i,j\in I}$,
\item a free abelian group $\sfP$, called the \emph{weight lattice},
\item $\Pi=\{ \al_i \ | i \in I \}$, called the set of  \emph{simple roots} ,
\item $\sfP^\vee \seteq \Hom_{\Z}(\wl,\Z)$, called the \emph{coweight lattice}  and
\item $\Pi^\vee=\{ h_i \ | i \in I \}$, called  the set of  \emph{simple coroots}
\ee
satisfying
\bnum
\item $\lan h_i,\al_j\ran = \sfc_{i,j}$ for $i,j\in I$,
\item $\Pi$ is linearly independent over $\Q$,
\item for each $i \in I$, there exists $\varpi_i \in \wl$, called the \emph{fundamental weight}, such that $\lan h_i,\varpi_j \ran=\delta_{i,j}$ for all $j \in I$ and
\item there exists a $\Q$-valued symmetric bilinear form $( \cdot,\cdot)$ on $\wl$ such that $\lan h_i,\la\ran = \dfrac{2 (\al_i , \la) }{ (\al_i , \al_i ) }$ and
$(\al_i,\al_i) \in \Q_{>0}$ for any $i$ and $  \la  \in \wl$.
\ee

We set  $\h \seteq \Q \otimes_\Z \cwl$,  $\rl \seteq \bigoplus_{ i \in I} \Z\al_i$ and $\rl^+ \seteq \sum_{i \in I} \Z_{\ge 0} \al_i$,
  $\wl^+ \seteq \{ \la \in P \ | \ \lan h_i,\la \ran \ge 0 \text{ for any } i \in I\}$ and $\het(\be)=\sum_{i \in I} k_i$ for $\be = \sum_{i \in I} k_i\al_i \in \rl^+$.
We denote by $\Phi$ the set of \emph{roots}, by $\Phi^\pm$ the set of \emph{positive roots} (resp.\ \emph{negative roots}).

\begin{definition} \hfill
\bna
\item
The \emph{multiplicity} $\mul(\be)$ of $\be =\sum_{i \in I}n_i\al_i  \in \rl^+$ is defined by
$$\mul(\be) \seteq \max\st{n_i \mid i \in I}.$$
\item  The \emph{support} $\supp(\be)$ of $\be =\sum_{i \in I}n_i\al_i  \in \rl^+$  is defined by
$$\supp(\be) \seteq\{  i \in I \ | \  n_i \ne 0 \}.$$
\item  For $i \in I$ and $\be =\sum_{i \in I}n_i\al_i  \in \rl^+$, we set $\supp_i(\be) \seteq n_i$.
\item For $\al,\be \in \Phi$, we set
$$
p_{\be,\al} \seteq \max\st{p\in \Z \mid \be -p \al \in \Phi }.
$$
 \ee
\end{definition}

  \emph{Throughout this paper,
we only consider a finite Cartan datum} and   realize the \emph{root lattice} $\rl  \subset \bigoplus_{s \in J} \R \ep_s$ with an orthonormal basis $\{ \ep_s \}_{s\in J}$. For instance, if $\rl$  is of type $A_{n}$,
$\rl \subset \soplus_{i=1}^{n+1} \Z \ep_i$ and $\al_i = \ep_i-\ep_{i+1}$ for $1 \le i \le n$.

For each finite Cartan datum, we take the non-degenerate symmetric bilinear form $(\cdot, \cdot)$ on $\h^*$
such that $(\al,\al)$=2 for short roots $\al$ in $\Phi^+$.
In this paper, a {\em Dynkin diagram} $\Dynkin$ associated with a Cartan datum is
a graph with $I$ as the set of vertices and
the set of edges with edges between $i$ and $j$
such that $(\al_i,\al_j)<0$. We assign $\sfd_i \seteq   (\al_i , \al_i) /{2}  = (\al_i, \varpi_i) \in\Z_{>0}$ to each vertex $i\in I$.
We denote by $\Dynkin_0$ the set of vertices and $\Dynkin_1$ the set of edges.
Here is the list of Dynkin diagrams:
\eq
&&\label{fig:Dynkin}\\
&&\ba{ccc}
&A_n  \   \xymatrix@R=0.5ex@C=4ex{ *{\circ}<3pt> \ar@{-}[r]_<{ 1 \ \   } & *{\circ}<3pt> \ar@{-}[r]_<{ 2 \ \  } & *{\circ}<3pt> \ar@{.}[r]
&*{\circ}<3pt> \ar@{-}[r]_>{ \ \ n} &*{\circ}<3pt> \ar@{}[l]^>{   n-1} }, \quad
 B_n  \    \xymatrix@R=0.5ex@C=4ex{ *{\circled{$\circ$}}<3pt> \ar@{-}[r]_<{ 1 \ \  } & *{\circled{$\circ$}}<3pt> \ar@{-}[r]_<{ 2 \ \  } & *{\circled{$\circ$}}<3pt> \ar@{.}[r]
&*{\circled{$\circ$}}<3pt> \ar@{-}[r]_>{ \ \ n} &*{\circ}<3pt> \ar@{}[l]^>{   n-1} }, \quad
C_n  \    \xymatrix@R=0.5ex@C=4ex{ *{\circ}<3pt> \ar@{-}[r]_<{ 1 \ \  } & *{\circ}<3pt> \ar@{-}[r]_<{ 2 \ \  } & *{\circ}<3pt> \ar@{.}[r]
&*{\circ}<3pt> \ar@{-}[r]_>{ \ \ n} &*{\circled{$\circ$}}<3pt> \ar@{}[l]^>{   n-1} }, \  \allowdisplaybreaks\\[2ex]
&D_n  \ \  \raisebox{1em}{ \xymatrix@R=2ex@C=4ex{  &&& *{\circ}<3pt> \ar@{-}[d]_<{ n-1  }  \\
*{\circ}<3pt> \ar@{-}[r]_<{ 1 \ \  } & *{\circ}<3pt> \ar@{-}[r]_<{ 2 \ \  } & *{\circ}<3pt> \ar@{.}[r]
&*{\circ}<3pt> \ar@{-}[r]_>{ \ \ n} &*{\circ}<3pt> \ar@{}[l]^>{   n-2} }},  \
E_{6}  \ \  \raisebox{1em}{   \xymatrix@R=2ex@C=4ex{  &&  *{\circ}<3pt> \ar@{-}[d]^<{ 2\ \ }   \\
*{\circ}<3pt>  \ar@{-}[r]_<{ 1 \ \  } & *{\circ}<3pt> \ar@{-}[r]_<{ 3 \ \  } & *{\circ}<3pt> \ar@{-}[r]_<{ 4 \ \  }
&*{\circ}<3pt> \ar@{-}[r]_>{ \ \ 6} &*{\circ}<3pt> \ar@{}[l]^>{   5} } },  \
E_{7}  \ \  \raisebox{1em}{    \xymatrix@R=2ex@C=3ex{  &&  *{\circ}<3pt> \ar@{-}[d]^<{ 2\ \ }   \\
 *{\circ}<3pt>  \ar@{-}[r]_<{ 1 \ \  } & *{\circ}<3pt>  \ar@{-}[r]_<{ 3 \ \  } & *{\circ}<3pt> \ar@{-}[r]_<{ 4 \ \  } & *{\circ}<3pt> \ar@{-}[r]_<{ 5 \ \  }
&*{\circ}<3pt> \ar@{-}[r]_>{ \ \ 7} &*{\circ}<3pt> \ar@{}[l]^>{   6} } },  \allowdisplaybreaks \\[4ex]
&  E_{8}  \ \  \raisebox{1em}{    \xymatrix@R=2ex@C=3ex{ &&  *{\circ}<3pt> \ar@{-}[d]^<{ 2\ \ }   \\
 *{\circ}<3pt>  \ar@{-}[r]_<{ 1 \ \  } & *{\circ}<3pt>  \ar@{-}[r]_<{ 3 \ \  } & *{\circ}<3pt> \ar@{-}[r]_<{ 4 \ \  } & *{\circ}<3pt> \ar@{-}[r]_<{ 5 \ \  }   & *{\circ}<3pt> \ar@{-}[r]_<{ 6 \ \  }
&*{\circ}<3pt> \ar@{-}[r]_>{ \ \ 8} &*{\circ}<3pt> \ar@{}[l]^>{   7} } }, \quad
F_{4}   \ \   \xymatrix@R=0.5ex@C=4ex{    *{\circled{$\circ$}}<3pt> \ar@{-}[r]_<{ 1 \ \  } & *{\circled{$\circ$}}<3pt> \ar@{-}[r]_<{ 2 \ \  }
&*{\circ}<3pt> \ar@{-}[r]_>{ \ \ 4} &*{\circ}<3pt> \ar@{}[l]^>{   3} }, \quad
 G_2  \ \   \xymatrix@R=0.5ex@C=4ex{  *{\circ}<3pt> \ar@{-}[r]_<{ 1 \ \  }
& *{\circled{$\odot$}}<3pt> \ar@{-}[l]^<{ \ \ 2  } }.
\ea\nonumber
\eneq

Here $ \circ_{j}$ implies $(\al_j,\al_j)=2$, $\circled{$\circ$}_j$ implies $(\al_j,\al_j)=4$, and
$\circled{$\odot$}_j$ implies $(\al_j,\al_j)=6$.
For $i,j \in  I$, $d(i,j)$ denotes the number of edges between $i$ and $j$ in $\Dynkin$ as a graph.

By the diagonal matrix $\sfD = {\rm diag}(\sfd_i \ | \ i \in I)$, $\sfC$  is \emph{symmetrizable}; i.e.,
\begin{align*}
\text{the matrices $\osfB\seteq\sfD\cm=\bl(\al_i,\al_j)\br_{i,j\in I}$ and   $\usfB\seteq\cm\sfD^{-1}=\bl(\al^\vee_i,\al^\vee_j)\br_{i,j\in I}$ are symmetric, }
\end{align*}
where $\al^\vee_i=(\sfd_i)^{-1}\al_i$.

For $\be \in \Phi$, we set $\sfd_\be \seteq (\be , \be) /{2}$.
For $\al,\be\in\Phi$ such that $\ga=\al+\be \in \Phi$, we have
\begin{align} \label{eq: palbe}
p_{\be,\al} = \bc
2 & \text{ if } \sfd_\ga=3 \text{ and } \sfd_{\al}= \sfd_{\be} =1,\\
1 & \text{ if } \sfd_\ga=2 \text{ and } \sfd_{\al}= \sfd_{\be} =1,\\
1 & \text{ if $\g$ is of type $G_2$ and }  \sfd_\al=\sfd_{\be}= \sfd_{\ga} =1,\\
0 & \text{otherwise.}
\ec
\end{align}

\subsection{Convex orders} \label{subsec: convex}
We denote by $\weyl$ the Weyl group associated to the finite Cartan datum. It is generated by the simple reflections
$\{ s_i  \mid i \in I \}$: $s_i \la = \la - \lan  h_i,\la \ran \al_i$ $(\la \in \wl)$.

Note that there exists a unique element $w_0 \in \weyl$ whose length $\ell(w_0)$ is   the largest,
and $w_0$ induces the Dynkin diagram automorphism
$^*\cl I \to I$ sending $i \mapsto i^*$ , where $w_0(\al_i)=-\al_{i^*}$.

Let $\uw_0 \seteq s_{i_1} \cdots s_{i_\ell}$ be a reduced expression of $w_0 \in \weyl$ and define
\begin{align}\label{eq: beta_k uw_0}
\be^{\uw_0}_k \seteq s_{i_1} \cdots s_{i_{k-1}}(\al_{i_k})   \quad \text{ for } k=1,\ldots,\ell.
\end{align}
Then we have  $\Phi^+ = \{ \be^{\uw_0}_k \ | \ 1 \le k \le l\}$ and $|\Phi^+|=\ell(w_0)$. It is well-known that the total order $<_{\uw_0}$  on $\Phi^+$, defined by
$\be^{\uw_0}_a <_{\uw_0} \be^{\uw_0}_b $ for $a <b$,
is \emph{convex} in the following sense: if $\al,\be\in\Phi^+$
satisfy $\al<_{\uw_0}\beta$ and $\al+\be \in \Phi^+ $, then we have
$ \al <_{\uw_0} \al+\be <_{\uw_0} \be $.

Two reduced expressions   $\uw_0$ and $\uw'_0$ of $w_0$   are said to be \emph{commutation equivalent}, denoted by $\uw_0 \sim \uw'_0$,  if
$\uw'_0$ can be obtained from $\uw_0$ by applying the \emph{commutation relations} $s_is_j=s_js_i$ $(d(i,j)>1)$. Note that this relation  $\sim$ is an   equivalence   relation,
and an   equivalence   class  under $\sim$ is called a \emph{commutation class}. We denote by $[\uw_0]$ the  commutation  class of $\uw_0$.
For a commutation class $\cc$ of $w_0$, we define the \emph{convex partial order} $\prec_{\cc}$ on $\Phi^+$ by:
\begin{align} \label{eq: convex partial order}
\al \prec_{\cc} \be  \quad \text{ if and only if }  \quad \al <_{\uw_0} \be  \quad \text{for any $ \uw_0 \in\cc$.}
\end{align}

For $[\uw_0]$ of $w_0$ and $\al \in \Phi^+$, we define $[\uw_0]$-residue of $\al$, denoted by $\res^{[\uw_0]}(\al)$, to be $i_k \in I$
if $\be^{\uw_0}_k=\al$ with $\uw_0=s_{i_1} \cdots s_{i_\ell}$. Note that this notion is well-defined; i.e, for any $\uw_0' = s_{j_1} \cdots s_{j_\ell} \in [\uw_0]$
with $\be^{\uw'_0}_t=\al$, we have $j_t=i_k$. Note that $$ (\al , \al) = (\al_i ,\al_i) \quad \text{ if } i= \res^{[\uw_0]}(\al).$$

For a reduced expression $\uw_0=s_{i_1}s_{i_2} \cdots s_{i_\ell}$, it is known that the expression $\uw_0'\seteq s_{i_2} \cdots s_{i_\ell}s_{i_1^*}$ is also a reduced expression
of $w_0$. This operation is sometimes referred to as a \emph{combinatorial reflection functor} and we write $r_{i_1}\uw_0 = \uw'_0$. Also it induces the operation on commutation classes of $w_0$ (i.e., $r_{i_1}[\uw_0] = [r_{i_1}\uw_0]$
is well-defined if there exists a reduced expression $\uw_0' =s_{j_1}s_{j_2}\cdots s_{j_\ell} \in [\uw_0]$ such that $j_1=i_1$).
The relations $[\uw] \overset{r}{\sim} [r_i\uw]$ for $i \in I$ generate an equivalence relation, called the \emph{reflection equivalent relation} $\overset{r}{\sim}$, on the
set of commutation classes of $w_0$. For $\uw_0$ of $w_0$, the family of commutation classes $\lf \uw_0 \rf \seteq \{ [\uw_0']\ \mid \ [\uw_0'] \overset{r}{\sim}  [\uw_0] \}$
is called an \emph{$r$-cluster point}.

\subsection{Statistics} \label{subsec: stat}
An element $\um=\seq{\um_\beta}$ of $\Ex$ is called an \emph{\ex}.
In this subsection, we give several notions concerning \exs,
which we need in the PBW description of simple $R$-modules
(e.g., see Theorem~\ref{thm: cuspidal} below).

For an \ex $\um$, we set $\wt(\um) \seteq \sum_{\beta\in\Phi^+} \um_\beta \be \in
\rl^+$.

\begin{definition}[{cf.\ \cite{McNa15,Oh18}}] \label{def: bi-orders}
For a reduced expression $\uw_0=s_{i_1}\ldots s_{\ell}$ and a commutation class $\cc$,
we define the partial orders $<^\ttb_{\uw_0}$ and $\prec^\ttb_{\cc}$ on $\Ex$ as follows:
\begin{enumerate}[{\rm (i)}]
\item $<^\ttb_{\uw_0}$ is the bi-lexicographical partial order induced by $<_{\uw_0}$. Namely, $\um<^\ttb_{\uw_0}\um'$ if
\begin{itemize}
\item $\wt(\um)=\wt(\um')$.
\item there exists $\al\in\Phi^+$ such that
$\um_\al<\um'_\al$ and $\um_\be=\um'_\be$ for any $\be$ such that $\be<_{\uw_0}\al$,
\item there exists $\eta\in\Phi^+$ such that
$\um_\eta<\um'_\eta$ and $\um_\zeta=\um'_\zeta$ for any $\zeta$ such that $\eta<_{\uw_0}\zeta$.
\end{itemize}
\item \label{eq: crazy order} For sequences $\um$ and $\um'$, we define $\um \prec^\ttb_{\cc} \um'$ if the following conditions are satisfied:
$$\um<^\ttb_{\uw_0} \um'\qt{for all $\uw_0 \in \cc$.}$$
\end{enumerate}
\end{definition}

\smallskip

We say that an \ex $\um=\seq{\um_\be} \in \Z_{\ge0}^{\Phi^+}$ is \emph{$\cc$-simple} if it is minimal with respect to the partial order $\prec^\ttb_{\cc}$. For a given $\cc$-simple \ex
$\us=\seq{\us_\beta} \in \Z_{\ge0}^{\Phi^+}$, we call a cover\footnote{Recall that a \emph{cover} of $x$ in a poset $P$ with partial order $\prec$ is an element $y \in P$ such that $x \prec  y$ and there
does not exist  $y' \in P$ such that $x \prec y' \prec y$.}
of $\us$ under $\prec^{\ttb}_{\cc}$ a \emph{$\cc$-minimal \ex for $\us$}. The \emph{$\cc$-degree} of an \ex $\um$,  denoted by $\dg_{\cc}(\um)$,  is the largest integer $k \ge 0$ such that
\begin{align} \label{eq: dist}
 \um^{(0)} \prec^\ttb_{\cc} \um^{(1)} \prec^\ttb_{\cc} \cdots \prec^\ttb_{\cc} \um^{(k-1)} \prec^\ttb_{\cc} \um^{(k)} = \um.
\end{align}

A pair $\pair{\al,\be}$ of $\al,\be\in\Phi^+$ is called a {\em \pr} if
$\be\not\preceq_\cc\al$
(i.e., $\al<_{[\uw_0]}\be$ for some $\uw_0\in\cc$).
We regard a \pr  $\up\seteq\pair{\al,\be}$ as an \ex
by $\up_\al=\up_\be=1$ and $\up_\gamma=0$ for
$\gamma\not=\al,\be$.
When there exists a unique $\cc$-simple \ex $\us$ satisfying
$\us \preceq^\ttb_{\cc} \up$, we call $\us$ the   \emph{$\cc$-head} of $\up$ and
denote it by $\head_{\cc}(\up)$\footnote{In \cite{Oh18}, we have used \emph{socle} instead of head, because we use a different convention
 in Theorem~\ref{thm: cuspidal} below}.

  We regard a positive root $\al\in\Phi^+$ as an \ex $\um$ by
$\um_{\be}=\delta_{\be,\al}$.
\begin{remark} 
The definition of
${\rm dist}_{[\tw_0]}(\up)$ for a \pr $\up$ in~\cite{Oh18} is different
from $\dg_{[\tw_0]}(\up)$ in~\eqref{eq: dist}, which was defined as follows:
\begin{align}
\um^{(0)} \prec^\ttb_{\cc} \up^{(1)} \prec^\ttb_{\cc}  \cdots \prec^\ttb_{\cc} \up^{(k-1)}  \prec^\ttb_{\cc} \up^{(k)} = \up,
\end{align}
for \prs $\up^{(i)}$ and  $\um^{(0)}$ is $\cc$-simple.
However, for a Dynkin quiver $Q$ of simply-laced type, the statistics
$\dg_{[Q]}(\up)$ for every \pr $\up$ still coincides with the
statistics ${\rm dist}_{[Q]}(\up)$ in  \cite{Oh18} when $Q$ is not
of type  $E_7$ and $E_8$ (see the next subsection for Dynkin quivers).
\end{remark}

\subsection{Dynkin quivers}
A \emph{Dynkin quiver} $Q$ of $\Dynkin$ is an oriented graph whose
underlying graph is $\Dynkin$ (see Example~\ref{ex: sigma-fixed} below). For each Dynkin quiver of $Q$, we can
associate a function $\xi\cl \Dynkin_0 \to \Z$ , called   a
\emph{height function} of $Q$  which satisfies: 
\begin{align*}
 \xi_i = \xi_j +1  \qquad \text{ if } 
i \to j \text{ in } Q.
\end{align*}
Conversely, for a Dynkin diagram $\Dynkin$ and a function $\xi\cl
\Dynkin_0 \to \Z$ satisfying $|\xi_i-\xi_j|=1$ for $i,j\in I$ with
$d(i,j)=1$, we can associate a Dynkin quiver $Q$ in a canonical way.
Thus we can identify a Dynkin quiver $Q$ with a pair
$(\Dynkin,\xi)$. Note that, since $\Dynkin$ is connected, height
functions of $Q$ differ by integers.

For a Dynkin quiver $Q=(\Dynkin,\xi)$, the function $\xi^*: \Dynkin_0 \to \Z$ given by
$   (\xi^*)_i  = \xi_{i^*}$ is also a height function. Thus
we set $Q^*=(\Dynkin,\xi^*)$.

For a Dynkin quiver $Q$, we call $i \in \Dynkin_0$ a \emph{source} of $Q$ if $\xi_i > \xi_j$ for all $j \in \Dynkin_0$ with $d(i,j)=1$.
For a Dynkin quiver $Q=(\Dynkin,\xi)$ and its source $i$, we denote by $s_iQ$ the Dynkin quiver $(\Dynkin,s_i\xi)$ where
$s_i\xi$ is the height function defined by
\begin{align} \label{eq: si height}
(s_i\xi)_j = \xi_j-2\delta_{i,j}.
\end{align}

\begin{definition} \label{def: adapted}
Let  $Q=(\Dynkin,\xi)$ be a Dynkin quiver.
\bnum
\item  A reduced expression $\uw=s_{i_1}\cdots s_{i_l}$ of an element of $\weyl_{\Dynkin}$
is said to be \emph{adapted to} $Q$ (or \emph{$Q$-adapted})  if
$$ \text{ $i_k$ is a source of } s_{i_{k-1}}s_{i_{k-2}}\ldots s_{i_1}Q \text{ for all } 1 \le k \le l.$$
\item  A \emph{Coxeter element} $\tau$ of $\weyl_{\Dynkin}$ is a product of all simple reflections; i.e., there exists a reduced expression $s_{i_1} \cdots s_{i_{|I|}}$ of $\tau$ such that $\{ i_1,\ldots,i_{|I|} \}=I$.
\item We denote by $\sfh$ the Coxeter number of $\Dynkin$
which is the order of a Coxeter element.
\ee
\end{definition}

\begin{theorem}[\cite{OS19B,KO22}]
For each Dynkin quiver $Q=(\Dynkin,\xi)$, there exists a $Q$-adapted reduced expression $\uw_0$ of $w_0 \in \weyl_\Dynkin$, and the set of all $Q$-adapted reduced expressions  is a commutation class of $w_0$.
\end{theorem}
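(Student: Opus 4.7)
The plan is to prove both claims by induction on $\ell(w_0)$, with the key mechanism being a greedy iterative construction via repeated choice of sources. The starting observation is that any Dynkin quiver admits at least one source, since the underlying graph $\Dynkin$ is a finite connected acyclic diagram.

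First I would establish existence of a $Q$-adapted reduced expression as follows. Setting $Q_0 \seteq Q$, at each step $k \ge 1$ I pick an arbitrary source $i_k$ of $Q_{k-1}$ and set $Q_k \seteq s_{i_k} Q_{k-1}$. I would then verify that $s_{i_1} \cdots s_{i_{k-1}}(\al_{i_k}) \in \Phi^+$ at every stage by an inductive argument: for any source $i$ of a Dynkin quiver $Q'$, the reflection $s_i$ sends $\al_i$ to $-\al_i$ while acting compatibly with the reorientation $s_i Q'$, and the new positive roots arising at later stages correspond to new source choices. This ensures $s_{i_1} \cdots s_{i_k}$ remains reduced for all $k$. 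To show that the procedure produces exactly $w_0$ after $\ell(w_0) = \sfh\, |I|/2$ steps, I would invoke the theory of Coxeter elements: a complete sink-source traversal of the $|I|$ vertices of $Q$ yields a Coxeter element $\tau_Q$ of order $\sfh$, and iterating $\tau_Q$ for $\sfh/2$ rounds (interpreted appropriately when $\sfh$ is odd) realizes $w_0$. This is classical in the simply-laced case, and the non-simply-laced extension is recorded in~\cite{KO22}.

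For the commutation class claim, I would argue by induction on $\ell(w_0)$. Let $\uw_0 = s_{i_1} \cdots s_{i_\ell}$ and $\uw_0' = s_{j_1} \cdots s_{j_\ell}$ be two $Q$-adapted reduced expressions. If $i_1 = j_1$, both tails are $(s_{i_1}Q)$-adapted reduced expressions of $s_{i_1} w_0$, which has length $\ell-1$; the induction hypothesis yields $s_{i_2}\cdots s_{i_\ell} \sim s_{j_2}\cdots s_{j_\ell}$, hence $\uw_0 \sim \uw_0'$. If $i_1 \neq j_1$, then both $i_1$ and $j_1$ are sources of $Q$; since two adjacent vertices cannot simultaneously be sources (the arrow between them forces one of them to fail), we conclude $d(i_1,j_1)>1$, so $s_{i_1} s_{j_1} = s_{j_1} s_{i_1}$. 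Moreover $j_1$ is still a source of $s_{i_1} Q$ (flipping arrows at $i_1$ does not affect non-adjacent vertices), so the inductive hypothesis applied to the tail of $\uw_0$ lets us rearrange it by commutation moves into an expression starting with $s_{j_1}$; this gives $\uw_0 \sim s_{i_1} s_{j_1} s_{k_3} \cdots s_{k_\ell} \sim s_{j_1} s_{i_1} s_{k_3} \cdots s_{k_\ell}$, reducing to the first case.

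The main obstacle will be the length/termination assertion in the existence step: namely that the greedy procedure indeed executes exactly $\ell(w_0)$ steps before running out of sources and that the resulting product equals $w_0$. Once this is secured, the commutation class part reduces to the clean induction sketched above. For the non-simply-laced setting, an alternative is to fold a simply-laced result through the automorphism $\sigma$ of~\eqref{eq: Gg'}, which also explains the compatibility with the combinatorial AR-quivers used throughout the paper.
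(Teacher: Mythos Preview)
The paper does not give its own proof of this theorem: it is stated with a citation to \cite{OS19B,KO22} and no argument is supplied. So there is no in-paper proof to compare against.

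Your outline is the standard one and is essentially sound. Two points deserve tightening. First, the phrase ``by induction on $\ell(w_0)$'' is imprecise, since $\ell(w_0)$ is fixed once $\Dynkin$ is fixed; what you actually need is the stronger statement ``for any Dynkin quiver $Q'$ and any $w\in\weyl$, all $Q'$-adapted reduced expressions of $w$ form a single commutation class'', proved by induction on $\ell(w)$. Your case analysis then goes through verbatim, because the tails you produce are $(s_{i_1}Q)$- or $(s_{j_1}Q)$-adapted reduced expressions of the shorter element $s_{i_1}w_0$ or $s_{j_1}w_0$. Second, in the $i_1\neq j_1$ case you should note explicitly that $s_{i_1}s_{j_1}Q=s_{j_1}s_{i_1}Q$ (immediate since $d(i_1,j_1)>1$), so that after the commutation swap the two expressions have tails adapted to the same quiver and the induction applies cleanly.

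You correctly identify the termination claim in the existence step as the nontrivial point; this is exactly what the cited references \cite{OS19B,KO22} establish (the simply-laced case is classical via iterated Coxeter elements, and \cite{KO22} extends the construction to non-simply-laced $\Dynkin$). Your folding remark at the end is in the spirit of how \cite{KO22} proceeds.
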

We denote by $[Q]$ the commutation class of $w_0$ consisting of all $Q$-adapted reduced expressions.
It is known that, for a fixed Dynkin diagram $\Dynkin$,  the set of commutation classes $\{ [Q] \ | \ Q=(\Dynkin,\xi) \}$ forms an $r$-cluster point, denoted by $\lf \Dynkin \rf$.

It is  also well-known that all of reduced expressions of a Coxeter element $\tau$ form a single commutation class and they are adapted to some Dynkin quiver $Q$. Conversely, for each Dynkin quiver $Q$,
there exists a unique Coxeter element $\tau_Q$ all of whose reduced expressions are adapted to $Q$.
All Coxeter elements are conjugate in $\weyl$, and their common order is called the \emph{Coxeter number} and denoted by $\sfh$.

\smallskip

A  bijection $\sigma$ from $\Dynkin_0$ to itself is said to be a \emph{Dynkin diagram automorphism}
if $\lan h_i,\al_j \ran= \lan h_{\sigma(i)},\al_{\sigma(j)} \ran$ for all $i,j\in \Dynkin_0$. Throughout this paper, we
assume that  Dynkin diagram automorphisms $\sigma$ satisfy the following condition:
\begin{align} \label{eq: auto cond}
\text{ there is no $i \in \Dynkin_0$ such that $d(i,\sigma(i))=1$}.
\end{align}

 For a Dynkin diagram of type $A_{2n-1}$, $D_n$ and $E_6$, there exists a unique non-identity Dynkin diagram automorphism $\vee$ of order $2$ (except $D_4$-type
in which case there are   three   automorphisms of order $2$ and
two automorphisms $\widetilde{\vee}$ and $\widetilde{\vee}{\akew[.2ex]}^2$ of order $3$) satisfying the condition in~\eqref{eq: auto cond}.
The automorphisms $\vee$ and $\widetilde{\vee}$ can be depicted as follows:
\begin{subequations}
\label{eq: diagram foldings}
\begin{gather}
\label{eq: B_n}
\begin{tikzpicture}[xscale=1.75,yscale=.8,baseline=0]
\node (A2n1) at (0,1) {$\Dynkin_{A_{2n-1}}$};
\node[dynkdot,label={above:$n+1$}] (A6) at (4,1.5) {};
\node[dynkdot,label={above:$n+2$}] (A7) at (3,1.5) {};
\node[dynkdot,label={above:$2n-2$}] (A8) at (2,1.5) {};
\node[dynkdot,label={above:$2n-1$}] (A9) at (1,1.5) {};
\node[dynkdot,label={above left:$n-1$}] (A4) at (4,0.5) {};
\node[dynkdot,label={above left:$n-2$}] (A3) at (3,0.5) {};
\node[dynkdot,label={above left:$2$}] (A2) at (2,0.5) {};
\node[dynkdot,label={above left:$1$}] (A1) at (1,0.5) {};
\node[dynkdot,label={right:$n$}] (A5) at (5,1) {};
\path[-]
 (A1) edge (A2)
 (A3) edge (A4)
 (A4) edge (A5)
 (A5) edge (A6)
 (A6) edge (A7)
 (A8) edge (A9);
\path[-,dotted] (A2) edge (A3) (A7) edge (A8);
\path[<->,thick,red] (A1) edge (A9) (A2) edge (A8) (A3) edge (A7) (A4) edge (A6);
\def\Foffset{-1}
\node (Bn) at (0,\Foffset) {$\Dynkinv = \bDynkin_{B_n}$};
\foreach \x in {1,2}
{\node[dynkdot, fill= black,label={below:$\x$}] (B\x) at (\x,\Foffset) {};}
\node[dynkdot, fill= black,label={below:$n-2$}] (B3) at (3,\Foffset) {};
\node[dynkdot, fill= black,label={below:$n-1$}] (B4) at (4,\Foffset) {};
\node[dynkdot,label={below:$n$}] (B5) at (5,\Foffset) {};
\path[-] (B1) edge (B2)  (B3) edge (B4);
\draw[-,dotted] (B2) -- (B3);
\draw[-] (B4.30) -- (B5.150);
\draw[-] (B4.330) -- (B5.210);
\draw[-] (4.55,\Foffset) -- (4.45,\Foffset+.2);
\draw[-] (4.55,\Foffset) -- (4.45,\Foffset-.2);
 \draw[-latex,dashed,color=blue,thick]
 (A1) .. controls (0.75,\Foffset+1) and (0.75,\Foffset+.5) .. (B1);
 \draw[-latex,dashed,color=blue,thick]
 (A9) .. controls (1.75,\Foffset+1) and (1.25,\Foffset+.5) .. (B1);
 \draw[-latex,dashed,color=blue,thick]
 (A2) .. controls (1.75,\Foffset+1) and (1.75,\Foffset+.5) .. (B2);
 \draw[-latex,dashed,color=blue,thick]
 (A8) .. controls (2.75,\Foffset+1) and (2.25,\Foffset+.5) .. (B2);
 \draw[-latex,dashed,color=blue,thick]
 (A3) .. controls (2.75,\Foffset+1) and (2.75,\Foffset+.5) .. (B3);
 \draw[-latex,dashed,color=blue,thick]
 (A7) .. controls (3.75,\Foffset+1) and (3.25,\Foffset+.5) .. (B3);
 \draw[-latex,dashed,color=blue,thick]
 (A4) .. controls (3.75,\Foffset+1) and (3.75,\Foffset+.5) .. (B4);
 \draw[-latex,dashed,color=blue,thick]
 (A6) .. controls (4.75,\Foffset+1) and (4.25,\Foffset+.5) .. (B4);
 \draw[-latex,dashed,color=blue,thick] (A5) -- (B5);
\draw[->] (A2n1) -- (Bn);
\node (A2n1) at (-0.1,-0.1) {$^{\vee}$};
\end{tikzpicture}
\allowdisplaybreaks \\
\label{eq: C_n}
\begin{tikzpicture}[xscale=1.65,yscale=1.25,baseline=-25]
\node (Dn1) at (0,0) {$\Dynkin_{D_{n+1}}$};
\node[dynkdot,label={above:$1$}] (D1) at (1,0){};
\node[dynkdot,label={above:$2$}] (D2) at (2,0) {};
\node[dynkdot,label={above:$n-2$}] (D3) at (3,0) {};
\node[dynkdot,label={above:$n-1$}] (D4) at (4,0) {};
\node[dynkdot,label={right:$n$}] (D6) at (5,.4) {};
\node[dynkdot,label={right:$n+1$}] (D5) at (5,-.4) {};
\path[-] (D1) edge (D2)
  (D3) edge (D4)
  (D4) edge (D5)
  (D4) edge (D6);
\draw[-,dotted] (D2) -- (D3);
\path[<->,thick,red] (D6) edge (D5);
\def\Coffset{-1.2}
\node (Cn) at (0,\Coffset) {$\Dynkinv=\bDynkin_{C_n}$};
\foreach \x in {1,2}
{\node[dynkdot,label={below:$\x$}] (C\x) at (\x,\Coffset) {};}
\node[dynkdot,label={below:$n-2$}] (C3) at (3,\Coffset) {};
\node[dynkdot,label={below:$n-1$}] (C4) at (4,\Coffset) {};
\node[dynkdot, fill= black,label={below:$n$}] (C5) at (5,\Coffset) {};
\draw[-] (C1) -- (C2);
\draw[-,dotted] (C2) -- (C3);
\draw[-] (C3) -- (C4);
\draw[-] (C4.30) -- (C5.150);
\draw[-] (C4.330) -- (C5.210);
\draw[-] (4.55,\Coffset+.1) -- (4.45,\Coffset) -- (4.55,\Coffset-.1);
\path[-latex,dashed,color=blue,thick]
 (D1) edge (C1)
 (D2) edge (C2)
 (D3) edge (C3)
 (D4) edge (C4);
\draw[-latex,dashed,color=blue,thick]
 (D6) .. controls (4.55,-.25) and (4.55,-0.8) .. (C5);
\draw[-latex,dashed,color=blue,thick]
 (D5) .. controls (5.25,\Coffset+.5) and (5.25,\Coffset+.3) .. (C5);
 \draw[->] (Dn1) -- (Cn);
\node (Dn1) at (-0.1,-0.65) {$^{\vee}$};
\end{tikzpicture}
\allowdisplaybreaks \\
\label{eq: F_4}
\begin{tikzpicture}[xscale=1.75,yscale=.8,baseline=0]
\node (E6desc) at (0,1) {$\Dynkin_{E_6}$};
\node[dynkdot,label={above:$2$}] (E2) at (4,1) {};
\node[dynkdot,label={above:$4$}] (E4) at (3,1) {};
\node[dynkdot,label={above:$5$}] (E5) at (2,1.5) {};
\node[dynkdot,label={above:$6$}] (E6) at (1,1.5) {};
\node[dynkdot,label={above left:$3$}] (E3) at (2,0.5) {};
\node[dynkdot,label={above right:$1$}] (E1) at (1,0.5) {};
\path[-]
 (E2) edge (E4)
 (E4) edge (E5)
 (E4) edge (E3)
 (E5) edge (E6)
 (E3) edge (E1);
\path[<->,thick,red] (E3) edge (E5) (E1) edge (E6);
\def\Foffset{-1}
\node (F4desc) at (0,\Foffset) {$\Dynkinv=\bDynkin_{F_4}$};
\foreach \x in {1,2}
{\node[dynkdot,fill= black,label={below:$\x$}] (F\x) at (\x,\Foffset) {};}
\node[dynkdot,label={below:$3$}] (F3) at (3,\Foffset) {};
\node[dynkdot, label={below:$4$}] (F4) at (4,\Foffset) {};
\draw[-] (F1.east) -- (F2.west);
\draw[-] (F3) -- (F4);
\draw[-] (F2.30) -- (F3.150);
\draw[-] (F2.330) -- (F3.210);
\draw[-] (2.55,\Foffset) -- (2.45,\Foffset+.2);
\draw[-] (2.55,\Foffset) -- (2.45,\Foffset-.2);
\path[-latex,dashed,color=blue,thick]
 (E2) edge (F4)
 (E4) edge (F3);
\draw[-latex,dashed,color=blue,thick]
 (E1) .. controls (1.25,\Foffset+1) and (1.25,\Foffset+.5) .. (F1);
\draw[-latex,dashed,color=blue,thick]
 (E3) .. controls (1.75,\Foffset+1) and (1.75,\Foffset+.5) .. (F2);
\draw[-latex,dashed,color=blue,thick]
 (E5) .. controls (2.75,\Foffset+1) and (2.25,\Foffset+.5) .. (F2);
\draw[-latex,dashed,color=blue,thick]
 (E6) .. controls (0.25,\Foffset+1) and (0.75,\Foffset+.5) .. (F1);
\draw[->] (E6desc) -- (F4desc);
\node (E6desc) at (-0.1,-0.1) {$^{\vee}$};
\end{tikzpicture}
\allowdisplaybreaks \\
\label{eq: G_2}
\begin{tikzpicture}[xscale=1.9,yscale=1.5,baseline=-25]
\node (D4desc) at (0,0) {$\Dynkin_{D_{4}}$};
\node[dynkdot,label={right:$1$}] (D1) at (1.75,.4){};
\node[dynkdot,label={above:$2$}] (D2) at (1,0) {};
\node[dynkdot,label={right:$3$}] (D3) at (2,0) {};
\node[dynkdot,label={right:$4$}] (D4) at (1.75,-.4) {};
\draw[-] (D1) -- (D2);
\draw[-] (D3) -- (D2);
\draw[-] (D4) -- (D2);
\path[->,red,thick]
(D1) edge[bend left=20] (D3)
(D3) edge[bend left=20] (D4)
(D4) edge[bend left=20] (D1);
\def\Coffset{-1.1}
\node (G2desc) at (0,\Coffset) {$\Dynkintv=\bDynkin_{G_2}$};
\node[dynkdot,label={below:$1$}] (G1) at (1,\Coffset){};
\node[dynkdot, fill= gray,label={below:$2$}] (G2) at (2,\Coffset) {};
\draw[-] (G1) -- (G2);
\draw[-] (G1.40) -- (G2.140);
\draw[-] (G1.320) -- (G2.220);
\draw[-] (1.55,\Coffset+.1) -- (1.45,\Coffset) -- (1.55,\Coffset-.1);
\path[-latex,dashed,color=blue,thick]
 (D2) edge (G1);
\draw[-latex,dashed,color=blue,thick]
 (D1) .. controls (2.7,0.1) and (2.5,-0.8) .. (G2);
\draw[-latex,dashed,color=blue,thick]
 (D3) .. controls (2.1,-0.3) and (2.2,-0.7) .. (G2);
\draw[-latex,dashed,color=blue,thick] (D4) -- (G2);
\draw[->] (D4desc) -- (G2desc);
\node (D4desc) at (-0.1,-0.55) {$^{\widetilde{\vee}}$};
\end{tikzpicture}
\end{gather}
\end{subequations}

\begin{definition}\label{def:folding}
Let $\Dynkin$ be a simply-laced Dynkin diagram and $\sigma$ a Dynkin diagram automorphism satisfying \eqref{eq: auto cond}.
\bnum
\item
Let $\Dynkins$ be the folding of $\Dynkin$ by $\sigma$.
Namely, the set $(\Dynkins)_0$ of vertices of $\Dynkins$
is the set of $\sigma$-orbits. Let
$\pi\cl \Dynkin_0\to(\Dynkins)_0$ be the projection.
The edges of $\Dynkins$ are given by the Cartan matrix
$$\mathsf{c}^{\Dynkins}_{\pi(i),\pi(j)}=\sum_{j'\in\pi^{-1}\pi(j)}\mathsf{c}^\Dynkin_{i,j'}.$$
Hence $\sfd_{\pi(i)}=|\pi^{-1}\pi(i)|$.
\item
For a simple Lie algebra $\sfg$ of simply-laced type associated to $\Dynkin$,
we denote by $\sfg_\sigma$  the simple Lie algebra  whose
Dynkin diagram is $\Dynkins$.
\ee
\end{definition}

Note that we have
the following relations of Coxeter numbers:
\eq
&&\sfh^{\Dynkin_\sigma}=\sfh^{\Dynkin}.\label{eq:Coxeter}
\eneq

\begin{definition}
Let $Q=(\Dynkin,\xi)$ be a Dynkin quiver with $\Dynkin$ and $\sigma$
of a Dynkin diagram automorphism of $\Dynkin$.
We say that $Q$ is \emph{$\sigma$-fixed}
if
$$   \xi_{\sigma(i)} = \xi_{i}  \qquad  \text{ for all $i \in \Dynkin_0$}.  $$
For a $\sigma$-fixed Dynkin quiver $Q$, we sometimes denote its height function by ${}^\sigma\xi$ instead of $\xi$, to emphasize.
\end{definition}

\begin{example} \label{ex: sigma-fixed}
Here are several examples of $\sigma$-fixed Dynkin quivers $Q$.
\begin{subequations}
\begin{gather}
\label{it: A5 fixed}
\xymatrix@R=0.5ex@C=6ex{ *{\circ}<3pt> \ar@{->}[r]^<{ _{\underline{3}} \ \  }_<{1 \ \ } & *{\circ}<3pt>  \ar@{->}[r]^<{ _{\underline{2}} \ \  }_<{2 \ \ }
&*{\circ}<3pt>    \ar@{<-}[r]^<{ _{\underline{1}} \ \  }_<{3 \ \ } & *{\circ}<3pt>  \ar@{<-}[r]^<{ _{\underline{2}} \ \  }_<{4 \ \ } & *{\circ}<3pt>    \ar@{}[l]_<{ \quad  \  _{\underline{3}} \ \  }^<{\quad 5 }
} \qt{for $Q=(\Dynkin_{A_5}, \vxi)$.}
\\
\label{it: D4 fixed vee}
 \raisebox{1.2em}{ \xymatrix@R=0.5ex@C=6ex{ && *{\circ}<3pt> \ar@{}[dl]_<{ \quad \ \  _{\underline{1}} \  }^<{  \ 3   } \\
*{\circ}<3pt> \ar@{->}[r]^<{ _{\underline{3}} \ \  }_<{1 \ \ } & *{\circ}<3pt>    \ar@{->}[ur]^<{ _{\underline{2}} \  }_<{2 \ \quad }  \\
&& *{\circ}<3pt>  \ar@{<-}[ul]_<{  \ \ _{\underline{1}} \  }^<{  \quad 4  }
}}\qt{for $Q=(\Dynkin_{D_4},\vxi)$.}
\\
\label{it: E6 fixed}
\raisebox{1.2em}{ \xymatrix@R=2ex@C=6ex{  && *{\circ}<3pt> \ar@{<-}[d]^<{ _{\underline{0}} \ \  }_<{2} \\
*{\circ}<3pt> \ar@{->}[r]^<{ _{\underline{3}} \ \  }_<{1 \ \ } & *{\circ}<3pt>  \ar@{->}[r]^<{ _{\underline{2}} \ \  }_<{3 \ \ }
&*{\circ}<3pt>    \ar@{<-}[r]^<{ _{\underline{1}} }_<{4 \ \ } & *{\circ}<3pt>  \ar@{<-}[r]^<{ _{\underline{2}} \ \  }_<{5\ \ } & *{\circ}<3pt>    \ar@{}[l]_<{ \quad  \  _{\underline{3}} \ \  }^<{\quad 6 }
}}\qt{for $Q=(\Dynkin_{E_6},\vxi)$.}
\\
\label{it: D4 fixed wvee}
 \raisebox{1.2em}{ \xymatrix@R=0.5ex@C=6ex{ && *{\circ}<3pt> \ar@{}[dl]_<{ \quad \ \  _{\underline{1}} \  }^<{  \ 3   } \\
*{\circ}<3pt> \ar@{<-}[r]^<{ _{\underline{1}} \ \  }_<{1 \ \ } & *{\circ}<3pt>    \ar@{->}[ur]^<{ _{\underline{2}} \  }_<{2 \ \quad }  \\
&& *{\circ}<3pt>  \ar@{<-}[ul]_<{  \ \ _{\underline{1}} \  }^<{  \quad 4  }
}}\qt{for $Q=(\Dynkin_{D_4}, \tvxi)$.}
\end{gather}
\end{subequations}
Here
\bnum
\item an underline integer $\underline{*}$ is the value $\xi_i$ at each vertex $i \in \Dynkin_0$,
\item an arrow $\xymatrix@R=0.5ex@C=4ex{  *{\circ }<3pt> \ar@{->}[r]_<{ i \ \  }   & *{\circ}<3pt> \ar@{-}[l]^<{ \ \ j  }}$ means that
$\xi_i >\xi_j$ and $d(i,j)=1$.
\ee
\end{example}

The following lemma is obvious.
\begin{lemma}
Let $\Dynkin$ be a Dynkin diagram, and $\sigma$
a Dynkin diagram automorphism of $\Dynkin.$
Then, the number of $\sigma$-fixed Dynkin quivers
\ro as an oriented graph\rfm \  is
$2^r$, where $r$ is the number of edges of $\Dynkin_\sigma$.
In particular, the number of $\vee$-fixed
Dynkin quivers  is
$$
\bc
2^{n-1} & \text{ if  Dynkin is of type $A_{2n-1}$ or $D_{n+1}$,} \\
8 & \text{ if  Dynkin is of type $E_6$.}
\ec
$$
The number of $\widetilde{\vee}$-fixed Dynkin quivers of $D_4$ is $2$.

\end{lemma}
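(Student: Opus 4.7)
The plan is to identify each Dynkin quiver $Q=(\Dynkin,\xi)$ with an orientation $\omega$ of the edges of $\Dynkin$, since a height function is determined by an orientation up to a global additive constant, and then count $\sigma$-invariant orientations. As a first step I would check that the condition $\xi_{\sigma(i)}=\xi_i$ for all $i$ is equivalent to $\sigma$ preserving $\omega$: given any $\sigma$-preserving orientation and a height function $\xi$, the composition $\xi\circ\sigma$ is again a height function for $Q$, so $\xi\circ\sigma=\xi+c$ for some integer $c$; iterating $m=|\sigma|$ times gives $mc=0$, hence $c=0$. So the set of $\sigma$-fixed Dynkin quivers is in bijection with the set of $\sigma$-invariant orientations of $\Dynkin$.

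Next I would observe that, by the defining formula of the folded Cartan matrix in Definition~\ref{def:folding}, the edges of $\Dynkin_\sigma$ are in canonical bijection with the $\sigma$-orbits of edges of $\Dynkin$, so it remains to show that each orbit contributes exactly a factor of $2$ to the count. A $\sigma$-invariant orientation is determined by its value on a single edge in each orbit, with the other edges in the orbit obtained by propagating via $\sigma$; the only possible obstruction is that this propagation be inconsistent, i.e.\ that some iterate $\sigma^k$ send an edge $\{i,j\}$ to itself while interchanging $i$ and $j$.

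The key (but very mild) step is to rule out this obstruction under the hypothesis~\eqref{eq: auto cond}. For $\sigma$ of order $2$, orbits of edges have size $1$ or $2$; a size-$1$ orbit with interchanged endpoints would give $\sigma(i)=j$ with $d(i,\sigma(i))=1$, contradicting~\eqref{eq: auto cond}; size-$2$ orbits are controlled by $\sigma^2=1$ which acts trivially on endpoints. For $\sigma=\widetilde{\vee}$ of order $3$ on $\Dynkin_{D_4}$, orbits have size $1$ or $3$: any size-$1$ orbit must have both endpoints fixed (order $3$ cannot interchange two vertices), and size-$3$ orbits are governed by $\sigma^3=1$. Hence in every case each orbit supplies precisely two consistent choices, giving a total of $2^r$ orientations.

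The explicit numerical assertions then follow by inspection of~\eqref{eq: diagram foldings}: $\Dynkin_\sigma$ is a chain on $n$ vertices with $n-1$ edges in the $A_{2n-1}$ and $D_{n+1}$ cases (yielding $2^{n-1}$), the diagram $F_4$ has $3$ edges (yielding $2^3=8$), and $G_2$ has $1$ edge (yielding $2$). There is no serious obstacle; the entire proof is a short orbit-counting argument, in line with the text labelling the lemma as obvious.
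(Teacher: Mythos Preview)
Your argument is correct. The paper itself offers no proof, merely noting that the lemma is obvious, so your write-up supplies the details the authors omit. One small point: the assertion that edges of $\Dynkin_\sigma$ are in bijection with $\sigma$-orbits of edges of $\Dynkin$ is not quite a formal consequence of Definition~\ref{def:folding} alone---one also needs that between any two adjacent vertex-orbits the edges of $\Dynkin$ form a single $\sigma$-orbit---but this is immediate from the explicit descriptions in~\eqref{eq: diagram foldings}, which you invoke anyway for the numerical conclusions.
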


\section{$t$-quantized Cartan matrix} \label{Sec: t-Cartan}
In this section, we briefly recall the $(q,t)$-Cartan matrix $\sfC(q,t)$ introduced by Frenkel-Reshetikhin in~\cite{FR98}, and the results about the
relationship between $\sfC(1,t)^{-1}$ and Dynkin quivers
in~\cite{HL15,KO22}.
Then we shall see that inverse of  $\sfC(1,t)\sfD^{-1}$ of type $\Dynkin_\sigma$ can be obtained from the one of $\sfC(1,t)$ of type $\Dynkin$ via the \emph{folding} by
$\sigma \ne {\rm id}$.

\medskip

For an indeterminate $x$ and $k \in \Z$, we set,
$$   [k]_x \seteq \dfrac{x^k-x^{-k}}{x-x^{-1}}.$$
For an indeterminate $q$ and $i$, we set $q_i \seteq q^{\sfd_i}$. For instance, when $\g$ is of type $G_2$, we have
$q_2 = q^{3}$.

For a given finite Cartan datum, we define
the \emph{adjacent matrix}  $\calI=(\calI_{i,j})_{  i,j\in I }$ of $\cm$ by
\begin{align}\label{eq: adjacent}
 \calI_{i,j} = 2\delta_{i,j} - \sfc_{i,j} \qquad \text{ for } i,j \in I.
\end{align}

In~\cite{FR98}, the $(q,t)$-deformation of Cartan matrix  $\cm(q,t) =(\sfc_{i,j}(q,t))_{  i,j\in I } $ is introduced:
$$
\sfc_{i,j}(q,t) \seteq (q_it^{-1}+q_i^{-1}t)\delta_{i,j}-[\calI_{i,j}]_{q}.
$$
The specialization of  $\cm(q,t)$ at $t=1$, denoted by $\osfC(q)\seteq \sfC(q,1)$, is usually called the \emph{quantum Cartan matrix}.

\begin{definition}
For each finite Cartan datum, we set
$$ \usfC(t) \seteq \cm(1,t).$$
and call it \emph{$t$-quantized Cartan matrix}. We also set
$$ \usfB(t) \seteq \usfC(t) \sfD^{-1} = (\usfB_{i,j}(t))_{i,j\in I} \quad  \text{ and }   \quad   \osfB(t) \seteq \sfD \usfC(t) = (\osfB_{i,j}(t))_{i,j\in I}.$$
\end{definition}

Hence we have
\eqn
&&\usfB_{i,j}(t)=\bc
\sfd_i^{-1}(t+t^{-1})&\text{if $i=j$,}\\
(\al_i^\vee,\al_j^\vee)&\text{if $i\not=j$,}
\ec
\eneqn
where $\al_i^\vee\seteq\sfd_i^{-1}\al_i$ is the coroot.
\begin{example} Note that, for simply-laced types, we have  $\usfB(t)=\usfC(t)$. The followings are $\usfB(t)$ of non-simply-laced types:
\begin{align*}
& \usfB(t)_{B_n}=\scriptsize{ \left(\begin{matrix}
\frac{t+t^{-1}}{2} & -\frac{1}{2} & 0& 0 &\cdots & 0 \\
-\frac{1}{2}&\frac{t+t^{-1}}{2} & -\frac{1}{2} &   0 & \cdots & 0 \\
\vdots & \vdots & \ddots  &   \ddots & \cdots & 0  \\
0 & \cdots & \cdots  &   -\frac{1}{2} & \frac{t+t^{-1}}{2}  & -1  \\
0 & \cdots & \cdots &  0 & -1 &  t+t^{-1}
\end{matrix}\right)}, \allowdisplaybreaks \\
& \usfB(t)_{C_n}=\scriptsize{\left(\begin{matrix}
t+t^{-1} & -1 & 0& 0 &\cdots & 0 \\
-1 & t+t^{-1}  & -1 &   0 & \cdots & 0 \\
\vdots & \vdots & \ddots  &   \ddots & \cdots & 0  \\
0 & \cdots & \cdots  &   -1 & t+t^{-1} & -1  \\
0 & \cdots & \cdots &  0 & -1 & \frac{t+t^{-1}}{2}
\end{matrix}\right) }, \allowdisplaybreaks \\
& \usfB(t)_{F_4} =
\scriptsize{\left(\begin{array}{rrrr}
 \frac{t+t^{-1}}{2}& -\frac{1}{2} & 0 & 0 \\
-\frac{1}{2} &  \frac{t+t^{-1}}{2} & -1 & 0 \\
0 & -1 &t+t^{-1} & -1 \\
0 & 0 & -1 & t+t^{-1}
\end{array}\right)}, \allowdisplaybreaks\\
&  \usfB(t)_{G_2}=\scriptsize{\left(\begin{array}{rr}
t+t^{-1} & -1 \\
-1 & \frac{t+t^{-1}}{3}
\end{array}\right)}.
\end{align*}
\end{example}

Note that $\usfB(t)\vert_{t=1}=\usfB \in {\rm GL}_{I}(\Q)$. We regard $\usfB(t)$ as an element of ${\rm GL}_{I}(\Q(t))$ and denote its inverse
by $\tuB(t)\seteq\bl\usfB(t)\br^{-1}=(\tuB_{i,j}(t))_{i,j\in I}$. Let
$$ \tuB_{i,j}(t) =\sum_{u\in\Z} \tfb_{i,j}(u)t^u$$
be the Laurent expansion of $ \tuB_{i,j}(t)$ at $t=0$. Note that $ \tuB_{i,j}(t) = \tuB_{j,i}(t)$ for all $i,j \in I$.

\smallskip

Recall that, for each Dynkin quiver $Q=(\Dynkin, \xi)$ of an arbitrary finite type,
there exists a unique Coxeter element $\tau_Q \in W_{\Dynkin}$ such that
all of its reduced expressions are adapted to $Q$.

\begin{definition}[\cite{HL15,FO21,KO22}] 
For a Dynkin quiver $Q$ and $i,j \in I$, we define a function $\eta_{i,j}^Q\cl\Z \to \Z$ by
$$
\eta_{i,j}^Q(u) \seteq \bc
 (\varpi_i , \tau_Q^{(u+\xi_j-\xi_i-1)/2}(\ga_j^Q) ) & \text{ if } u+\xi_j-\xi_i-1\in 2\Z, \\
0& \text{ otherwise}.
\ec
$$
Here $\ga_j^Q\seteq(1-\tau_Q)\varpi_{j} \in \Phi^+$. 
\end{definition}

\begin{lemma} [\cite{HL15,KO22}] Let $Q'$ be another Dynkin quiver of the same type of $Q$. Then we have
$$ \eta_{i,j}^Q = \eta_{i,j}^{Q'} \quad \text{and hence $\eta_{i,j}^\Dynkin$ is well-defined.}$$
\end{lemma}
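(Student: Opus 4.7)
The plan is to establish invariance of $\eta_{i,j}^Q$ under a single source reflection $Q \mapsto s_kQ$, then to invoke the well-known fact that any two Dynkin quivers with the same underlying diagram are connected by a sequence of such reflections. Global shifts of the height function do not affect $\eta_{i,j}^Q$, since only the difference $\xi_j - \xi_i$ enters the formula, so this source-reflection connectivity suffices.

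For the invariance step, I would fix a source $k$ of $Q$ and write $\tau_Q = s_k\tau'$, where $\tau'$ is a product of simple reflections $s_j$ with $j \neq k$. Then $\tau_{s_kQ} = \tau's_k = s_k\tau_Q s_k$ and $\xi^{s_kQ} = \xi^Q - 2\delta_k$. Since $s_j\varpi_k = \varpi_k$ for $j \neq k$, one obtains the key identity $\tau_Q\varpi_k = s_k\varpi_k = \varpi_k - \alpha_k$, hence $\gamma_k^Q = \alpha_k$. For $j \neq k$, using $s_k\varpi_j = \varpi_j$ one computes $\gamma_j^{s_kQ} = s_k\gamma_j^Q$, while the case $j = k$ is handled by direct expansion to $\gamma_k^{s_kQ} = s_k\tau_Q\alpha_k$.

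Setting $m = (u+\xi_j-\xi_i-1)/2$ and $m' = m + \delta_{k,i} - \delta_{k,j}$, the target identity becomes $(\varpi_i,\tau_Q^m\gamma_j^Q) = (\varpi_i,\tau_{s_kQ}^{m'}\gamma_j^{s_kQ})$. When $i,j \neq k$, this follows immediately from $\tau_{s_kQ}^m = s_k\tau_Q^m s_k$, $\gamma_j^{s_kQ} = s_k\gamma_j^Q$, and $\weyl$-invariance of the bilinear form. The remaining cases, in which $i$ or $j$ equals $k$, rely on the auxiliary identity $(\varpi_k - \alpha_k,\tau_Q\beta) = (\varpi_k,\beta)$, itself an instant consequence of $\tau_Q\varpi_k = \varpi_k - \alpha_k$ together with the $\weyl$-invariance $(\varpi_k,\tau_Q\beta) = (\tau_Q^{-1}\varpi_k,\beta)$. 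This identity exactly compensates the $\pm 1$ shift in the exponent that accompanies the change of height at $k$.

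The hard part will be the bookkeeping in this last step: showing that the $s_k$-twist introduced in $\gamma_j^{s_kQ}$ or in $s_k\varpi_i$ is precisely cancelled by the $\pm 1$ shift in the power of $\tau_Q$. Once this compensation is in place, the whole verification reduces to a few applications of $\weyl$-invariance together with the identity $\tau_Q\varpi_k = \varpi_k - \alpha_k$, and the lemma follows.
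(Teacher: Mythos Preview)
Your argument is correct. The reduction to a single source reflection is the natural strategy, and each of your ingredients checks out: for a source $k$ one has $\tau_Q\varpi_k=s_k\varpi_k$, hence $\gamma_k^Q=\al_k$; the conjugation $\tau_{s_kQ}=s_k\tau_Qs_k$ together with $s_k\varpi_j=\varpi_j$ for $j\neq k$ gives $\gamma_j^{s_kQ}=s_k\gamma_j^Q$; and the auxiliary identity $(\varpi_k-\al_k,\tau_Q\beta)=(\tau_Q\varpi_k,\tau_Q\beta)=(\varpi_k,\beta)$ is exactly what absorbs the shift $m\mapsto m\pm1$ in the two boundary cases. The four cases ($i,j\neq k$; $i=k\neq j$; $j=k\neq i$; $i=j=k$) all close cleanly under $\weyl$-invariance of $(\,\cdot\,,\,\cdot\,)$. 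Your remark that only $\xi_j-\xi_i$ enters, so global height shifts are harmless, is needed because source reflections alone strictly lower $\sum_i\xi_i$ and can therefore only realize each orientation up to a constant shift.

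As for comparison: the paper does not supply its own proof of this lemma---it is quoted as a known result from \cite{HL15,KO22}. So there is nothing in the present paper to compare your argument against; you have written out a self-contained proof along the lines one would expect from those references.
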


\begin{theorem}[\cite{HL15,KO22}]  \label{thm: inv}
For each $i,j \in I$ and $u \in \Z_{\ge 0}$,  we have $$\tfb_{i,j}(u)=\eta_{i,j}(u).$$ In other words, we can compute
$\tuB(t)$ by using $\Gamma_Q$ of any Dynkin quiver $Q$ as follows:
$$\tfb_{i,j}(u) = \bc
(\varpi_i , \tau_Q^{(u+\xi_j-\xi_i-1)/2}(\ga_j^Q))   & \text{if $u+\xi_j-\xi_i-1\in2\Z$   and $u\ge0$,  } \\
0 & \text{ otherwise.}
\ec
$$
\end{theorem}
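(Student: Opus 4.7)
The strategy is to identify the power series
$$F_{i,j}(t) \seteq \sum_{u \ge 0} \eta_{i,j}^Q(u)\,t^u \in \Z[[t]]$$
with the Laurent expansion of $\tuB_{i,j}(t)$ at $t = 0$. By the preceding lemma, $F_{i,j}(t)$ is independent of $Q$, so at each step I am free to choose whichever Dynkin quiver $Q$ makes the computation transparent. Observing that $\usfB(t) = t^{-1}\sfD^{-1} + M_0 + t\sfD^{-1}$ with $M_0$ the off-diagonal Gram matrix $\bl(\al_i^\vee,\al_k^\vee)\br_{i \ne k}$, a Neumann-series expansion shows that $\tuB(t) \in t\cdot\mathrm{Mat}_I(\Z[[t]])$ and is the unique such power-series matrix satisfying $\usfB(t)\tuB(t) = I$. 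Hence the theorem is reduced to verifying $\usfB(t)\,F(t) = I$ in $\Q[[t,t^{-1}]]$.

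Extracting the coefficient of $t^u$ turns this into the single combinatorial recurrence
\begin{equation}\label{eq:plan-rec}
\sfd_i^{-1}\bl\eta_{i,j}^Q(u+1) + \eta_{i,j}^Q(u-1)\br \;+\; \sum_{k \ne i}(\al_i^\vee,\al_k^\vee)\,\eta_{k,j}^Q(u) \;=\; \delta_{i,j}\,\delta_{u,0},
\end{equation}
to be verified for all $u \ge 0$ under the boundary convention $\eta_{i,j}^Q(-1) = 0$. For each fixed $i$ I would invoke the $Q$-independence and pick a Dynkin quiver $Q_i$ in which $i$ is a source, so that $\xi_i = \xi_k + 1$ for every neighbor $k$ of $i$ (synchronizing the parities of the half-integer shifts $(u + \xi_j - \xi_\star - 1)/2$ appearing in \eqref{eq:plan-rec}) and $\tau_{Q_i} = s_i\tau'$ with $\tau'$ supported on $I \setminus \{i\}$, forcing $\tau_{Q_i}\varpi_i = \varpi_i - \al_i$.

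For $u \ge 1$ in the matching-parity case, setting $w \seteq \tau_{Q_i}^{(u + \xi_j - \xi_i)/2}\ga_j^{Q_i}$ and using $(\al_i,w) = \sfd_i(\al_i^\vee,w)$, the left-hand side of \eqref{eq:plan-rec} telescopes to
$$\sum_{k \in I}(\al_i^\vee,\al_k^\vee)(\varpi_k, w) \;-\; (\al_i^\vee, w),$$
which vanishes identically by the dual-basis expansion $\al_i^\vee = \sum_k (\al_i^\vee,\al_k^\vee)\varpi_k$, itself a consequence of the orthogonality $(\al_i^\vee,\varpi_k) = \delta_{i,k}$. In the mismatched-parity case every $\eta$-value in \eqref{eq:plan-rec} already vanishes, so the recurrence away from $u = 0$ holds automatically.

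The main obstacle is the boundary at $u = 0$, where the truncation $\eta_{i,j}^Q(-1) = 0$ forces the separate verification of $\sfd_i^{-1}\eta_{i,j}^Q(1) = \delta_{i,j}$ together with $\eta_{i,j}^Q(0) = 0$. The diagonal identity $\eta_{i,i}^Q(1) = (\varpi_i, \ga_i^Q) = (\varpi_i,\al_i) = \sfd_i$ is immediate once $i$ is a source of $Q$, but the off-diagonal vanishing $\eta_{i,j}^Q(1) = (\varpi_i,\tau_Q^{(\xi_j-\xi_i)/2}\ga_j^Q) = 0$ for $i \ne j$ in the even-parity case is substantive: it amounts to showing that $\tau_Q^m\ga_j^Q$ has no $\al_i$-component at the specific orbit position $m = (\xi_j-\xi_i)/2$. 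I expect this to be the delicate step, and would attack it through the combinatorics of the $\tau_Q$-orbits of $\ga_j^Q$ inside $\Phi^+$ encoded by the repetition quiver $\hDynkin$ from \cite{KO22}: under the bijection $\hDynkin_0 \leftrightarrow \Phi^+\times\Z$ and the source-at-$i$ choice of $Q$, the orbit position of $\tau_Q^m\ga_j^Q$ is explicitly locatable in $\Gamma_Q$, and the desired vanishing reduces to the observation that $\al_i$ does not appear in this orbit until its own canonical slot, which lies strictly past the shift dictated by the boundary. A symmetric argument handles $\eta_{i,j}^Q(0) = 0$.
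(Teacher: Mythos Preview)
The paper does not prove this theorem; it is quoted from \cite{HL15,KO22}. So there is no in-paper argument to compare against, and I will assess your proposal on its own merits.

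Your strategy---verify that the matrix $F(t) = \bl F_{i,j}(t)\br$ of generating series satisfies $\usfB(t)F(t)=I$---is sound, and your computation for $u\ge1$ is correct and clean: with $i$ a source one has $\tau_Q\varpi_i=\varpi_i-\al_i$, and the dual-basis identity $\al_i^\vee=\sum_k(\al_i^\vee,\al_k^\vee)\varpi_k$ (equivalent to $(\al_i^\vee,\varpi_k)=\delta_{i,k}$) makes the left-hand side collapse to zero.

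The boundary step you flag as ``delicate'' is in fact not: you are underusing the $Q$-independence. For fixed $i\ne j$, do not insist that $i$ be a source; instead choose $Q$ so that the exponent on $\tau_Q$ vanishes. Concretely, if $d(i,j)$ is even pick $Q$ with $\xi_j=\xi_i$, so that $\eta_{i,j}^Q(1)=(\varpi_i,\ga_j^Q)$; if $d(i,j)$ is odd pick $Q$ with $\xi_j=\xi_i+1$, so that $\eta_{i,j}^Q(0)=(\varpi_i,\ga_j^Q)$. In either case $\xi_j\ge\xi_i$ forces $i\notin B^Q(j)$ (a directed path in $Q$ strictly decreases $\xi$), and Lemma~\ref{lem: general gaQ} (or \eqref{eq: gaQ}) then gives $\supp_i(\ga_j^Q)=0$, hence $(\varpi_i,\ga_j^Q)=0$. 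The diagonal case $\eta_{i,i}^Q(1)=(\varpi_i,\ga_i^Q)=\sfd_i$ follows from $\supp_i(\ga_i^Q)=1$, and $\eta_{i,i}^Q(0)=0$ by parity. This dispatches the boundary in two lines; your sketch via ``$\al_i$ not appearing in the $\tau_Q$-orbit until its canonical slot'' is both vaguer and phrased as if the issue were $\tau_Q^m\ga_j^Q=\al_i$ rather than the $\al_i$-\emph{coefficient} of $\tau_Q^m\ga_j^Q$ vanishing, which is what $(\varpi_i,\,\cdot\,)=0$ actually asserts.

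With that correction the argument is complete and essentially the one underlying the cited references.
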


\begin{corollary}[\cite{HL15,FO21,FM21,KO22}]  
\label{cor: bij property}
The coefficients $\{ \tfb_{i,j}(u) \ | \ i,j \in I, \ u \in \Z_{\ge 0} \}$ enjoy the following properties:
\ben
\item \label{it: vanish}  $\tfb_{i,j}(u)=0$   for $u\le 0$, and  $\tfb_{i,j}(1)=\sfd_i\, \delta(i=j)$.
\item \label{it: additive} $\tfb_{i,j}(u-1)+ \tfb_{i,j}(u+1) = \displaystyle\sum_{k; \ d(k,j)=1}  -\lan h_k,\al_j \ran \tfb_{i,k}(u)$ for $u \ge 1$.
\item \label{it: h nega} $\tfb_{i,j}(u+\sfh)=-\tfb_{i,j^*}(u)$  and  $\tfb_{i,j}(u+2\sfh)=\tfb_{i,j}(u)$ for $u \ge 0$.
\item $\tfb_{i,j}(\sfh-u)=\tfb_{i,j^*}(u)$ for $0 \le u \le \sfh$ and $\tfb_{i,j}(2\sfh-u)=-\tfb_{i,j}(u)$ for $0 \le u \le 2\sfh$.
\item\label{it: tfb positive} $\tfb_{i,j}(u) \ge 0$ for $0 \le u \le \sfh$ and $\tfb_{i,j}(u)\le 0$ for $\sfh \le u \le 2\sfh$.
\ee
\end{corollary}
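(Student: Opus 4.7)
The plan is to establish the five items by combining Theorem~\ref{thm: inv}, which expresses $\tfb_{i,j}(u)$ via the action of the Coxeter element $\tau_Q$ on $\ga_j^Q$, with the defining matrix identity $\usfB(t)\tuB(t)=I$. Using $\usfB_{j,j}(t)=(t+t^{-1})/\sfd_j$ and $\usfB_{j,k}(t)=\sfc_{j,k}/\sfd_k$ for $k\ne j$, multiplying the $(j,i)$-entry equation through by $\sfd_j$ and invoking the symmetrization $\sfd_j\sfc_{j,k}/\sfd_k=\sfc_{k,j}=\lan h_k,\al_j\ran$, I obtain
$$(t+t^{-1})\tuB_{j,i}(t)+\sum_{k\ne j,\;d(k,j)=1}\lan h_k,\al_j\ran\,\tuB_{k,i}(t)=\sfd_j\,\delta_{i,j}.$$
Comparing coefficients of $t^u$ for each $u\in\Z$ then yields the single scalar recursion
$$\tfb_{j,i}(u-1)+\tfb_{j,i}(u+1)+\sum_{k:\,d(k,j)=1}\lan h_k,\al_j\ran\,\tfb_{k,i}(u)=\sfd_j\,\delta_{i,j}\,\delta_{u,0}.$$

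For~\ref{it: vanish} and~\ref{it: additive}, Theorem~\ref{thm: inv} gives the vanishing $\tfb_{i,j}(u)=0$ for $u<0$ directly. Setting $u=-1$ in the recursion and using this vanishing yields $\tfb_{j,i}(0)=0$; then $u=0$ gives $\tfb_{j,i}(1)=\sfd_j\delta_{i,j}$, which completes~\ref{it: vanish} after using the symmetry $\tfb_{j,i}=\tfb_{i,j}$. For $u\ge 1$ the recursion is, up to the same symmetry, precisely~\ref{it: additive}.

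For~\ref{it: h nega}, (4), and~\ref{it: tfb positive}, the full-period identity $\tfb_{i,j}(u+2\sfh)=\tfb_{i,j}(u)$ is immediate from Theorem~\ref{thm: inv} together with $\tau_Q^{\sfh}=1$ on the root lattice. The half-period identity $\tfb_{i,j}(u+\sfh)=-\tfb_{i,j^*}(u)$ reduces, via Theorem~\ref{thm: inv}, to a relation of the form $\tau_Q^{\sfh/2}\ga_j^Q=-\tau_Q^{(\xi_{j^*}-\xi_j)/2}\ga_{j^*}^Q$ when $\sfh$ is even, which in turn follows from the classical identification of $\tau_Q^{\sfh/2}$ with the longest element $w_0$ together with $w_0\varpi_j=-\varpi_{j^*}$; when $\sfh$ is odd (type $A_{2k}$) the parity condition in Theorem~\ref{thm: inv} forces both sides of~\ref{it: h nega} to vanish identically. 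Item~(4) then follows by combining~\ref{it: h nega} with itself, reflecting the index $u$ around $\sfh$ or $2\sfh$. Finally,~\ref{it: tfb positive} holds because $\tau_Q^k\ga_j^Q$ stays in $\Phi^+$ as $k$ runs over a consecutive window of length $\sfh/2$ (Kostant's theorem on Coxeter orbits, equivalent here to the AR-quiver description), and $(\varpi_i,\beta)\ge 0$ for any positive root $\beta$ since $\varpi_i$ is dominant; the negativity on $[\sfh,2\sfh]$ then follows from~\ref{it: h nega}.

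The main obstacle will be the half-period identity in~\ref{it: h nega}: it requires a careful case analysis depending on the parity of $\sfh$, and in the even case one must track how $\tau_Q^{\sfh/2}$, the involution $*$, and the height-function shift $\xi\leadsto\xi^*$ interact in order to identify $w_0\ga_j^Q$ with $-\ga_{j^*}^{Q^*}$ up to a power of $\tau_Q$. I expect this to be handled most cleanly by invoking the explicit AR-quiver combinatorics developed in \cite{KO22} rather than a purely root-theoretic computation.
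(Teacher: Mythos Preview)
The paper does not give its own proof of this corollary; it simply cites \cite{HL15,FO21,FM21,KO22}. So there is nothing to compare your argument against, and the question is only whether your sketch is correct.

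Items~\ref{it: vanish} and~\ref{it: additive} are fine: the recursion you extract from $\usfB(t)\tuB(t)=I$ is exactly right, and combining it with the vanishing for $u<0$ from Theorem~\ref{thm: inv} gives both claims. Item~(4) does follow from~\ref{it: h nega}, and~\ref{it: tfb positive} follows from positivity of the roots labeling $\Gamma_Q$ together with~\ref{it: h nega}, though the ``window of length $\sfh/2$'' is really $r_j^Q=(\sfh+\xi_j-\xi_{j^*})/2$ and one must check the boundary carefully.

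There is, however, a genuine gap in your argument for~\ref{it: h nega}. First, in the odd-$\sfh$ case (type $A_{2k}$) your parity claim is false: the nonvanishing parities of $\tfb_{i,j}(u+\sfh)$ and $\tfb_{i,j^*}(u)$ \emph{agree} (this is precisely the congruence $\xi_j\equiv\xi_{j^*}-\sfh\bmod 2$), so both sides are generically nonzero and the identity is a genuine constraint, not a tautology. Second, even when $\sfh$ is even, the assertion $\tau_Q^{\sfh/2}=w_0$ is not correct for arbitrary $Q$ when $w_0\ne -1$: already in $A_3$ with $\tau_Q=s_1s_2s_3$ one has $\tau_Q^2=(1\,3)(2\,4)\ne w_0=(1\,4)(2\,3)$. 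What is actually needed is the AR-quiver boundary relation $\tau_Q^{r_j^Q}(\ga_j^Q)=-\ga_{j^*}^{Q'}$ for a suitable reflected quiver $Q'$, together with the $Q$-independence of $\eta_{i,j}^Q$; this is exactly the Nakayama-type identity encoded in the bijection $\phi_Q$ and the range description~\eqref{eq: range}, and is proved in the references you would need to cite. Your final sentence acknowledges that this step needs the machinery of \cite{KO22}, but the specific reductions you wrote down before that acknowledgment are not correct as stated.
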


By Corollary~\ref{cor: bij property}~\eqref{it: h nega},
it suffices to compute the $\tfb_{i,j}(u)$ for $1 \le u \le \sfh$, to know $\tuB_{i,j}(t)$ for any $i,j \in \Dynkin_0$.
Thus we set
$$\tde_{i,j}(t)\seteq \tuB_{i,j}(t)+t^\sfh \tuB_{i,j^*}(t)=  \sum_{u=1}^{\sfh}\tfb_{i,j}(u)t^u $$
and
$$\tde_{i,j}[k]\seteq\tfb_{i,j}(k-1)   \quad \text{for $0 \le  k \le \sfh$.}
$$
Then, we have
$$\tuB_{i,j}(t)=\dfrac{\tde_{i,j}(t)-t^{2\sfh}\,\tde_{i,j}(t^{-1})}{1-t^{2\sfh}}.$$

\begin{theorem} \label{thm: tbij folding}
Let $\sigma$ be a Dynkin diagram automorphism,
$\Dynkins$ the folding of $\Dynkin$ by $\sigma$,
and $\pi\cl(\Dynkin)_0\to(\Dynkins)_0$ the projection
\ro see Definition~\ref{def:folding}\/\rfm.
Then we have the following relation:
$$  \tde^{\Dynkins}_{i,j}(t) =  \sum_{\substack{ \oi\in \pi^{-1}(i), \\ \oj\in \pi^{-1}(j)}}  \tde^{\Dynkin}_{\oi,\oj}(t)\qt{for any $i,j\in(\Dynkins)_0$.}$$
\end{theorem}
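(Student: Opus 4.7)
The plan is to reduce the identity to a coefficient-by-coefficient comparison via Theorem~\ref{thm: inv}. First choose a $\sigma$-fixed Dynkin quiver $Q = (\Dynkin, \xi)$ of $\Dynkin$, which exists by the lemma preceding this section; the $\sigma$-invariance $\xi_{\sigma(\oi)} = \xi_\oi$ descends to a height function $\xi^{\Dynkins}_i \seteq \xi_\oi$ (for any $\oi\in\pi^{-1}(i)$) on $(\Dynkins)_0$, yielding a Dynkin quiver $Q_\sigma$ of $\Dynkins$. By \eqref{eq:Coxeter}, both $\tde^{\Dynkin}$ and $\tde^{\Dynkins}$ are polynomials in $t$ of degree $\le \sfh$ with no constant term, so it suffices to prove
$$\sum_{\oi\in\pi^{-1}(i),\;\oj\in\pi^{-1}(j)} \eta^\Dynkin_{\oi, \oj}(u) \;=\; \eta^\Dynkins_{i,j}(u)\qquad(1 \le u \le \sfh).$$

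The key technical tool is a $\Q$-linear embedding $\iota\cl V^\Dynkins \hookrightarrow V^\Dynkin$ onto the $\sigma$-invariant subspace $(V^\Dynkin)^\sigma$ (where $V^? := \Q\otimes_\Z \sfP^?$) defined by
$$\iota(\al^\Dynkins_i) = \sum_{\oi\in\pi^{-1}(i)} \al^\Dynkin_{\oi}, \qquad \iota(\varpi^\Dynkins_i) = \sum_{\oi\in\pi^{-1}(i)} \varpi^\Dynkin_{\oi}.$$
Consistency of these two formulas amounts exactly to the folding relation $\sfc^\Dynkins_{\pi(\oi),j} = \sum_{\oj\in\pi^{-1}(j)}\sfc^\Dynkin_{\oi,\oj}$ of Definition~\ref{def:folding}, and the isometry $\bl\iota x, \iota y\br^\Dynkin = (x, y)^\Dynkins$ follows from a direct check on simple roots/weights, using that distinct simple roots in a common $\sigma$-orbit are orthogonal by~\eqref{eq: auto cond}. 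Next I would prove that $\iota$ intertwines the Coxeter elements, $\tau_Q \circ \iota = \iota \circ \tau_{Q_\sigma}$. The core is the identity $S_i\circ\iota = \iota\circ s^\Dynkins_i$, where $S_i \seteq \prod_{\oi\in\pi^{-1}(i)} s^\Dynkin_\oi$: by \eqref{eq: auto cond} the reflections $s_\oi$ pairwise commute and act by orthogonal rank-one corrections, so both sides send $\iota(v)$ to $\iota(v) - \sfd_i^{-1}\bl\iota(v),\iota(\al^\Dynkins_i)\br\iota(\al^\Dynkins_i)$. Because $Q$ is $\sigma$-fixed, a $Q$-adapted reduced expression of $\tau_Q$ can be chosen to group the reflections in each $\sigma$-orbit consecutively and to traverse orbits in decreasing $\xi$-height, giving $\tau_Q = S_{i_1}\cdots S_{i_n}$ corresponding to $\tau_{Q_\sigma} = s^\Dynkins_{i_1}\cdots s^\Dynkins_{i_n}$.

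Granting this, the parity condition $u+\xi^{\Dynkins}_j-\xi^{\Dynkins}_i-1\in 2\Z$ depends only on $(i,j)$, so both sides of the target identity vanish together when it fails; when it holds with $m \seteq (u+\xi^\Dynkins_j - \xi^\Dynkins_i -1)/2$, I compute
\begin{align*}
\sum_{\oi,\oj}\eta^\Dynkin_{\oi,\oj}(u)
&= \bl \sum_{\oi\in\pi^{-1}(i)}\varpi^\Dynkin_\oi,\;\tau_Q^m \sum_{\oj\in\pi^{-1}(j)}\ga^Q_\oj\br
= \bl \iota(\varpi^\Dynkins_i),\;\tau_Q^m\iota(\ga^{Q_\sigma}_j)\br \\
&= \bl\iota(\varpi^\Dynkins_i),\;\iota(\tau^m_{Q_\sigma}\ga^{Q_\sigma}_j)\br
= \eta^\Dynkins_{i,j}(u),
\end{align*}
where I use $\sum_{\oj}\ga^Q_\oj = (1-\tau_Q)\iota(\varpi^\Dynkins_j) = \iota\bl(1-\tau_{Q_\sigma})\varpi^\Dynkins_j\br = \iota(\ga^{Q_\sigma}_j)$ (from $\tau_Q\circ\iota = \iota\circ\tau_{Q_\sigma}$) together with the isometry of $\iota$. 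Multiplying by $t^u$ and summing over $u$ yields the theorem. The principal obstacle is the Coxeter intertwining $\iota\circ\tau_{Q_\sigma} = \tau_Q\circ\iota$: while the local formula $S_i\circ\iota = \iota\circ s^\Dynkins_i$ is natural, verifying that the orbitwise-grouped product is a $Q$-adapted expression for $\tau_Q$ (and that the resulting ordering gives precisely $\tau_{Q_\sigma}$) is the combinatorial heart of the argument and must be checked against the folding data~\eqref{eq: diagram foldings}.
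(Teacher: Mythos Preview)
Your proof is correct and takes a genuinely different route from the paper's. The paper argues directly at the level of matrices: it records the relation $(\usfB^\Dynkins)_{i,\pi(\oj)}=\sfd_{i}^{-1}\sum_{\oi\in\pi^{-1}(i)}(\usfB^\Dynkin)_{\oi,\oj}$, sets $f^\Dynkins_{j,k}\seteq\sum_{\oj,\ok}\bl(\usfB^\Dynkin)^{-1}\br_{\oj,\ok}$, and verifies by a short computation that $\sum_j(\usfB^\Dynkins)_{i,j}f^\Dynkins_{j,k}=\delta_{i,k}$, forcing $f^\Dynkins$ to be the inverse of $\usfB^\Dynkins$. No Dynkin quiver, no Coxeter element, no embedding is invoked; the whole argument is a few lines of linear algebra over $\Q(t)$. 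Your approach instead passes through Theorem~\ref{thm: inv} and the $\sigma$-equivariant embedding $\iota$, and its content is the compatibility $\tau_Q\circ\iota=\iota\circ\tau_{Q_\sigma}$. This is more work but also more conceptual, and it anticipates exactly the structures the paper later develops case by case in Section~\ref{sec: Labeling BCFG} (compare Lemma~\ref{lem: Weyl D and B}, Proposition~\ref{prop: tauQ A to C}, Theorem~\ref{thm: D to B}).

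Regarding your ``principal obstacle'': the Coxeter intertwining does \emph{not} need a type-by-type verification against~\eqref{eq: diagram foldings}. Because $Q$ is $\sigma$-fixed, all vertices in a common $\sigma$-orbit share the same height; hence a $Q$-adapted expression of $\tau_Q$ can be rearranged (within commuting blocks) so that each orbit appears consecutively, and the induced order on orbits is then a $Q_\sigma$-adapted expression of $\tau_{Q_\sigma}$. Combined with your local identity $S_i\circ\iota=\iota\circ s^\Dynkins_i$, this yields $\tau_Q\circ\iota=\iota\circ\tau_{Q_\sigma}$ uniformly. One minor point: your computation only uses that $\iota$ preserves the pairing between fundamental weights and roots (which you do check), not the full bilinear form on all of $V^\Dynkins$; calling $\iota$ an ``isometry'' slightly overstates what is needed and what is actually verified.
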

\Proof
First let us remark the relations of $\usfB$ for $\Dynkins$ and $\Dynkin$:
\eqn
&&(\usfB^\Dynkins)_{i,\pi(\oj)}=\sfd_{i}^{-1}
\sum_{\oi\in\pi^{-1}(i)}(\usfB^\Dynkin)_{\oi,\oj}
\qt{for any $i\in(\Dynkins)_0$ and $\oj\in\Dynkin_0$.}
\eneqn

Now set
\eqn
f_{\oi,\oj}^\Dynkin&&=\Bigl((\usfB^\Dynkin)^{-1}\Bigr)_{\oi,\oj}
=(1+t^\sfh)^{-1}\tde^{\Dynkin}_{\im,\jm}(t),\\
f_{i,j}^{\Dynkins}&&=\sum_{\oi\in\pi^{-1}(i),\;\oj\in\pi^{-1}(j)}
f_{\oi,\oj}^\Dynkin
\eneqn
for $\oi,\oj\in\Dynkin_0$ and $i,j\in(\Dynkins)_0$.

In order to see the theorem, it is enough to show that
\eq
&&\sum_{j\in(\Dynkins)_0}(\usfB^\Dynkins)_{i,j}f^\Dynkins_{j,k}=\delta_{i,k}
\qt{for any $i,k\in(\Dynkins)_0$.}\label{eq:req}
\eneq
We have
$$
\delta_{\oi,\ok}
=\sum_{\oj\in (\Dynkin)_0}(\usfB^\Dynkin)_{\oi,\oj}f^\Dynkin_{\oj,\ok}$$
for any $\oi,\ok\in\Dynkin_0$.
Hence for $i\in (\Dynkins)_0$ and $\ok\in\Dynkin_0$, we have
\eqn
\delta_{i,\pi(\ok)}&&=\sum_{\oi\in\pi^{-1}(i)}\delta_{\oi,\ok}
=\sum_{\oi\in\pi^{-1}(i),\;\oj\in (\Dynkin)_0}(\usfB^\Dynkin)_{\oi,\oj}f^\Dynkin_{\oj,\ok}\\
&&=\sum_{\oj\in (\Dynkin)_0}\sfd_{i}(\usfB^\Dynkins)_{i,\pi(\oj)}f^\Dynkin_{\oj,\ok}.
\eneqn
Hence, for any $i,k\in (\Dynkins)_0$, we have
\eqn
\sfd_k\delta_{i,k}
&&=\sum_{\ok\in\pi^{-1}(k)}\delta_{i,\pi(\ok)}
=\sum_{\ok\in\pi^{-1}(\ok),\;\oj\in (\Dynkin)_0}
\sfd_{i}(\usfB^\Dynkins)_{i,\pi(\oj)}f^\Dynkin_{\oj,\ok}\\
&&=\sum_{\ok\in\pi^{-1}(\ok),\;j\in (\Dynkins)_0,\;\oj\in\pi^{-1}(j)}
\sfd_{i}(\usfB^\Dynkins)_{i,j}f^\Dynkin_{\oj,\ok}\\
&&=\sum_{j\in (\Dynkins)_0}
\sfd_{i}(\usfB^\Dynkins)_{i,j}f^\Dynkins_{j,k}.
\eneqn
Thus we obtain \eqref{eq:req}.
\QED

The polynomials  $\tde_{i,j}(t)$  are calculated explicitly as
follows:

\begin{theorem}  [\cite{HL15,Fuj19,OS19,KO22}] \label{thm: BC denom}  Note that $\tde_{i,j}(t) = \tde_{j,i}(t)$  for $i,j \in I$ .
\ben
\item For $\Dynkin$ of type $A_{n}$, and $i,j\in I =\{1,\ldots,n\}$, $\tde_{i,j}(t)$ is given as follows:
\begin{align} \label{eq: A formula}
\tde_{i,j}(t)&   = \sum_{s=1}^{\min(i,j,n+1-i,n+1-j)}t^{|i-j|+2s-1}.
\end{align}
\item For $\Dynkin$ of type $D_{n+1}$, and $i,j\in I =\{1,\ldots,n,n+1\}$, $\tde_{i,j}(t)$ is given as follows:
\begin{align}\label{eq: D formula}
\tde_{i,j}(t) &  = \bc
\displaystyle\sum_{s=1}^{\min(i,j)} \bl t^{|i-j|+2s-1}+\delta(\max(i,j)<n)\,t^{2n-i-j+2s-1})
 & \text{ if } \min(i,j)<n,\\[3ex]
\ \displaystyle \sum_{s=1}^{\lfloor (n+ \delta_{i,j}) /2 \rfloor}  t^{4s-1 -2\ms{1mu}\delta(i,j)} & \text{ otherwise.}
\ec
\end{align}
\item
For $\Dynkin$ of type $B_n$ or $C_n$, and $i,j\in I =\{1,\ldots,n\}$, $\tde_{i,j}(t)$ is given as follows:
\begin{equation}\label{eq: BC formula}
\begin{aligned}
\tde_{i,j}(t)& =
\max(\sfd_i,\sfd_j) \displaystyle\sum_{s=1}^{\min(i,j)}\Bigl( t^{|i-j|+2s-1} +  \delta(\max(i,j)<n)\, t^{2n -i-j+2s-1} \Bigr).
\end{aligned}
\end{equation}
\item   For $\Dynkin$ of type $E_6$  and $i \le j\in I =\{1,\ldots,6\}$, $\tde_{i,j}(t)$ is given as follows:
\begin{align*} 
&\tde_{1,1}(t) =  t + t^{7},  && \tde_{1,2}(t) =  t^{4} + t^{8},   \allowdisplaybreaks\\
& \tde_{1,3}(t)  = t^{2} +t^{6}+t^{8}, && \tde_{1,4}(t)  = t^{3}+t^{5}+t^{7}+t^{9}, \allowdisplaybreaks\\
& \tde_{1,5}(t)  = t^{4}+t^{6}+t^{10}, &&  \tde_{1,6}(t)  = t^{5}+t^{11},  \allowdisplaybreaks\\
&  \tde_{2,2}(t)  = t^{1}+t^{5}+t^{7}+t^{11}, && \tde_{2,3}(t)  = t^{3}+t^{5}+t^{7}+t^{9},  \allowdisplaybreaks\\
& \tde_{2,4}(t)  = t^{2}+t^{4}+2t^{6}+t^{8}+t^{10},  && \tde_{3,3}(t) = t^{1}+t^{3}+t^{5}+2t^{7}+t^{9}, \allowdisplaybreaks\\
&\tde_{3,4}(t) = t^{2}+2t^{4}+2t^{6}+2t^{8}+t^{10},   && \tde_{3,5}(t) = t^{3}+2 t^{5}+ t^{7}+ t^{9}+ t^{11},\allowdisplaybreaks \\
& \tde_{4,4}(t) = t^{1}+ 2t^{3}+ 3t^{5}+ 3t^{7}+ 2t^{9}+ t^{11},  && \tde_{i,j}(t) =
t^\sfh\tde_{i,j^*}(t^{-1}) = \tde_{j,i}(t) =t^\sfh \tde_{j,i^*}(t^{-1}).
\end{align*}
\item For $E_7$ and $E_8$, see {\rm Appendix~\ref{appeA: tde}.}
\item  For $\Dynkin$ of type $F_4$ and $i \le j\in I =\{1,2,3,4\}$, $\tde_{i,j}(t)$ is given as follows:
\begin{align*} 
&\tde_{1,1}(t) = 2(t+t^{5}+t^{7}+t^{11} ),  && \tde_{1,2}(t)  = 2(t^{2}+t^{4}+2t^{6}+t^{8}+t^{10} ),  \allowdisplaybreaks\\
&\tde_{1,3}(t)  = 2(t^{3}+t^{5}+t^{7}+t^{9} ),  && \tde_{1,4}(t)  = 2(t^{4}+t^{8} ),  \allowdisplaybreaks\\
&\tde_{2,2}(t)  = 2(t+2t^{3}+3t^{5}+3t^{7}+2t^{9}+t^{11} ),   && \tde_{2,3}(t)  = 2(t^{2}+2t^{4}+2t^{6}+2t^{8}+t^{10} ),  \allowdisplaybreaks\\
&\tde_{2,4}(t)  = 2(t^{3}+t^{5}+t^{7}+t^{9} ),   && \tde_{3,3}(t)  = t+2t^{3}+3t^{5}+3t^{7}+2t^{9}+t^{11},  \allowdisplaybreaks\\
&\tde_{3,4}(t)  = t^{2}+t^{4}+2t^{6}+t^{8}+t^{10},   &&  \tde_{4,4}(t)  = t+t^{5}+t^{7}+t^{11}.
\end{align*}

\item  For $\Dynkin$ of type $G_2$   and $i \le j\in I =\{1,2\}$, $\tde_{i,j}(t)$ is given as follows:
$$
\tde_{1,1}(t) = t+2t^{3}+t^5,  \qquad \tde_{1,2}(t)  =3(t^{2}+t^{4}), \qquad \tde_{2,2}(t)  =3 \tde_{1,1}(t).
$$
\ee
\end{theorem}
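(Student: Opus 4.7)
The plan is to reduce the non-simply-laced types to the simply-laced ones via Theorem~\ref{thm: tbij folding}, and to establish the simply-laced formulas by direct verification against the three-term recurrence from Corollary~\ref{cor: bij property}. Items~(1) and (2) of that corollary show that for each fixed $i$, the sequence $\bigl(\tfb_{i,j}(u)\bigr)_{j\in I,\,u\ge 0}$ is determined uniquely by the initial data $\tfb_{i,j}(0)=0$, $\tfb_{i,j}(1)=\sfd_i\delta_{i,j}$ together with
$$\tfb_{i,j}(u+1)=-\tfb_{i,j}(u-1)+\sum_{k:\,d(k,j)=1}\bigl(-\langle h_k,\alpha_j\rangle\bigr)\tfb_{i,k}(u)\qquad(u\ge 1),$$
and by the quasi-periodicity $\tfb_{i,j}(u+\sfh)=-\tfb_{i,j^*}(u)$ it suffices to determine $\tfb_{i,j}(u)$ for $1\le u\le \sfh$, or equivalently to determine $\tde_{i,j}(t)=\sum_{u=1}^\sfh \tfb_{i,j}(u)t^u$.

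For the simply-laced families $A_n$ and $D_{n+1}$, I would verify that the proposed polynomials in \eqref{eq: A formula} and \eqref{eq: D formula}, read coefficient-wise as the $\tfb_{i,j}(u)$, satisfy the above recurrence and initial conditions. In type $A_n$ this reduces to a straightforward shift argument: each summand $t^{|i-j|+2s-1}$ contributes predictably under the recurrence to neighbours $j\pm 1$, and the boundary terms at $s=\min(i,j,n+1-i,n+1-j)$ cancel against the zero contributions from vertices outside $\{1,\ldots,n\}$. In type $D_{n+1}$ the same induction applies, but the additional block $t^{2n-i-j+2s-1}$ encodes the reflection coming from the forked tail, and the branching vertex $n-1$ together with the tips $n,n+1$ (permuted by $*$) must be handled as separate induction bases. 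For the exceptional simply-laced types $E_6, E_7, E_8$, direct manipulation via the recurrence is unwieldy; instead I would fix a convenient Dynkin quiver $Q$ (say, bipartite) and evaluate $\tfb_{i,j}(u)=(\varpi_i,\tau_Q^{(u+\xi_j-\xi_i-1)/2}\gamma_j^Q)$ using Theorem~\ref{thm: inv}, iterating the Coxeter element $\tau_Q$ on the positive roots $\gamma_j^Q=(1-\tau_Q)\varpi_j$ up to order $\sfh$. This reduces to a finite tabulation of pairings in the root system.

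For the non-simply-laced types, Theorem~\ref{thm: tbij folding} yields
$$\tde_{i,j}^{\Dynkin_\sigma}(t)=\sum_{\bar i\in\pi^{-1}(i),\;\bar j\in\pi^{-1}(j)}\tde_{\bar i,\bar j}^{\Dynkin}(t),$$
so each claimed formula reduces to summing the already-verified simply-laced formula over fibres of $\pi$. For $B_n$ out of $A_{2n-1}$ the fibres have size $1$ over the fixed vertex $n$ and size $2$ otherwise; when $i,j<n$ the four $A_{2n-1}$-terms pair via the symmetry $i\mapsto 2n-i$, producing the factor $2=\max(\sfd_i,\sfd_j)$ and the two sums indexed by $|i-j|$ and $2n-i-j$ in \eqref{eq: BC formula}. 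Cases with $i=n$ or $j=n$ degenerate with fewer pairings. An entirely analogous computation produces the $C_n$ formula from $D_{n+1}$, the $F_4$ formula from $E_6$, and the $G_2$ formula from $D_4$; in each case, the fibre multiplicities match $\sfd_i=|\pi^{-1}(i)|$ as in Definition~\ref{def:folding}, accounting for the overall $\max(\sfd_i,\sfd_j)$ appearing in the closed forms.

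The principal obstacle I anticipate is the exceptional simply-laced cases $E_7$ and $E_8$: while the Coxeter-orbit computation via Theorem~\ref{thm: inv} is mechanical, the Coxeter numbers $\sfh=18$ and $\sfh=30$ produce long tables, and matching them against the polynomials of Appendix~\ref{appeA: tde} is an unforgiving bookkeeping exercise. A secondary delicate point is the behaviour of the $*$-involution in $D_{n+1}$, which depends on the parity of $n$ and must be tracked carefully when combining the recurrence on $[1,\sfh]$ with the quasi-periodicity $\tfb_{i,j}(u+\sfh)=-\tfb_{i,j^*}(u)$; this affects both the $D_{n+1}$ verification and the $C_n$ folded computation.
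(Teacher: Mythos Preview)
The paper does not give its own proof of this theorem: it is stated with citations to \cite{HL15,Fuj19,OS19,KO22} and used as input. So there is no in-paper argument to compare against, and your proposal is essentially a sketch of how one might reprove these cited results using the tools the present paper assembles.

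That said, your strategy is sound. The uniqueness argument via Corollary~\ref{cor: bij property}\,(1)--(2) is the right skeleton: the recurrence together with the initial data pins down $\tfb_{i,j}(u)$ for $u\ge 0$, so it suffices to check that the claimed polynomials satisfy it on $[1,\sfh]$. Your folding reduction for the non-simply-laced types via Theorem~\ref{thm: tbij folding} is exactly the mechanism the paper itself develops, and your sample computation (deriving $B_n$ from $A_{2n-1}$ by summing over the four fibre pairs) is correct and extends verbatim to the boundary cases $j=n$ and $i=j=n$. The pairing $(\sfg,\sfg_\sigma)$ you use---$A_{2n-1}\to B_n$, $D_{n+1}\to C_n$, $E_6\to F_4$, $D_4\to G_2$---matches Definition~\ref{def:folding} and the diagrams~\eqref{eq: diagram foldings}, so the fibre multiplicities do produce the factor $\max(\sfd_i,\sfd_j)$ as you claim.

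The two obstacles you flag are genuine but purely computational: the $E_7$, $E_8$ tables are long but finite and are most efficiently produced by iterating $\tau_Q$ as in Theorem~\ref{thm: inv}; the parity of $n$ in $D_{n+1}$ only affects whether $*$ swaps the two tail vertices, and this is absorbed into the case split $\min(i,j)<n$ versus $\min(i,j)\ge n$ already present in~\eqref{eq: D formula}. Neither threatens the argument.
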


\section{Auslander-Reiten (AR) quivers} \label{Sec: classical AR}

In this section, we first recall (combinatorial) Auslander-Reiten  quiver $\Gamma_Q$ associated to each Dynkin quiver $Q$. Then we
recall their combinatorial properties, including the simple algorithm for labeling them with the set of positive roots, and statistics in classical $A_{n}$ and $D_{n}$-cases.  In the next section, we will investigate the relationship among AR-quivers related to the Dynkin diagram folding $\sigma$.

\subsection{Quivers} For each reduced expression $\uw_0 =s_{i_1} \cdots s_{i_\ell}$ of $w_0 \in \weyl$, we associate a quiver $\Upsilon_{\uw_0}$ as follows \cite{OS19A}:
\bnum
\item The set of vertices is $\Phi^+   = \{ \be^{\uw_0}_k \ | \ 1 \le k \le \ell \}$.
\item We assign $(-\lan h_{i_k},\al_{i_l} \ran)$-many arrows from
$\be^{\uw_0}_k$ to $\be^{\uw_0}_l$ if and only if $1 \le l  <  k \le \ell$ and there is no $j$ such that
$l< j< k$ and $i_j \in \{i_k,i_l\}$.
\ee
 We say that a total order  $(\be_1< \cdots < \be_\ell)$ of $\Phi^+$ is a \emph{compatible reading} of $\Upsilon_{\uw_0}$ if we have $k < l$ whenever there is
an arrow from  $\be_l$ to $\be_k$ in $\Upsilon_{\uw_0}$.

\begin{theorem}[{\cite{OS19A,KO22}}] \label{thm: OS17}
Let $\cc$ be a commutation class.
\bnum
\item  Two reduced expressions $\uw_0$ and  $\uw'_0$ of $w_0$
are commutation equivalent if and only if
$\Upsilon_{\uw_0}=\Upsilon_{\uw'_0}$ as quivers.
 Hence $\Upsilon_{\cc}$ is well-defined.
\item For $\al,\be \in \Phi^+$,  $\al  \preceq_{\cc}  \be$ if and only if there exists a path from $\be$ to $\al$ in $\Upsilon_{\cc}$. In other words, the quiver
$\Upsilon_{\cc}$ \ro forgetting the number of arrows\rfm  is the Hasse quiver of the partial order $\preceq_{\cc}$.
\item \label{it: noncom}
If $\al,\be \in \Phi^+$ are not comparable with respect to $\preceq_{\cc}$, then we have $(\al,\be)=0$.
\item \label{it: comp reading}
If
$\Phi^+=\{\be_1< \cdots < \be_\ell\}$ is  a compatible reading of $\Upsilon_{\cc}$, then
there is a unique reduced expression $\uw_0=s_{i_1} \cdots s_{i_\ell}$
in $\cc$ such that
$\beta_k=\be^{\uw_0}_k$ for any $k$.
\ee
\end{theorem}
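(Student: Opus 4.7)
My plan is to treat the four parts in order, letting (i) supply the infrastructure that (ii)--(iv) exploit. For (i), the forward implication reduces to checking invariance under a single commutation move $s_is_j \leftrightarrow s_js_i$ with $d(i,j)>1$. Such a move only swaps adjacent entries in $\uw_0$ whose residues satisfy $\langle h_i,\al_j\rangle=0$, so the multiplicities $-\langle h_{i_k},\al_{i_l}\rangle$ of arrows between the associated positive roots are zero and the ``blocking condition'' (no intermediate $j$ with $i_j\in\{i_k,i_l\}$) is unaffected; hence $\Upsilon_{\uw_0}=\Upsilon_{\uw'_0}$. For the converse, I would argue by induction on $\ell(w_0)$: a vertex of $\Upsilon_{\uw_0}$ is a source iff it can be realized as $\be_1^{\uw_0'}$ for some $\uw_0'\in[\uw_0]$ (namely, some simple root $\al_i$ which commutes past every residue preceding its first occurrence). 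Two expressions with the same quiver must therefore begin, after a sequence of commutation moves, with the same simple reflection; peeling this off and invoking the inductive hypothesis gives the required chain of commutation moves.

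For (ii), I plan to identify compatible readings of $\Upsilon_{\uw_0}$ with the total orders $<_{\uw_0}$, $\uw_0\in\cc$; this identification is essentially part (iv). Once this is in place, $\al\preceq_\cc \be$ becomes the statement that $\al$ precedes $\be$ in every linear extension of the acyclic quiver $\Upsilon_\cc$, which by the standard order-theoretic fact is equivalent to the existence of a directed path from $\be$ to $\al$. Acyclicity of $\Upsilon_\cc$ itself follows from the existence of at least one compatible reading.

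For (iii), incomparability of $\al$ and $\be$ yields $\uw_0,\uw_0'\in\cc$ with $\al<_{\uw_0}\be$ and $\be<_{\uw_0'}\al$. Along the chain of commutation moves connecting $\uw_0$ to $\uw_0'$, there must exist an intermediate $\uw_0''\in\cc$ in which the consecutive positions carrying residues $\res^\cc(\al)$ and $\res^\cc(\be)$ lie next to each other and are swapped by a legal commutation move; this forces $\langle h_{\res(\al)},\al_{\res(\be)}\rangle=0$. Tracking back through the partial reduced product $s_{i_1}\cdots s_{i_{k-1}}$ at this position and using that this element preserves the bilinear form $(\cdot,\cdot)$, one obtains $(\al,\be)=(\al_{\res(\al)},\al_{\res(\be)})=0$. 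For (iv), given a compatible reading $(\be_1<\cdots<\be_\ell)$, let $i_k\seteq \res^\cc(\be_k)$. Any two compatible readings are connected by a finite sequence of adjacent transpositions of incomparable pairs; each such transposition corresponds, via (iii), to a legal commutation move on the induced word, so it suffices to check that the reading coming from a chosen $\uw_0\in\cc$ recovers $\uw_0$ itself, which is the very definition of $\be_k^{\uw_0}$.

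The main obstacle is the converse direction of (i): showing that two reduced expressions with identical $\Upsilon$-quivers are actually connected by a chain of commutation moves, rather than merely sharing the same combinatorial data. The inductive ``identify and remove a source'' argument sketched above must be executed carefully, since a source of $\Upsilon_{\uw_0}$ is \emph{a priori} only a positive root, and one needs to verify it can legitimately be brought to the front of some $\uw_0'\sim\uw_0$, which is the content of the standard lemma that in a reduced expression any simple reflection commuting past all earlier entries can be moved to position one.
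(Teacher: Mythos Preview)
The paper does not give its own proof of this theorem: it is stated with a citation to \cite{OS19A,KO22} and no argument follows in the text. Your sketch is essentially the standard proof found in those references, and the overall strategy---invariance under a single commutation move for the forward direction of (i), inductive ``peel off a terminal vertex'' for the converse, the order-theoretic characterization of $\preceq_\cc$ via linear extensions for (ii), and the adjacent-swap argument for (iii) and (iv)---is correct.

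Two small points. First, in the arrow convention here (arrows go from $\be_k$ to $\be_l$ with $l<k$), the vertex $\be_1^{\uw_0}$ is a \emph{sink}, not a source; your peeling argument should strip sinks. This is purely terminological. Second, in (iii) you should make explicit that when $\al=\be_k^{\uw_0''}$ and $\be=\be_{k+1}^{\uw_0''}$ are adjacent with $\langle h_{i_k},\al_{i_{k+1}}\rangle=0$, one has $s_{i_k}(\al_{i_{k+1}})=\al_{i_{k+1}}$, so that \emph{both} roots are images of orthogonal simple roots under the \emph{same} element $s_{i_1}\cdots s_{i_{k-1}}$; without this remark the computation $(\al,\be)=(\al_{i_k},\al_{i_{k+1}})$ does not quite follow. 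For (iv), your transposition argument simultaneously shows that the word $s_{i_1}\cdots s_{i_\ell}$ with $i_k=\res^\cc(\be_k)$ is reduced, since it is connected by commutation moves to a fixed $\uw_0\in\cc$; it is worth saying this explicitly.
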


\begin{definition} [\cite{KO22}] For a Dynkin quiver $Q=(\Dynkin,\xi)$, the
{\em repetition quiver} $\hDynkin=(\hDynkin_0,\hDynkin_1)$ associated to
$Q$ is defined as follows:
\begin{equation}\label{eq: rep quiver}
\begin{aligned}
& \hDynkin_0 \seteq \{ (i,p) \in \Dynkin_0 \times \Z \bigm|  p -\xi_i \in 2 \Z\}, \\
& \hDynkin_1 \seteq \{ (i,p) \To[-\lan h_{i},\al_j \ran]  (j,p+1) )   \ | \  (i,p),(j, p+1) \in \hDynkin_0, \ d(i,j)=1 \}.
\end{aligned}
\end{equation}
Here $(i,p) \To[-\lan h_{i},\al_j \ran] (j,p+1)$ denotes that we assign $(-\lan h_{i},\al_j \ran)$-many arrows from $(i,p)$ to $(j,p+1)$.
Note that the definition of $\hDynkin$  depend only on the parity of the height function of $Q$.
\end{definition}

For any Dynkin quiver $Q = (\Dynkin,\xi)$, we have a bijection $\phi_Q\cl \hDynkin_0  \isoto \hPhi^+_\Dynkin \seteq \Phi^+_\Dynkin \times \Z$ as follows (see \cite{HL15,KO22}): set
$\ga_i^Q \seteq (1-\tau_Q)\varpi_i \in \Phi^+_\Dynkin$ and
\begin{eqnarray} &&
\parbox{75ex}{
\bnum
\item $\phi_Q(i,\xi_i)=(\ga_i^Q,0)$,
\item if $\phi_Q(i,p)=(\be,u)$, then we define
\bna
\item $\phi_Q(i,p\mp 2)=(\tau_Q^{\pm 1}(\be),u)$ if $\tau_Q^{\pm 1}(\be) \in \Phi^+_\Dynkin$,
\item $\phi_Q(i,p\mp 2)=(-\tau_Q^{\pm  1}(\be),u\pm1)$ if $\tau_Q^{\pm 1}(\be) \in \Phi^-_\Dynkin$.
\ee
\ee
}\label{eq: bijection}
\end{eqnarray}
We say   that $(i,p)$ with $\phi_Q(i,p)=(\be,u)$ is the \emph{$Q$-coordinate of $(\be,u)$ in $\hDynkin$}.

The combinatorial description  $\gamma^Q_i$ is given as follows \cite[\S 2.2]{HL15}:
\begin{align} \label{eq: gaQ}
\gamma^Q_i = \sum_{j \in B^Q(i)} \al_j,
\end{align}
where $B^Q(i)$ denotes the set of vertices $j$ such that there exists a path in $Q$ from $j$ to $i$.

For each Dynkin quiver $Q=(\Dynkin,\xi)$, we also denote by
$\Gamma_Q=\bl(\Gamma_Q)_0, (\Gamma_Q)_1 \br$ the full subquiver of
$\hDynkin$ whose set of vertices is $\phi_Q^{-1}(\Phi_{\Dynkin}^+
\times 0)$. We call $\Gamma_Q$ the \emph{AR-quiver associated to
$Q$}.

\begin{theorem} [\cite{HL15, OS19A,KO22}] Let $Q=(\Dynkin,\xi)$ be a Dynkin quiver.
\bnum
\item \label{it:range}  
We have $ \xi_i \equiv \xi_{i^*} - \sfh \bmod2$ for any $i\in I$ and
\begin{align}\label{eq: range}
(\Gamma_Q)_0=\{ (i,p) \in \hDynkin_0 \mid  \xi_i \ge p > \xi_{i^*} - \sfh \}.
\end{align}
In particular, we have
\begin{align*}
\phi_Q(i,p)=\bl\tau_Q^{(\xi_i-p)/2}(\ga_i^Q),0\br  \quad \text{for any $(i,p) \in (\Gamma_Q)_0$.}
\end{align*}
\item \label{it:si}
If $i$ is a source of $Q$, then we have
$$\phi_{s_iQ}(i^*,\xi_i-\sfh)=\phi_Q(i,\xi_i)=(\al_i,0)
\text{ and } \
 \Gamma_{s_iQ}=\left(\Gamma_Q\setminus\{(i,\xi_i) \} \right) \cup \{(i^*,\xi_i-\sfh)\}.$$
\item The map $\phi_Q$ induces a quiver isomorphism
$\Gamma_Q\isoto\Upsilon_{[Q]}$.
Recall that $[Q]$ is the commutation class of $Q$-adapted reduced expressions.
\ee
\end{theorem}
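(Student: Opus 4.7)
The plan is to establish the three parts in sequence, with part (i) forming the combinatorial core from which (ii) and (iii) follow more formally.

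For part (i), I would argue by induction on $k=(\xi_i-p)/2\ge 0$ using the recursion in~\eqref{eq: bijection}. The base case is the definition $\phi_Q(i,\xi_i)=(\gamma_i^Q,0)$, and the inductive step gives $\phi_Q(i,\xi_i-2(k+1))=(\tau_Q^{k+1}(\gamma_i^Q),0)$ provided the new entry is still a positive root. Thus the claim reduces to determining sign behaviour of the orbit: $\tau_Q^k(\gamma_i^Q)\in\Phi^+$ precisely when $0\le k<(\xi_i-\xi_{i^*}+\sfh)/2$. To prove this I would exploit the telescoping identity $\tau_Q^k(\gamma_i^Q)=\tau_Q^k\varpi_i-\tau_Q^{k+1}\varpi_i$ coming from $\gamma_i^Q=(1-\tau_Q)\varpi_i$, and then invoke the classical Bernstein--Gelfand--Ponomarev analysis of sign changes in the Coxeter orbit of the fundamental weights; equivalently, this is the well-known fact that the AR-quiver of $\C Q$ has exactly $|\Phi^+|$ vertices and that its $i$-th column has length $(\xi_i-\xi_{i^*}+\sfh)/2$. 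The parity congruence $\xi_i\equiv\xi_{i^*}-\sfh\pmod 2$ then emerges as the necessary integrality condition for the cutoff, and can be checked using $w_0=\tau_Q^{\sfh/2}$ when $\sfh$ is even together with the $*$ involution in the remaining (type $A_{2n}$) cases.

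For part (ii), since $i$ is a source of $Q$, no arrow of $Q$ points into $i$, so $B^Q(i)=\{i\}$ and hence $\gamma_i^Q=\al_i$ by~\eqref{eq: gaQ}; in particular $\phi_Q(i,\xi_i)=(\al_i,0)$. The height function changes to $(s_i\xi)_j=\xi_j-2\delta_{i,j}$, and applying part~(i) to $s_iQ$ shifts the admissible ranges in~\eqref{eq: range}: column $i$ loses its maximum $(i,\xi_i)$, while column $i^*$ gains a new bottom vertex $(i^*,\xi_i-\sfh)$ (a direct bookkeeping using $(s_i\xi)_i-\sfh$, separating the cases $i=i^*$ and $i\ne i^*$). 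Finally, $\phi_{s_iQ}(i^*,\xi_i-\sfh)=(\al_i,0)$ follows from part~(i) applied to $s_iQ$, the conjugation relation $\tau_{s_iQ}=s_i\tau_Q s_i$, and an unwinding of the definition of $\gamma^{s_iQ}_{i^*}$.

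For part (iii), I would combine (i) and (ii) with Theorem~\ref{thm: OS17}\eqref{it: comp reading}. Iterating the source-removal operation $Q\leadsto s_iQ$ builds a $Q$-adapted reduced expression $\uw_0$ of $w_0$ and, simultaneously, a compatible reading of $\Gamma_Q$ (reading off the source vertex removed at each step). Under this parallel construction, the $k$-th root $\be_k^{\uw_0}$ matches $\phi_Q$ applied to the vertex removed at step $k$, which identifies the two vertex sets. The arrows of $\Upsilon_{[Q]}$ are determined by the local multiplicities $-\lan h_{i_k},\al_{i_l}\ran$ in the reduced expression, which under $\phi_Q^{-1}$ coincide with the arrows of $\Gamma_Q$ inherited from the repetition quiver $\hDynkin$ via~\eqref{eq: rep quiver}.

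The main obstacle is the orbit analysis underpinning part~(i): nailing down the exact cutoff $(\xi_i-\xi_{i^*}+\sfh)/2$ and the parity identity requires either a detailed case check on the Coxeter-element action or a sufficiently slick invariant-theoretic argument, especially in type $A_{2n}$ where $\sfh$ is odd while $i\ne i^*$ for every $i$. Once that orbit description is in hand, parts (ii) and (iii) are essentially bookkeeping supplemented by the combinatorial reflection-functor formalism already set up in Section~\ref{Sec: backgroud}.
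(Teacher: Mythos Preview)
The paper does not give its own proof of this theorem: it is quoted as a result from \cite{HL15,OS19A,KO22}, and the paper proceeds directly to examples after the statement. So there is no in-paper proof to compare your proposal against.

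That said, your sketch is broadly on the right track for the simply-laced case, but one point deserves a flag. You invoke ``the AR-quiver of $\C Q$'' and the Bernstein--Gelfand--Ponomarev analysis to pin down the orbit cutoff in part~(i). That argument is only available when $\Dynkin$ is simply-laced, whereas the theorem as stated (and as used later in this paper) covers Dynkin quivers of \emph{all} finite types, including $BCFG$. In the non-simply-laced setting there is no path algebra $\C Q$ whose module category produces $\Gamma_Q$; the combinatorial AR-quiver was defined directly in \cite{KO22} via the root-system bijection~\eqref{eq: bijection}. So the orbit analysis must be carried out purely in terms of the Coxeter element acting on the weight lattice, without appeal to representation theory of hereditary algebras. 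Your telescoping identity $\tau_Q^k(\gamma_i^Q)=\tau_Q^k\varpi_i-\tau_Q^{k+1}\varpi_i$ is the right starting point for such an argument, but the subsequent sign-change count has to be done root-theoretically (this is essentially what \cite{KO22} does). Similarly, your use of~\eqref{eq: gaQ} in part~(ii) is fine in spirit, but note that the paper later proves the general-type version as Lemma~\ref{lem: general gaQ}; for a source $i$ both formulas agree and give $\gamma_i^Q=\al_i$, so no harm is done there.

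Parts~(ii) and~(iii) of your outline are otherwise sound and match the standard reflection-functor bookkeeping in \cite{OS19A,KO22}.
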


\begin{example} \label{ex: AR Q ADE}
Here are some examples of $\Gamma_Q$ for $Q$ of simply-laced types.
\ben
\item For $ Q=  \xymatrix@R=0.5ex@C=6ex{    *{\circ}<3pt> \ar@{->}[r]^<{ _{\underline{4}} \ \  }_<{1 \ \ } & *{\circ}<3pt> \ar@{->}[r]^<{ _{\underline{3}} \ \  }_<{2 \ \ }
&*{\circ}<3pt> \ar@{<-}[r]^<{ _{\underline{2}} \ \  }_<{3 \ \ }  & *{\circ}<3pt> \ar@{<-}[r]^<{ _{\underline{3}} \ \  }_<{4 \ \ }
& *{\circ}<3pt> \ar@{-}[l]_<{ \ \  _{\underline{4}}   }^<{ \ \ 5 }  }  $ of type $A_5$,  $\Gamma_Q$ can be depicted as
$$
 \raisebox{3.2em}{ \scalebox{0.7}{\xymatrix@!C=5ex@R=0.5ex{
(i\setminus p) &  -2 & -1 & 0 & 1 & 2 & 3 & 4\\
1&&& [3,5]\ar@{->}[dr]&& [2] \ar@{->}[dr] && [1]   \\
2&& [3,4] \ar@{->}[dr]\ar@{->}[ur] && [2,5] \ar@{->}[dr] \ar@{->}[ur]  && [1,2] \ar@{->}[ur] \\
3& [3]  \ar@{->}[dr]\ar@{->}[ur] && [2,4]  \ar@{->}[dr]\ar@{->}[ur]  && [1,5] \ar@{->}[dr]\ar@{->}[ur]\\
4&& [2,3]  \ar@{->}[dr]\ar@{->}[ur] && [1,4] \ar@{->}[dr] \ar@{->}[ur]  && [4,5] \ar@{->}[dr] \\
5&&& [1,3]\ar@{->}[ur]&& [4] \ar@{->}[ur] && [5].   \\
}}}
$$
Here $[a,b] \seteq \sum_{k=a}^b \al_k$ for $1 \le a <b \le 5$ and $[a] \seteq \al_a$ for $1 \le a \le 5$.
\item \label{it: E6}
 For
$Q=\raisebox{1.2em}{ \xymatrix@R=2ex@C=6ex{  && *{\circ}<3pt> \ar@{<-}[d]^<{ _{\underline{0}} \ \  }_<{2} \\
*{\circ}<3pt> \ar@{->}[r]^<{ _{\underline{3}} \ \  }_<{1 \ \ } & *{\circ}<3pt>  \ar@{->}[r]^<{ _{\underline{2}} \ \  }_<{3 \ \ }
&*{\circ}<3pt>    \ar@{<-}[r]^<{ _{\underline{1}} }_<{4 \ \ } & *{\circ}<3pt>  \ar@{<-}[r]^<{ _{\underline{2}} \ \  }_<{5\ \ } & *{\circ}<3pt>    \ar@{}[l]_<{ \quad  \  _{\underline{3}} \ \  }^<{\quad 6 }
}}$ of type $E_6$, $\Gamma_Q$  can be depicted as
$$ \hspace{-6ex}
\raisebox{6em}{\scalebox{0.58}{\xymatrix@R=0.5ex@C=2ex{
(i/p) & -10 & -9 & -8 & -7 & -6 & -5 & -4 & -3 & -2 & -1 & 0 & 1 & 2 & 3    \\
1 &&&&  \sprt{010111}\ar[dr] &&  \sprt{001100}\ar[dr]&&  \sprt{111110}\ar[dr]&&  \sprt{000111}\ar[dr]&&  \sprt{001000}\ar[dr]&&  \sprt{100000} \\
3 &&&\sprt{010110}\ar[dr]\ar[ur]&&\sprt{011211}\ar[dr]\ar[ur]&&  \sprt{112210}\ar[dr]\ar[ur]&&  \sprt{111221}\ar[dr]\ar[ur]&&  \sprt{001111}\ar[dr]\ar[ur]&&  \sprt{101000}\ar[ur] \\
4 &&  \sprt{010100}\ar[dr]\ar[ddr]\ar[ur]&&  \sprt{011210}\ar[dr]\ar[ddr]\ar[ur]&&  \sprt{122321}\ar[dr]\ar[ddr]\ar[ur]&&  \sprt{112321}\ar[dr]\ar[ddr]\ar[ur]&&
\sprt{112221}\ar[dr]\ar[ddr]\ar[ur]&&  \sprt{101111} \ar[ddr]\ar[ur]\\
2 &  \sprt{010000}\ar[ur]&&  \sprt{000100}\ar[ur]&&  \sprt{011110}\ar[ur]&&  \sprt{111211}\ar[ur]&&  \sprt{000111}\ar[ur]&&  \sprt{111111}\ar[ur] \\
5 &&&  \sprt{011100}\ar[uur]\ar[dr]&&  \sprt{111210}\ar[uur]\ar[dr]&&  \sprt{011221}\ar[uur]\ar[dr]&&  \sprt{112211}\ar[uur]\ar[dr]&&  \sprt{101110}\ar[uur]\ar[dr]
&&  \sprt{000011} \ar[dr] \\
6 &&&&  \sprt{111100}\ar[ur] &&  \sprt{000110} \ar[ur]&&  \sprt{011111}\ar[ur]&&  \sprt{101100}\ar[ur]&&  \sprt{000010}\ar[ur]&&  \sprt{000001}
}}}
$$
Here $\sprt{a_1a_2a_3a_4a_5a_6} \seteq \ \sum_{i=1}^6 a_i\al_i$.
\ee
\end{example}

\begin{example} \label{ex: AR Q BCFG}
For $Q =  \xymatrix@R=0.5ex@C=6ex{    *{\circ}<3pt> \ar@{->}[r]^<{ _{\underline{4}} \ \  }_<{1} & *{\circ}<3pt> \ar@{->}[r]^<{  _{\underline{3}} \ \  }_<{2 \ \ } &*{\circled{$\circ$}}<3pt>
\ar@{-}[l]_>{  \qquad \qquad\qquad _{\underline{2}}}^<{\ \ 3   }  }$ of type $C_3$,  $\Gamma_Q$  can be depicted as
$$ \raisebox{3.2em}{ \scalebox{0.7}{\xymatrix@!C=6ex@R=0.5ex{
(i\setminus p) &  -2 & -1 & 0 & 1 & 2 & 3  & 4\\
1&&& \srt{1,3}\ar@{->}[dr]&& \srt{2,-3} \ar@{->}[dr] && \srt{1,-2}   \\
2&& \srt{2,3} \ar@{=>}[dr]\ar@{->}[ur] && \srt{1,2} \ar@{=>}[dr] \ar@{->}[ur]  && \srt{1,-3} \ar@{->}[ur] \\
3& \srt{3,3}  \ar@{->}[ur] && \srt{2,2} \ar@{->}[ur]  && \srt{1,1} \ar@{->}[ur]  }}}
$$
Here $\lan a, \pm b\ran \seteq \ep_a \pm \ep_b$ for $1 \le a < b \le 4$ and $\lan a,a\ran \seteq 2\ep_a$ for $1 \le a \le 3$.
\end{example}

\noindent
Note the followings:
\begin{eqnarray} &&
\parbox{80ex}{
\bna
\item The vertices in $\Gamma_{Q}$ are labeled by $\Phi^+_{\Dynkin}$ by the bijection $\phi_{Q}|_{(\Gamma_{Q})_0}$.
\item \label{it: reflection}
For a source $i$ of $Q$, $\Gamma_{s_iQ}$ can be obtained from $\Gamma_Q$ by following way:
\bnum
\item Each $\be \in \Phi^+ \setminus \{ \al_i \}$ is located at $(j,p)$ in $\Gamma_{s_iQ}$, if $s_i\be$ is at $(j,p)$ in $\Gamma_Q$.
\item $\al_i$ is located at the coordinate $(i^*,\xi_i-\sfh)$ in $\Gamma_{s_iQ}$, while $\al_i$ was at $(i,\xi_i)$ in $\Gamma_Q$.
\ee
\ee
}\label{eq: Qd properties}
\end{eqnarray}
We refer the operation in ~\eqref{eq: Qd properties}~\eqref{it: reflection} the \emph{reflection} from $\Gamma_Q$ to $\Gamma_{s_iQ}$.

Let $Q$ be a Dynkin diagram except $E$-types. We say an arrow $(i,p) \to (j,p+1)$ in $\Gamma_Q$ is a \emph{downward} (resp.\ \emph{upward}) arrow if $j > i$ (resp.\ $j<i$) by identifying vertices of $\Dynkin_0$ with integers as in \eqref{fig:Dynkin}.

For $\Dynkin$ of $E$-types, we give a total order $<$ on $\{ 1,2, \ldots, 8 \}$ as follows:
$$ 1 < 3 < 4 < 2 < 5 < 6 < 7 < 8 .$$
Then we can define a downward (resp.\ upward) arrow in $\Gamma_Q$ as in other cases (see Example~\ref{ex: AR Q ADE}~\eqref{it: E6} for $E_6$-case).
\begin{definition} \hfill
\bnum
\item A \prq $\pair{\al,\be} \in (\Phi^+)^2$ with $\phi_Q(i,p)=(\al,0)$ and $\phi_Q(j,s)=(\be,0)$ is \emph{sectional in $\Gamma_Q$} if 
$d(i,j)=p-s$.
\item A full subquiver $\rho$ of $\Gamma_Q$ is \emph{sectional} if every \prq $\pair{\al,\be}$ in $\rho$ is sectional.
\item A connected full subquiver $\varrho$ in $\Gamma_Q$ is said to be \emph{$S$-sectional} (resp.\ \emph{$N$-sectional}) path if it is concatenation of downward (resp.\ upward) arrows.
\item An $S$-sectional (resp.\ $N$-sectional)  path $\varrho$ is said to be  \emph{$N$-path} (resp.\ \emph{$S$-path}) if there is no longer connected sectional path in $\Gamma_Q$ consisting of downward (resp.\ upward) arrows and containing $\varrho$.
\ee
\end{definition}

\subsection{Classical AR-quiver $\Gamma_Q$} In this subsection, we review the labeling and the combinatorial properties of $\Gamma_Q$ for Dynkin quivers $Q$ of type $ADE$.
Throughout this subsection, $Q=(\Dynkin, \xi)$ denotes a Dynkin quiver of type $ADE$.

\begin{proposition}[\cite{B99,KKK13b}]  \label{prop: range descrption}
\ben
\item Let $Q$ be a Dynkin quiver of $ADE$-type, and $\sfh$ the Coxeter number.
For $i \in I$, let
$$   r^Q_i \seteq = \dfrac{\sfh +{\rm a}^Q_i -{\rm b}^Q_i}{2}  $$
where ${\rm a}^Q_i$ is the number of arrows in $Q$ between $i$ and $i^*$ directed toward $i^*$,
${\rm b}^Q_i$ is the number of arrows in $Q$ between $i$ and $i^*$ directed toward $i$ \ro ${\rm a}^Q_i -{\rm b}^Q_i=\xi_{i}-\xi_{i^*})$.
Then we have  $ r^Q_i\in\Z_{\ge0}$ and
$$(\Gamma_Q)_0 = \{ (i,\xi_i-2k ) \ | \  0 \le k < r_i^Q  \}.$$
\item For $i , j \in \Dynkin_0$ and any $Q$, we have
\begin{align*} 
(i,\xi_j-d(i,j)),\ (i,\xi_j-2r_j^Q+d(i,j))\in \Gamma_Q.\
\end{align*}
\ee
\end{proposition}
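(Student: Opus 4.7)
The plan is to deduce both parts directly from the range description in Theorem~\ref{thm: OS17}~\eqref{it:range},
\[
(\Gamma_Q)_0=\{(i,p)\in\hDynkin_0 \mid \xi_i\ge p>\xi_{i^*}-\sfh\},
\]
together with the parity characterization $(i,p)\in\hDynkin_0 \iff p\equiv\xi_i\pmod{2}$, which is immediate from the definition of $\hDynkin$ in~\eqref{eq: rep quiver}.

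For part (1), the vertices of $\hDynkin_0$ in column $i$ are precisely $\{(i,\xi_i-2k)\mid k\in\Z\}$, so the range condition translates to $0\le k<(\sfh+\xi_i-\xi_{i^*})/2$. The identity $\xi_i-\xi_{i^*}=\mathrm{a}_i^Q-\mathrm{b}_i^Q$ is established by walking along the unique path from $i$ to $i^*$ in the Dynkin tree and summing the $\pm 1$ increments of $\xi$ at each arrow: an arrow directed toward $i^*$ decreases $\xi$, while one directed toward $i$ increases it. Combining these gives $r_i^Q=(\sfh+\xi_i-\xi_{i^*})/2$. Integrality is a consequence of the parity statement $\xi_i\equiv\xi_{i^*}-\sfh\pmod{2}$ from Theorem~\ref{thm: OS17}~\eqref{it:range}, and $r_i^Q\ge 1$ follows since $(i,\xi_i)\in(\Gamma_Q)_0$.

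For part (2), I will verify for each of the two candidate coordinates the three membership conditions: the parity condition $p\equiv\xi_i\pmod{2}$, the upper bound $\xi_i\ge p$, and the strict lower bound $p>\xi_{i^*}-\sfh$. After rewriting $\xi_j-2r_j^Q=\xi_{j^*}-\sfh$, the parity conditions reduce to the tree identity $\xi_a-\xi_b\equiv d(a,b)\pmod{2}$ combined with $\xi_{j^*}\equiv\xi_j+\sfh\pmod{2}$. The upper and lower bounds reduce to the Lipschitz-type inequality $|\xi_a-\xi_b|\le d(a,b)$ on the Dynkin tree and to the Coxeter--diameter inequality
\[
d(i,j)+d(i^*,j)<\sfh \qquad\text{for all } i,j\in\Dynkin_0,
\]
using that $^{*}$ is an isometry of the Dynkin tree so that $d(j^*,i)=d(j,i^*)$.

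The main obstacle, and the only non-formal ingredient, is the Coxeter--diameter inequality above. I expect to settle it by a direct case-by-case check: for example, in type $A_n$ one computes $d(i,j)+d(i^*,j)=|i-j|+|j-(n+1-i)|\le n-1<n+1=\sfh$ by a short case analysis on the position of $j$ relative to $\{i,n+1-i\}$, and analogous elementary verifications dispose of $D_n$, $E_6$, $E_7$, and $E_8$. Once this inequality is in hand, both membership statements in (2) follow mechanically from the three conditions listed above.
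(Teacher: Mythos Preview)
The paper does not supply a proof here; the proposition is cited from \cite{B99,KKK13b}. Your derivation of part~(1) from the range description~\eqref{eq: range} is correct, as is your treatment of the first coordinate $(i,\xi_j-d(i,j))$ in part~(2).

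There is, however, a genuine gap for the second coordinate $(i,\xi_j-2r_j^Q+d(i,j))$. After your rewriting $\xi_j-2r_j^Q=\xi_{j^*}-\sfh$, the lower-bound condition $p>\xi_{i^*}-\sfh$ becomes the \emph{strict} inequality $\xi_{i^*}-\xi_{j^*}<d(i,j)$, whereas the Lipschitz bound only gives $\xi_{i^*}-\xi_{j^*}\le d(i^*,j^*)=d(i,j)$. This gap is not an artifact of your argument but of the statement itself: for $i=j$ the second coordinate is $(i,\xi_i-2r_i^Q)=(i,\xi_{i^*}-\sfh)$, which sits exactly on the excluded boundary of~\eqref{eq: range} and hence is \emph{not} in $(\Gamma_Q)_0$. (Concretely, in type $A_3$ with the linear orientation $1\to2\to3$ and $i=j=1$, the point is $(1,-4)$ while column~$1$ stops at $(1,-2)$.)

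The intended formula is almost certainly $(i,\xi_j-2(r_j^Q-1)+d(i,j))$, i.e.\ one step up, so that it records the vertex in column~$i$ reached sectionally from the \emph{bottom} vertex $(j,\xi_j-2r_j^Q+2)$ of column~$j$. With that correction your scheme goes through: the lower bound becomes $\xi_{i^*}-\xi_{j^*}<d(i,j)+2$, which follows from Lipschitz, and the upper bound becomes $d(i,j)+d(i^*,j)\le\sfh-2$, which follows from your Coxeter--diameter inequality together with the parity observation $d(i,j)+d(i^*,j)\equiv\sfh\pmod 2$ (itself a consequence of $\xi_a-\xi_b\equiv d(a,b)$ and $\xi_i\equiv\xi_{i^*}-\sfh$).
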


Note that, if $Q$ is $\sigma$-fixed $(\sigma\not=\id)$, we have
\begin{align}\label{eq: riq fixed}
 r_i^Q = \sfh/2 \quad \text{ for all $i$}.
\end{align}

\begin{proposition}[{\cite[Proposition 4.1]{Oh18}}] \label{prop: dir Q cnt}
Let $Q$ be a Dynkin quiver $Q$ of type $ADE$
and let $\pair{\al,\be}$ be a \prq  sectional in $\Gamma_Q$. Then, we have
\begin{enumerate}
\item[{\rm (a)}] the \prq $\pair{\al,\be}$ is $[Q]$-simple,
\item[{\rm (b)}] $ (\al , \be)=1$,
\item[{\rm (c)}] $\al-\be\in\Phi$,
\item[{\rm (d)}] either $\pair{\al,\be-\al}$ is a $[Q]$-minimal \prq for $\be$ or $\pair{\al-\be,\be}$ is a $[Q]$-minimal \prq for $\al$.
\end{enumerate}
\end{proposition}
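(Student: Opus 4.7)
The plan is to establish (c) first by induction along the sectional path, derive (b) as an immediate corollary, and then use these to obtain (a) and (d). Throughout, I will use the explicit parametrization $\phi_Q(i,p)=(\tau_Q^{(\xi_i-p)/2}\ga_i^Q,0)$ together with the combinatorial description $\ga_i^Q=\sum_{k\in B^Q(i)}\al_k$ from~\eqref{eq: gaQ}.

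\textbf{Step 1 (part (c)).} Writing $\phi_Q^{-1}(\al,0)=(i,p)$ and $\phi_Q^{-1}(\be,0)=(j,s)$ with $r:=p-s=d(i,j)$, let $(i=k_0,k_1,\ldots,k_r=j)$ be the unique geodesic from $i$ to $j$ in the Dynkin tree $\Dynkin$. The intermediate vertices $(k_t,p-t)$ all lie in $(\Gamma_Q)_0$ and are linked by arrows of $\Gamma_Q$, producing a chain of positive roots $\al=\ga^{(0)},\ga^{(1)},\ldots,\ga^{(r)}=\be$. A direct calculation using the two formulas above should show that each one-step difference $\ga^{(t)}-\ga^{(t+1)}$ is a single simple root $\al_{l_t}$; the base case $r=1$ reduces to comparing $B^Q(k_0)$ and $B^Q(k_1)$, which differ by exactly one vertex. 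Since the geodesic does not repeat, the simple roots $\al_{l_0},\ldots,\al_{l_{r-1}}$ are distinct and index a connected subdiagram of $\Dynkin$, so the telescoping sum $\al-\be=\sum_{t=0}^{r-1}\al_{l_t}$ lies in $\Phi^+$.

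\textbf{Step 2 (parts (b) and (a)).} Part (b) follows at once: in the simply-laced setting every root has squared norm $2$, so expanding $(\al-\be,\al-\be)=2$ with $(\al,\al)=(\be,\be)=2$ forces $(\al,\be)=1$. For (a), suppose toward a contradiction that some $\um\prec^\ttb_{[Q]}\pair{\al,\be}$ has $\wt(\um)=\al+\be$. Then $\um$ must contain a positive root $\ga\neq\al,\be$ whose $\preceq_{[Q]}$-position lies strictly between $\al$ and $\be$; by Theorem~\ref{thm: OS17}, any such $\ga$ is $\preceq_{[Q]}$-comparable with both endpoints and therefore its $\phi_Q$-image is forced into the sectional strip swept out by the path from Step 1. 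A finite check against the explicit intermediate labels $\ga^{(t)}$ then rules out any strict refinement of the pair.

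\textbf{Step 3 (part (d)) and main obstacle.} After Step 1 we may assume $\eta:=\al-\be\in\Phi^+$ (the case $\be-\al\in\Phi^+$ being symmetric). By the telescoping decomposition, $\eta$ is precisely the root carried by the sectional sub-path of length $r-1$ obtained from the original by dropping $\be$, so $\pair{\eta,\be}$ is again a sectional pair, hence $[Q]$-simple by (a) applied inductively. The covering relation $\al\prec^\ttb_{[Q]}\pair{\eta,\be}$ is then verified from Definition~\ref{def: bi-orders} by locating $\eta$ and $\be$ inside any compatible reading $\uw_0\in[Q]$. The principal difficulty lies here: producing the candidate decomposition $\al=\eta+\be$ is routine, but showing that $\pair{\eta,\be}$ is a genuine \emph{cover} of $\al$ in $\prec^\ttb_{[Q]}$ --- rather than merely some element strictly above $\al$ --- requires ruling out every finer exponent. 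My planned route is the recursive identification of $\eta$ and $\be-\al$ with shorter sectional sub-paths and an induction on $r$, leveraging (a) at each stage; additional care will be needed when the path endpoints approach the boundary of $\Gamma_Q$, where subsequent $\tau_Q$-shifts leave $\Phi^+$ and a short case analysis is needed to keep the truncated sub-pair inside $\hDynkin$ with its sectional interpretation intact.
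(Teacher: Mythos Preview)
The paper does not supply its own proof of this proposition; it simply imports the result from \cite[Proposition~4.1]{Oh18}. So there is no ``paper's approach'' to compare against, only the question of whether your argument stands on its own.

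It does not, and the gap is in Step~1. Your core claim --- that for adjacent vertices $(k_t,p-t)$ and $(k_{t+1},p-t-1)$ on a sectional path the difference $\gamma^{(t)}-\gamma^{(t+1)}$ is a single simple root --- is false. Take the $A_5$ example in the paper (Example~\ref{ex: AR Q ADE}): the pair $\al=[1,2]$ at $(2,3)$ and $\be=[1,5]$ at $(3,2)$ is sectional ($d(2,3)=1=p-s$), yet $\al-\be=-(\al_3+\al_4+\al_5)=-[3,5]$, which is a root but not a simple one. The reason your base-case reduction fails is that $\gamma_{k_t}^Q$ and $\gamma_{k_{t+1}}^Q$ need not differ by one vertex in their $B^Q$-sets: when the arrow in $Q$ points $k_{t+1}\to k_t$, the $\tau_Q$-exponents on the two sides differ by $1$, so you are not comparing $\ga_{k_t}^Q$ with $\ga_{k_{t+1}}^Q$ at all; and even when the exponents match, $B^Q(k_{t+1})\setminus B^Q(k_t)$ can contain several vertices (namely $k_{t+1}$ together with any other branches feeding into it). Consequently the telescoping sum you write down is not a sum of \emph{distinct} simple roots over a connected subdiagram, and the argument collapses.

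What actually drives (c) in type $A$ and $D$ is the labeling combinatorics recalled in Propositions~\ref{prop :labeling A} and~\ref{prop: label1}--\ref{prop: label2}: along any $N$- or $S$-sectional path all vertices share a common component $\ep_a$ (or $\ve_a$), so any two roots on the path differ by a root. This is the mechanism you should invoke rather than a simple-root telescoping. Parts (b) and (a) then follow from (c) together with Theorem~\ref{thm: OS17}\,\eqref{it: noncom}, and (d) from the explicit location of $\al-\be$ (or $\be-\al$) in $\Gamma_Q$ furnished by the same labeling rules.
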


\begin{definition} \label{def: Phi ijk}
For $i,j \in I$ and integer $k \in
\Z_{\ge 0}$, let $\Phi_Q(i,j)[k]$
be the set of \prs[{[Q]}] $\pair{\al,\be}$
such that 
$$   |p-s| = k \qt{   when $\{ \phi_{Q}(i,p), \phi_{Q}(j,s)\} = \{ (\al,0),(\be,0) \}$.   }$$
\end{definition}

\begin{lemma} [{\cite[Lemma 6.15]{Oh18}}] \label{lem: integer o}
For any $\pair{\al,\be}$, $\pair{\al',\be'} \in \Phi_Q(i,j)[k]$, we have
$$ \dg_{[Q]}(\pair{\al,\be}) =\dg_{[Q]}(\pair{\al',\be'}).$$
\end{lemma}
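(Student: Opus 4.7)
The plan is to prove this invariance by exhibiting two symmetries of the AR-quiver that connect any two pairs in $\Phi_Q(i,j)[k]$ while preserving the degree statistic $\dg_{[Q]}$. The first symmetry is the simultaneous translation by $\tau_Q^{-1}$: if $\pair{\al,\be} \in \Phi_Q(i,j)[k]$ occupies positions $(i,p),(j,s)$ with both $(i,p-2),(j,s-2)\in(\Gamma_Q)_0$, then $\tau_Q^{-1}\al,\tau_Q^{-1}\be \in \Phi^+$ and $\pair{\tau_Q^{-1}\al,\tau_Q^{-1}\be} \in \Phi_Q(i,j)[k]$. Since the set of columns of residue $i$ (resp.\ $j$) in $\Gamma_Q$ is a single chain under $\tau_Q^{-1}$ by Proposition~\ref{prop: range descrption}, iterating this operation connects any two elements of $\Phi_Q(i,j)[k]$. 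Thus it suffices to prove that $\dg_{[Q]}$ is preserved under one $\tau_Q^{-1}$-shift.

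The second, more fundamental symmetry is reflection at a source. The plan is to establish the following key invariance: if $i'$ is a source of a Dynkin quiver $Q'$ of type $ADE$ and $\al,\be \in \Phi^+ \setminus \{\al_{i'}\}$, then
\[
\dg_{[Q']}(\pair{\al,\be}) \;=\; \dg_{[s_{i'}Q']}(\pair{s_{i'}\al,\,s_{i'}\be}).
\]
The proof uses that the combinatorial reflection $r_{i'}$ at a source, which sends $[Q']$ to $[s_{i'}Q']$, induces a bijection of exponents supported on $\Phi^+\setminus\{\al_{i'}\}$ via $\gamma \mapsto s_{i'}\gamma$, and this bijection intertwines the bi-lexicographical partial orders $\prec^\ttb_{[Q']}$ and $\prec^\ttb_{[s_{i'}Q']}$. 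Indeed, the convex orders $\prec_{[Q']}$ and $\prec_{[s_{i'}Q']}$ agree on $\Phi^+\setminus\{\al_{i'}\}$ after this identification, because $\al_{i'}$ is $[Q']$-minimal and, correspondingly, $\al_{i'}$ is $[s_{i'}Q']$-maximal. Hence any chain $\um^{(0)} \prec^\ttb_{[Q']} \cdots \prec^\ttb_{[Q']} \pair{\al,\be}$ with $[Q']$-simple $\um^{(0)}$ pulls back to an analogous chain in $[s_{i'}Q']$ ending at $\pair{s_{i'}\al,s_{i'}\be}$ (no intermediate exponent can use $\al_{i'}$ as a component, since the target pair does not).

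To package the two symmetries together, express $\tau_Q^{-1} = s_{i_n}\cdots s_{i_1}$ as a product of reflections at successive sources: starting from $Q_0 = Q$, pick a source $i_1$ of $Q_0$, then a source $i_2$ of $Q_1 = s_{i_1}Q_0$, and so on until $Q_n = Q$ with height function shifted by $-2$. Applying the invariance lemma at each step yields
\[
\dg_{[Q_0]}(\pair{\al,\be}) = \dg_{[Q_1]}(\pair{s_{i_1}\al, s_{i_1}\be}) = \cdots = \dg_{[Q_n]}(\pair{\tau_Q^{-1}\al, \tau_Q^{-1}\be}) = \dg_{[Q]}(\pair{\tau_Q^{-1}\al, \tau_Q^{-1}\be}),
\]
completing the reduction.

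The main obstacle will be verifying the side condition of the invariance lemma at every intermediate step: namely, that at stage $\ell$, neither of the iteratively reflected roots $s_{i_\ell}\cdots s_{i_1}\al$ or $s_{i_\ell}\cdots s_{i_1}\be$ equals $\al_{i_{\ell+1}}$. I would handle this by observing that, under the standing assumption that the $\tau_Q^{-1}$-shift of both positions lies within $(\Gamma_Q)_0$, the root $\al_{i_{\ell+1}}$ occupies position $(i_{\ell+1},\xi_{i_{\ell+1}})$ in $\Gamma_{Q_\ell}$, which is the unique top-of-column vertex and is disjoint from the positions of the shifted roots (which remain strictly below the top of their respective columns throughout the sequence). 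If $\pair{\al,\be}$ admits no $\tau_Q^{-1}$-successor in $\Phi_Q(i,j)[k]$, then it is the unique minimal element of the fiber in the $\tau_Q$-cyclic sense and the statement holds vacuously in that direction, providing the base case of the induction.
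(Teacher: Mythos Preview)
Your approach mirrors the paper's (which proves this, and its generalization to arbitrary finite type, as Proposition~\ref{eq: d well-defined}\,(a)): both translate one pair to the other by a power of the Coxeter element $\tau_Q$, using that $\tau_Q$ acts on $\Gamma_Q$ as a horizontal shift and hence carries chains $\um^{(0)}\prec^\ttb_{[Q]}\cdots\prec^\ttb_{[Q]}\pair{\al,\be}$ bijectively to chains below $\pair{\al',\be'}$. The paper does this in one line, directly asserting $\um\prec^\ttb_{[Q]}\pair{\al,\be}\Longleftrightarrow\tau_Q^{-s}(\um)\prec^\ttb_{[Q]}\pair{\al',\be'}$; your decomposition of $\tau_Q$ into a product of source reflections is a correct, more explicit justification of the same order-preservation, but the paper does not pass through the reflection functor. (Minor sign slip: by~\eqref{eq: bijection} one has $\phi_Q(i,p-2)=(\tau_Q\al,0)$, so $\tau_Q^{-1}\al$ sits at $(i,p+2)$, not $(i,p-2)$.)

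There is, however, a gap in your reduction step, and the paper's own argument shares it. Powers of $\tau_Q$ preserve residues, so they only connect $\pair{\al,\be},\pair{\al',\be'}\in\Phi_Q(i,j)[k]$ when $\res^{[Q]}(\al)=\res^{[Q]}(\al')$; but Definition~\ref{def: Phi ijk} uses the \emph{unordered} set $\{i,j\}$, so one pair may have $\al$ at residue $i$ and the other at residue $j$. Concretely, for $Q\colon 1\to2\to3$ in type $A_3$, the set $\Phi_Q(1,2)[3]$ consists of $\pair{[1],[2,3]}$ and $\pair{[1,2],[3]}$, which lie in distinct $\tau_Q$-orbits (neither admits a further $\tau_Q^{\pm1}$-shift inside $\Gamma_Q$). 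The paper's proof simply asserts $\al=\tau_Q^a(\ga_i^Q)$, $\al'=\tau_Q^{a'}(\ga_i^Q)$ with $a-a'=b-b'$, which fails in exactly this situation. Your sentence ``iterating this operation connects any two elements of $\Phi_Q(i,j)[k]$'' is thus not justified; an additional symmetry is needed to bridge the two residue assignments.
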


For $i,j\in I$ and $k\in\Z_{>0}$, we define $\tto_k^{Q}(i,j)$
by
\eq\tto_k^{Q}(i,j)\seteq
\bc\dg_{[Q]}(\pair{\al,\be})&\text{
if $\pair{\al,\be} \in \Phi_Q(i,j)[k]$,}\\
0&\text{if $\Phi_Q(i,j)[k]=\emptyset$.}
\ec\label{def:tto}\eneq
It is well-defined by the above lemma.

\begin{definition} \label{def: Distance 1}
Let $t$ be an indeterminate.
For $i,j \in I$ and a Dynkin quiver $Q$ of type ADE,
we define a polynomial $\de_{i,j}^Q(z) \in \Z_{\ge0}[t]$
$$    \de_{i,j}^Q(t) \seteq \sum_{k \in \Z_{\ge 0} } {\ttO_k^{Q}(i,j)} t^{k-1},$$
where
\begin{align} \label{eq: ttO}
\ttO_k^{Q}(i,j) \seteq \max(\tto_k^{Q}(i,j), \tto_k^{Q^*}(i,j)).
\end{align}
\end{definition}

\begin{proposition} [{\cite[Proposition 6.18]{Oh18}}] \label{prop: well-defined integer}
For any Dynkin quivers $Q$, $Q'$ of $\Dynkin$,
we have $$\ttO_k^{Q}(i,j) = \ttO_k^{Q'}(i,j).$$
Thus $\ttO_k^{\Dynkin}(i,j)$ is well-defined.
\end{proposition}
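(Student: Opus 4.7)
My plan is to reduce the claim to invariance under a single reflection at a source of $Q$. Since any two Dynkin quivers of the same Dynkin diagram $\Dynkin$ are connected by a finite sequence of such reflections (equivalently, all commutation classes $[Q]$ lie in the same $r$-cluster point $\lf \Dynkin \rf$), it suffices to prove $\ttO_k^Q(i,j) = \ttO_k^{s_{i_0}Q}(i,j)$ for every source $i_0$ of $Q$ and every $i,j \in I$, $k\ge 1$. Recall from \eqref{eq: Qd properties} that $\Gamma_{s_{i_0}Q}$ is obtained from $\Gamma_Q$ by relocating the unique vertex at $(i_0,\xi_{i_0})$ (labeled $\al_{i_0}$) to the position $(i_0^*,\xi_{i_0}-\sfh)$ (still labeled $\al_{i_0}$), and by relabeling every other vertex $(j,p)$ from $\be$ to $s_{i_0}\be$.

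The main step is to handle pairs $\pair{\al,\be} \in \Phi_Q(i,j)[k]$ neither of whose positions is $(i_0,\xi_{i_0})$, equivalently with $\al,\be \ne \al_{i_0}$. For such a pair, $\pair{s_{i_0}\al,s_{i_0}\be}$ sits in $\Phi_{s_{i_0}Q}(i,j)[k]$ at the same coordinates, so it remains to establish $\dg_{[Q]}(\pair{\al,\be}) = \dg_{[s_{i_0}Q]}(\pair{s_{i_0}\al,s_{i_0}\be})$. This would follow from a bijection between chains $\um^{(0)} \prec^{\ttb}_{[Q]} \cdots \prec^{\ttb}_{[Q]} \um^{(k)} = \pair{\al,\be}$ and chains $\um'^{(0)} \prec^{\ttb}_{[s_{i_0}Q]} \cdots \prec^{\ttb}_{[s_{i_0}Q]} \um'^{(k)} = \pair{s_{i_0}\al,s_{i_0}\be}$ which, on $i_0$-free exponents, is induced by the order-preserving bijection $s_{i_0}\cl (\Phi^+\setminus\{\al_{i_0}\},<_{\uw_0}) \isoto (\Phi^+\setminus\{\al_{i_0}\},<_{\uw'_0})$ for $\uw_0 \in [Q]$ starting with $s_{i_0}$ and $\uw'_0 \in [s_{i_0}Q]$ ending with $s_{i_0^*}$. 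Note that $\al_{i_0}$ is extremal in both total orders: it is the minimum of $<_{\uw_0}$ and the maximum of $<_{\uw'_0}$, so both the lexicographic and the reverse-lexicographic conditions of Definition~\ref{def: bi-orders} transform consistently across the reflection.

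For pairs involving $\al_{i_0}$ — and, more subtly, for intermediate chain exponents whose support includes $\al_{i_0}$ — the naive $s_{i_0}$-bijection fails, since it would produce negative multiplicities of $\al_{i_0}$. The $\max$ over $Q$ and $Q^*$ in $\ttO_k^Q(i,j) = \max(\tto_k^Q(i,j),\tto_k^{Q^*}(i,j))$ is designed to absorb precisely this asymmetry: combined with the identity $(s_{i_0}Q)^* = s_{i_0^*}(Q^*)$, exceptional pairs in $Q$ are matched with generic pairs in $Q^*$ (and conversely), so the equality $\ttO_k^Q(i,j) = \ttO_k^{s_{i_0}Q}(i,j)$ holds at the level of the maximum even when the individual $\tto_k$ differ. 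I expect the principal technical difficulty to be showing that the chains realising $\dg_{[Q]}$ on a non-exceptional pair can be chosen to consist of $i_0$-free exponents (so that the $s_{i_0}$-bijection applies throughout), or equivalently to control the $\al_{i_0}$-support of minimal-length chain intermediates; the sectional simplicity statement of Proposition~\ref{prop: dir Q cnt}, the compatibility of $<_{\uw_0}$-orders under commutation equivalence from Theorem~\ref{thm: OS17}, and the explicit range description of Proposition~\ref{prop: range descrption} should together provide the combinatorial leverage needed, after which the remainder of the argument is formal.
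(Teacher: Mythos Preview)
Your reduction to a single source reflection is exactly the first move the paper makes (see the proof of Proposition~\ref{eq: d well-defined}(b), which is the paper's generalization of the cited statement). But from there the two arguments diverge, and the difficulty you single out is not actually there.

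For the non-exceptional case you worry that chain exponents below $\pair{\al,\be}$ might carry $\al_{i_0}$ in their support. They cannot: pick $\uw_0 \in [Q]$ starting with $s_{i_0}$ (possible since $i_0$ is a source), so that $\al_{i_0}$ is the $<_{\uw_0}$-minimum. For any $\um \prec^\ttb_{[Q]} \pair{\al,\be}$ we have in particular $\um <^\ttb_{\uw_0} \pair{\al,\be}$, and the lexicographic half of Definition~\ref{def: bi-orders} forces $\um_\gamma = \up_\gamma$ for all $\gamma$ strictly below the first discrepancy; since $\up_{\al_{i_0}} = 0$ and $\al_{i_0}$ is minimal, this gives $\um_{\al_{i_0}} = 0$. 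So every chain exponent is automatically $i_0$-free, and your $s_{i_0}$-bijection goes through without the work you anticipate.

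The paper's argument is shorter still and does not analyze chains under the reflection at all. Its essential input is part~(a): within a fixed $Q$, every pair in $\Phi_Q(i,j)[k]$ has the same $[Q]$-degree, because any two are related by a power of $\tau_Q$. Since passing from $Q$ to $s_{i_0}Q$ moves only one vertex and each row has $\sfh/2 > 2$ vertices, there is always a representative pair at coordinates common to $\Gamma_Q$ and $\Gamma_{s_{i_0}Q}$, and for that representative the computation is literally the same on both sides. This bypasses your exceptional case entirely; your proposed treatment of it via the identity $(s_{i_0}Q)^* = s_{i_0^*}(Q^*)$ is suggestive but, as written, does not give a proof --- you have not said how an exceptional pair in $\Phi_Q(i,j)[k]$ produces a specific generic pair in $\Phi_{Q^*}(i,j)[k]$ with the same degree, nor why the two maxima then agree.
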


\begin{remark}
In \cite{Oh18}, $\ttO_k^{Q}(i,j)$ was defined as $\max(\tto_k^{Q}(i,j), \tto_k^{Q^\rev}(i,j))$, where  $Q^{\rev}$
is a Dynkin quiver obtained from $Q$ by reversing all arrows. However, by applying the argument in \cite{Oh18}, one can check
$$\max(\tto_k^{Q}(i,j), \tto_k^{Q^\rev}(i,j)) = \max(\tto_k^{Q}(i,j), \tto_k^{Q^*}(i,j)).$$
\end{remark}

By Proposition~\ref{prop: well-defined integer}, we can replace the superscript $^Q$ in  $\de_{i,j}^Q(t)$ with $^\Dynkin$. We call
$\de_{i,j}^\Dynkin(t)$  the \emph{degree polynomial of $\Dynkin$ at $(i,j)$}. For $k \in \Z$, we set
$$ \de_{i,j}[k]  \seteq  \ttO_k^\Dynkin(i,j).$$

In the next \S\,\ref{subsub:D} and \S\,\ref{subsub:A}
we recall the results in~\cite{Oh16,Oh17,Oh18}
on the combinatorial properties and statistics of AR-quivers of type $A_{n}$ and $D_n$,  and prove that
$\tde_{i,j}^\Dynkin(t)=\de_{i,j}^\Dynkin(t)+\delta_{i,j^*}t^{\sfh-1}$ in those types.

\subsubsection{$D_{n+1}$-case}\label{subsub:D}  Let $Q$ be a Dynkin quiver of type $D_{n+1}$.
The simple roots $\{\al_i \ | \ 1 \le i \le n+1 \}$ and $\Phi^+_{D_{n+1}}$ can be identified in $\rl^+ \subset   \soplus_{i=1}^{n+1} \R\ep_i$ as follows:
\begin{equation}\label{eq: PR D}
\begin{aligned}
& \al_i =\ep_i- \ep_{i+1} \quad \text{ for $i<n+1$} \quad\text{ and } \quad \al_{n+1} =\ep_n+\ep_{n+1}, \\
&\Phi_{D_{n+1}}^+  = \left\{  \ep_i -\ep_{j} = \sum_{i \le k <j}  \al_k \ | \ 1 \le i < j \le n \right\} \\
&\quad  \ssqcup \left\{  \ep_i-\ep_{n+1} = \sum_{k=i}^{n} \al_k    \ | \ 1 \le i \le n \right\} \ssqcup \left\{  \ep_i+\ep_{n+1} = \sum_{k=i}^{n-1} \al_k  + \al_{n+1} \ | \ 1 \le i   \le n \right\} \\
& \quad\ssqcup \left\{  \ep_i+\ep_{j} = \sum_{ i \le k <j} \al_k + 2\sum_{j \le k <n} \al_s +\big(\al_n +\al_{n+1} \big) \ | \ 1 \le i <j  < n+1 \right\}.
\end{aligned}
\end{equation}

\begin{convention}
For $2 \le a \le n+1$, we write $\ep_{-a}$ to denote $-\ep_a$. For $\be=\ep_a+\ep_b \in \Phi$ $(1 \le |a| < |b| \le n+1)$, we call $\ep_a$ and $\ep_b$ \emph{components} of $\be$.
\end{convention}

\begin{remark} \label{rmk: analy roots D}
Note the followings:
\bnum
\item For each $1 \le a \le n$, there are exactly $(2n+1-a)$-many positive roots having $\ep_a$ as their components.
\item For each $2 \le a \le n$, there are exactly $(a-1)$-many positive roots having $\ep_{-a}$ as their components.
\item There are exactly $n$-many positive roots having $\ep_{\pm (n+1)}$ as their components.
\ee
\end{remark}

\begin{definition} \label{def: swing}    A connected subquiver $\uprho$ in $\Gamma_Q$ is called a {\it swing} if it consists of vertices and arrows in the following way: There exist positive roots
$\al,\be \in \Phi^+$ and $r,s \le n$ such that
$$\raisebox{2em}{\xymatrix@C=2.5ex@R=0.1ex{ &&&&\be\ar@{->}[dr] \\S_r \ar@{->}[r] & S_{r+1} \ar@{->}[r] & \cdots \ar@{->}[r]& S_{n-1}
\ar@{->}[ur]\ar@{->}[dr] && N_{n-1} \ar@{->}[r]& N_{n-2} \ar@{->}[r] & \cdots \ar@{->}[r] & N_s,\\
&&&&\al\ar@{->}[ur] }} \text{ where }$$
\bna
\item $ \raisebox{1.8em}{\xymatrix@C=2.5ex@R=0.1ex{ &&&&\be
\\ S_{r} \ar@{->}[r] & S_{r+1} \ar@{->}[r] & \cdots \ar@{->}[r]& S_{n-1} \ar@{->}[ur]\ar@{->}[dr] \\
&&&&\al}}$ is an $S$-path ($\phi^{-1}_{Q}(S_l,0)=(l,k)$ for   some $k \in \Z$),
\item $\raisebox{1.8em}{\xymatrix@C=2.5ex@R=0.1ex{ \be \ar@{->}[dr] \\
& N_{n-1} \ar@{->}[r]& N_{n-2} \ar@{->}[r] & \cdots \ar@{->}[r] & N_{s}\\
\al\ar@{->}[ur]}}$ is an $N$-path ($\phi^{-1}_{Q}(N_l,0)=(l,k')$ for   some   $k' \in \Z$),
\item $\phi^{-1}_{Q}(\be,0)=(n,u)$ and $\phi^{-1}_{Q}(\al,0)=(n+1,u)$ for some $u \in \Z$
\ee
(see Example~\ref{ex: label D} below for examples of swings).
\end{definition}

\begin{proposition} [{\cite[Theorem 2.21]{Oh16}}]\label{prop: label1}
For each swing $\uprho$ of $\Gamma_Q$, there exists a unique $1 \le a \le n$ satisfying the following properties$\colon$
\bnum
\item Every vertex shares a component $\ep_a$.
\item It consists of exactly $(2n+1-a)$-many vertices.
\item Every positive roots having $\ep_a$ as its component appears in the swing.
\item $\min(r,s)=1$, where $r$, $s$ are given in Definition~\ref{def: swing}.
\ee
Thus we say the swing as an $a$-swing and denote it by $\uprho_a$.
\end{proposition}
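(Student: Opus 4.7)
The plan is to analyze the structure of a swing by tracking how components of positive roots change along its sectional paths, using Proposition~\ref{prop: dir Q cnt} together with the explicit description \eqref{eq: PR D} of the root system of $D_{n+1}$. First I would apply Proposition~\ref{prop: dir Q cnt} to each consecutive pair $\pair{\ga,\ga'}$ along the $S$-path $S_r \to \cdots \to S_{n-1}$ and along the $N$-path $N_{n-1} \to \cdots \to N_s$. Since each such pair is sectional, the difference $\ga-\ga'$ lies in $\Phi$; moreover, because neighboring vertices in $\Gamma_Q$ sit in adjacent columns at Dynkin-adjacent vertices, this difference is forced to be a simple root. In view of the forms $\al_i = \ep_i-\ep_{i+1}$ for $i<n+1$ and $\al_{n+1}=\ep_n+\ep_{n+1}$, each sectional step therefore modifies exactly one of the two $\ep$-components of the current root while leaving the other unchanged.

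Next I would isolate the common component $\ep_a$ with $1\le a\le n$ that is preserved across the entire swing by examining the branch point. The two children $\be$ at $(n,u)$ and $\al$ at $(n+1,u)$ of $S_{n-1}$ are related to $S_{n-1}$ by $\pm\al_n$ and $\pm\al_{n+1}$ respectively; since $\al_n$ and $\al_{n+1}$ differ only in the sign of $\ep_{n+1}$, both $\al$ and $\be$ share a common component $\ep_a$ inherited from $S_{n-1}$, necessarily with $a\le n$. The symmetric analysis at the reconvergence $N_{n-1}$ yields the same $\ep_a$. Propagating this invariant up the $S$-path back to $S_r$ and up the $N$-path to $N_s$ proves (i). By Remark~\ref{rmk: analy roots D}\,(i), there are exactly $2n+1-a$ positive roots containing $\ep_a$, which bounds the total length of $\uprho$ from above.

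For (ii)--(iv), I would show that $\uprho$ saturates this bound. The maximality built into the definitions of $N$-path and $S$-path forbids extending $\uprho$ at either end within $\Gamma_Q$. Combined with Proposition~\ref{prop: range descrption}, which guarantees that every positive root has a representative in $(\Gamma_Q)_0$, this forces $\uprho$ to exhaust all $2n+1-a$ roots with component $\ep_a$, yielding (ii) and (iii). The condition $\min(r,s)=1$ in (iv) then follows, since the ``other'' component must sweep over all admissible values, which only terminates when $r=1$ or $s=1$. The main obstacle will be verifying uniformity of the preserved component $\ep_a$ throughout both arms of the swing, in particular ensuring that the $S$-path and the $N$-path preserve the \emph{same} $\ep_a$; this requires a careful case analysis at the fork of the Dynkin diagram, which could be streamlined by first reducing to a canonical $\sigma$-fixed quiver via a sequence of reflection operations $Q\mapsto s_iQ$, under which the swing structure transforms in a controlled way.
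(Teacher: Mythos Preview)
The paper does not give a proof of this proposition; it is quoted verbatim from \cite[Theorem 2.21]{Oh16}. So there is no in-paper argument to compare against, and I can only assess your sketch on its own terms.

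Your central observation is correct and is indeed the key point: in type $D_{n+1}$ every root has exactly two nonzero $\ep$-coordinates with coefficients $\pm1$, so if $\gamma-\gamma'\in\Phi$ then $\gamma$ and $\gamma'$ must share an $\ep$-component. Combined with Proposition~\ref{prop: dir Q cnt}\,(c), this shows that consecutive vertices along any sectional path share a component. However, several steps in your outline do not go through as written.

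First, the assertion that the sectional difference ``is forced to be a simple root'' is both unjustified and unnecessary: Proposition~\ref{prop: dir Q cnt} only says the difference lies in $\Phi$, and that alone already yields the shared-component conclusion. Second, your branch-point analysis is incomplete. You claim $\be-S_{n-1}=\pm\al_n$ and $\al-S_{n-1}=\pm\al_{n+1}$, but nothing in Proposition~\ref{prop: dir Q cnt} pins down \emph{which} root the difference is. The real issue is showing that the component preserved along the $S$-arm equals the one preserved along the $N$-arm; knowing that each arm individually preserves \emph{some} component does not rule out a switch at the fork. Third, and most seriously, the saturation argument for (ii)--(iv) is essentially missing. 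Maximality of the $S$- and $N$-paths only says you cannot extend $\uprho$ further inside $\Gamma_Q$; it does not by itself force $\uprho$ to contain \emph{every} root with component $\ep_a$. One needs an additional global argument---for instance, that $\Gamma_Q$ contains exactly $n$ swings and the assignment $\uprho\mapsto a$ is injective, so a counting argument forces each swing to be saturated. Your appeal to Proposition~\ref{prop: range descrption} and the proposed reduction to a $\sigma$-fixed quiver do not address this point.
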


By above proposition and Remark~\ref{rmk: analy roots D}, there are distinct $n$-swings in $\Gamma_Q$.

\begin{definition}
We say that a $S$-path (resp.\ $N$-path) is {\it shallow} if it ends (resp.\ starts) at residue less than $n$.
\end{definition}

\begin{proposition}[{\cite[Theorem 2.25]{Oh16}}]
\label{prop: label2} Let $\rho$ be a shallow $S$-path $($resp.\ $N$-path$)$. Then there exists a unique $2 \le a \le n$ such that
\bnum
\item $\rho$ consists of $(a-1)$-many vertices,
\item all vertices in $\rho$ share $\ep_{-a}$ as their components.
\ee Furthermore, there are exactly $(n-1)$-many shallow $X$-paths
$(X=S$ or $N)$ in $\Gamma_Q$. Thus we say the shallow $X$-path as
$(-a)$-shallow path and denote it by $\rho_{-a}$.
\end{proposition}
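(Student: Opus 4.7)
The plan is to classify shallow paths in $\Gamma_Q$ by tracking the $\ep$-component structure of positive root labels along sectional paths, using the explicit formula for $\phi_Q$ from Theorem~\ref{thm: OS17} together with $\ga_i^Q=\sum_{j\in B^Q(i)}\al_j$ from~\eqref{eq: gaQ}. By Proposition~\ref{prop: dir Q cnt}, if $\pair{\al,\be}$ is a sectional pair in $\Gamma_Q$ with $\al-\be\in\Phi$, then $(\al,\be)=1$, which in type $D_{n+1}$ forces $\al$ and $\be$ to share exactly one $\ep$-component with matching sign. Proceeding inductively along a maximal sectional path, one identifies an $\ep$-component that persists at every vertex; this component serves as the natural ``label'' of the path.

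For a shallow $S$-path $\rho$, the persistent component cannot be $\ep_b$ with $1\le b\le n$: by Proposition~\ref{prop: label1} together with Remark~\ref{rmk: analy roots D}, the $2n+1-b$ positive roots containing $\ep_b$ all lie in the $b$-swing $\uprho_b$, which by definition passes through residues $n$ and $n+1$, contradicting the shallowness of $\rho$. Similarly, $\ep_{\pm(n+1)}$-bearing roots are absorbed by the swing structure. Hence the persistent component of $\rho$ must be $\ep_{-a}$ for some $a\in\{2,\ldots,n\}$, and the length of $\rho$ is bounded above by $a-1$ via Remark~\ref{rmk: analy roots D}.

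To obtain the matching lower bound and the count of $n-1$ shallow $S$-paths, I would use the reflection principle~\eqref{eq: Qd properties} to reduce to a single representative Dynkin quiver per reflection orbit: a source reflection $s_i$ relocates only one boundary vertex and acts on the remaining labels by permuting \ro or sign-changing\rfm\ $\ep$-components, hence sends shallow paths to shallow paths and $\ep_{-a}$-fibres to $\ep_{-a'}$-fibres. For a convenient canonical orientation of $Q$, direct computation of the $\tau_Q$-orbits on $\Phi^+$ confirms that the roots $\{\ep_b-\ep_a\mid 1\le b<a\}$ do assemble into one connected shallow $S$-path of length $a-1$. The $N$-path case is symmetric, obtained by replacing $Q$ with $Q^*$.

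The main obstacle I foresee is precisely this contiguity claim: showing that the $a-1$ labels sharing the component $\ep_{-a}$ assemble into a single connected sectional subquiver, rather than being scattered across several distinct sectional pieces. This hinges on fine control of how $\tau_Q$ cycles through these roots relative to the residue grading of $\hDynkin$, with the fork of $D_{n+1}$ at residues $n,n+1$ demanding careful case analysis. The reduction to a canonical representative makes the verification tractable, but one must still track the compatibility between the $S$-versus-$N$ dichotomy and the dual height $\xi^*$ in order to cover every reflection orbit uniformly.
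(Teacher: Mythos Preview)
The paper does not supply its own proof of this proposition; it is quoted verbatim from \cite[Theorem~2.25]{Oh16}.  So there is nothing in the present paper to compare against, and your sketch should be judged on its own.

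Your outline has the right shape, but there is a genuine gap in the first paragraph.  From Proposition~\ref{prop: dir Q cnt} you correctly deduce that any two \emph{adjacent} vertices on a sectional path share exactly one signed $\ep$-component.  You then assert that ``proceeding inductively\ldots one identifies an $\ep$-component that persists at every vertex''.  This does not follow: if $\al=\ep_a\pm\ep_b$ and $\be=\ep_b\pm\ep_c$ share $\ep_b$, the next vertex $\gamma$ could share $\ep_c$ with $\be$ rather than $\ep_b$, so a priori the common component can drift along the path.  Ruling this out is exactly what requires work, and it is not a consequence of the pairing condition alone.  The argument in \cite{Oh16} handles this by explicitly tracking how $\tau_Q$ permutes the indices $\{\pm 1,\ldots,\pm(n+1)\}$ (cf.\ the discussion around Lemma~\ref{lem: tauQ fixed D} here), which pins down the component structure of $\tau_Q^k(\ga_i^Q)$ directly rather than arguing locally along the path.

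Your second paragraph---using Proposition~\ref{prop: label1} to exclude $\ep_b$ with $b\ge 1$ as the persistent component once one is known to exist---is fine, and your identification of the contiguity issue as the main remaining obstacle is accurate.  But note that contiguity and the persistence gap above are really the same problem viewed from opposite ends: once you know how $\tau_Q$ cycles the $\ep_{-a}$-roots through consecutive residues, both are resolved simultaneously.  Your proposed reduction via reflection functors to a single canonical $Q$ is workable, but you would still need to carry out the $\tau_Q$-orbit computation for that $Q$, which is the substantive content of the original proof.
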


In this paper, we sometimes understand elements in $\sfW_{D_{n+1}}$ as automorphisms of the set $$ \{ \pm 1, \pm 2,\ldots,\pm (n+1) \},$$
i.e., $s_i\ep_{t}=\ep_{s_i(t)}$ for $t$ with $1 \le |t| \le n$.
Note that $s_ns_{n+1}= s_{n+1}s_n$, $s_ns_{n+1}(n)=-n$ and  $s_ns_{n+1}(n+1)=-n-1$.

\begin{lemma} \label{lem: tauQ fixed D}
Let $Q=(\Dynkin,\vxi)$ be a $\vee$-fixed Dynkin quiver of type $D_{n+1}$. 
\bna
\item If $\phi^{-1}_{Q}(\al,0)=(n,k)$ $($resp.\ $(n+1,k))$ for some $\al \in \Phi^+$, there exists $\be \in \Phi^+$ such that $\phi^{-1}_{Q}(\be,0)=(n+1,k)$ $($resp.\ $(n,k))$.
\item  For $\al,\be \in \Phi^+$ with $\phi^{-1}_{Q}(\al,0)=(n,k)$ and $\phi^{-1}_{Q}(\be,0)=(n+1,k)$, there exists a unique $1 \le a\le n$ such that
$$ \{ \al,\be\}  = \{ \ep_a+\ep_{n+1},  \ \ep_a-\ep_{n+1} \}.$$
\item \label{it: fixed D} $\tau_Q( \pm (n+1)) = \mp (n+1)$.
\ee
\end{lemma}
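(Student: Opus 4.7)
The plan is to establish the three parts (a), (b), (c) in sequence, each building on the structure of $\Gamma_Q$ developed earlier.

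For (a), I would simply combine the range description in Proposition~\ref{prop: range descrption} with two facts specific to $\vee$-fixed quivers: $\xi_n = \xi_{n+1}$ (by definition of $\vee$-fixed, since $\vee$ swaps $n$ and $n+1$), and $r_n^Q = r_{n+1}^Q = \sfh/2$ (by \eqref{eq: riq fixed}). These together yield
$$\{k : (n,k) \in (\Gamma_Q)_0\} = \{\xi_n - 2m : 0 \le m < \sfh/2\} = \{k : (n+1,k) \in (\Gamma_Q)_0\},$$
from which (a) is immediate.

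For (b), I would invoke the swing structure of Proposition~\ref{prop: label1}. By (a), at every valid height $k$ both $(n,k)$ and $(n+1,k)$ lie in $(\Gamma_Q)_0$ and share arrows with the common neighbors $(n-1, k\pm 1)$ (when those exist), so each such pair is the ``branching point'' of a single swing. There are exactly $\sfh/2 = n$ such pairs, hence exactly $n$ swings, which matches the number of labels $a \in \{1, \ldots, n\}$ provided by Proposition~\ref{prop: label1}. For the $a$-swing, the two positive roots sitting at the branching pair are the unique roots in the swing forced to occupy residues $n$ and $n+1$; since they share the component $\ep_a$ and must involve $\pm \ep_{n+1}$ as their other component (because they are in the $a$-swing and live at these residues), they can only be $\ep_a + \ep_{n+1}$ and $\ep_a - \ep_{n+1}$. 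This proves (b).

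For (c), the key observation is that the $\vee$-fixedness of $Q$ forces $\tau_Q$ to commute with $\vee$: any reduced expression of $\tau_Q$ adapted to $Q$ becomes, after applying $\vee$ to each index, a reduced expression adapted to $\vee Q = Q$, and by the uniqueness of the Coxeter element adapted to a Dynkin quiver the two elements must coincide. Now $\vee$ acts on $\soplus_{i=1}^{n+1} \R\ep_i$ by fixing $\ep_1, \ldots, \ep_n$ and sending $\ep_{n+1} \mapsto -\ep_{n+1}$ (as one checks from $\vee(\al_n) = \al_{n+1}$ and $\vee(\al_i) = \al_i$ for $i < n$), so the $(-1)$-eigenspace of $\vee$ is the line $\R\ep_{n+1}$. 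Since $\tau_Q$ is orthogonal and preserves this line, $\tau_Q(\ep_{n+1}) = \pm\ep_{n+1}$. The $+$ case would make $\ep_{n+1}$ a non-zero fixed vector of $\tau_Q$, which is excluded because a Coxeter element of a finite-type Weyl group has no eigenvalue equal to $1$ (its eigenvalues being $e^{2\pi i m_j/\sfh}$ for the exponents $m_j$, none of which vanishes modulo $\sfh$). Hence $\tau_Q(\ep_{n+1}) = -\ep_{n+1}$, which gives (c).

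The main (rather mild) obstacle is the bookkeeping in part (b), namely matching each branching pair in $\Gamma_Q$ with the correct swing label $a$ and confirming that the two roots at the pair must be $\ep_a \pm \ep_{n+1}$; the spectral argument in (c) is clean once one records that $\vee$-fixedness gives the commutativity of $\tau_Q$ and $\vee$.
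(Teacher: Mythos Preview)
Your proof is correct, but part (c) takes a genuinely different route from the paper. The paper simply observes that, since $\xi_n=\xi_{n+1}$, some $Q$-adapted reduced expression $s_{i_1}\cdots s_{i_{n+1}}$ of $\tau_Q$ has $\{i_u,i_{u+1}\}=\{n,n+1\}$ for some $u$; then
\[
\tau_Q(n+1)=s_{i_1}\cdots s_{i_{u-1}}s_ns_{n+1}(n+1)=s_{i_1}\cdots s_{i_{u-1}}(-(n+1))=-(n+1),
\]
because $s_ns_{n+1}$ sends $n+1\mapsto-(n+1)$ and every $s_k$ with $k<n$ fixes $\pm(n+1)$. Your argument instead deduces $\vee\tau_Q\vee^{-1}=\tau_Q$ from $\vee Q=Q$, notes that $\vee$ has $(-1)$-eigenspace $\R\ep_{n+1}$, and then excludes $\tau_Q(\ep_{n+1})=\ep_{n+1}$ via the classical fact that a Coxeter element has no eigenvalue $1$. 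The paper's computation is more elementary and self-contained; your spectral argument is cleaner conceptually but imports a nontrivial (if standard) fact about Coxeter elements.

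For (a) and (b) the paper just cites \cite{Oh16}; your self-contained arguments are fine. One small point on (b): the clause ``must involve $\pm\ep_{n+1}$ as their other component (because they are in the $a$-swing and live at these residues)'' is asserted rather than justified. A quick way to close this is to prove (c) first and then observe that $\ga_n^Q$ contains $\al_n$ but not $\al_{n+1}$ (since $n+1\notin B^Q(n)$), so $\ga_n^Q=\ep_c-\ep_{n+1}$ for some $c\le n$; since $\tau_Q$ fixes $\{\pm(n+1)\}$, every $\tau_Q^m(\ga_n^Q)$ keeps $\pm\ep_{n+1}$ as one component, and similarly for residue $n+1$. Your proof of (c) does not use (b), so this reordering costs nothing.
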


\begin{proof} The first assertion is  \cite[Lemma 2.14]{Oh16}.  Recall that any reduced expression of $\tau_Q$ is adapted to $Q$. Since $\xi_{n}=\xi_{n+1}$, there exists a reduced expression $\uw = s_{i_1}\cdots s_{i_{n+1}}$ of $\tau_Q$ and $u \in [1,n]$ such that
$\{ i_{u}, i_{u+1} \} = \{ n,n+1\}$. Then we have
$$   \tau_Q(n+1) =  s_{i_1}\cdots s_{i_{u-1}}s_{n}s_{n+1}(n+1) =  s_{i_1}\cdots s_{i_{u-1}}(-(n+1)) =-(n+1),$$
as desired.
\end{proof}

Using the results above,
we can label the $\Gamma_Q$ in terms of $\Phi^+$ without computation
$\be^{\uw_0}_k$'s or the bijection $\phi_{Q}|_{\Gamma_Q}$.

\begin{example} \label{ex: label D}
Let us consider the following $\vee$-fixed Dynkin quiver $Q$ of type $D_5$:
$$Q \ =\raisebox{1.2em}{ \xymatrix@R=0.5ex@C=6ex{ &&&*{\circ}<3pt> \ar@{<-}[dl]^<{5 }_<{ \ \ \ _{\underline{0}}} \\
*{\circ}<3pt> \ar@{->}[r]^<{ _{\underline{3}} \ \  }_<{1} & *{\circ}<3pt>  \ar@{->}[r]^<{  _{\underline{2}} \ \  }_<{2} & *{\circ}<3pt>  \ar@{->}[ur]  \ar@{->}[dr]^<{  _{\underline{1}} \ \  }_<{3} \\
&&&*{\circ}<3pt> \ar@{}[l]_>{ \qquad\qquad\qquad _{\underline{0}}}^<{ \ \ 4}  }}$$
By ~\eqref{eq: bijection}, the $\Gamma_Q$ without labeling are given as follows:
\begin{align*}
 \raisebox{3.2em}{ \scalebox{0.7}{\xymatrix@!C=2ex@R=2ex{
(i\setminus p) &-6  &-5  &-4 &-3 &  -2 & -1 & 0 & 1 & 2  & 3  \\
1&&&& \circ \ar[dr]  && \circ \ar[dr]&& \circ \ar[dr]&& \circ   \\
2&&& \circ \ar[dr]\ar[ur]  && \circ \ar[dr]\ar[ur] && \circ\ar[dr]\ar[ur] && \circ \ar[ur]   \\
3&& \circ \ar[ddr] \ar[dr]\ar[ur]  && \circ \ar[ddr]\ar[dr]\ar[ur] && \circ \ar[ddr]\ar[dr]\ar[ur]&& \circ \ar[ur]  \\
4& \circ \ar[ur]  && \circ \ar[ur]&& \circ \ar[ur]&& \circ \ar[ur]   \\
5& \circ  \ar[uur] && \circ  \ar[uur]&& \circ  \ar[uur] && \circ   \ar[uur]
}}}
\end{align*}
Using (i) Proposition~\ref{prop: label1} and (ii)  Lemma~\ref{lem: tauQ fixed D} and Proposition~\ref{prop: label2} in order, we can complete the label of $\Gamma_Q$
as follows:

\begin{align*}
{\rm (i)} \raisebox{4.1em}{ \scalebox{0.6}{\xymatrix@!C=1ex@R=2ex{
(i\setminus p) &-6  &-5  &-4 &-3 &  -2 & -1 & 0 & 1 & 2  & 3  \\
1&&&&  \ep_1^*  +\ep_4^\dagger\ \ar[dr]  &&\ep_3^\ddagger\ar[dr]&& \ep_2 ^\star\ar[dr]&&\ep_1^*  \\
2&&& \ep_2 ^\star+\ep_4^\dagger\ar[dr]\ar[ur]  &&\ep_1^*+\ep_3^\ddagger \ar[dr]\ar[ur] &&\ep_2 ^\star\ar[dr]\ar[ur] &&\ep_1^*\ar[ur]   \\
3&&\ep_3^\ddagger+\ep_4^\dagger\ar[ddr] \ar[dr]\ar[ur]  && \ep_2 ^\star+\ep_3^\ddagger\ar[ddr]\ar[dr]\ar[ur] &&\ep_1^*+\ep_2 ^\star\ar[ddr]\ar[dr]\ar[ur]&&\ep_1^*\ar[ur]  \\
4& \ep_4^\dagger\ar[ur]  &&\ep_3^\ddagger\ar[ur]&& \ep_2 ^\star\ar[ur]&&\ep_1^*\ar[ur]   \\
5& \ep_4^\dagger \ar[uur] &&\ep_3^\ddagger \ar[uur]&& \ep_2 ^\star \ar[uur] &&\ep_1^*  \ar[uur]
}}} \hspace{-5ex}\To \
{\rm (ii)} \hspace{-1ex} \raisebox{4.1em}{ \scalebox{0.6}{\xymatrix@!C=2ex@R=1.5ex{
(i\setminus p) &-6  &-5  &-4 &-3 &  -2 & -1 & 0 & 1 & 2  & 3  \\
1&&&& \srt{1,4} \ar[dr]  && \srt{3,{\color{magenta}-4}} \ar[dr]&& \srt{2,{\color{red}-3}} \ar[dr]&& \srt{1, {\color{blue}-2}}   \\
2&&& \srt{2,4} \ar[dr]\ar[ur]  && \srt{1,3} \ar[dr]\ar[ur] && \srt{2,{\color{magenta}-4}}\ar[dr]\ar[ur] && \srt{1,{\color{red}-3}} \ar[ur]   \\
3&& \srt{3,4} \ar[ddr] \ar[dr]\ar[ur]  && \srt{2,3} \ar[ddr]\ar[dr]\ar[ur] && \srt{1,2} \ar[ddr]\ar[dr]\ar[ur]&& \srt{1,{\color{magenta}-4}} \ar[ur]  \\
4& \srt{4,5} \ar[ur]  && \srt{3,-5} \ar[ur]&& \srt{2,5} \ar[ur]&& \srt{1,5} \ar[ur]   \\
5& \srt{4,-5}  \ar[uur] && \srt{3,5}  \ar[uur]&& \srt{2,-5}  \ar[uur] && \srt{1,-5}   \ar[uur]
}}}
\end{align*}
Here, each symbol $^{\divideontimes}$ in (i) represents an $a$-swing in Proposition~\ref{prop: label1} and each color in (ii) represents an $(-a)$-shallow path in Proposition~\ref{prop: label2}.

\end{example}
Furthermore, in~\cite{Oh18}, the second named author proved
the following proposition by using
the labeling algorithm for $(\Gamma_Q)_0$ and considering all relative positions of $\pair{\al,\be}$'s.

\begin{proposition}[{\cite[Proposition 4.5]{Oh18} $($see also  \cite[Lemma 2.6]{BKM12}$)$}]
For any \pr $\up=\pair{\al,\be}$ and any Dynkin quiver $Q$ of type $D$, we have the followings:
\ben
\item   $\head_{[Q]}(\pair{\al,\be})$   is well-defined. In particular, if $\al+\be = \ga \in \Phi^+$, then    $\head_{[Q]}(\pair{\al,\be})=\ga.$
\item $\dg_{[Q]}(\pair{\al,\be}) \le 2$.
\ee
\end{proposition}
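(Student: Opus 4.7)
The plan is to prove both statements by a careful case analysis based on the explicit geometric labeling of $\Gamma_Q$ provided by Propositions~\ref{prop: label1} and~\ref{prop: label2}. Concretely, every positive root of $D_{n+1}$ has the form $\ep_a+\ep_b$ with $1\le |a|<|b|\le n+1$, so any pair $\pair{\al,\be}$ can be classified by the relationship between the two (or three, or four) components involved. First I would reduce to the case where $\pair{\al,\be}$ is \emph{not} already $[Q]$-simple; in particular we may assume that $\al$ and $\be$ are comparable under $\prec_{[Q]}$, since Theorem~\ref{thm: OS17}~\eqref{it: noncom} otherwise forces $(\al,\be)=0$ and the pair is $[Q]$-simple (so trivially its own head, with $\dg_{[Q]}=0$).

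For (1), the key observation is that when $\ga\seteq\al+\be\in\Phi^+$, the sectional-pair result Proposition~\ref{prop: dir Q cnt} and the swing/shallow-path structure force $\pair{\al,\be-\al}$ or $\pair{\al-\be,\be}$ (where applicable) to be sectional $[Q]$-minimal, so that the single exponent $\ga$ (regarded as $\gamma$-th coordinate $1$ and all others $0$) lies strictly below $\pair{\al,\be}$ in $\prec_{[Q]}^\ttb$. To see that $\gamma$ is the \emph{unique} $[Q]$-simple exponent below $\pair{\al,\be}$, I would argue that any other $[Q]$-simple $\us$ with $\wt(\us)=\ga$ and $\us\prec_{[Q]}^\ttb\pair{\al,\be}$ would have to contain a root either strictly $\prec_{[Q]}$-smaller than $\al$ or strictly $\prec_{[Q]}$-larger than $\be$; combined with the total-weight constraint and the description of swings, this leaves only the single-root exponent $\ga$. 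When $\al+\be\notin\Phi^+$, we again use the labeling to locate $\al$ and $\be$ inside the $a$-swings and $(-a)$-shallow paths they belong to, and check directly that there is a unique minimal exponent below the pair; essentially this is where one applies the type-$D$ Levi-restriction trick (via Remark~\ref{rmk: analy roots D}) to reduce to type-$A$ subconfigurations already handled in~\cite{Oh16,Oh17}.

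For (2), I would enumerate the relative positions in $\Gamma_Q$ of $\al$ and $\be$ using the swing picture: either (a) $\al$ and $\be$ lie on a common sectional path (in which case the pair is already $[Q]$-simple and $\dg_{[Q]}=0$, or it covers a $[Q]$-simple in one step so $\dg_{[Q]}\le 1$), (b) they share a component but lie on different swings/shallow paths, or (c) they share no component. In cases (b) and (c) I would exhibit an explicit intermediate exponent $\up'$ with $\us\prec_{[Q]}^\ttb\up'\prec_{[Q]}^\ttb\pair{\al,\be}$, where $\us$ is the head found in (1), and then show no strictly longer chain exists by a weight and convexity argument: any strictly $\prec_{[Q]}^\ttb$-increasing chain of exponents from $\us$ to $\pair{\al,\be}$ can involve at most two ``merging'' moves along the $\tau_Q$-orbit structure, because each swing contributes at most one splitting and shallow paths contribute no additional layers (this is the content of the component-counting in Remark~\ref{rmk: analy roots D}).

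The main obstacle will be case (b) when both $\al$ and $\be$ involve the exceptional spin components $\ep_{\pm(n+1)}$, where the swing has a forked shape and one must use Lemma~\ref{lem: tauQ fixed D}~\eqref{it: fixed D} to pair spin roots correctly across the $\tau_Q$-orbit; controlling uniqueness of the head in this configuration and simultaneously bounding the chain length by $2$ requires checking that no three-step refinement $\us\prec\up_1\prec\up_2\prec\pair{\al,\be}$ is consistent with the bi-lexicographic definition of $\prec_{[Q]}^\ttb$ over \emph{all} $\uw_0\in[Q]$. This is handled by choosing two distinguished reduced expressions in $[Q]$ — one reading $\al$ early and the other reading $\be$ late — and observing that any putative $\up_1,\up_2$ would violate the compatibility with at least one of them.
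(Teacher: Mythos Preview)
Your approach matches the paper's: the paper does not prove this proposition itself but cites \cite{Oh18}, stating explicitly that the argument there proceeds ``by using the labeling algorithm for $(\Gamma_Q)_0$ and considering all relative positions of $\pair{\al,\be}$'s,'' and then records the outcome of that case analysis as the pictorial classification in \eqref{eq: dist al, be 1}, \eqref{eq: dist al be 2}, and Table~\ref{table: al,be D}. Your plan --- case-split on the component structure via Propositions~\ref{prop: label1} and~\ref{prop: label2}, locate the head, and bound chain length by exhibiting the intermediate exponents --- is exactly this.

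One caution: your reduction ``incomparable $\Rightarrow (\al,\be)=0 \Rightarrow [Q]$-simple'' has a gap. Theorem~\ref{thm: OS17}\,\eqref{it: noncom} does give orthogonality, but orthogonality alone does not imply minimality under $\prec^\ttb_{[Q]}$; the bi-lexicographic order is about positions in reduced words, not the pairing. In the end the claim is true in type $D$ --- every non-simple configuration in \eqref{eq: dist al, be 1}--\eqref{eq: dist al be 2} has $\al,\be$ comparable --- but that is a \emph{consequence} of the classification, not a shortcut into it. You should either drop this reduction and let the case analysis handle incomparable pairs directly, or verify separately (e.g.\ via a compatible-reading argument as in Theorem~\ref{thm: OS17}\,\eqref{it: comp reading}) that an incomparable pair cannot dominate any exponent in $\prec^\ttb_{[Q]}$.
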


The relative positions for all \prs $\pair{\al,\be}$ with $\dg_Q(\pair{\al,\be}) \ne 0$ are classified and described, which do not depend on the choice of $Q$, as follows (\cite{Oh18}):
\begin{equation} \label{eq: dist al, be 1}
\begin{aligned}
& \scalebox{0.79}{{\xy
(-20,0)*{}="DL";(-10,-10)*{}="DD";(0,20)*{}="DT";(10,10)*{}="DR";
"DT"+(-30,-4); "DT"+(140,-4)**\dir{.};
"DD"+(-20,-6); "DD"+(150,-6) **\dir{.};
"DD"+(-20,-10); "DD"+(150,-10) **\dir{.};
"DT"+(-32,-4)*{\scriptstyle 1};
"DT"+(-32,-8)*{\scriptstyle 2};
"DT"+(-32,-12)*{\scriptstyle \vdots};
"DT"+(-32,-16)*{\scriptstyle \vdots};
"DT"+(-34,-36)*{\scriptstyle n};
"DT"+(-33,-40)*{\scriptstyle n+1};
"DL"+(-10,0); "DD"+(-10,0) **\dir{-};"DL"+(-10,0); "DT"+(-14,-4) **\dir{.};
"DT"+(-6,-4); "DR"+(-10,0) **\dir{.};"DR"+(-10,0); "DD"+(-10,0) **\dir{-};
"DL"+(-6,0)*{\scriptstyle \be};
"DL"+(0,0)*{{\rm(i)}};
"DR"+(-14,0)*{\scriptstyle \al};
"DL"+(-10,0)*{\bullet};
"DR"+(-10,0)*{\bullet};
"DT"+(-6,-4); "DT"+(-14,-4) **\crv{"DT"+(-10,-2)};
"DT"+(-5.5,-6)*{{\scriptstyle(1,p) }};
"DT"+(-15,-6)*{{\scriptstyle(1,p-2) }};
"DT"+(-10,0)*{\scriptstyle 2};
"DD"+(-10,0)*{\bullet};
"DD"+(-10,-2)*{\scriptstyle \us=\al+\be};
"DL"+(15,-3); "DD"+(15,-3) **\dir{-};
"DR"+(11,-7); "DD"+(15,-3) **\dir{-};
"DT"+(11,-7); "DR"+(11,-7) **\dir{-};
"DT"+(11,-7); "DL"+(15,-3) **\dir{-};
"DL"+(19,-3)*{\scriptstyle \be};
"DR"+(7,-7)*{\scriptstyle \al};
"DL"+(30,0)*{{\rm(ii)}};
"DL"+(15,-3)*{\bullet};
"DL"+(31,12.5)*{\bullet};
"DL"+(31,14.5)*{\scriptstyle s_1};
"DL"+(25,-13)*{\bullet};
"DL"+(25,-15)*{\scriptstyle s_2};
"DR"+(11,-7)*{\bullet};
"DD"+(28,4); "DD"+(38,-6) **\dir{-};
"DD"+(28,4); "DD"+(50,26) **\dir{.};
"DD"+(48,4); "DD"+(38,-6) **\dir{-};
"DD"+(48,4); "DD"+(70,26) **\dir{.};
"DD"+(82,18); "DD"+(74,26) **\dir{.};
"DD"+(82,18); "DD"+(58,-6) **\dir{-};
"DD"+(48,4); "DD"+(58,-6) **\dir{-};
"DD"+(82,18)*{\bullet};
"DD"+(78,18)*{\scriptstyle \al};
"DD"+(28,4)*{\bullet};
"DD"+(32,4)*{\scriptstyle \be};
"DD"+(74,26); "DD"+(70,26) **\crv{"DD"+(72,28)};
"DD"+(76,24)*{{\scriptstyle(1,p) }};
"DD"+(67,24)*{{\scriptstyle(1,p-2) }};
"DD"+(72,30)*{\scriptstyle 2};
"DD"+(38,14)*{\bullet};
"DD"+(45,14)*{\scriptstyle \us=\al+\be};
"DL"+(59,0)*{{\rm(iii)}};
"DD"+(48,4); "DD"+(38,14) **\dir{.};
"DD"+(80,4); "DD"+(90,-6) **\dir{-};
"DD"+(96,0); "DD"+(90,-6) **\dir{-};
"DD"+(96,0); "DD"+(102,-6) **\dir{-};
"DD"+(122,16); "DD"+(102,-6) **\dir{-};
"DD"+(80,4); "DD"+(102,26) **\dir{.};
"DD"+(122,16); "DD"+(112,26) **\dir{.};
"DD"+(96,0); "DD"+(117,21) **\dir{.};
"DD"+(96,0); "DD"+(86,10) **\dir{.};
"DD"+(117,21)*{\bullet};
"DD"+(114,21)*{\scriptstyle s_1};
"DD"+(86,10)*{\bullet};
"DD"+(89,10)*{\scriptstyle s_2};
"DD"+(102,26); "DD"+(112,26) **\crv{"DD"+(107,28)};
"DD"+(113,24)*{{\scriptstyle(1,p) }};
"DD"+(102,24)*{{\scriptstyle(1,p-l) }};
"DD"+(107,29)*{\scriptstyle l >2};
"DD"+(80,4)*{\bullet};
"DL"+(107,0)*{{\rm (iv)}};
"DD"+(84,4)*{\scriptstyle \be};
"DD"+(122,16)*{\bullet};
"DD"+(117,16)*{\scriptstyle \al};
%
%%%%%%%%%%%%%%%%%%%%%%%%%
"DD"+(119,4); "DD"+(129,-6) **\dir{-};
"DD"+(135,0); "DD"+(129,-6) **\dir{-};
"DD"+(119,4); "DD"+(141,26) **\dir{-};
"DD"+(154,19); "DD"+(147,26) **\dir{-};
"DD"+(135,0); "DD"+(154,19) **\dir{-};
"DD"+(119,4)*{\bullet};
"DL"+(146,0)*{{\rm (v)}};
"DD"+(123,4)*{\scriptstyle \be};
"DD"+(141,26); "DD"+(147,26) **\crv{"DD"+(144,28)};
"DD"+(148,24)*{{\scriptstyle(1,p) }};
"DD"+(139,24)*{{\scriptstyle(1,p-2) }};
"DD"+(154,19)*{\bullet};
"DD"+(154,17)*{\scriptstyle \al};
"DD"+(129,-10)*{\bullet};
"DD"+(129,-8)*{\scriptstyle s_1};
"DD"+(129,-6)*{\bullet};
"DD"+(129,-4)*{\scriptstyle s_2};
"DD"+(144,29)*{\scriptstyle 2};
\endxy}}
\allowdisplaybreaks \\
& \scalebox{0.79}{{\xy
(-20,0)*{}="DL";(-10,-10)*{}="DD";(0,20)*{}="DT";(10,10)*{}="DR";
"DT"+(-30,-4); "DT"+(120,-4)**\dir{.};
"DD"+(-20,-6); "DD"+(130,-6) **\dir{.};
"DD"+(-20,-10); "DD"+(130,-10) **\dir{.};
"DT"+(-34,-4)*{\scriptstyle 1};
"DT"+(-34,-8)*{\scriptstyle 2};
"DT"+(-34,-12)*{\scriptstyle \vdots};
"DT"+(-34,-16)*{\scriptstyle \vdots};
"DT"+(-36,-36)*{\scriptstyle n};
"DT"+(-34,-40)*{\scriptstyle n+1};
"DD"+(-17,4); "DD"+(-7,-6) **\dir{-};
"DD"+(-1,0); "DD"+(-7,-6) **\dir{-};
"DD"+(-17,4); "DD"+(2,23) **\dir{-};
"DD"+(12,13); "DD"+(2,23) **\dir{-};
"DD"+(-1,0); "DD"+(12,13) **\dir{-};
"DD"+(-17,4)*{\bullet};
"DD"+(2,23)*{\bullet};
"DL"+(10,0)*{{\rm (vi)}};
"DD"+(-13,4)*{\scriptstyle \be};
"DD"+(2,25)*{\scriptstyle s_3};
"DD"+(12,13)*{\bullet};
"DD"+(12,11)*{\scriptstyle \al};
"DD"+(-7,-10)*{\bullet};
"DD"+(-7,-8)*{\scriptstyle s_1};
"DD"+(-7,-6)*{\bullet};
"DD"+(-7,-4)*{\scriptstyle s_2};
"DD"+(15,4); "DD"+(25,-6) **\dir{-};
"DD"+(15,4); "DD"+(5,-6) **\dir{-};
"DD"+(15,4); "DD"+(30,19) **\dir{.};
"DD"+(40,9);"DD"+(25,-6) **\dir{-};
"DD"+(40,9);"DD"+(30,19) **\dir{.};
"DD"+(40,9)*{\bullet};
"DD"+(30,19)*{\bullet};
"DD"+(30,21)*{\scriptstyle s_1};
"DD"+(5,-6)*{\circ};
"DD"+(5,-10)*{\circ};
"DD"+(5,-8)*{\scriptstyle \be};
"DD"+(25,-6.2)*{\circ};
"DD"+(25,-8.2)*{\scriptstyle s_2};
"DD"+(25,-10.2)*{\circ};
"DD"+(37,9)*{\scriptstyle \al};
"DD"+(25,4)*{{\rm (vii)}};
"DD"+(45,4); "DD"+(55,-6) **\dir{-};
"DD"+(45,4); "DD"+(35,-6) **\dir{-};
"DD"+(45,4); "DD"+(67,26) **\dir{.};
"DD"+(80,19);"DD"+(55,-6) **\dir{-};
"DD"+(80,19);"DD"+(73,26) **\dir{.};
"DD"+(80,19)*{\bullet};
"DD"+(55,4)*{{\rm (viii)}};
"DD"+(84,19)*{\scriptstyle \al};
"DD"+(35,-6)*{\circ};
"DD"+(35,-10)*{\circ};
"DD"+(55,-6.2)*{\circ};
"DD"+(55,-8.2)*{\scriptstyle \us=\al+\be};
"DD"+(55,-10.2)*{\circ};
"DD"+(35,-8)*{\scriptstyle \be};
"DD"+(67,26); "DD"+(73,26) **\crv{"DD"+(70,28)};
"DD"+(74,24)*{{\scriptstyle(1,p) }};
"DD"+(66,24)*{{\scriptstyle(1,p-2) }};
"DD"+(70,30)*{\scriptstyle 2};
"DD"+(92,16); "DD"+(114,-6) **\dir{-};
"DD"+(92,16); "DD"+(70,-6) **\dir{-};
"DD"+(92,4)*{ {\rm (ix)}};
"DD"+(114,-6)*{\circ};
"DD"+(114,-10)*{\circ};
"DD"+(114,-8)*{\scriptstyle \al};
"DD"+(70,-6)*{\circ};
"DD"+(92,16)*{\bullet};
"DD"+(92,18)*{\scriptstyle \us=\al+\be};
"DD"+(70,-10)*{\circ};
"DD"+(70,-8)*{\scriptstyle \be};
\endxy}} \hspace{-3em} \dg_Q(\pair{\al,\be})=1,
\end{aligned}
\end{equation}
and
\begin{align} \label{eq: dist al be 2}
\scalebox{0.79}{{\xy
(-20,0)*{}="DL";(-10,-10)*{}="DD";(0,20)*{}="DT";(10,10)*{}="DR";
"DT"+(-30,-4); "DT"+(70,-4)**\dir{.};
"DD"+(-20,-6); "DD"+(80,-6) **\dir{.};
"DD"+(-20,-10); "DD"+(80,-10) **\dir{.};
"DT"+(-32,-4)*{\scriptstyle 1};
"DT"+(-32,-8)*{\scriptstyle 2};
"DT"+(-32,-12)*{\scriptstyle \vdots};
"DT"+(-32,-16)*{\scriptstyle \vdots};
"DT"+(-33,-36)*{\scriptstyle n};
"DT"+(-34,-40)*{\scriptstyle n+1};
"DD"+(-10,4); "DD"+(0,-6) **\dir{-};
"DD"+(6,0); "DD"+(0,-6) **\dir{-};
"DD"+(6,0); "DD"+(12,-6) **\dir{-};
"DD"+(32,16); "DD"+(12,-6) **\dir{-};
"DD"+(-10,4); "DD"+(12,26) **\dir{.};
"DD"+(32,16); "DD"+(22,26) **\dir{.};
"DD"+(6,0); "DD"+(27,21) **\dir{.};
"DD"+(6,0); "DD"+(-4,10) **\dir{.};
"DD"+(12,26); "DD"+(22,26) **\crv{"DD"+(17,28)};
"DD"+(23,24)*{{\scriptstyle(1,p) }};
"DD"+(11,24)*{{\scriptstyle(1,p-2) }};
"DD"+(17,30)*{\scriptstyle 2};
"DD"+(-10,4)*{\bullet};
"DL"+(17,0)*{{\rm(x)}};
"DD"+(-6,4)*{\scriptstyle \be};
"DD"+(-4,12)*{\scriptstyle \eta_2};
"DD"+(32,16)*{\bullet};
"DD"+(27,16)*{\scriptstyle \al};
"DD"+(28,23)*{\scriptstyle \eta_1};
"DD"+(27,21)*{\scriptstyle \bullet};
"DT"+(2,-36)*{\scriptstyle \bullet};
"DT"+(2,-38)*{\scriptstyle \tau_1};
"DT"+(2,-40)*{\scriptstyle \bullet};
"DT"+(2,-42)*{\scriptstyle \tau_2};
"DT"+(-10,-36)*{\scriptstyle \bullet};
"DT"+(-10,-38)*{\scriptstyle \zeta_1};
"DT"+(-10,-40)*{\scriptstyle \bullet};
"DT"+(-10,-42)*{\scriptstyle \zeta_2};
"DT"+(-4,-30)*{\scriptstyle \bullet};
"DT"+(-4,-28)*{\scriptstyle \us=\al+\be};
"DD"+(35,4); "DD"+(45,-6) **\dir{-};
"DD"+(51,0); "DD"+(45,-6) **\dir{-};
"DD"+(51,0); "DD"+(57,-6) **\dir{-};
"DD"+(69,8); "DD"+(57,-6) **\dir{-};
"DD"+(35,4); "DD"+(54,23) **\dir{.};
"DD"+(69,8); "DD"+(54,23) **\dir{.};
"DD"+(51,0); "DD"+(64,13) **\dir{.};
"DD"+(51,0); "DD"+(41,10) **\dir{.};
"DD"+(35,4)*{\bullet};
"DL"+(62,0)*{{\rm(xi)}};
"DD"+(39,4)*{\scriptstyle \be};
"DD"+(41,10)*{\scriptstyle \bullet};
"DD"+(45,10)*{\scriptstyle \eta_2};
"DD"+(69,8)*{\bullet};
"DD"+(64,12.5)*{\bullet};
"DD"+(61,12.5)*{\scriptstyle \eta_1};
"DD"+(65,8)*{\scriptstyle \al};
"DT"+(47,-36)*{\scriptstyle \bullet};
"DT"+(47,-38)*{\scriptstyle \tau_1};
"DT"+(35,-36)*{\scriptstyle \bullet};
"DT"+(35,-38)*{\scriptstyle \zeta_1};
"DT"+(47,-40)*{\scriptstyle \bullet};
"DT"+(47,-42)*{\scriptstyle \tau_2};
"DT"+(35,-40)*{\scriptstyle \bullet};
"DT"+(35,-42)*{\scriptstyle \zeta_2};
"DT"+(41,-32)*{\scriptstyle s_2};
"DT"+(44,-5)*{\scriptstyle s_1};
"DT"+(44,-7)*{\scriptstyle \bullet};
"DT"+(41,-30)*{\scriptstyle \bullet};
\endxy}} \ \dg_Q(\pair{\al,\be})=2.
\end{align}
Here we only exhibit the cases when $\res^{[Q]}(\al) \le \res^{[Q]}(\be)$ and
\ben
\item we set $\us= \seq{s_1,\ldots,s_r}=  \head_Q(\pair{\al,\be})$,
\item
 $\be$ in (vii), (viii) and (ix)  is one of two vertices $\circ$ and then $s_2$ in (vii) (resp.\ $\al+\be$ in (viii), $\al$ in (ix)) is determined by
the choice,
\item every \ex $\um \prec_Q^\ttb \pair{\al,\be}$ is also described in the above pictures ($\prec_Q \seteq \prec_{[Q]}$),
\item in~\eqref{eq: dist al, be 1}, the positive roots with residue $n$ or $n+1$ appear   in (v), (vi), (vii), (viii), (ix) (x) and (xi),
\item see Example~\ref{ex: Uding D} below for $ \dg_Q(\pair{\al,\be})=2$ cases.
\ee

In each case, the value $(\al,\be)$ does not depend on the choice of $Q$ and is given as follows:
\begin{table}[h]
\centering
{ \arraycolsep=1.6pt\def\arraystretch{1.5}
\begin{tabular}{|c||c|c|c|c|c|c|c|c|c|c|c|c|}
\hline
 &   (i) & (ii)  & (iii)  & (iv)  & (v)  & (vi)  & (vii)  & (viii)  & (ix)  & (ix)  & (x)  \\ \hline
$(\al,\be)$ &  $-1$ &  $0$ &  $-1$  &  $0$ &  $0$ &  $1$ &  $0$ &  $-1$ &  $-1$ &  $-1$ &  $0$ \\ \hline
\end{tabular}
}\\[1.5ex]
    \caption{$(\al,\be)$ for non $[Q]$-simple \prs of type $D_{n+1}$}
    \protect\label{table: al,be D}
\end{table}

\begin{example} \label{ex: Uding D}
Using $\Gamma_Q$ in Example~\ref{ex: label D},
\ben
\item $ \srt{1,-4} \prec_Q^\ttb \pair{\srt{1,-2},\srt{2,-4}}$ corresponds to (i),
\item $ \pair{\srt{2,-3},\srt{1,-4}} \prec_Q^\ttb \pair{\srt{1,-3},\srt{2,-4}}$ corresponds to (ii),
\item $ \srt{1,4}  \prec_Q^\ttb  \pair{\srt{1,-2},\srt{2,4}}$ corresponds to (iii),
\item $ \pair{\srt{1,4},\srt{2,-3}} \prec_Q^\ttb  \pair{\srt{1,-3},\srt{2,4}}$ corresponds to (iv),
\item $ \pair{\srt{1,5},\srt{1,-5}} \prec_Q^\ttb  \pair{\srt{1,-2},\srt{1,2}}$ corresponds to (v),
\item $ \seq{\srt{2,-3} ,\srt{1,5},\srt{1,-5}} \prec_Q^\ttb  \pair{\srt{1,-3},\srt{1,2}}$ corresponds to (vi),
\item $ \pair{\srt{3,-4}, \srt{1,5}} \prec_Q^\ttb  \pair{\srt{1,-4},\srt{3,5}}$ and $ \pair{\srt{3,-4} , \srt{1,-5}} \prec_Q^\ttb  \pair{\srt{1,-4},\srt{3,-5}}$  correspond to (vii),
\item $ \srt{1,5}  \prec_Q^\ttb  \pair{\srt{1,-3},\srt{3,5}}$ and $ \srt{1,-5}  \prec_Q^\ttb  \pair{\srt{1,-3},\srt{3,-5}}$  correspond to (viii),
\item $ \srt{1,3}  \prec_Q^\ttb  \pair{\srt{1,5},\srt{3,-5}}$ and $ \srt{1,3}  \prec_Q^\ttb  \pair{\srt{1,-5},\srt{3,5}}$  correspond to (ix),
\item $ \srt{1,2} \prec_Q^\ttb    \pair{\srt{1,-5},\srt{2,5}}, \pair{\srt{1,5},\srt{2,-5}} ,\pair{\srt{1,3},\srt{2,-3}}  \prec_Q^\ttb  \pair{\srt{1,-3},\srt{2,3}}$ corresponds to (x),
\item $  \pair{\srt{1,2},\srt{3,-4}} \prec_Q^\ttb   \begin{matrix}   \seq{\srt{1,-5},\srt{2,5},\srt{3,-4}} \\ \seq{\srt{1,5},\srt{2,-5},\srt{3,-4}} \\ \pair{\srt{1,3},\srt{2,-4}} \end{matrix}  \prec_Q^\ttb  \pair{\srt{1,-4},\srt{2,3}}$ corresponds to (xi).
\ee
\end{example}

\smallskip

For $\Dynkin$ of type $D_{n+1}$, the degree polynomials $\de_{i,j}^{D_{n+1}}(t)$ for $i,j \in \Dynkin_0$ are given in the following explicit form \cite{Oh16,Oh18,Fuj19}:
\begin{align}\label{eq: d poly D}
\de_{i,j}^{D_{n+1}}(t) + \delta_{i,j^*}t^{\sfh-1} = \bc
  \dfrac{\displaystyle \sum_{s=1}^{\min(i,j)} (t^{|i-j|+2s-1}+t^{2n-i-j+2s-1})}{1+ \delta(\max(i,j) \ge n)}  & \text{ if } \min(i,j)<n,\\[3ex]
\displaystyle \sum_{s=1}^{\lfloor (n+ \delta_{i,j}) /2 \rfloor}  t^{4s-1 -2\delta(i,j)} & \text{ otherwise.}
\ec
\end{align}

\begin{remark} \label{remark: dist 2}
The coefficient $t^{s-1}$ of $\de_{i,j}^\Dynkin(t)$ is $2$  if and only if
\begin{align}   \label{eq: double pole}
2 \le i,j \le n-1, \ \ i+j \ge n+1, \ \ \sfh+2 -i-j \le s \le  i+j \ \text{ and }  \ s\equiv_2 i+j
\end{align}
(see \cite[Lemma 3.2.4]{KKK13b} also).
Equivalently, if $\dg_Q(\pair{\al,\be})=2$, the relative positions of the \prs $\pair{\al,\be}$ in $\Gamma_Q$ are described as in~\eqref{eq: dist al be 2},
where
\bnum
\item[{\rm (x)}] happens when $\al+\be=\ga=\us$ with $\mul(\ga)=2$ and we have
$$      \ga \prec^\ttb_Q \pair{\eta_1,\eta_2},  \pair{\tilde{\tau}_1,\tilde{\zeta}_1}, \pair{\tilde{\tau}_2,\tilde{\zeta}_2} \prec^\ttb_{Q}  \pair{\al,\be} $$
where
$\{ \tilde{\tau}_1,\tilde{\tau}_2\} =\{\tau_1,\tau_2 \}$, $\{ \tilde{\zeta}_1,\tilde{\zeta}_2\} =\{\zeta_1,\zeta_2 \}$ and $\tilde{\tau}_i+\tilde{\zeta}_i =\ga$ $(i=1,2)$,
\item[{\rm (xi)}] happens when $\head_Q(\pair{\al,\be}) = \pair{\ga_1,\ga_2}$ and we have
$$     \pair{ \ga_1,\ga_2} \prec^\ttb_Q \pair{\eta_1,\eta_2},  \seq{\tilde{\tau}_1,\tilde{\zeta}_1,\ga_1}, \seq{\tilde{\tau}_2,\tilde{\zeta}_2,\ga_1} \prec^\ttb_{Q}  \pair{\al,\be} $$
where
$\{ \tilde{\tau}_1,\tilde{\tau}_2\} =\{\tau_1,\tau_2 \}$, $\{ \tilde{\zeta}_1,\tilde{\zeta}_2\} =\{\zeta_1,\zeta_2 \}$ and $\tilde{\tau}_i+\tilde{\zeta}_i =\ga_1$ $(i=1,2)$.
\ee
\end{remark}

\subsubsection{$A_{2n-1}$-case} \label{subsub:A} Let $Q$ be a Dynkin quiver of type $A_{2n-1}$.
The simple roots and $\Phi^+_{A_{2n-1}}$ can be identified in $\rl^+ \subset  \soplus_{i=1}^{2n} \Z\ep_i$ as follows:
\begin{equation}\label{eq: PR A}
\begin{aligned}
& \al_i =\ep_i- \ep_{i+1} \quad \text{ for $1 \le i \le 2n-1$}, \\
&\Phi_{A_{2n-1}}^+  = \left\{  \ep_i -\ep_{j} = \sum_{ i \le k <j}  \al_k \ \left| \ 1 \le i \le j \le 2n  \right. \right\}.
\end{aligned}
\end{equation}

\begin{remark} \label{rmk: vee fixed A}
Note that for $\vee$-fixed Dynkin quiver $Q=(\Dynkin, \vxi)$ of type $A_{2n-1}$,
\ben
\item there is no path $\mathtt{p}$ in $Q$ passing through $n$,
\item $n$ is always sink or source of $Q$,
\ee
since $\vxi_{n-1}=\vxi_{n+1}$ and $|\vxi_n - \vxi_{n \pm 1}|=1$.
\end{remark}

We identify $\be \in \Phi^+_{A_{2n-1}}$ with a \emph{segment} $[a,b]$ for $1 \le a \le b \le 2n-1$, where $\be = \sum_{k=a}^b \al_k$.
For $[a,b]$, we call $a$  the \emph{first component} and $b$ the \emph{second component} of $\be$. If $\be$ is simple,
we sometimes write $\be$ as $[a]$ instead of $[a,a]$.

\begin{proposition}\cite[Theorem 1.11]{Oh17} \label{prop :labeling A} \hfill
\ben
\item Every positive root in an $N$-path shares the same first component
and  every positive root in an $S$-path shares the same second component.
\item For each $1\le i \le 2n-i$, there are exactly one $N$-path $\varrho^N_i$ containing $(2n-i)$ vertices and each vertex in
 $\varrho^N_i$ has $i$ as its first component. Conversely, every positive root with the first component $i$ appears in $\varrho^N_i$.
\item For each $1\le i \le 2n-i$, there are exactly one $S$-path $\varrho^S_i$ containing $i$ vertices and each vertex in
 $\varrho^S_i$ has $i$ as its second component. Conversely, every positive root with the second component $i$ appears in $\varrho^S_i$.
\ee
\end{proposition}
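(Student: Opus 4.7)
My plan is to prove the three assertions simultaneously by translating the local structure of arrows in $\Gamma_Q$ into segment language. The key local claim is: if $(i,p) \to (j,p+1)$ is an arrow of $\Gamma_Q$ with $\phi_Q(i,p)=(\,[a,b],0\,)$ and $\phi_Q(j,p+1)=(\,[a',b'],0\,)$, then the downward case $j=i+1$ forces $b=b'$ (segments share a second component), while the upward case $j=i-1$ forces $a=a'$ (same first component). Granting this, part~(1) is immediate, since an $N$-path (resp.\ $S$-path) consists entirely of upward (resp.\ downward) arrows, so its vertices share a common first (resp.\ second) component.

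For the local claim I would proceed by induction on Dynkin quivers via the reflection procedure~\eqref{eq: Qd properties}(b). The base case can be taken to be the linear quiver $Q_{\mathrm{lin}}$ with $\xi_i=-i$, whose AR-quiver is completely explicit via~\eqref{eq: gaQ} and~\eqref{eq: bijection}: at the vertex $(i,p)$ one reads off a segment whose endpoints depend linearly on $(i,p)$, and the segment-sharing property of arrows is visible by inspection. For the inductive step, passing from $Q$ to $s_iQ$ at a source $i$, every label $\beta\ne\alpha_i=[i,i]$ is replaced by $s_i\beta$, which acts on a segment $[a,b]$ by modifying at most one endpoint (swapping $i$ with $i+1$ whenever the segment has $i$ or $i+1$ as an endpoint). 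This elementary action preserves the "common first component" and "common second component" relations along any arrow that does not touch $\alpha_i$. The one vertex carrying the label $\alpha_i$ is deleted and reinserted at $(i^*,\xi_i-\sfh)$ on the opposite extremity of $\Gamma_{s_iQ}$, and there the local claim is verified directly, using the fact that $\alpha_i$ is the unique positive root whose first and second components both equal $i$ and hence sits at the intersection of the relevant $N$- and $S$-paths.

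For parts~(2) and~(3), a counting argument concludes. The positive roots with first component $i$ are exactly $\{[i,b]\mid i\le b\le 2n-1\}$, giving $2n-i$ such roots; those with second component $i$ are $\{[a,i]\mid 1\le a\le i\}$, giving $i$ such roots. By part~(1), any $N$-path (resp.\ $S$-path) has its vertices lying in a single first-component class (resp.\ second-component class), hence has at most $2n-i$ (resp.\ $i$) vertices. Since every positive root of $A_{2n-1}$ appears exactly once as a vertex of $\Gamma_Q$ (by Proposition~\ref{prop: range descrption} applied to type $A$) and the $N$-paths together cover all vertices along upward arrows, the maximum lengths are saturated by a unique $N$-path $\varrho^N_i$ and a unique $S$-path $\varrho^S_i$ for each $i\in\{1,\dots,2n-1\}$.

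The main obstacle is the tracking of the arrow-labeling property through the reflection step at the single vertex whose label is $\alpha_i$: one must check that the two-sided adjustment (deletion at one extremity, reinsertion at the other) does not break the segment-sharing rule along paths passing through these two positions. The symmetry in $\Gamma_Q$ arising from the involution $i\mapsto i^*$ handles this: the configuration of $N$- and $S$-paths incident to the new position $(i^*,\xi_i-\sfh)$ is the mirror of that incident to $(i,\xi_i)$, and the label at the new position is precisely $\alpha_i$, so both the "first component $=i$" and the "second component $=i$" paths remain coherent after the reflection.
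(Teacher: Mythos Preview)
The paper does not prove this proposition; it is quoted verbatim from \cite[Theorem 1.11]{Oh17}. So there is no argument here to compare against, and I evaluate your sketch on its own.

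Your overall architecture (local arrow-by-arrow claim, then counting) is the right one, and the preservation of the first/second-component relation under $s_i$ along arrows not touching $\alpha_i$ is correct. The genuine gap is the reinserted vertex. To close the induction you need that in $\Gamma_{s_iQ}$ the neighbors of $(i^*,\xi_i-\sfh)$, namely $(i^*\mp 1,\xi_i-\sfh+1)$, carry labels with first (resp.\ second) component $i$; equivalently, that their \emph{old} labels in $\Gamma_Q$ have first (resp.\ second) component $i+1$ (resp.\ $i-1$), so that $s_i$ moves the relevant endpoint to $i$. Your appeal to ``symmetry in $\Gamma_Q$ arising from $i\mapsto i^*$'' does not establish this: there is no label-preserving involution of $\Gamma_Q$ matching the right boundary near $(i,\xi_i)$ with the left boundary near $(i^*,\xi_i-\sfh)$. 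What actually determines those leftmost labels is the relation $\tau_Q(\mu)=-\alpha_i$ for the leftmost label $\mu$ of row $i^*$, together with the mesh additivity; one has to extract the endpoint information from that, and as written you have not.

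The counting step also has a hole. From part~(1) and the fact that the maximal $N$-paths partition $\Phi^+$, you get only that the $N$-paths with first component $a$ \emph{together} contain all $2n-a$ such roots; nothing yet forbids two disjoint $N$-paths sharing first component $a$. You need the independent fact that there are exactly $2n-1$ maximal $N$-paths. This follows from the shape of $\Gamma_Q$: a vertex $(j,p)$ is the terminal point of an $N$-path iff $j=1$ or $(j-1,p+1)\notin(\Gamma_Q)_0$, and a short computation with $r_1^Q$ and the number of edges oriented $j\to j-1$ in $Q$ shows this count is always $2n-1$. Once that is in place, pigeonhole plus the existence of $\alpha_a$ on some $N$-path gives exactly one $N$-path per first component and forces saturation.
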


As in $D_{n+1}$-case, we can label the $\Gamma_Q$ in
terms of $\Phi^+$ without computation $\be^{\uw_0}_k$'s or the
bijection $\phi_{Q}|_{\Gamma_Q}$ by using the result above and obtain the following proposition:

\begin{proposition}  \cite[Proposition 4.5]{Oh18} $($see also~\cite[Lemma 2.6]{BKM12}$)$
For any \pr $\up=\pair{\al,\be}$ and any Dynkin quiver $Q$ of type $A_n$, we have the followings:
\ben
\item $\head_{[Q]}(\pair{\al,\be})$ is well-defined. In particular, if $\al+\be = \ga \in \Phi^+$, then  $\head_{[Q]}(\pair{\al,\be}) =\ga.$
\item $\dg_{[Q]}(\pair{\al,\be}) \le 1$.
\ee
\end{proposition}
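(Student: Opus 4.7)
The plan is to exploit the segment description $\Phi^+_{A_n} = \{[a,b] \mid 1 \le a \le b \le n\}$ together with the explicit labeling of $\Gamma_Q$ provided by Proposition~\ref{prop :labeling A}: each positive root with first component $i$ (resp.\ second component $j$) sits in the unique $N$-path $\varrho^N_i$ (resp.\ $S$-path $\varrho^S_j$). Writing $\al = [a_1,b_1]$ and $\be = [a_2,b_2]$, I would then classify the pair $\pair{\al,\be}$ by the mutual position of these two segments, which is invariant under the choice of $Q$ since it depends only on the integer data $a_1,b_1,a_2,b_2$.

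Concretely, the cases are: (A) \emph{unlinked}, meaning the segments are either disjoint (non-adjacent) or one is strictly nested in the other with neither endpoint coinciding, as well as the trivially commuting endpoint-sharing situations; (B) \emph{adjacent}, that is $b_1 + 1 = a_2$ (up to swap), so $\al+\be = [a_1,b_2] \in \Phi^+$; (C) \emph{linked}, that is $a_1 < a_2 \le b_1 < b_2$, in which case the identity $\al+\be = [a_1,b_2] + [a_2-1,b_1]\cdot\delta(a_2\le b_1)+\cdots$ (more precisely $[a_1,b_2]+[a_2,b_1]$ whenever $a_2\le b_1$, or a nested variant otherwise) gives an alternative decomposition of the weight. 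In case (A) I would show that no other exponent has the same weight and lies below $\pair{\al,\be}$ in $\prec^\ttb_\cc$ — so $\pair{\al,\be}$ is $[Q]$-simple with $\head = \pair{\al,\be}$ and $\dg = 0$. In case (B) I would verify that $\ga \seteq [a_1,b_2]$ satisfies $\ga \prec^\ttb_\cc \pair{\al,\be}$ by locating $\ga$ on the union of the $N$-path through $\al$ and the $S$-path through $\be$ in $\Gamma_Q$ using Proposition~\ref{prop :labeling A}; convexity of $<_{\uw_0}$ combined with Proposition~\ref{prop: dir Q cnt} then forces $\ga$ to lie between $\al$ and $\be$ in every $\uw_0 \in [Q]$, for both the usual and the reversed reading, yielding $\head = \ga$ and $\dg = 1$. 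In case (C) I would set $\ga_1 = [a_1,b_2]$ and $\ga_2 = [a_2,b_1]$, observe that $\ga_1$ and $\ga_2$ are \emph{unlinked} (one strictly contains the other) so the pair $\pair{\ga_1,\ga_2}$ is $[Q]$-simple by case (A), and then show $\pair{\ga_1,\ga_2} \prec^\ttb_\cc \pair{\al,\be}$ by the same sectional positioning argument, obtaining $\head = \pair{\ga_1,\ga_2}$ and $\dg = 1$.

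The bound $\dg_{[Q]}(\pair{\al,\be}) \le 1$ would then follow from the observation that the operation $\pair{\al,\be} \mapsto \pair{\ga_1,\ga_2}$ sends a linked pair to an unlinked one (equivalently, to a $[Q]$-simple pair), and that in the adjacent case the result is the $[Q]$-simple positive root $\ga$ itself; thus no further step in $\prec^\ttb_\cc$ is possible below the putative head. Uniqueness of $\head_{[Q]}$ — the nontrivial content of part (1) — is where the main obstacle lies: one has to rule out the existence of a \emph{different} $[Q]$-simple exponent $\um \preceq^\ttb_\cc \pair{\al,\be}$ of the same weight. This requires showing that in type $A$ there is essentially a unique way to decompose the weight $\al+\be$ as a sum of two unlinked roots (when $\al+\be \notin \Phi^+$), and that any exponent involving three or more roots summing to $\al+\be$ necessarily factors through the intermediate pair $\pair{\ga_1,\ga_2}$ in the partial order $\prec^\ttb_\cc$. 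This I would establish by induction on $|\al+\be|$ together with the convexity property Theorem~\ref{thm: OS17}\eqref{it: noncom}, which forces all intermediate roots to have supports compatible with the segment $[\min(a_1,a_2),\max(b_1,b_2)]$.
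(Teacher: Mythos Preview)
The paper does not supply its own proof of this proposition; it is imported verbatim from \cite[Proposition 4.5]{Oh18} (with a pointer to \cite[Lemma 2.6]{BKM12}). Your segment case-analysis is precisely the method used in those references, and it is the right one for type $A$: the labeling algorithm of Proposition~\ref{prop :labeling A} reduces everything to the combinatorics of two intervals $[a_1,b_1]$ and $[a_2,b_2]$, and the three cases unlinked\,/\,adjacent\,/\,linked exhaust the possibilities.

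Two small corrections. First, endpoint-sharing pairs (e.g.\ $a_1=a_2$, $b_1<b_2$) are not ``trivially commuting'': they lie on the same $N$-path (or $S$-path) in $\Gamma_Q$, so they are comparable under $\prec_{[Q]}$. What makes them $[Q]$-simple is Proposition~\ref{prop: dir Q cnt}\,(a), not incomparability. Second, your uniqueness sketch for $\head_{[Q]}$ is the weakest part. In type $A$ the key fact that makes it clean is that the weight $\al+\be$ has at most two decompositions as an \emph{unordered pair} of positive roots --- namely $\{[a_1,b_1],[a_2,b_2]\}$ and $\{[a_1,b_2],[a_2,b_1]\}$ in the linked case --- and any longer exponent $\um$ with $\wt(\um)=\al+\be$ necessarily contains a sub-pair which is again linked or adjacent, hence can be shortened. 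Combined with the bilexicographic definition of $\prec^\ttb_{[Q]}$ and the convexity of $<_{\uw_0}$, this forces every chain below $\pair{\al,\be}$ to terminate at the single candidate $\{[a_1,b_2],[a_2,b_1]\}$ (or at $\ga$ in the adjacent case). The induction you propose works, but stating this structural fact about segment decompositions explicitly would close the argument.
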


Understanding the non-trivial Dynkin diagram automorphism $\vee$ on
$A_{2n-1}$ as an involution on $\{ 1,\ldots,2n\}$, $\vee$ normalizes
$\sfW_{A_{2n-1}}$ via $\vee s_i \vee  = s_{\vee(i)}$.

\begin{lemma} \label{lem: vee comuute A}
For $\vee$-fixed Dynkin quiver $Q=(\Dynkin,\vxi)$ of type $A_{2n-1}$ and $i \in \Dynkin_0$, we have
$$   \ga_{\vee(i)}^Q    = \vee(\ga^Q_i).$$
Here we understand $\vee$ as an involution of $\Phi^+_{A_{2n-1}}$.
\end{lemma}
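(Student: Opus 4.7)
The plan is to reduce the claim to a statement about the quiver $Q$ itself, using the combinatorial description
\[
\ga_i^Q = \sum_{j \in B^Q(i)} \al_j
\]
recorded in~\eqref{eq: gaQ}, where $B^Q(i)$ is the set of vertices admitting a directed path to $i$ in $Q$. In these terms, the desired identity $\ga_{\vee(i)}^Q = \vee(\ga_i^Q)$ is equivalent to the set equality $B^Q(\vee(i)) = \vee(B^Q(i))$.

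First, I will observe that when $Q=(\Dynkin,\vxi)$ is $\vee$-fixed, the map $\vee$ acts as a \emph{quiver} automorphism of $Q$ (not merely of its underlying graph). Indeed, $\vee$ is a Dynkin diagram automorphism, so it preserves the set of edges of $\Dynkin$; and the hypothesis $\vxi_{\vee(j)}=\vxi_j$ for all $j$ shows that if $i\to j$ in $Q$ (equivalently $d(i,j)=1$ and $\vxi_i>\vxi_j$), then $d(\vee(i),\vee(j))=1$ and $\vxi_{\vee(i)}=\vxi_i>\vxi_j=\vxi_{\vee(j)}$, so $\vee(i)\to\vee(j)$ in $Q$.

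Next, applying $\vee$ pointwise to any directed path $j=j_0\to j_1\to\cdots\to j_r=i$ in $Q$ yields a directed path $\vee(j_0)\to\cdots\to\vee(j_r)=\vee(i)$, and the same argument applied to $\vee^{-1}=\vee$ gives the reverse implication. Consequently $j\in B^Q(i)\iff \vee(j)\in B^Q(\vee(i))$, i.e.\ $B^Q(\vee(i))=\vee(B^Q(i))$.

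Substituting into~\eqref{eq: gaQ}, I obtain
\[
\ga_{\vee(i)}^Q=\sum_{j\in B^Q(\vee(i))}\al_j=\sum_{k\in B^Q(i)}\al_{\vee(k)}=\vee\!\Bigl(\sum_{k\in B^Q(i)}\al_k\Bigr)=\vee(\ga_i^Q),
\]
which is the claim. There is no real obstacle here: once the $\vee$-fixed hypothesis is unpacked into the statement that $\vee$ is a quiver automorphism of $Q$, the combinatorial formula for $\ga_i^Q$ immediately gives the equivariance. (Alternatively, one could argue directly from $\ga_i^Q=(1-\tau_Q)\varpi_i$: $\vee$-fixedness implies $\vee$ commutes with $\tau_Q$, and $\varpi_{\vee(i)}=\vee(\varpi_i)$, yielding the same conclusion.)
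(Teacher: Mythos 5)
Your proof is correct and follows essentially the same route as the paper: the paper also reduces the claim to the equivalence $j \in B^Q(i) \iff \vee(j) \in B^Q(\vee(i))$ (using the $\vee$-invariance of the height function, phrased via Remark~\ref{rmk: vee fixed A}) and then concludes from the formula~\eqref{eq: gaQ}. Your packaging of the key step as ``$\vee$ is a quiver automorphism of $Q$'' is a slightly cleaner way to state the same observation.
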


\begin{proof}
Recall $B^Q(i)$ in~\eqref{eq: gaQ}.
By the facts that $\vxi_i = \vxi_{\vee(i)}$ and Remark~\ref{rmk: vee fixed A},
 if there exists a path $j$ to $i$ in $Q$ for $i,j\le n$, then there exists a path $2n-i$ to $2n-j$ in $Q$.
Hence $j \in B^Q(i) \iff 2n-j \in B^Q(2n-i)$ for $i,j \le n$ and our assertion follows from~\eqref{eq: gaQ}.
\end{proof}

For $\Dynkin$ of type $A_{2n-1}$ with $\sfh=2n$, the degree polynomials $\de_{i,j}^{A_{2n-1}}(t)$ for $i,j \in \Dynkin_0$ are given in the following explicit form  \cite{Oh16,Oh18,Fuj19}:
\begin{align*}
\de_{i,j}^{A_{2n-1}}(t) + \delta_{i,j^*}t^{\sfh-1} = \sum_{s=1}^{\min(i,j,2n-i,2n-j)}t^{|i-j|+2s-1}.
\end{align*}
Note that every non-zero coefficient of $\de_{i,j}^{A_{2n-1}}(t)$ is $1$.

\subsection{Relation between $\de_{i,j}(t)$ and $\tde_{i,j}(t)$} For a Dynkin diagram $\Dynkin$ of type $A_{n}$, $D_{n}$ and $E_6$, we have the following theorem:

\begin{theorem}[\cite{Oh18}] \label{thm: ADE6 de=tde}
For $\Dynkin$ of type $A_{n}$, $D_{n}$ and $E_6$, and $i,j \in I$, we have
\begin{align} \label{eq: de=tde}
\tde_{i,j}(t) = \de_{i,j}(t) +\delta_{i,j^*} t^{\sfh-1}.
\end{align}
\end{theorem}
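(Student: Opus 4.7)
The plan is to establish~\eqref{eq: de=tde} by reducing it to a direct comparison of two explicit polynomial formulas. Both $\tde_{i,j}(t)$ and $\de_{i,j}(t)+\delta_{i,j^*}t^{\sfh-1}$ are polynomials supported in odd degrees between $1$ and $\sfh-1$, so it suffices to match coefficients. On the $(q,t)$-Cartan side, Theorem~\ref{thm: BC denom} already lists closed-form formulas for $\tde_{i,j}(t)$ in types $A$, $D$, and $E_6$. On the AR-quiver side, $\de_{i,j}(t)=\sum_k \ttO_k^\Dynkin(i,j)\,t^{k-1}$ is a combinatorial invariant which, by Proposition~\ref{prop: well-defined integer}, is independent of the choice of Dynkin quiver $Q$ of type $\Dynkin$.

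For type $A_{2n-1}$, the closed form at the end of \S\ref{subsub:A} gives $\de_{i,j}(t)+\delta_{i,j^*}t^{\sfh-1}=\sum_{s=1}^{\min(i,j,2n-i,2n-j)}t^{|i-j|+2s-1}$, which coincides with the $A_n$-formula of Theorem~\ref{thm: BC denom}(1) specialized to $A_{2n-1}$ with $\sfh=2n$; this case is immediate. For type $D_{n+1}$, I would compare equation~\eqref{eq: d poly D} against Theorem~\ref{thm: BC denom}(2) by splitting into the three ranges $\max(i,j)<n$, $\min(i,j)<n\le\max(i,j)$, and $\min(i,j)\ge n$, and further refining by the parity of $n+1$, which controls whether the involution $*$ acts trivially or swaps the spin nodes $\{n,n+1\}$. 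In each refined case one matches coefficients directly; the extra bookkeeping in the middle range reconciles the factor $1/2$ in~\eqref{eq: d poly D} with the vanishing of the second monomial in Theorem~\ref{thm: BC denom}(2), keeping careful track of when the correction $\delta_{i,j^*}t^{\sfh-1}$ is active.

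The main obstacle, as I expect, is type $E_6$. The formulas for $\tde_{i,j}^{E_6}(t)$ are listed in Theorem~\ref{thm: BC denom}(4), but the degree polynomial $\de_{i,j}^{E_6}(t)$ must be computed by direct enumeration. My plan is to fix the $\vee$-fixed Dynkin quiver of type $E_6$ from Example~\ref{ex: sigma-fixed}, whose labelled AR-quiver is already drawn in Example~\ref{ex: AR Q ADE}, and then for each ordered pair $(i,j)\in I\times I$ and each $1\le k\le\sfh-1=11$ enumerate the \prs $\pair{\al,\be}\in\Phi_Q(i,j)[k]$ (Definition~\ref{def: Phi ijk}) and determine $\dg_{[Q]}(\pair{\al,\be})$ by building the chain~\eqref{eq: dist}. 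The coefficient $2$ appearing in $\tde_{3,3}^{E_6}$, $\tde_{3,4}^{E_6}$, and $\tde_{4,4}^{E_6}$ forces the existence of \prs whose statistic is exactly $2$; these must be matched with configurations analogous to those of Remark~\ref{remark: dist 2}, i.e.\ covers where an intermediate \ex further decomposes into a non-trivial pair of \exs. Lemma~\ref{lem: integer o} guarantees well-definedness across each fiber $\Phi_Q(i,j)[k]$, and Proposition~\ref{prop: well-defined integer} ensures a single choice of $Q$ suffices. Assembling these statistics yields $\de_{i,j}^{E_6}(t)$, and a term-by-term comparison with Theorem~\ref{thm: BC denom}(4) completes the proof.
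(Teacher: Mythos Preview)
The paper does not supply its own proof of this theorem: it is stated as a cited result from \cite{Oh18}, with the explicit formulas for $\de_{i,j}^{A_{2n-1}}(t)$ and $\de_{i,j}^{D_{n+1}}(t)$ recorded (with citations to \cite{Oh16,Oh17,Oh18,Fuj19}) immediately before the theorem so that the reader can compare them to the $\tde_{i,j}(t)$ formulas of Theorem~\ref{thm: BC denom}. Your proposal---match the two closed formulas in types $A$ and $D$, and verify $E_6$ by direct enumeration of $\dg_{[Q]}$ on a fixed AR-quiver---is exactly the strategy of the cited reference, and is consistent with how the paper packages the result.

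One small caution on your $A$-case: the formula displayed at the end of \S\ref{subsub:A} is stated for $A_{2n-1}$ (because that is what feeds into the $C_n$ folding later), whereas Theorem~\ref{thm: ADE6 de=tde} concerns general $A_n$; you should either note that the same closed form holds for all $A_n$ with $\sfh=n+1$, or appeal directly to \cite{Oh17,Oh18} rather than only to the displayed $A_{2n-1}$ specialization. For $E_6$, your plan to verify the coefficient-$2$ and coefficient-$3$ entries of $\tde_{4,4}^{E_6}$, $\tde_{3,4}^{E_6}$, etc.\ via chains of length $\ge 2$ in $\prec_{[Q]}^\ttb$ is the right one, but be aware that $\tde_{4,4}^{E_6}$ has coefficients up to $3$, so you will need to exhibit chains of length $3$ as well, not only the length-$2$ configurations analogous to Remark~\ref{remark: dist 2}.
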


\begin{remark} \label{rmk: remained type}
In Section~\ref{sec: Degree poly}, we will see that ~\eqref{eq: de=tde} also holds for $B_n$ and $C_n$ (see Theorem~\ref{thm:Main}). However, in types $E_7,E_8$, $F_4$ and $G_2$,
~\eqref{eq: de=tde} do not hold (see Remark~\ref{rmk: F4 do not hold} and Remark~\ref{rmk: G2 do not hold} below for $F_4$ and $G_2$.)
\end{remark}

\section{Labeling of AR-quivers in type BCFG} \label{sec: Labeling BCFG}

For a Lie algebra $\sfg$ of simply-laced type associated to
$\Dynkin$ and a Dynkin diagram automorphism $\upsigma\; (\ne {\rm id})$
on $\Dynkin_\sfg$, we denote by $\sg$ the Lie subalgebra of $\sfg$
such that it is of non simply-laced type \cite[Proposition 7.9]{Kac}:
\begin{table}[h]
\centering
{ \arraycolsep=1.6pt\def\arraystretch{1.5}
\begin{tabular}{|c||c|c|c|c|}
\hline
 $(\Dynkin,  \upsigma)$ &  $(\Dynkin_{A_{2n-1}}, \vee)$& $(\Dynkin_{D_{n+1}}, \vee)$ & $(\Dynkin_{E_{6}},  \vee)$ & $(\Dynkin_{D_{4}},  \widetilde{\vee})$  \\ \hline
Type of $\sg$ & $C_n $ & $B_n$  & $F_4$  & $G_2$ \\ \hline
\end{tabular}
}\\[1.5ex]

\caption{$\sg$ for each $(\Dynkin,\sigma)$ with $\sigma \ne {\rm id}$}
\protect\label{g sigma}
\end{table}

Note that
the index $\sI$ for $\sg$ can be identified the image of
$I_\sfg$ under the following surjection $\overline{\upsigma}$: For
$\sg$ of type $B_n$ or $C_n$,
$$
\overline{\upsigma}(i) =
\bc
n  & \text{ if $\sg$  is of type $B_n$ and $i =n+1$}, \\
2n-i  & \text{ if $\sg$  is of type $C_n$ and $i \ge n+1$},  \\
i  & \text{ otherwise},
\ec
$$
and
\begin{align*}
&\overline{\upsigma}(1) = \overline{\upsigma}(6) =2, \quad \overline{\upsigma}(3) =\overline{\upsigma}(5) = 2, \quad  \overline{\upsigma}(4) =  2, \quad  \overline{\upsigma}(2) =1, &&\text{ if  $\sg$ is of  type $F_4$,} \\
&\overline{\upsigma}(1) = \overline{\upsigma}(3) =\overline{\upsigma}(4) =2, \qquad \overline{\upsigma}(2) =1, &&\text{ if  $\sg$ is of  type $G_2$.}
\end{align*}
Then, for a $\upsigma$-fixed Dynkin quiver $Q=(\Dynkin_{\sfg}, \sxi)$,
we can obtain a Dynkin quiver $\overline{Q}=(\Dynkin_{\sg}, \overline{\xi})$ such that
\begin{align}\label{eq: oQ}
\overline{\xi}_{\overline{\upsigma}(i)} =   \sxi_{i}.
\end{align}
 for any $i \in I$. We write $\sI = \{ \ov{\im} \seteq \overline{\upsigma}(\im) \ | \ \im \in I_\sfg\}$.

\begin{example}
Here are $\overline{Q}$'s corresponding to $Q$'s in Example~\ref{ex: sigma-fixed}.
\ben
\item \label{it: C3 induced}
$\overline{Q} = \xymatrix@R=0.5ex@C=6ex{ *{\circ}<3pt> \ar@{->}[r]^<{ _{\underline{3}} \ \  }_<{1 \ \ } & *{\circ}<3pt>  \ar@{->}[r]^<{ _{\underline{2}} \ \  }_<{2 \ \ }
&*{\circled{$\circ$}}<3pt>    \ar@{}[l]_<{ \ \   _{\underline{1}}  }^<{ \ \ 3 }
}$ of type $C_n$ for $Q$ of type $A_5$ in Example~\ref{ex: sigma-fixed}~\eqref{it: A5 fixed}.
\item \label{it: B3 induced}
$\overline{Q} = \xymatrix@R=0.5ex@C=6ex{ *{\circled{$\circ$}}<3pt> \ar@{->}[r]^<{ _{\underline{3}} \ \  }_<{1 \ \ } & *{\circled{$\circ$}}<3pt>  \ar@{->}[r]^<{ _{\underline{2}} \ \  }_<{2 \ \ }
&*{\circ}<3pt>    \ar@{}[l]_<{ \ \   _{\underline{1}}  }^<{ \ \ 3 }
}$ of type $B_3$ for $Q$ of type $D_4$ in Example~\ref{ex: sigma-fixed}~\eqref{it: D4 fixed vee}.
\item \label{it: F4 induced}
$\overline{Q} = \xymatrix@R=0.5ex@C=6ex{ *{\circled{$\circ$}}<3pt> \ar@{<-}[r]^<{ _{\underline{0}} \ \  }_<{1 \ \ } & *{\circled{$\circ$}}<3pt>  \ar@{<-}[r]^<{ _{\underline{1}} \ \  }_<{2 \ \ }
&*{\circ}<3pt>  \ar@{<-}[r]^<{ _{\underline{2}} \ \  }_<{3 \ \ }  &*{\circ}<3pt>     \ar@{}[l]_<{ \ \ _{\underline{3}}  }^<{ \ \ 4 }
}$ of type $F_4$
 for $Q$ of type $E_6$ in Example~\ref{ex: sigma-fixed}~\eqref{it: E6 fixed}.
\item \label{it: G2 induced}
$\overline{Q} = \xymatrix@R=0.5ex@C=6ex{   *{\circ}<3pt>  \ar@{->}[r]^<{ _{\underline{2}} \ \  }_<{1 \ \ }
&*{\circled{$\odot$}}<3pt>    \ar@{}[l]_<{ \ \   _{\underline{1}}  }^<{ \ \ 2 }
}$ of type $G_2$ for $Q$  of type $D_4$ in Example~\ref{ex: sigma-fixed}~\eqref{it: D4 fixed wvee}.
\ee
\end{example}

\medskip

In
this section, we shall show that we can obtain $\Gamma_{\ov{Q}}$
from $\Gamma_Q$ and hence the labeling  of  $\Gamma_{\ov{Q}}$ from
the ones of $\Gamma_Q$.
The following lemma holds for Dynkin quivers $Q$ of any finite type, which generalizes~\eqref{eq: gaQ}:
\begin{lemma} \label{lem: general gaQ}
For Dynkin quiver $Q=(\Dynkin, \xi)$ of any finite type and $i \in I$, we have
$$ \ga_i^Q  = \sum_{j \in B^Q(i)}  \left(  \prod_{k=1}^{l^j-1}  - \left\lan  h_{p^j_k},\al_{p^j_{k+1}} \right\ran  \right)\al_j $$
where $B^Q(i)$ denotes the set of indices $j$ such that there exists a
path $\ttp^j = (j=p^j_1 \to p^j_2 \to \cdots \to p^j_{l^j-1} \to
p^j_{l^j}=i)$ in $Q$. Here we understand $\left(  \prod_{k=1}^{l^j-1}  - \left\lan  h_{p^j_k},\al_{p^j_{k+1}} \right\ran  \right)=1$ when $i=j$ and hence $l^j=1$.
\end{lemma}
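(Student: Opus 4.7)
The plan is to establish the formula by induction on $|I|$, with the key reduction being $\gamma_i^Q = s_j \, \gamma_i^{Q\setminus j}$ for any source $j$ of $Q$ distinct from $i$, where $Q\setminus j$ denotes the Dynkin quiver obtained by removing the vertex $j$ and its incident arrows. To see this reduction, note that in any Coxeter element each simple reflection appears exactly once; choosing an adapted reduced expression $\tau_Q = s_{i_1}\cdots s_{i_n}$ with $i_1=j$, one checks directly that $(i_2,\ldots,i_n)$ is adapted to $Q\setminus j$, so that $s_{i_2}\cdots s_{i_n} = \tau_{Q\setminus j}$ as a Coxeter element of $\weyl_{I\setminus\{j\}}$. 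Computing $(1-\tau_Q)\varpi_i$ and using $s_j\varpi_i = \varpi_i$ (valid since $i \ne j$) together with $\tau_{Q\setminus j}\varpi_i = \varpi_i - \gamma_i^{Q\setminus j}$ yields the reduction.

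The base case $|I|=1$ is immediate, as is the case when $i$ is itself a source of $Q$: then $B^Q(i) = \{i\}$, and an adapted order starting with $s_i$ shows $\gamma_i^Q = \alpha_i$, matching the formula. In general, pick any source $j \ne i$ and apply the reduction. By the inductive hypothesis,
\[ \gamma_i^{Q\setminus j} = \sum_{j' \in B^{Q\setminus j}(i)} c^{Q\setminus j}_{j'}\, \alpha_{j'}, \]
where $c^{Q\setminus j}_{j'}$ denotes the product $\prod_{k=1}^{l^{j'}-1}\bigl(-\lan h_{p^{j'}_k},\alpha_{p^{j'}_{k+1}}\ran\bigr)$ along the unique path from $j'$ to $i$ in $Q\setminus j$. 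Applying $s_j$ distributes over the sum and produces an additional $\alpha_j$ contribution with coefficient $-\sum_{j'} c^{Q\setminus j}_{j'} \lan h_j,\alpha_{j'}\ran$.

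To match the claimed formula, I rely on two combinatorial facts coming from the tree structure of the underlying Dynkin diagram. First, since $j$ is a source, no directed path to $i$ can pass through $j$, so $B^{Q\setminus j}(i) = B^Q(i)\setminus\{j\}$ and the paths in $Q\setminus j$ agree with those in $Q$; this identifies the first sum with the $j'\ne j$ terms of the target formula. Second, at most one neighbor $j'$ of $j$ can lie in $B^Q(i)$, again by uniqueness of paths in a tree: if $j\in B^Q(i)$, this $j'$ is the successor of $j$ along the path from $j$ to $i$, and $-c^{Q\setminus j}_{j'}\,\lan h_j,\alpha_{j'}\ran$ recovers the one-step extension of the product, yielding the prescribed coefficient $c_j^Q$; if $j\notin B^Q(i)$, then a neighbor $j'\in B^Q(i)$ would force the directed path $j\to j'\to\cdots\to i$ and hence $j\in B^Q(i)$, a contradiction, so the coefficient of $\alpha_j$ vanishes, consistent with $j\notin B^Q(i)$. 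The main obstacle is not any single estimate but rather carefully tracking how deletion of a source interacts with the ancestor set $B^Q(i)$ and the multiplicative coefficients along paths, and verifying these two cases against the tree hypothesis.
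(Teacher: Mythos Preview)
Your proof is correct and proceeds by a genuinely different route from the paper's. The paper works directly with the identity $\gamma_{i}^Q = s_{i_1}\cdots s_{i_{m-1}}(\alpha_{i})$ (where $i=i_m$ in an adapted reduced expression of $\tau_Q$), first arguing that $j\notin B^Q(i)$ forces $j\notin\supp(\gamma_i^Q)$, and then, for $j\in B^Q(i)$, running an induction on the length of the path $j\to i$ inside the fixed quiver $Q$ together with a neighbor analysis to pin down the coefficient of $\alpha_j$. Your argument instead inducts on $|I|$ via the reduction $\gamma_i^Q = s_j\,\gamma_i^{Q\setminus j}$ for a source $j\ne i$, and then reads off the $\alpha_j$-coefficient from the single neighbor of $j$ lying on the (unique, by the tree property) path to $i$.

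What each approach buys: your deletion-of-a-source reduction is clean and makes the recursive structure of the coefficient product transparent; the only mild subtlety is that $Q\setminus j$ may be disconnected, but since every $s_k$ with $k$ outside the component of $i$ fixes $\alpha_i$, the quantity $\gamma_i^{Q\setminus j}$ depends only on that component and the induction applies. (Equivalently, the identity $(1-\tau_{Q\setminus j})\varpi_i = s_{i_2}\cdots s_{i_{m-1}}(\alpha_i)$ shows that using the ambient $\varpi_i$ rather than the sub-datum fundamental weight makes no difference.) The paper's approach avoids passing to subquivers and stays inside a single adapted word, at the cost of a slightly more delicate support/coefficient argument. Both ultimately hinge on the same tree combinatorics: at most one neighbor of a given vertex lies on the path to $i$.
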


\begin{proof} Set $i_m=i$. Note that we have
\begin{align}\label{eq: gaQ 2}
 \ga_{i_m}^Q = (1-\tau_Q)\varpi_{i_m} = (s_{i_1} \cdots s_{i_{m-1}}- s_{i_1} \cdots s_{i_{m-1}}s_{i_m} )\varpi_{i_m}  =  s_{i_1} \cdots s_{i_{m-1}}(\al_{i_m}),
\end{align}
where $\tau_Q = s_{i_1} \cdots  s_{i_{|\Dynkin_0|}}$ and $1 \le m \le |\Dynkin_0|$. 
Let $j \not\in B^Q(i)$. Then there is no  \ex $1 \le t_1< \ldots < t_l = m$ such that $d(i_{t_x},i_{t_{x+1}})=1$. Hence $j$ can not be contained in $\supp(\ga_{i}^Q)$ by~\eqref{eq: gaQ 2}.

Let $j \in B^Q(i)$. Since $\Dynkin$ is of finite type, there exists a
unique path  $\ttp^j = (j=p^j_1 \to p^j_2 \to \cdots \to p^j_{l^j-1}
\to p^j_{l^j}=i)$ in $Q$. If the length of $\ttp^j$ is $2$, we have
$\xi_j=\xi_i+1$ and the assertion follows from~\eqref{eq: gaQ 2}. By
an induction on $l$, $p^j_2 \in \supp(\ga_{i}^Q)$ and
$\supp_{p^j_2}(\ga_{i}^Q) = \left(  \displaystyle\prod_{k=2}^{l^j-1}
- \left\lan  h_{p^j_k},\al_{p^j_{k+1}} \right\ran  \right)$. Since
$\xi_{j} = \xi_{p^j_2}+1$ and $\tau_Q$ is adapted to $Q$, we can
guarantee that $\supp_{j}(\ga_{i}^Q) \ge \left(
\displaystyle\prod_{k=1}^{l^j-1}  - \left\lan
h_{p^j_k},\al_{p^j_{k+1}} \right\ran  \right)$.

\noindent
(i) Assume that there exists $j' \in \Dynkin_0$ such that $d(j,j')=1$ and $\xi_{j'}=\xi_j+1$. Then $s_j'$ appears earlier than $s_j$ in every $Q$-adapted reduced expression of $\tau_Q$.

\noindent
(ii) Assume that there exists $j' \in \Dynkin_0$ such that $d(j,j')=1$ and $\xi_{j'}=\xi_j-1$. Then there is no path form $j'$ to $i$. Thus, even though $s_j'$ appears later than $s_j$ in every $Q$-adapted reduced expression of $\tau_Q$,  we have $j' \not \in \supp(\ga_i^Q)$.

By (i) and (ii), we have  $\supp_{j}(\ga_{i}^Q) = \left(  \displaystyle\prod_{k=1}^{l^j-1}  - \left\lan  h_{p^j_k},\al_{p^j_{k+1}} \right\ran  \right)$, as we desired.
\end{proof}

\begin{corollary}
For a Dynkin quiver $Q=(\Dynkin,\xi)$ of an arbitrary type and $i \in \Dynkin_0$, we have
$$\al_i - \sum_{\substack{d(i,j)=1, \\ \xi_j-\xi_i=1}}\sfc_{j,i} \ga_j^Q = \ga_i^Q.$$
\end{corollary}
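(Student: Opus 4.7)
The plan is to derive this corollary directly from the preceding Lemma by matching coefficients of each simple root on both sides. Using the Lemma, I would write
\[\ga_i^Q = \sum_{k \in B^Q(i)} c_k(i)\, \al_k, \qquad c_k(i) \seteq \prod_{m=1}^{l^k-1} -\lan h_{p^k_m}, \al_{p^k_{m+1}}\ran,\]
for the unique path $\ttp^k = (k = p^k_1 \to \cdots \to p^k_{l^k}=i)$ in $Q$, so that in particular $c_i(i)=1$.

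The crucial observation is that the Dynkin diagram $\Dynkin$ of a finite type is a tree, so for every $k \in B^Q(i)\setminus\{i\}$ the path $\ttp^k$ is unique, and its penultimate vertex $j \seteq p^k_{l^k-1}$ is the unique neighbor of $i$ with $j\to i$ in $Q$, that is, with $\xi_j - \xi_i = 1$. This yields the disjoint decomposition
\[ B^Q(i) \setminus \{i\} = \bigsqcup_{\substack{d(i,j)=1 \\ \xi_j-\xi_i=1}} B^Q(j), \]
and for $k \in B^Q(j)$ the path $\ttp^k$ is the concatenation of the unique path from $k$ to $j$ with the edge $j\to i$, so the product defining $c_k(i)$ factors as $c_k(i) = -\lan h_j, \al_i\ran \cdot c_k(j) = -\sfc_{j,i}\, c_k(j)$.

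Substituting these relations, I would then compute
\[\ga_i^Q - \al_i = \sum_{\substack{d(i,j)=1 \\ \xi_j-\xi_i=1}} \sum_{k \in B^Q(j)}(-\sfc_{j,i})\, c_k(j)\, \al_k = -\sum_{\substack{d(i,j)=1 \\ \xi_j-\xi_i=1}} \sfc_{j,i}\, \ga_j^Q,\]
which rearranges to the claimed identity. There is no substantive obstacle: the only point requiring care is the tree-ness of $\Dynkin$ underlying the disjoint-union decomposition, which is automatic for any finite-type Dynkin diagram.
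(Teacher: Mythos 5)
Your proof is correct and is exactly the intended derivation: the paper states this corollary without proof as an immediate consequence of Lemma~\ref{lem: general gaQ}, and your coefficient-matching argument --- decomposing $B^Q(i)\setminus\{i\}$ as a disjoint union over the neighbors $j$ with $\xi_j-\xi_i=1$ (valid because $\Dynkin$ is a tree, so directed paths in $Q$ are unique) and peeling off the last factor $-\lan h_j,\al_i\ran=-\sfc_{j,i}$ from each path product --- fills in precisely the omitted details.
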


\begin{convention}
In the rest of this section, we use
\bna
\item $\im,\jm$ for elements in the index set $I$ of type $ADE$ and $i,j$ for elements in  the index set $I$ of type $BCFG$,
\item $\al,\be,\ga$ for positive roots of type $ADE$ and $\upal,\upbe,\upga$ for positive roots of type $BCFG$,
\item $\ep,\ve$ for elements in $\R \otimes_\Z \rl^+$ of type $ADE$ and $\upep,\upve$ for elements in $\R \otimes_\Z \rl^+$ of type $BCFG$,
\ee
to avoid confusion.
\end{convention}

For subsections~\ref{subsec: Bn combinatorics} and~\ref{subsec: Cn combinatorics} below, we shall investigate the combinatorial properties and the labeling algorithm for AR-quivers of type $B_n$ and $C_n$, and observe the relationship with the ones for $D_n$ and $A_{2n-1}$, respectively. The results in those subsections will be used for obtaining the degree polynomials in the next section crucially.

\subsection{$B_{n}$-case} \label{subsec: Bn combinatorics}
Note that for $\sfg$ of type $D_{n+1}$, its corresponding $\vg$ is of type $B_n$. For $Q=(\bDynkin_{B_n}, \xi)$ of type $B_n$,
we denote by $\uQ = (\Dynkin_{D_{n+1}}, \vuxi)$ the $\vee$-fixed Dynkin quiver of type $D_{n+1}$ such that
\begin{align} \label{eq: uB}
 \vuxi_{\im} = \xi_{\ov{\im}}   \qquad \text{ for $\im \in \Dynkin_0$}.
\end{align}
Recall the notation $\oQ=(\bDynkin_{B_n}, \oxi)$ of type $B_n$ corresponding to $\vee$-fixed Dynkin quiver $Q=(\Dynkin_{D_{n+1}}, \vxi)$ of type $D_{n+1}$ in~\eqref{eq: oQ}.

The simple roots $\{ \upal_i \}$ and $\Phi^+$ of type $B_n$ can be identified in $\rl^+ \subset \soplus_{i=1}^n \R \upep_i$ as follows:
\begin{equation}\label{eq: PR B}
\begin{aligned}
\upal_i &=\upve_i- \upve_{i+1} \quad \text{ for $i<n$,} \quad \upal_n =\upve_n \quad\text{ and }  \quad \upve_i =\sqrt{2}\upep_i,\\
\Phi^+_{B_n} & =  \left\{  \upve_i-\upve_{j} = \sum_{k=i}^{j-1} \upal_k \ | \ 1 \le i <j \le n \right\} \ssqcup  \left\{  \upve_i = \sum_{k=i}^n \upal_k \ | \ 1 \le i \le n \right\}\\
& \quad \ssqcup \left\{  \upve_i+\upve_{j} = \sum_{k=i}^{j-1} \upal_k + 2\sum_{s=j}^n \upal_s \ | \ 1 \le i <j \le n \right\}.
\end{aligned}
\end{equation}

For $\upbe \in \Phi^{+}_{B_n}$,  we write
\begin{align}\label{eq: conv B root}  \upbe =
\bc
\lan i,\pm j \ran  & \text{ if $\be=\upve_{i} \pm \upve_j$ for some $1 \le i<j\le n$}, \\
\lan i \ran   & \text{ if $\be=\upve_{i}$ for some $1 \le i \le n$}
\ec
\end{align}

Understanding $\weyl_{B_n}$ as bijections on $\{ \pm1,\ldots,\pm n\}$, we have the following descriptions for simple reflections:
$$  s_i (\pm i)=\pm(i+1)  \quad \text{ for }i <n  \quad \text{ and }  \quad s_n(n)=-n.$$

Note that there is a surjection  $\psi\cl \Phi^+_{D_{n+1}} \to \Phi^+_{B_{n}}$ whose description can be described in two ways using $\ep_\im$'s, $\upve_i$'s, $\al_\im$'s and $\upal_i$'s:
\begin{align} \label{eq: D to B}
 \psi(\ep_\im) = \bc
\ \upve_{\oim} & \text{ if } \im \le n, \\
\ 0   & \text{ if } \im = n+1,
\ec
\qquad \left(\text{equivalently, } \
\psi(\al_\im) =  \upal_{\oim}
   \right)
\end{align}
and extends it linearly.

For each $i \in I_{B_n}$, let $\im$ be the index of $I_{D_n}$ such that $i = \im$ as an integer.
By~\eqref{eq: PR D} and~\eqref{eq: PR B}, we have
$$
\psi^{-1}(\upve_i)=\{ \ep_\im + \ep_{n+1}, \ep_\im - \ep_{n+1} \}  \quad \text{ and } \quad \psi^{-1}(\upve_i \pm \upve_j)= \ep_\im \pm \ep_\jm.
$$

\begin{proposition} \label{prop: surgery D to B}
For a $\vee$-fixed Dynkin quiver $Q$ of type $D_{n+1}$, we can obtain $\Gamma_\oQ$ from $\Gamma_Q$ by the following simple surgery:
\ben
\item Remove all vertices in $\Gamma_Q$ located at residue $(n+1)$.
\item Replace all labels at residue $n$ from $ \lan \im, \pm  (n+1) \ran$ to $\lan i \ran$.
\ee
\end{proposition}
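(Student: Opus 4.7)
The plan is to lift the surjection $\psi$ of~\eqref{eq: D to B} to a compatibility
\[
\psi\bl \phi_Q(\im,p)\br = \phi_{\overline{Q}}(\ov{\im},p)
\]
that holds for every $(\im,p)\in(\Gamma_Q)_0$ with $\im\ne n+1$. Combined with Lemma~\ref{lem: tauQ fixed D}, which pairs the residue-$n$ and residue-$(n+1)$ labels of $\Gamma_Q$ as $\{\ep_a+\ep_{n+1},\,\ep_a-\ep_{n+1}\}$, both of which $\psi$ sends to $\upve_a=\lan a\ran$, this compatibility is exactly the two-step surgery in the statement.

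Using the formula $\phi_Q(\im,p)=(\tau_Q^{(\xi_\im-p)/2}(\ga_\im^Q),0)$ together with its $\overline{Q}$-analogue and the identity $\overline{\xi}_{\ov{\im}}=\xi_\im$, the compatibility reduces to the two claims
\begin{itemize}
\item[(A)] $\psi(\ga_\im^Q) = \ga_{\ov{\im}}^{\overline{Q}}$ for every $\im\in I_{D_{n+1}}$, and
\item[(B)] $\psi\circ\tau_Q = \tau_{\overline{Q}}\circ\psi$ on $\R\Phi_{D_{n+1}}$.
\end{itemize}
Claim (A) follows from Lemma~\ref{lem: general gaQ} by comparing coefficients of simple roots. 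For $\im\le n-1$, the $\vee$-symmetry of $Q$ makes $\{n,n+1\}\cap B^Q(\im)$ all-or-nothing, so $\psi$ combines the two contributions $\al_n,\al_{n+1}$ into $2\upal_n^{B_n}$, matching the coefficient $2 = -\lan h_n^{B_n},\al_{n-1}\ran$ produced by the double edge of $\bDynkin_{B_n}$ in Lemma~\ref{lem: general gaQ}; the cases $\im\in\{n,n+1\}$ are straightforward, since $n,n+1\notin B^Q(n+1)$ and $n+1\notin B^Q(n)$.

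For claim (B), the $\vee$-invariance of $Q$ forces $\vee\tau_Q\vee=\tau_Q$, so $\tau_Q$ preserves the $(+1)$-eigenspace $V^+=\R\{\ep_1,\ldots,\ep_n\}$ of $\vee$, on which $\psi$ restricts to an isomorphism onto $\R\{\upve_1,\ldots,\upve_n\}$. One then chooses a $Q$-adapted reduced expression $\tau_Q=s_{\im_1}\cdots s_{\im_{n+1}}$ in which $s_n$ and $s_{n+1}$ occur consecutively \ro possible because $\xi_n=\xi_{n+1}$ and these two commuting reflections are interchangeable at that height\rfm. A direct computation shows $s_ns_{n+1}|_{V^+}$ sends $\ep_n\mapsto -\ep_n$ while fixing the other $\ep_\jm$, which corresponds under $\psi$ to $s_n^{B_n}$; each remaining $s_{\im_k}|_{V^+}$ with $\im_k\le n-1$ corresponds to $s_{\im_k}^{B_n}$ under $\psi$. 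Multiplying these in the $\overline{\xi}$-adapted order yields $\psi\circ\tau_Q|_{V^+}=\tau_{\overline{Q}}\circ\psi|_{V^+}$, while on $\ker\psi=\R\ep_{n+1}$ both sides vanish by Lemma~\ref{lem: tauQ fixed D}.

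Finally, the equality $\{(\im,p)\in(\Gamma_Q)_0 : \im\ne n+1\}=(\Gamma_{\overline{Q}})_0$ \ro under $\im\leftrightarrow\ov{\im}$\rfm\ is immediate from~\eqref{eq: range}, using $\sfh^{D_{n+1}}=\sfh^{B_n}=2n$ and the $\vee$-fixedness of $Q$. The main obstacle is claim (B): matching the combinatorics of the $Q$-adapted reduced expression, in which $s_n$ and $s_{n+1}$ enter together at height $\xi_n$, with the $\overline{Q}$-adapted expression in which these two are folded into the single $s_n^{B_n}$ at the same height.
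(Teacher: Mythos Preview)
Your proof is correct and follows essentially the same route as the paper. The paper separates the argument into two preparatory lemmas: one establishing $\tau_Q(\im)=\tau_{\overline Q}(i)$ for $1\le\im\le n$ (your claim (B), stated there as an identity of signed permutations rather than an intertwining on the root space) and one establishing $\psi(\ga_\im^Q)=\ga_{\ov\im}^{\overline Q}$ (your claim (A)), both proved by exactly the mechanisms you describe---choosing a $Q$-adapted reduced expression with $s_n,s_{n+1}$ adjacent, and comparing coefficients via the path formula of Lemma~\ref{lem: general gaQ}---and then invokes the bijection~\eqref{eq: bijection} to conclude.
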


To prove Proposition~\ref{prop: surgery D to B}, we need preparations:

\begin{lemma}  \label{lem: Weyl D and B}
For any $\vee$-fixed Dynkin quiver $Q$ of type $D_{n+1}$, we have
$$\tau_{Q}(\im)=\tau_{\oQ}(i)  \qquad \text{ for $1 \le \im \le n$.}$$
\end{lemma}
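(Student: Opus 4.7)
The plan is to exhibit a reduced-expression-level compatibility between $\tau_Q$ and $\tau_{\oQ}$ via the Dynkin folding $\pi\cl \im \mapsto \oim$. Because $Q=(\Dynkin_{D_{n+1}},\vuxi)$ is $\vee$-fixed, we have $\vuxi_n=\vuxi_{n+1}$; since $d(n,n+1)=2$ in $\Dynkin_{D_{n+1}}$ the reflections $s_n$ and $s_{n+1}$ commute. Thus, within the commutation class $[Q]$, I can choose a $Q$-adapted reduced expression
\[
\tau_Q=s_{\im_1}\cdots s_{\im_{n+1}}
\]
in which $s_n$ and $s_{n+1}$ occur in consecutive positions, say $s_{\im_r}s_{\im_{r+1}}=s_ns_{n+1}$.

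Applying the folding to this word produces the sequence $s_{\oim_1}\cdots s_{\oim_{n+1}}$ in $\weyl_{B_n}$, in which the two consecutive letters at positions $r,r+1$ both equal $s_n$. Collapsing this pair yields a word $\underline{w}=s_{i_1}\cdots s_{i_n}$ in $\weyl_{B_n}$ using every generator exactly once, hence a Coxeter element. I will verify that $\underline{w}$ is $\oQ$-adapted: by~\eqref{eq: oQ}, $\oxi_{\oim}=\vuxi_{\im}$ for every $\im$, so at each step $k$, being a source of the successively reflected $D$-type quiver $s_{\im_{k-1}}\cdots s_{\im_1}Q$ transfers to being a source of the corresponding $B$-type quiver $s_{\oim_{k-1}}\cdots s_{\oim_1}\oQ$, with the single exception of the pair $(s_n,s_{n+1})$ which becomes the single letter $s_n$ on the $B$-side (both letters $s_n,s_{n+1}$ become sources simultaneously because $\vuxi_n=\vuxi_{n+1}$, so removing them together is consistent with removing the single vertex $n$ in $\oQ$). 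Thus $\underline{w}$ is a $\oQ$-adapted reduced expression of the Coxeter element $\tau_{\oQ}$.

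Finally, I check compatibility of actions on $\{\pm 1,\ldots,\pm n\}$. For $\im<n$, the reflection $s_\im\in\weyl_{D_{n+1}}$ acts on $\ep_\jm$ with $1\le |\jm|\le n$ by swapping $\im\leftrightarrow\im+1$, and $s_i\in\weyl_{B_n}$ acts on $\upve_j$ with $1\le |j|\le n$ by swapping $i\leftrightarrow i+1$; these actions match under $\psi(\ep_\jm)=\upve_j$. For the collapsed pair: a direct computation gives $(s_ns_{n+1})(\ep_n)=-\ep_n$ and $(s_ns_{n+1})(\ep_\jm)=\ep_\jm$ for $1\le|\jm|<n$, which matches $s_n(\upve_n)=-\upve_n$ and $s_n(\upve_j)=\upve_j$ for $1\le|j|<n$. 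Threading $\underline{w}$ through both pictures letter by letter then yields $\tau_Q(\im)=\tau_{\oQ}(i)$ for $1\le\im\le n$, as required.

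The main obstacle is the verification of $\oQ$-adaptedness of $\underline{w}$ in the middle paragraph: one must check that the heights $\vuxi$ and $\oxi$ evolve consistently under the removal of sources, with particular care at the moment the $s_ns_{n+1}$ block is collapsed. This is where the hypothesis that $Q$ is $\vee$-fixed is genuinely used; without it one could not arrange $s_n$ and $s_{n+1}$ to be removed simultaneously, and the folded word would fail to be $\oQ$-adapted.
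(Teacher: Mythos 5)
Your proof is correct and follows essentially the same route as the paper's: use the $\vee$-fixedness (so that $\vuxi_n=\vuxi_{n+1}$) to choose a reduced expression of $\tau_Q$ with $s_n$ and $s_{n+1}$ adjacent, fold it to the reduced expression of $\tau_{\oQ}$ in which that pair collapses to a single $s_n$, and compare the actions letter by letter. You simply spell out the adaptedness check and the computation $(s_ns_{n+1})(\ep_n)=-\ep_n$ versus $s_n(\upve_n)=-\upve_n$, which the paper leaves implicit in ``thus our assertion follows.''
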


\begin{proof}
By Lemma~\ref{lem: tauQ fixed D}~\eqref{it: fixed D},  $\tau_Q(n+1) = -(n+1)$.
Since $Q$ is $\vee$-fixed, there exists a $1 \le k < n+1$ reduced expression $s_{\im_1} \ldots s_{\im_{n+1}}$ of $\tau_Q$ such that $\{ \im_k,\im_{k+1} \}=\{n,n+1\}$.
Since $\oxi_{i} =\vxi_\im$, $1 \le \im \le n$,  $\tau_{\oQ}$ has a reduced expression $s_{\ov{\im_1}} \ldots s_{\ov{\im_{k-1}}} s_{i^\dagger_k} s_{\ov{\im_{k+2}}} \ldots s_{\ov{\im_{n+1}}}$ such that $i^\dagger_k =n$.
Thus our assertion follows.
\end{proof}

The following lemma is obvious:

\begin{lemma} \label{eq: reflection commuting}
For $Q=(\bDynkin_{B_n}, \xi)$, we have
$$   \underline{s_iQ}  = \iota(s_i) \uQ   \quad \text{ for $1 \le i \le n$}, $$
where
$$  \iota(s_i)  \seteq \bc
s _\im  & \text{ if } i < n, \\
s _n s_{n+1} & \text{ if } i= n.
\ec $$
\end{lemma}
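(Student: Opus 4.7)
The plan is to verify the equality by comparing height functions vertex by vertex: both $\underline{s_iQ}$ and $\iota(s_i)\uQ$ are Dynkin quivers of type $D_{n+1}$, hence are determined by their heights. The whole argument reduces to unwinding the definition $\vuxi_\jm = \xi_{\ov\jm}$ together with the rule $(s_\im \eta)_\jm = \eta_\jm - 2\delta_{\im,\jm}$ for a simple reflection acting on a height function, under the implicit assumption that $i$ is a source of $Q$ so that $s_iQ$ is well-defined.

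First I would dispose of the case $1 \le i < n$. Here the surjection $\overline{\upsigma}$ has a singleton fibre $\overline{\upsigma}^{-1}(i) = \{\im\}$, with $\im = i$ as an integer. Consequently $\iota(s_i) = s_\im$, and for every vertex $\jm$ of $\Dynkin_{D_{n+1}}$ we have
\[
(s_\im \vuxi)_\jm = \vuxi_\jm - 2\delta_{\im,\jm} = \xi_{\ov\jm} - 2\delta_{i,\ov\jm} = (s_i\xi)_{\ov\jm} = (\underline{s_iQ})_\jm,
\]
where the identity $\delta_{\im,\jm} = \delta_{i,\ov\jm}$ uses precisely that $\overline{\upsigma}^{-1}(i)$ is a singleton.

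The only case carrying any content is $i = n$, where $\overline{\upsigma}^{-1}(n) = \{n, n+1\}$. Since $d(n,n+1) \ne 1$ in $\Dynkin_{D_{n+1}}$, the reflections $s_n$ and $s_{n+1}$ commute in $\weyl_{D_{n+1}}$, and $s_n s_{n+1}$ subtracts $2$ from the height at both $n$ and $n+1$ while fixing the other heights. A direct verification then yields $(s_n s_{n+1}\vuxi)_\jm = \xi_{\ov\jm} - 2\delta_{n,\ov\jm} = (s_n\xi)_{\ov\jm}$ for every $\jm$, which coincides with $(\underline{s_nQ})_\jm$. The essential (if mild) point of the lemma is to record why both $s_n$ and $s_{n+1}$ must be applied together: applying either alone would destroy the $\vee$-fixedness of the result, since the equality $\vuxi_n = \vuxi_{n+1} = \xi_n$ must be preserved. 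This constraint is exactly what forces the definition of $\iota(s_n)$.
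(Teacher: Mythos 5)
Your proof is correct and follows the only natural route: the paper simply declares this lemma ``obvious'' and supplies no argument, and your vertex-by-vertex comparison of height functions via $\vuxi_{\jm}=\xi_{\ov{\jm}}$ and $(s_i\xi)_j=\xi_j-2\delta_{i,j}$ is exactly the verification being left to the reader. Your closing remark on why both $s_n$ and $s_{n+1}$ must be applied together (to preserve $\vee$-fixedness, using that $n$ and $n+1$ are non-adjacent in $\Dynkin_{D_{n+1}}$) correctly identifies the one point of substance.
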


\begin{lemma} \label{lem: ga D and B}
For any $\vee$-fixed Dynkin quiver $Q$ of type $D_{n+1}$, we have
$$  \psi( \ga_\im^Q  )  = \upga_i^{\oQ}  \quad \text{ for $1\le \im \le n$} \quad \text{ and } \quad   \psi( \ga_{n+1}^Q  )  = \upga_n^{\oQ}.$$
\end{lemma}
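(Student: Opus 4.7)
The plan is to convert the claim into a compatibility statement between $\psi$, viewed as a linear map on weights (not just roots), and the Coxeter actions $\tau_Q$ and $\tau_{\oQ}$, then leverage Lemmas~\ref{lem: tauQ fixed D} and~\ref{lem: Weyl D and B}. First I would recall that
\[
\ga_\im^Q = (1-\tau_Q)\varpi_\im \qtq[and] \upga_i^{\oQ} = (1-\tau_{\oQ})\varpi_i^{\oQ},
\]
so the assertion is equivalent to $\psi(\varpi_\im) - \psi(\tau_Q\varpi_\im)=(1-\tau_{\oQ})\bl\psi(\varpi_\im)\br$ for the appropriate identifications of $\varpi_\im$ on the two sides.

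Next I would extend $\psi\cl \Phi^+_{D_{n+1}}\to \Phi^+_{B_n}$ to a $\Q$-linear map on $\h^*$ by the same rule $\ep_\im\mapsto \upve_{\oim}$ for $\im\le n$ and $\ep_{n+1}\mapsto 0$. A direct check in the $\ep$-basis gives
\[
\psi(\varpi_\im)=\varpi^{\oQ}_{i}\ \text{for }\im\le n-1,\qquad \psi(\varpi_n)=\psi(\varpi_{n+1})=\varpi^{\oQ}_n,
\]
using the standard formulas $\varpi_\im=\ep_1+\cdots+\ep_\im$ for $\im\le n-1$ and $\varpi_n,\varpi_{n+1}=\tfrac12(\ep_1+\cdots+\ep_n\mp\ep_{n+1})$ on the $D_{n+1}$ side, against $\varpi_i^{\oQ}=\upve_1+\cdots+\upve_i$ and $\varpi_n^{\oQ}=\tfrac12(\upve_1+\cdots+\upve_n)$ on the $B_n$ side; the $\ep_{n+1}$ contribution simply disappears.

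The heart of the argument is then to show $\psi\circ\tau_Q = \tau_{\oQ}\circ\psi$ on each $\varpi_\im$. Writing $\tau_Q\varpi_\im$ back in the $\ep$-basis turns this into a statement about the signed permutation induced by $\tau_Q$ on $\{\pm1,\ldots,\pm(n+1)\}$: for each $\ep_k$ in the expansion of $\varpi_\im$, one must have $\psi(\ep_{\tau_Q(k)}) = \upve_{\tau_{\oQ}(k)}$ (with the sign convention $\ep_{-k}=-\ep_k$). For $1\le k\le n$, Lemma~\ref{lem: Weyl D and B} gives exactly $\tau_Q(k)=\tau_{\oQ}(k)\in\{\pm1,\ldots,\pm n\}$, so both sides match and no value $\pm(n+1)$ ever arises. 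For $k=n+1$, which only appears in $\varpi_n$ and $\varpi_{n+1}$, Lemma~\ref{lem: tauQ fixed D}~\eqref{it: fixed D} yields $\tau_Q(n+1)=-(n+1)$, hence $\ep_{\tau_Q(n+1)}=-\ep_{n+1}$ is still killed by $\psi$. Combining the cases finishes all three ranges $\im\le n-1$, $\im=n$, $\im=n+1$ at once.

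The main obstacle is essentially bookkeeping for the spinor weights $\varpi_n$ and $\varpi_{n+1}$: because they involve $\tfrac12\ep_{n+1}$, one needs Lemma~\ref{lem: tauQ fixed D}~\eqref{it: fixed D} to guarantee that the $\ep_{n+1}$ contribution in $\tau_Q\varpi_{n}$ and $\tau_Q\varpi_{n+1}$ continues to lie in $\R\ep_{n+1}$, so that $\psi$ annihilates it cleanly and the remaining $\ep_1,\ldots,\ep_n$ part is governed by Lemma~\ref{lem: Weyl D and B}. Once this is in place, the identity $\psi\bl(1-\tau_Q)\varpi_\im\br=(1-\tau_{\oQ})\psi(\varpi_\im)$ is immediate in all cases, giving the desired $\psi(\ga_\im^Q)=\upga_i^{\oQ}$ for $1\le\im\le n$ and $\psi(\ga_{n+1}^Q)=\upga_n^{\oQ}$.
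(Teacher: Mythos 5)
Your proof is correct, and it takes a genuinely different route from the paper's. The paper works with the path-combinatorial formula for $\ga_i^Q$ from Lemma~\ref{lem: general gaQ}: it compares $\supp_j$ of both sides coefficient by coefficient, with the only nontrivial point being that the two $D_{n+1}$-contributions at the nodes $n$ and $n+1$ merge into the single coefficient produced by $-\sfc_{n,n-1}=2$ on the $B_n$ side. You instead go back to the defining formula $\ga_i^Q=(1-\tau_Q)\varpi_i$, extend $\psi$ linearly to the weight space, verify $\psi(\varpi_\im)=\varpi^{\oQ}_{i}$ (with $\varpi_n,\varpi_{n+1}\mapsto\varpi_n^{\oQ}$), and prove the intertwining relation $\psi\circ\tau_Q=\tau_{\oQ}\circ\psi$ on each $\ep_k$ using Lemma~\ref{lem: Weyl D and B} for $k\le n$ and Lemma~\ref{lem: tauQ fixed D}~\eqref{it: fixed D} for $k=n+1$; both lemmas are established before this point, so the dependencies are legitimate. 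Your approach buys more than the statement asks for: the full $\tau$-equivariance of $\psi$ on the weight lattice immediately yields $\psi(\tau_Q^k(\ga_\im^Q))=\tau_{\oQ}^k(\upga_i^{\oQ})$ for all $k$, i.e.\ essentially Theorem~\ref{thm: D to B} in one stroke, whereas the paper's support-counting argument is more local and closer in spirit to the quiver combinatorics used elsewhere in Section~\ref{sec: Labeling BCFG}. The one point worth stating explicitly in your write-up is that $\tau_Q$ preserves the subset $\{\pm1,\dots,\pm n\}$ (which follows from $\tau_Q(\pm(n+1))=\mp(n+1)$ and the fact that $\tau_Q$ is a signed permutation), since this is what guarantees that $\upve_{\tau_{\oQ}(k)}$ is meaningful for every $k\le n$; you do note this, so the argument is complete.
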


\begin{proof}
By Lemma~\ref{lem: general gaQ} and~\eqref{eq: D to B},
\begin{equation}
\begin{aligned}
\psi( \ga_\im^Q  ) & =  \sum_{\jm \in B^Q(\im) \setminus \{ n+1 \} }    \left(   \prod_{k=1}^{l^\jm-1}   -   \left\lan  h_{p^\jm_k},\al_{p^\jm_{k+1}} \right\ran   \right)   \upal_j  \\
& \hspace{25ex} +  \delta(n + 1 \in B^Q(\im))  \left(  \prod_{k=1}^{l^{n+1}-1} - \left\lan  h_{p^{n+1}_k},\al_{p^{n+1}_{k+1}} \right\ran  \right) \upal_n .
\end{aligned}
\end{equation}

For any $j \in B^\oQ(i) \setminus \{ n \}$,  we have $|\sfc_{ji}^{B_n}| \le 1$.
Then we have
$\supp_j( \upga_i^\oQ ) = \supp_\jm( \upga_\im^Q )$ for any $j \in B^Q(i) \setminus \{ n \}$  by Lemma~\ref{lem: general gaQ}.

Assume that $n \in B^\oQ(i)$. Then we have $n,n+1 \in B^Q(\im)$ if $i<n$ and $n+1 \not \in B^Q(n)$ if $i=n$. Since $-\sfc_{n,n-1}=2$, we have
\begin{align*}
& \left(   \prod_{k=1}^{l^n-1}   -   \left\lan  h_{p^n_k},\al_{p^n_{k+1}} \right\ran   \right)\upal_n +\delta(n + 1 \in B^Q(\im))  \left(  \prod_{k=1}^{l^{n+1}-1} - \left\lan  h_{p^{n+1}_k},\al_{p^{n+1}_{k+1}} \right\ran  \right) \upal_n  \\
& \hspace{55ex}=  \left(   \prod_{k=1}^{l^n-1}   -   \left\lan  h_{p^n_k},\upal_{p^n_{k+1}} \right\ran   \right)\upal_n,
\end{align*}
which completes our assertion.
\end{proof}

\begin{proof}[Proof of Proposition~\ref{prop: surgery D to B}] By Lemma~\ref{lem: ga D and B},
we can obtain  $\{\upga^\oQ_i  \}$ by using $\{\ga^Q_\im  \}$ and the surjection $\psi$ in~\eqref{eq: D to B}.
Then our assertion follows from Lemma~\ref{lem: Weyl D and B} and~\eqref{eq: bijection}.
\end{proof}

\begin{definition}
A connected full subquiver $\ov{\uprho}$ in $\Gamma_\oQ$ is said to be
\emph{swing} if it is a concatenation of one $N$-path and one
$S$-path whose intersection is located at $(n,p) \in (\Gamma_Q)_0$
for some $p \in \Z$:  There exist a positive root
$\be \in \Phi^+$ and $r,s \le n$ such that
$$\xymatrix@C=2.5ex@R=0.5ex{  S_r \ar@{->}[r] & S_{r+1} \ar@{->}[r] & \cdots \ar@{->}[r]& S_{n-1}
\ar@{->}[r] & \be \ar@{=>}[r]  & N_{n-1} \ar@{->}[r]& N_{n-2} \ar@{->}[r] & \cdots \ar@{->}[r] & N_s} \text{ where }$$
\begin{itemize}
\item $\xymatrix@C=2.5ex@R=0.5ex{ S_{r} \ar@{->}[r] & S_{r+1} \ar@{->}[r] & \cdots \ar@{->}[r]& S_{n-1} \ar@{->}[r] &\be}$ is an $S$-path ($\phi^{-1}_{Q}(S_l,0)=(l,k)$ for   $k \in \Z$),
\item $\xymatrix@C=2.5ex@R=0.5ex{ \be \ar@{=>}[r] & N_{n-1} \ar@{->}[r]& N_{n-2} \ar@{->}[r] & \cdots \ar@{->}[r] & N_{s} }$ is an $N$-path ($\phi^{-1}_{Q}(N_l,0)=(l,k')$ for  $k' \in \Z$),
\item $\be$ is located at $(n,p)$ for some $p \in \Z$.
\end{itemize}

\end{definition}

By Proposition~\ref{prop: surgery D to B}, we have the $B_n$-analogue of Proposition~\ref{prop: label1}  and Proposition~\ref{prop: label2} as follows:

\begin{proposition} \label{prop: label12 B} \hfill
\ben
\item \label{it: swing B} For each swing in $\Gamma_\oQ$, there exists a unique $1 \le a \le n$ satisfying the following properties:
\bnum
\item Every vertex shares a component $\upve_a$.
\item If every vertex shares a component $\upve_a$, it consists of exactly $(2n-a)$-many vertices.
\item Every positive roots having $\upve_a$ as its component appears in the swing.
\item It starts at $1$ or ends at $1$.
\item There are only $n$-swings.
\ee
Thus we say the swing as $a$-swing and denote it by $\ov{\varrho}_a$.
\item \label{it: shallow B} Let $\ov{\rho}$ be a shallow $S$ $($resp.\ $N)$-path in $\Gamma_\oQ$. Then there exists a unique $2 \le a \le n$ such that
\bnum
\item $\ov{\rho}$ consists of $(a-1)$-many vertices,
\item all vertices in $\rho$ share $\upve_{-a}$ as their components.
\ee \ee Furthermore, there are exactly $(n-1)$-many shallow
$X$-paths $(X=S$ or $N)$ in $\Gamma_\oQ$. Thus we say the shallow
$X$-path as $(-a)$-path and denote it by $\ov{\rho}_{-a}$.
\end{proposition}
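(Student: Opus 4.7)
The plan is to transport the known structural results for $\Gamma_Q$ of type $D_{n+1}$ (Propositions~\ref{prop: label1} and~\ref{prop: label2}) to $\Gamma_\oQ$ of type $B_n$ via the surgery described in Proposition~\ref{prop: surgery D to B}. The key observation is that Proposition~\ref{prop: surgery D to B} identifies $\Gamma_\oQ$ with a quotient of $\Gamma_Q$: the two rows at residues $n$ and $n+1$ in $\Gamma_Q$ collapse to a single row at residue $n$ in $\Gamma_\oQ$, with the relabeling $\lan\im,\pm(n+1)\ran\mapsto\lan i\ran=\upve_i$, while the rows at residues $1,\ldots,n-1$ remain combinatorially unchanged under $\psi$ from~\eqref{eq: D to B}.

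For part~\eqref{it: swing B}, I would first observe that any swing $\ov{\uprho}$ in $\Gamma_\oQ$ arises by ``splicing'' at residue $n$: the $S$-path portion $\xymatrix@C=1.5ex{S_r\to\cdots\to S_{n-1}\to\be}$ in $\Gamma_\oQ$ lifts uniquely to an $S$-path in $\Gamma_Q$ terminating at the two vertices at residues $n,n+1$ whose common label (after $\psi$) is $\be$, and similarly for the $N$-path portion. Under this lift, $\ov{\uprho}$ corresponds (via $\psi$) to an $a$-swing $\uprho_a$ of $\Gamma_Q$ in the sense of Proposition~\ref{prop: label1} for some $1\le a\le n$. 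The four enumerated properties then follow by applying $\psi$ to the corresponding properties for $\uprho_a$: Proposition~\ref{prop: label1}(i)-(iv) together with $\psi(\ep_a)=\upve_a$ and the count from Remark~\ref{rmk: analy roots D} (noting that the $(2n+1-a)$ roots of $\Phi^+_{D_{n+1}}$ containing $\ep_a$ map under $\psi$ to the $(2n-a)$ roots of $\Phi^+_{B_n}$ containing $\upve_a$, since the pair $\ep_a\pm\ep_{n+1}$ collapses to $\upve_a$). The final statement that only $n$-swings occur in $\Gamma_\oQ$ (as opposed to type $D$, where an $(n+1)$-swing exists) follows since the would-be $(n+1)$-swing in $\Gamma_Q$ consists entirely of vertices at residue $n,n+1$ involving $\ep_{n+1}$, and these are exactly the vertices that collapse or are removed in the surgery.

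For part~\eqref{it: shallow B}, I would use the fact that shallow $S$- and $N$-paths in $\Gamma_\oQ$ live entirely at residues $\le n-1$ by definition, and therefore the surgery of Proposition~\ref{prop: surgery D to B} identifies them bijectively with shallow $S$- and $N$-paths of $\Gamma_Q$. Under this identification and the linear map $\psi$, the label $\ep_{-a}$ ($2\le a\le n$) on the corresponding shallow path in $\Gamma_Q$ (cf.\ Proposition~\ref{prop: label2}) is sent to $\upve_{-a}$, and all properties (length $a-1$, the component is $\upve_{-a}$, existence of exactly $n-1$ shallow paths of each orientation) follow immediately.

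The main obstacle I anticipate is bookkeeping the splicing at residue $n$ in part~\eqref{it: swing B}: one must verify that the two $D_{n+1}$-vertices at $(n,p)$ and $(n+1,p)$, which in $\Gamma_Q$ sit in distinct $a$-swings only when they share the positive-root component labels $\{\ep_a+\ep_{n+1},\ep_a-\ep_{n+1}\}$ as per Lemma~\ref{lem: tauQ fixed D}, collapse under $\psi$ to a single vertex $\upve_a$ of $\Gamma_\oQ$ that lies simultaneously on the $S$-path and $N$-path of the unique swing $\ov{\uprho}_a$ of $\Gamma_\oQ$, and that the resulting double edge at residue $n$ matches the Cartan datum of $B_n$ (the double arrow $\Rightarrow$ in the definition of swing). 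This compatibility is exactly what the construction of $\hDynkin$ in~\eqref{eq: rep quiver} guarantees, since the arrow multiplicity from $(n-1,p-1)$ to $(n,p)$ in $\hbDynkin_{B_n}$ equals $-\lan h_{n-1},\upal_n\ran=2$, matching the sum of the two single arrows $(n-1,p-1)\to(n,p)$ and $(n-1,p-1)\to(n+1,p)$ present in $\hDynkin_{D_{n+1}}$.
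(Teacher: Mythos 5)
Your proposal is correct and follows exactly the route the paper intends: the paper states this proposition with no written proof beyond the remark that it follows from Proposition~\ref{prop: surgery D to B}, i.e.\ by transporting Propositions~\ref{prop: label1} and~\ref{prop: label2} for type $D_{n+1}$ through the surgery $\psi$, which is precisely your argument (including the count $(2n+1-a)\mapsto(2n-a)$ from the collapse of the pair $\ep_a\pm\ep_{n+1}$ and the observation that shallow paths sit at residues $<n$ and are untouched). Your write-up simply supplies the bookkeeping the authors left implicit.
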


From~Proposition~\ref{prop: surgery D to B} and  Proposition~\ref{prop: label12 B}, we have the following main result of this subsection:

\begin{theorem} \label{thm: D to B}
For each $\sigma$-fixed $Q$ of type $D_{n+1}$ and $k \in \Z$, we have
$$   \psi\left( \tau_Q^k  (\gamma^Q_\im)  \right)  = \tau_\oQ^k  (\gamma^\oQ_i) \quad \text{ for $\im<n$} $$
and
$$   \psi\left( \tau_Q^k  (\gamma^Q_\im)  \right)  = \tau_\oQ^k  (\gamma^\oQ_n) \quad \text{ for $\im=n$ or $n+1$.} $$
\end{theorem}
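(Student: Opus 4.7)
The strategy is to reduce the theorem to verifying that the linear map $\psi$ intertwines the Coxeter actions of the two types, after which the result follows from Lemma~\ref{lem: ga D and B} by iteration. Concretely, I would view $\psi\cl\soplus_{\im=1}^{n+1}\R\ep_\im\to\soplus_{i=1}^n\R\upve_i$ as the linear map $\ep_\im\mapsto\upve_{\oim}$ for $\im\le n$ and $\ep_{n+1}\mapsto 0$, under the sign conventions $\ep_{-\im}=-\ep_\im$ and $\upve_{-i}=-\upve_i$. With these conventions, the signed-permutation descriptions of $\weyl_{D_{n+1}}$ and $\weyl_{B_n}$ recalled in \S\,\ref{subsec: Bn combinatorics} give $w(\ep_\im)=\ep_{w(\im)}$ and $w(\upve_i)=\upve_{w(i)}$.

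The heart of the argument is the intertwining identity
\[
\psi\circ\tau_Q=\tau_{\oQ}\circ\psi
\]
on the whole ambient lattice of $D_{n+1}$. By linearity, it is enough to check this on each $\ep_\im$. When $|\im|=n+1$, Lemma~\ref{lem: tauQ fixed D}(\ref{it: fixed D}) gives $\tau_Q(\pm(n+1))=\mp(n+1)$, so $\tau_Q$ preserves $\ker\psi$ and both sides vanish. When $|\im|\le n$, the same lemma combined with the bijectivity of $\tau_Q$ forces $|\tau_Q(\im)|\le n$, so $\tau_Q(\im)$ lies in $\{\pm1,\dots,\pm n\}$; then Lemma~\ref{lem: Weyl D and B}, read as an equality of signed integers (which is the natural reading once one notes that $s_ns_{n+1}$ in $D_{n+1}$ and $s_n$ in $B_n$ have identical signed-permutation restrictions to $\{\pm1,\dots,\pm n\}$, so their replacement in any $Q$-adapted reduced word of $\tau_Q$ does not disturb the action), gives $\overline{\tau_Q(\im)}=\tau_{\oQ}(i)$. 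Hence $\psi(\tau_Q(\ep_\im))=\upve_{\tau_{\oQ}(i)}=\tau_{\oQ}(\upve_i)=\tau_{\oQ}(\psi(\ep_\im))$.

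Iterating yields $\psi\circ\tau_Q^k=\tau_{\oQ}^k\circ\psi$ for every $k\in\Z$. Applying this to $\ga_\im^Q$ and invoking Lemma~\ref{lem: ga D and B}, which says $\psi(\ga_\im^Q)=\upga_i^{\oQ}$ for $1\le\im\le n$ and $\psi(\ga_{n+1}^Q)=\upga_n^{\oQ}$, immediately produces the two displayed identities of the theorem.

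The only subtle step is the signed upgrade of Lemma~\ref{lem: Weyl D and B}: one must confirm that the replacement of the adjacent commuting pair $s_ns_{n+1}$ in a $Q$-adapted reduced word by the single $s_n$ (as done in the proof of Lemma~\ref{lem: Weyl D and B}) preserves not only the unsigned images but also their signs on indices $|\im|\le n$. This is clear because in $D_{n+1}$ the element $s_ns_{n+1}$ acts on $\{\pm1,\ldots,\pm n\}$ exactly as $s_n$ acts in $B_n$ (sending $n\mapsto -n$ and fixing all other indices), while every other generator $s_{\im}$ with $\im<n$ restricts identically to $s_i$; so the signed permutation effected on $\{\pm1,\dots,\pm n\}$ is the same letter by letter. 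This is the only nontrivial bookkeeping in an otherwise direct verification.
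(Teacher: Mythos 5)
Your proposal is correct and follows essentially the same route as the paper: the theorem is obtained by combining the base case $\psi(\ga_\im^Q)=\upga_i^{\oQ}$ (Lemma~\ref{lem: ga D and B}) with the compatibility of the two Coxeter elements (Lemma~\ref{lem: Weyl D and B} together with Lemma~\ref{lem: tauQ fixed D}), exactly as in the paper's derivation through Proposition~\ref{prop: surgery D to B}. Your explicit formulation of the intertwining identity $\psi\circ\tau_Q=\tau_{\oQ}\circ\psi$, including the sign check on $s_ns_{n+1}$ versus $s_n$ and the observation that $\tau_Q$ preserves $\ker\psi$, is a clean packaging of the same bookkeeping the paper leaves implicit.
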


\begin{example} \label{ex: Label B4}
Let us consider the following $\vee$-fixed Dynkin quiver $Q$ of type $B_4$:
$$Q \ =
 \xymatrix@R=0.5ex@C=4ex{ *{\circled{$\circ$}}<3pt> \ar@{<-}[r]_<{ 1 \ \  }^{_{\underline{2}} \qquad } & *{\circled{$\circ$}}<3pt> \ar@{->}[r]_<{ 2 \ \  }^{_{\underline{3}} \qquad }
&*{\circled{$\circ$}}<3pt> \ar@{<-}[r]_>{ \ \ 4}^{_{\underline{2}} \qquad \ }   &*{\circ}<3pt> \ar@{}[l]^>{  3 \ \ }_{  \ \qquad _{\underline{3}}   }   }
$$
By ~\eqref{eq: range}, the $\Gamma_Q$ without labeling are given as follows:
\begin{align*}
 \raisebox{3.2em}{ \scalebox{0.7}{\xymatrix@!C=2ex@R=2ex{
(i\setminus p)   &-4 &-3 &  -2 & -1 & 0 & 1 & 2  & 3  \\
1& \circ \ar[dr] && \circ \ar[dr]  && \circ \ar[dr]&& \circ \ar[dr]    \\
2&& \circ \ar[dr]\ar[ur]  && \circ \ar[dr]\ar[ur] && \circ\ar[dr]\ar[ur] && \circ     \\
3& \circ  \ar[dr]\ar[ur]  && \circ  \ar[dr]\ar[ur] && \circ  \ar[dr]\ar[ur]&& \circ \ar[ur] \ar[dr]  \\
4&& \circ \ar@{=>}[ur]&& \circ \ar@{=>}[ur]&& \circ \ar@{=>}[ur]  && \circ  \\
}}}
\end{align*}

By applying Proposition~\ref{prop: label12 B}~\eqref{it: swing B} and~\eqref{it: shallow B}  in order, we can complete the label of $\Gamma_Q$
as follows:
\begin{align*}
{\rm (i)}  \raisebox{4.3em}{ \scalebox{0.7}{\xymatrix@!C=1ex@R=2ex{
(i\setminus p)   &-4 &-3 &  -2 & -1 & 0 & 1 & 2  & 3  \\
1& \upve_1^* \ar[dr] && \upve_2^\star\ar[dr]  &&  \upve_3^\ddagger + \upve_4^\dagger  \ar[dr]&& \upve_1^* \ar[dr]    \\
2&& \upve_1^* \ar[dr]\ar[ur]  && \upve_2^\star+  \upve_3^\ddagger \ar[dr]\ar[ur] && \upve_1^* + \upve_4^\dagger\ar[dr]\ar[ur] && \upve_2^\star    \\
3& \upve_3^\ddagger\ar[dr]\ar[ur]  && \upve_1^*+ \upve_3^\ddagger \ar[dr]\ar[ur] && \upve_1^* + \upve_2^\star \ar[dr] \ar[ur]&& \upve_2^\star+\upve_4^\dagger\ar[ur] \ar[dr]  \\
4&&  \upve_3^\ddagger\ \ar@{=>}[ur]&& \upve_1^* \ar@{=>}[ur]&& \upve_2^\star\ar@{=>}[ur]  &&  \upve_4^\dagger  \\
}}}
 \hspace{-1ex}
\To
{\rm (ii)}   \hspace{-2ex}  \raisebox{4.3em}{ \scalebox{0.7}{\xymatrix@!C=1ex@R=2ex{
(i\setminus p)   &-4 &-3 &  -2 & -1 & 0 & 1 & 2  & 3  \\
1& \srt{1,-2}\ar[dr] && \srt{2,-4} \ar[dr]  && \srt{3,4} \ar[dr]&& \srt{1,-3} \ar[dr]    \\
2&& \srt{1,-4} \ar[dr]\ar[ur]  && \srt{2,3} \ar[dr]\ar[ur] && \srt{1,4}\ar[dr]\ar[ur] && \srt{2,-3}     \\
3& \srt{3,-4}  \ar[dr]\ar[ur]  && \srt{1,3}  \ar[dr]\ar[ur] && \srt{1,2}  \ar[dr]\ar[ur]&& \srt{2,4} \ar[ur] \ar[dr]  \\
4&& \srt{3} \ar@{=>}[ur]&& \srt{1} \ar@{=>}[ur]&& \srt{2} \ar@{=>}[ur]  && \srt{4}  \\
}}}
\end{align*}
\end{example}

\subsection{$C_{n}$-case}  \label{subsec: Cn combinatorics}
Note that for $\sfg$ of type $A_{2n-1}$, its corresponding $\vg$ is of type $C_n$.
The simple roots  $\{ \upal_i \}$  and $\Phi^+_{C_{n}}$ can be identified in $\rl^+ \subset \soplus_{i=1}^n \Z\upep_i$ as follows:
\begin{equation} \label{eq: PR C}
\begin{aligned}
\upal_i &=\upep_i- \upep_{i+1} \quad \text{ for $i<n$} \quad\text{ and } \quad \al_n =2\upep_n \allowdisplaybreaks \\
\Phi^+_{C_n} & =  \left\{  \upep_i-\upep_{j} = \sum_{k=i}^{j-1} \upal_k \ | \ 1 \le i <j \le n \right\} \ssqcup
\left\{ 2\upep_i = 2\sum_{k=i}^{n-1} \upal_k +\upal_n \ | \ 1 \le i \le n \right\} \allowdisplaybreaks\\
& \quad \ssqcup \left\{  \upep_i+\upep_{j} = \sum_{k=i}^{j-1} \upal_k + 2\sum_{s=j}^{n-1} \upal_s + \upal_n \ | \ 1 \le i <j \le n \right\}.
\end{aligned}
\end{equation}

For $\upbe \in \Phi^{+}_{C_n}$,  we write (cf. ~\eqref{eq: conv B root})
\begin{align}\label{eq: conv C root}  \upbe =
\bc
\lan i,\pm j \ran  & \text{ if $\upbe=\upep_{i} \pm \upep_j$ for some $1 \le i<j\le n$}, \\
\lan i, i \ran   & \text{ if $\upbe=2\upep_{i}$ for some $1 \le i \le n$}.
\ec
\end{align}

Note that $\sfW_{B_n} \simeq \sfW_{C_n}$. For a Dynkin quiver $Q=(\bDynkin_{B_n}, \xi)$ of type $B_n$, we denote by $Q^\tr=(\bDynkin_{C_n} ,\xi^{\tr})$
the Dynkin quiver of type $C_n$ such that
$$ \xi^{\tr}_i =\xi_i \qquad \text{ for } 1 \le i \le n.$$
Then we have a natural bijection $\psi^\tr\cl \Phi^+_{B_n} \to \Phi^+_{C_n}$ given by
$$ \psi^{\tr}(\upve_i\pm\upve_j) =  \upep_i\pm\upep_j  \ \  (1 \le i < j \le n) \ \quad \text{ and } \ \quad \phi^{\tr}(\upve_i)=2\upep_i.$$

\begin{proposition}
For all $1\le i\le n$, we have
$$   \psi^{\tr}(\upga_i^{Q} ) =\upga_i^{Q^{\tr}}.$$
\end{proposition}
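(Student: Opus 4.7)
My plan is to apply Lemma~\ref{lem: general gaQ} to both $\upga_i^{Q}$ and $\upga_i^{Q^{\tr}}$, expressing them in terms of simple roots, and then verify the identity case-by-case. Since $Q$ and $Q^{\tr}$ share the same underlying oriented graph (both are the linear quiver carrying the height function $\xi$), the set $B^Q(i)=B^{Q^{\tr}}(i)=:B(i)$ coincides; moreover for each $j\in B(i)$ the unique oriented path $\ttp^j\colon j=p_1^j\to\cdots\to p_{l^j}^j=i$ is identical in the two quivers, so only the factor $-\langle h_{p^j_k},\al_{p^j_{k+1}}\rangle$ can differ between types $B_n$ and $C_n$.

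Next I would observe that the Cartan matrices of $B_n$ and $C_n$ are transposes of each other, differing only at the double edge joining $n-1$ and $n$:
\[ -\langle h_{n-1},\al_n\rangle_{B_n}=1,\ -\langle h_{n},\al_{n-1}\rangle_{B_n}=2,\ -\langle h_{n-1},\al_n\rangle_{C_n}=2,\ -\langle h_{n},\al_{n-1}\rangle_{C_n}=1. \]
Because the underlying Dynkin diagram is linear, each path $\ttp^j$ visits a consecutive interval of vertices, so the edge $(n-1,n)$ is traversed at most once along $\ttp^j$. In particular $B(i)$ is an interval $[a_i,b_i]\ni i$, and any discrepancy between $\upga_i^Q$ and $\upga_i^{Q^{\tr}}$ is concentrated in the coefficients of $\upal_{n-1}$ or $\upal_n$ alone.

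Then I would run a short case analysis depending on whether $i=n$ and whether $n\in B(i)$, computing $\upga_i^Q$ and $\upga_i^{Q^{\tr}}$ explicitly via telescoping sums in the $\upve$- and $\upep$-coordinates using the simple-root descriptions \eqref{eq: PR B} and \eqref{eq: PR C}. For instance, when $i\le n-1$ and $n\in B(i)$ the computation yields $\upga_i^Q=\sum_{j=a_i}^{n-1}\upal_j+2\upal_n=\upve_{a_i}+\upve_n$ and $\upga_i^{Q^{\tr}}=\sum_{j=a_i}^{n-1}\upal_j+\upal_n=\upep_{a_i}+\upep_n$, which match under $\psi^{\tr}(\upve_{a_i}+\upve_n)=\upep_{a_i}+\upep_n$; when $i=n$ and $B(n)=[a_n,n]$ one obtains $\upga_n^Q=\sum_{j=a_n}^{n-1}\upal_j+\upal_n=\upve_{a_n}$ and $\upga_n^{Q^{\tr}}=2\sum_{j=a_n}^{n-1}\upal_j+\upal_n=2\upep_{a_n}$, again matched by $\psi^{\tr}(\upve_{a_n})=2\upep_{a_n}$. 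The remaining cases ($n\notin B(i)$ with $i\le n-1$, or $i=n$ with $B(n)=\{n\}$) are even simpler and follow the same pattern.

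The main obstacle I anticipate is purely the bookkeeping: the $\psi^{\tr}$-image of a short root $\upve_i$ of $B_n$ becomes the long root $2\upep_i$ of $C_n$, while a long root $\upve_i\pm\upve_j$ of $B_n$ becomes the short root $\upep_i\pm\upep_j$ of $C_n$. This length-exchange is precisely dual to the transposition of Cartan data between $B_n$ and $C_n$, and the telescoping collapse $\sum_{j=a}^{n-1}(\upve_j-\upve_{j+1})+c\,\upve_n$ absorbs the factor $c\in\{1,2\}$ correctly in each type, making the identification of $\psi^{\tr}(\upga_i^Q)$ with $\upga_i^{Q^{\tr}}$ transparent.
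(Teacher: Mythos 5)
Your proposal is correct and follows essentially the same route as the paper's own proof: both invoke Lemma~\ref{lem: general gaQ}, observe that $B^Q(i)=B^{Q^{\tr}}(i)$, and then split into the cases $i=n$ versus $i\ne n$ with $n\in B(i)$ or not, computing $\upga_i^Q$ and $\upga_i^{Q^{\tr}}$ explicitly as $\upve_a$, $\upve_a\pm\upve_{\bullet}$ and $2\upep_a$, $\upep_a\pm\upep_{\bullet}$ respectively and matching them under $\psi^{\tr}$. The explicit values you compute (including the placement of the factor $2$ coming from the double edge) agree with the paper's displays \eqref{eq: ga B} and \eqref{eq: ga C}, so nothing further is needed.
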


\begin{proof}
Note that
$B^Q(i) = B^{Q^\tr}(i)$ for all $1 \le i \le n$.
Set
\bna
\item $a = \min( s \ |   \ s \in B^Q(i))$,
\item $b = \max( s \ |   \ s \in B^Q(i))$ if  $i \ne n$ and $n \not\in B^Q(i)$.
\ee

By Lemma~\ref{lem: general gaQ}, we have
\begin{align}
\upga_i^Q  
& = \bc
\displaystyle \sum_{j \in B^Q(n)}    \upal_j & \text{ if } i=n, \\
\displaystyle \sum_{j \in B^Q(i) \setminus\{n\}}    \upal_j +\delta(n \in B^Q(i)) 2\upal_n & \text{ if } i \ne n,
\ec \label{eq: ga B}\\
& =
 \bc
\upve_a & \text{ if } i=n, \\
\upve_a -\upve_{b+1}  & \text{ if } i \ne n  \text{ and } n \not\in B^Q(i), \\
 \upve_a+\upve_n,& \text{ if } i \ne n  \text{ and } n  \in B^Q(i),
\ec \nonumber
\end{align}
On the other hand, we have
\begin{align}
\upga_i^{Q^\tr}
& = \bc
\displaystyle \left( \sum_{j \in B^{Q}(n) \setminus\{ n \}}    2\upal_j \right) +  \upal_n & \text{ if } i=n, \qquad\qquad \qquad \\[4ex]
\displaystyle \sum_{j \in B^{Q}(i)}    \upal_j & \text{ if } i \ne n,
\ec \label{eq: ga C} \\
& =
 \bc
2\upep_a & \text{ if } i=n, \\
\upep_a -\upep_{b+1}  & \text{ if } i \ne n  \text{ and } n \not\in B^{Q}(i), \\
 \upep_a+\upep_n,& \text{ if } i \ne n  \text{ and } n  \in B^{Q}(i),
\ec \nonumber
\end{align}
which implies our assertion.
\end{proof}

 Note that every $Q$ of type $C_n$ is $Q^\tr$ for some $Q$ of type $B_n$. Hence we have the following theorem:

\begin{proposition} \label{prop: transpose}
The labeling of $\Gamma_{Q^\tr}$ in terms of notations $\lan i,\pm j \ran$ and $\lan i,i \ran$ can be obtained from the one of $\Gamma_{Q}$
by replacing only $\lan i \ran$ with $\lan i,i \ran$ for $1 \le i \le n$.
\end{proposition}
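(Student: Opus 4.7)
The plan is to lift the identity $\psi^\tr(\upga_i^Q) = \upga_i^{Q^\tr}$ just established to the entire labeling, using the iterative description of $\phi_Q$ in~\eqref{eq: bijection}. First, since $\xi^\tr = \xi$ under the identification $I_{B_n} = I_{C_n}$, and since both types have the same Coxeter number $\sfh = 2n$ and satisfy $i^* = i$, Proposition~\ref{prop: range descrption} yields $r_i^Q = r_i^{Q^\tr} = n$ and hence $(\Gamma_Q)_0 = (\Gamma_{Q^\tr})_0$ as subsets of the common vertex set of the repetition quiver (which depends only on the parity of the height function). Thus $\phi_Q$ and $\phi_{Q^\tr}$ have the same domain, and it suffices to match their values vertex by vertex.

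The key step is to verify that $\psi^\tr$, extended to a bijection $\Phi_{B_n} \isoto \Phi_{C_n}$ by declaring $\psi^\tr(-\upbe) \seteq -\psi^\tr(\upbe)$, is equivariant with respect to the Weyl group actions under the canonical isomorphism $\sfW_{B_n} \simeq \sfW_{C_n}$ sending $s_i \mapsto s_i$. This reduces to a finite check on generators: for $i<n$, both $s_i^{B_n}$ and $s_i^{C_n}$ act as the transposition swapping the $i$-th and $(i{+}1)$-th coordinates in the signed-permutation realization, while $s_n$ flips the sign of the $n$-th coordinate in both types (using $h_n^{C_n} = \upep_n$, so that $s_n^{C_n}(\upep_j) = \upep_j - 2\delta_{j,n}\upep_n$). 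Consequently, since the height functions coincide, the Coxeter elements $\tau_Q$ and $\tau_{Q^\tr}$ are represented by the same reduced word, and hence $\psi^\tr \circ \tau_Q^k = \tau_{Q^\tr}^k \circ \psi^\tr$ for all $k\in\Z$.

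Combining this equivariance with the preceding proposition and the closed-form description $\phi_Q(i,p) = (\tau_Q^{(\xi_i-p)/2}(\upga_i^Q),0)$, one concludes that for every $(i,p)\in(\Gamma_Q)_0$, if $\phi_Q(i,p) = (\upbe,0)$ then $\phi_{Q^\tr}(i,p) = (\psi^\tr(\upbe), 0)$; note that no sign-flip in~\eqref{eq: bijection} ever occurs, since all iterates stay in $\Phi^+$ throughout the range~\eqref{eq: range}. Reading off the rule defining $\psi^\tr$, every label $\lan i,\pm j\ran = \upve_i\pm\upve_j$ is sent to $\upep_i\pm\upep_j$ (so the notation $\lan i,\pm j\ran$ is unchanged), while each short-root label $\lan i\ran = \upve_i$ becomes $2\upep_i = \lan i,i\ran$, yielding exactly the stated replacement rule. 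The principal technical point is the Weyl-equivariance check in the second paragraph: because $\psi^\tr$ is not linear but only a bijection of root sets, the equivariance must be tested separately on each Weyl generator and on the four classes of roots $\pm\upve_i$ and $\pm(\upve_i \pm \upve_j)$, though the verification is entirely mechanical.
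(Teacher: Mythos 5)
Your argument is correct and follows essentially the same route as the paper's (very terse) proof: transport the identity $\psi^\tr(\upga_i^Q)=\upga_i^{Q^\tr}$ along the iterative definition of $\phi$ using the identification $\weyl_{B_n}=\weyl_{C_n}$, for which your explicit generator-by-generator equivariance check of $\psi^\tr$ is exactly the content the paper leaves implicit. The only cosmetic point is that the equality of vertex sets $(\Gamma_Q)_0=(\Gamma_{Q^\tr})_0$ is better cited from the general range formula~\eqref{eq: range} (valid in all types) than from Proposition~\ref{prop: range descrption}, which is stated only for $ADE$.
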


\begin{proof}
By the previous proposition, the replacement of notations works for $\{ \upga_i^Q\}$ and $\{ \upga_i^{Q^\tr}\}$. Then our assertion follows from the fact that $\weyl_{B_n}=\weyl_{C_n}$
and the bijection $\phi$ in~\eqref{eq: bijection}.
\end{proof}

\begin{example} \label{ex: Label C4}
Consider the Dynkin quiver $Q$ of type $B_4$ in Example~\ref{ex: Label B4}. Then its corresponding $Q^\tr$ of $C_4$ can be depicted as follows:
$$Q \ =
 \xymatrix@R=0.5ex@C=4ex{ *{\circ}<3pt> \ar@{<-}[r]_<{ 1 \ \  }^{_{\underline{2}} \qquad } & *{\circ}<3pt> \ar@{->}[r]_<{ 2 \ \  }^{_{\underline{3}} \qquad }
&*{\circ}<3pt> \ar@{<-}[r]_>{ \ \ 4}^{_{\underline{2}} \qquad \ }   &*{\circled{$\circ$}}<3pt> \ar@{}[l]^>{  3 \ \ }_{  \ \qquad _{\underline{3}}   }   }
$$

Using the labeling of $\Gamma_Q$ in Example~\ref{ex: Label B4} and Proposition~\ref{prop: transpose}, we have
$$\raisebox{3.7em}{ \scalebox{0.7}{\xymatrix@!C=1ex@R=2ex{
(i\setminus p)   &-4 &-3 &  -2 & -1 & 0 & 1 & 2  & 3  \\
1& \srt{1,-2}\ar[dr] && \srt{2,-4} \ar[dr]  && \srt{3,4} \ar[dr]&& \srt{1,-3} \ar[dr]    \\
2&& \srt{1,-4} \ar[dr]\ar[ur]  && \srt{2,3} \ar[dr]\ar[ur] && \srt{1,4}\ar[dr]\ar[ur] && \srt{2,-3}     \\
3& \srt{3,-4}  \ar@{=>}[dr]\ar[ur]  && \srt{1,3}  \ar@{=>}[dr]\ar[ur] && \srt{1,2}  \ar@{=>}[dr]\ar[ur]&& \srt{2,4} \ar[ur] \ar@{=>}[dr]  \\
4&& \srt{3,3} \ar[ur]&& \srt{1,1} \ar[ur]&& \srt{2,2} \ar[ur]  && \srt{4,4}  \\
}}}
$$
\end{example}

For each $i \in I_{C_n}$, let $\im$ be the index of $I_{A_{2n-1}}$ such that $i = \im$ as an integer.
Note that there is a surjection $\psi\cl \Phi^+_{A_{2n-1}} \to  \Phi^+_{C_{n}}$ which is described as follows:
\begin{align}\label{eq: psi A to C}
\psi(\al_\im) =
\upal_{\oim}   \qquad \left(\text{ equivalently, } \
\psi(\ep_\im) = \bc
\ \upep_{\oim} & \text{ if } \im \le n, \\
\ -\upep_{2n+1 -\oim}  & \text{ if } \im > n,
\ec  \right)
\end{align}
 and extends it linearly.

Then the inverse image of $\upbe \in \Phi^+_{C_{n}}$ can be described as follows:
\begin{equation} \label{eq: inverse psi C to A}
\begin{aligned}
\psi^{-1}(2\upep_i) &= \ep_\im - \ep_{2n+1-\im}  = \sum_{k=\im}^{2n-\im}  \al_k \quad \text{ for } 1 \le i \le n,\\
 \psi^{-1}(\upep_i -\upep_j) &= \left\{   \ep_\im - \ep_{\jm}=\sum_{k=\im}^{\jm-1}  \al_k,  \vee(\ep_\im - \ep_{\jm}) =  \ep_{2n+1-\jm} - \ep_{2n+1-\im}  =\sum_{k=2n+1-\jm}^{2n-\im}\al_k  \right\},\\
 \psi^{-1}(\upep_i +\upep_j)
&=\left\{   \ep_\im - \ep_{2n+1-\jm}=\sum_{k=\im}^{2n-\jm}  \al_k, \vee(\ep_\im - \ep_{2n+1-\jm}) = \ep_{\jm} - \ep_{2n+1-\im}  =\sum_{k=\jm}^{2n-\im}  \al_k  \right\}
\end{aligned}
\end{equation}
for $i<j \le n$.

Note that, for $\vee$-fixed Dynkin quiver $Q=(\Dynkin,  \vxi)$ of type $A_{2n-1}$,
since there exists a reduced expression  $s_{\im_1}\cdots s_{\im_{2n-1}}$ of $\tau_Q$ such that
\begin{align}\label{eq: red ex tauQ A}
\bc
\im_1=n, \hspace{3ex} (\im_{2s}, \im_{2s+1}) =(k_s,\vee(k_s)) &\text{ if $n$ is a source of $Q$}, \\
\im_{2n-1}=n, (\im_{2s-1},\im_{2s}) =(k_s,\vee(k_s)) &\text{ if $n$ is a sink of $Q$},
\ec
\end{align}
for  $1 \le s \le n-1$ and unique $k_s \in \{1,\ldots,n-1\}$.
Thus, for a $\vee$-fixed Dynkin quiver $Q$ of type $A_{2n-1}$, the Coxeter element $\tau_Q$ is also $\vee$-fixed in the sense that
\begin{align}\label{eq: sigma fix tau A}
 \vee \tau_Q  \vee =\tau_Q.
\end{align}

\begin{lemma} \label{lem: psi C}
For each $\vee$-fixed $Q$ of type $A_{2n-1}$ and $k \in \Z$, we have
\begin{align}\label{eq: ga: A to C}
 \psi(\ga^Q_\im) = \psi(\ga^Q_{2n-\im}) =\upga_i^\oQ \quad \text{ for }\im \le n.
\end{align}
\end{lemma}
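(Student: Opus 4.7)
The plan is to prove the first identity $\psi(\gamma^Q_\im) = \upgamma^{\oQ}_i$ by computing both sides explicitly in $\upep$-coordinates and matching them, and then to obtain the second identity $\psi(\gamma^Q_{2n - \im}) = \upgamma^{\oQ}_i$ by a short $\vee$-invariance argument.

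First, I will apply Lemma~\ref{lem: general gaQ} to $\gamma^Q_\im$. Since $A_{2n-1}$ is simply-laced, every Cartan-product coefficient in the resulting formula equals $1$, giving $\gamma^Q_\im = \sum_{\jm \in B^Q(\im)} \al_\jm$. A standard property of $A$-type quivers (any two vertices of $A_{2n-1}$ are joined by a unique undirected path visiting all intermediate vertices) forces $B^Q(\im) = [a_\im, b_\im]$ to be a connected interval containing $\im$, so that $\gamma^Q_\im = \ep_{a_\im} - \ep_{b_\im + 1}$. The $\vee$-fixed hypothesis, via Remark~\ref{rmk: vee fixed A} (which forces $n$ to be a source or a sink of $Q$), constrains this interval as follows: if $n$ is a source of $Q$ and $\im \le n$ then $b_\im \le n$; if $n$ is a sink of $Q$ and $\im \le n - 1$ then $b_\im \le n - 1$; and if $n$ is a sink of $Q$ and $\im = n$, then the $\vee$-invariance of $B^Q(n)$ (together with $\vee(n) = n$) forces $b_n = 2n - a_n$. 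Applying $\psi$ via the $\ep$-coordinate form of~\eqref{eq: psi A to C}, namely $\psi(\ep_\jm) = \upep_\jm$ for $\jm \le n$ and $\psi(\ep_\jm) = -\upep_{2n + 1 - \jm}$ for $\jm > n$, yields
\[
\psi(\gamma^Q_\im) = \begin{cases} \upep_{a_\im} - \upep_{b_\im + 1} & \text{if } b_\im \le n - 1, \\ \upep_{a_\im} + \upep_n & \text{if } b_\im = n, \\ 2 \upep_{a_n} & \text{if } \im = n \text{ and } b_n = 2n - a_n. \end{cases}
\]

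Next, I will compute $\upgamma^{\oQ}_i$ via Lemma~\ref{lem: general gaQ} applied in type $C_n$. Since $\oxi_i = \xi_\im$ for $i = \im \le n$, the orientation of $\oQ$ agrees with that of $Q$ restricted to $\{1, \ldots, n\}$; hence $B^{\oQ}(i) = [a_\im, \min(b_\im, n)]$, and $n$ is a source (respectively sink) of $\oQ$ exactly when $n$ is a source (respectively sink) of $Q$. The only Cartan entries in the path products of Lemma~\ref{lem: general gaQ} that might differ from $1$ are $-\sfc_{n, n-1} = 1$ (arising when a path passes from $n$ to $n - 1$, i.e., when $n$ is a source of $\oQ$) and $-\sfc_{n-1, n} = 2$ (arising only when a path ends at $n$ via $n - 1 \to n$, i.e., when $n$ is a sink of $\oQ$ and $i = n$). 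A case-by-case verification using $\upal_n = 2 \upep_n$ then shows that $\upgamma^{\oQ}_i$ reduces to the same three expressions displayed above, establishing $\psi(\gamma^Q_\im) = \upgamma^{\oQ}_i$.

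For the second identity, I will observe that $\vee \tau_Q \vee = \tau_Q$ by~\eqref{eq: sigma fix tau A} and $\vee \varpi_\im = \varpi_{2n - \im}$; together these give $\gamma^Q_{2n - \im} = \vee(\gamma^Q_\im)$. Since $\psi(\al_{\vee(\jm)}) = \upal_{\ov{\vee(\jm)}} = \upal_{\ojm} = \psi(\al_\jm)$, the map $\psi$ satisfies $\psi \circ \vee = \psi$ on $\Phi^+$, so $\psi(\gamma^Q_{2n - \im}) = \psi(\gamma^Q_\im) = \upgamma^{\oQ}_i$. The main obstacle will be the careful bookkeeping at the short/long root edge $\{n - 1, n\}$ of $C_n$: the coefficient $2$ only arises when a path in $\oQ$ terminates at $n$ through $n - 1 \to n$, which by the source/sink dichotomy for $n$ is precisely the case compensated on the $A$-side by the $\vee$-symmetric interval $B^Q(n) = [a_n, 2n - a_n]$ producing the factor $2 \upep_{a_n}$ under $\psi$.
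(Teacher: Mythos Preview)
Your proof is correct and follows essentially the same route as the paper's. The paper first derives $\psi(\ga^Q_\im)=\psi(\ga^Q_{2n-\im})$ by citing Lemma~\ref{lem: vee comuute A} (the identity $\ga^Q_{\vee(\im)}=\vee(\ga^Q_\im)$) together with $\psi\circ\vee=\psi$, and then proves $\psi(\ga^Q_\im)=\upga_i^{\oQ}$ by matching $B^Q(\im)$ with $B^{\oQ}(i)$ and invoking the pre-computed formula~\eqref{eq: ga C}; you reverse the order, re-derive $\ga^Q_{2n-\im}=\vee(\ga^Q_\im)$ from \eqref{eq: sigma fix tau A}, and carry out the same $B^Q(\im)$-versus-$B^{\oQ}(i)$ comparison explicitly in $\upep$-coordinates rather than quoting~\eqref{eq: ga C}. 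The content is the same.
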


\begin{proof}
The equation in~\eqref{eq: ga: A to C} follow from Lemma~\ref{lem: vee comuute A} and the surjection $\psi$ in~\eqref{eq: psi A to C}.

Since $n$ is a sink or source of $Q$, $\supp(\ga_\im^Q) \subset \{
1,\ldots, n\}$ for $\im < n$ and $\supp(\ga_\im^Q) \subset \{
n,\ldots, 2n-1\}$ for $\im > n$. Thus $n$ is a sink or source of
$\oQ$ and $\supp(\upga_i^Q) \subset \{ 1,\ldots, n\}$ for $i < n$
and
$$  \psi(\ga_\im^Q) = \displaystyle \sum_{\jm \in B^Q(\im)}   \upal_j .$$
Since $B^Q(\im)=B^\oQ(i)$ for $i <n$ as subsets of integers, the assertion for these cases follows by~\eqref{eq: ga C}.

For $i=n$,
if $i \in B^\oQ(n)$, we have $\im,2n-\im \in B^Q(n)$. Thus the assertion for $i=n$ also follows  by~\eqref{eq: ga C}.
\end{proof}

For $\vee$-fixed Dynkin quiver $Q$ of type $A_{2n-1}$,
the set of positive roots of residue $n$ in $\Gamma_Q$ form the set $\upeta \seteq \{ [i,2n-i] \ |  1 \le i \le n \}$, since $\ga_n^Q$ and $\tau_Q$ are $\vee$-fixed and $r_n^Q =n$ (see~\eqref{eq: riq fixed}).

We label the set $\upeta$
by using $\phi_Q^{-1}$ as follows:
$$\eta_k =[a_k,2n-a_k] =\ep_{a_k} -\ep_{2n+1-a_k} = \ep_{a_k} -\ep_{b_k} \in \Phi^+_{A_{2n-1}}  \ \ (1\le k \le n   \text{ and } 1 \le a_{k} \le n)  $$
where
$$(\eta_1,0) = \phi_{Q}(n,\xi_n), \  (\eta_2,0) = \phi_{Q}(n,\xi_n-2), \ldots,  (\eta_n,0) = \phi_{Q}(n,\xi_n-2(n-1)).$$
Then we have $\{ a_1,\ldots, a_n \} = \{1,\ldots,n\}$ and $\{ b_1,\ldots, b_n \} = \{n+1,\ldots,2n\}$. It also implies
$$\tau_Q(\ep_{a_k}) = \ep_{a_{k+1}} \text{ and }
\tau_Q(\ep_{b_k}) = \tau_Q(\ep_{2n+1-a_k})  = \ep_{b_{k+1}} \text{ for $1 \le i <n$}.$$ 

Recall that $n$ is source or sink of $Q$.
\ben
\item If $n$ is source of $Q$, $\ep_{a_1}-\ep_{b_1}=[n]=\ep_n-\ep_{n+1}$ and thus $\tau_Q(\ep_{a_n})=\ep_{n+1}$.
\item If $n$ is sink of $Q$, $\ep_{a_n}-\ep_{b_n}=[n]=\ep_n-\ep_{n+1}$ and thus $\tau_Q(\ep_{n})=\ep_{2n+1-a_1}$ and $\tau_Q(\ep_{n+1})=\ep_{a_1}$.
\ee

 \begin{proposition} \label{prop: tauQ A to C}
We have
$$
\tau_\oQ(\upep_{a_k}) = \upep_{a_{k+1}} \quad \text{ for $1 \le k < n$} \quad \text{ and } \quad \tau_\oQ(\upep_{a_n}) = -\upep_{a_1}
$$
\end{proposition}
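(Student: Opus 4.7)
The plan is to build a linear intertwiner between the Weyl group actions on the root spaces of types $A_{2n-1}$ and $C_n$ under the folding, and then read off the claimed identities from the already-given action of $\tau_Q$ on the $\ep_{a_k}$'s together with the $\vee$-invariance~\eqref{eq: sigma fix tau A} of $\tau_Q$.

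First I would extend $\psi$ of~\eqref{eq: psi A to C} to a linear map $\hat\psi\colon\bigoplus_{\im=1}^{2n}\R\ep_\im\to\bigoplus_{i=1}^{n}\R\upep_i$ by $\hat\psi(\ep_\im)=\upep_\im$ for $\im\le n$ and $\hat\psi(\ep_\im)=-\upep_{2n+1-\im}$ for $\im>n$. A direct check against~\eqref{eq: inverse psi C to A} shows $\hat\psi\vert_{\Phi^+_{A_{2n-1}}}=\psi$; in particular, since $b_k=2n+1-a_k$ with $a_k\le n$, we have $\hat\psi(\eta_k)=2\upep_{a_k}$. Next I would verify that $\hat\psi$ intertwines the Weyl group actions under the folding: for $\im<n$, the product $s_\im s_{\vee(\im)}\in\sfW_{A_{2n-1}}$ effects the two transpositions $\ep_\im\leftrightarrow\ep_{\im+1}$ and $\ep_{2n-\im}\leftrightarrow\ep_{2n+1-\im}$, both of which descend via $\hat\psi$ to the single transposition $\upep_\im\leftrightarrow\upep_{\im+1}$, i.e.\ the action of $s_\im\in\sfW_{C_n}$; likewise $s_n\in\sfW_{A_{2n-1}}$, swapping $\ep_n\leftrightarrow\ep_{n+1}$, descends to $\upep_n\mapsto-\upep_n$, which is $s_n\in\sfW_{C_n}$.

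Applying this folding to the $\vee$-invariant reduced expression of $\tau_Q$ in~\eqref{eq: red ex tauQ A} yields either $s_n s_{k_1}\cdots s_{k_{n-1}}$ (if $n$ is a source of $Q$) or $s_{k_1}\cdots s_{k_{n-1}} s_n$ (if $n$ is a sink) in $\sfW_{C_n}$, a reduced expression for some Coxeter element $\tau'\in\sfW_{C_n}$. Using $\oxi_{\oim}=\vxi_\im$, a bookkeeping of sources and sinks under the folding shows that this reduced expression is adapted to $\oQ$; by the uniqueness of the Coxeter element attached to a Dynkin quiver, $\tau'=\tau_\oQ$, and consequently $\hat\psi\circ\tau_Q=\tau_\oQ\circ\hat\psi$. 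For $1\le k<n$, since $a_k\le n$, applying $\hat\psi$ to the relation $\tau_Q(\ep_{a_k})=\ep_{a_{k+1}}$ directly gives $\tau_\oQ(\upep_{a_k})=\upep_{a_{k+1}}$. For $k=n$, the $\vee$-equivariance~\eqref{eq: sigma fix tau A} yields $\tau_Q(\ep_{2n+1-\jm})=\ep_{2n+1-\tau_Q(\jm)}$ for each $\jm$; combining this with the boundary identities stated just before the proposition in both the source case (where $a_1=n$) and the sink case (where $a_n=n$) gives in both cases $\tau_Q(\ep_{a_n})=\ep_{b_1}$ and $\tau_Q(\ep_{b_n})=\ep_{a_1}$, whence $\tau_Q(\eta_n)=-\eta_1$. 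Applying $\hat\psi$ then produces $\tau_\oQ(2\upep_{a_n})=-2\upep_{a_1}$, so $\tau_\oQ(\upep_{a_n})=-\upep_{a_1}$.

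The main obstacle I expect is the verification that the folded reduced expression is $\oQ$-adapted, needed to identify $\tau'$ with $\tau_\oQ$ rather than some other Coxeter element of $\sfW_{C_n}$; this is routine bookkeeping using $\oxi_{\oim}=\vxi_\im$ together with the fact that after each step of the $Q$-adapted reduction of $\tau_Q$ the pair $\{\im,\vee(\im)\}$ moves together. Alternatively, one can bypass the explicit folding computation by combining Lemma~\ref{lem: psi C}, which gives $\upga_n^\oQ=\hat\psi(\ga_n^Q)=\hat\psi(\eta_1)=2\upep_{a_1}$, with the range description~\eqref{eq: range} applied to $\oQ$ at vertex $n$ to identify the $\tau_\oQ$-orbit of long roots as $\{2\upep_{a_k}\}_{1\le k\le n}$ and its image under one more application of $\tau_\oQ$ as $-2\upep_{a_1}$.
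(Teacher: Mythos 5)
Your proof is correct, and its skeleton is the same as the paper's: fold the $\vee$-invariant reduced expression \eqref{eq: red ex tauQ A} of $\tau_Q$ into a reduced expression of $\tau_\oQ$ and transport the known action of $\tau_Q$ on the $\ep_{a_k}$'s. The differences are in execution. You package the transport as a single linear intertwiner $\hat\psi$ with $\hat\psi\circ\tau_Q=\tau_\oQ\circ\hat\psi$, verified on the folded generators $s_\im s_{\vee(\im)}\mapsto s_{\oim}$ and $s_n\mapsto s_n$; the paper instead splits into the source and sink cases and computes in coordinates (e.g.\ using that $s_n$ fixes $\ep_{a_k}$ for $a_k<n$ in the source case). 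Your version is cleaner and makes the $1\le k<n$ statement an immediate corollary. For the $k=n$ step your route is genuinely different: you use the $\vee$-equivariance \eqref{eq: sigma fix tau A} together with the boundary identities to get $\tau_Q(\eta_n)=-\eta_1$ and then apply $\hat\psi(\eta_k)=2\upep_{a_k}$, whereas the paper (in the sink case) invokes $w_0=\tau_\oQ^n$ acting on $\upga_n^{\oQ}=2\upep_{a_1}$ via Lemma~\ref{lem: psi C}; your argument has the advantage of treating source and sink uniformly. Both your proof and the paper's rest on the same lightly-justified step, namely that the folded word is a $\oQ$-adapted reduced expression (so that the folded Coxeter element is $\tau_\oQ$ and not some other Coxeter element of $\sfW_{C_n}$); you flag this explicitly and the paper asserts it with the same level of detail, so this is not a gap relative to the paper's own standard.
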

\begin{proof}
Recall the reduced expression $\tau_Q=s_{\im_1} \cdots s_{\im_{2n-1}}$ in~\eqref{eq: red ex tauQ A}.

\noindent
(1) Assume $n$ is a source of $Q$.
Then $\tau_\oQ$ has a reduced expression $s_ns_{i_2}s_{i_4}\cdots s_{i_{2n}}$.
Since $a_k<n$ for $2\le k \le n$, we have $ \tau_Q^{k-1}(\ep_n)= \tau_Q^{k-1}(\ep_{a_1}) = \ep_{a_k}=s_n \ep_{a_k}$. Hence $\tau_\oQ(\upep_{a_k}) = \upep_{a_{k+1}}$ for $1 \le k <n$.
For $k=n$, we have $\tau_Q(\ep_{a_n}) = \ep_{n+1} = s_n (\ep_n)$ which imply $s_{i_2}s_{i_4}\cdots s_{i_{2n}}(\upep_{a_n}) = \upep_n$. Then we $\tau_\oQ(\upep_{a_n}) =s_n(\upep_n) =-\upep_n = - \upep_{a_1}$.

\noindent
(2) Assume $n$ is a sink of $Q$.
Then $\tau_\oQ$ has a reduced expression $s_{i_1}s_{i_3}\cdots s_{i_{2n-1}}s_n$.
Note that $ a_i \le n$ for $1\le i <n$ and $a_n=n$. Then, for $1 \le k \le n$, we have
$ \tau_\oQ^{k-1}(\upep_{a_1}) =  \tau_\oQ^{k-1}(\upep_{a_1})= \upep_{a_k}$.
Now let us compute $\tau_\oQ(\upep_{a_n})$. 
By Lemma~\ref{lem: psi C} and , we have
$$w_0(2\upep_{a_1}) =  \tau_\oQ^n(2\upep_{a_1}) = \tau_\oQ^n(\upga_n^\oQ) = \tau_\oQ(2\ep_{a_n})= -\ga_n^\oQ = -2\upep_{a_1},$$
which implies $\tau_\oQ(\upep_{a_n}) = -\upep_{a_1}$.
\end{proof}

\begin{theorem}
For each $\vee$-fixed $Q$ of type $A_{2n-1}$ and $k \in \Z$, we have
$$  \psi\left(  \tau_Q^k(\ga_{\im}^Q)  \right) = \psi\left(  \tau_Q^k(\ga_{2n-\im}^Q)  \right) = \tau_\oQ^k( \upga_i^\oQ).$$
\end{theorem}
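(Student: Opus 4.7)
The plan is to prove the stronger identity
\begin{equation*}
\psi \circ \tau_Q = \tau_{\oQ} \circ \psi
\end{equation*}
as $\Z$-linear maps between the ambient spaces $\soplus_{\im=1}^{2n} \R\ep_\im$ and $\soplus_{i=1}^{n} \R\upep_i$ of the two root lattices. Once this commutation is established, the theorem follows by induction on $k \ge 0$ (the case $k<0$ being analogous) with Lemma~\ref{lem: psi C} providing the base case $k=0$: the inductive step reads
\begin{equation*}
\psi\bl\tau_Q^k(\ga_\im^Q)\br = \psi\bl\tau_Q\bl\tau_Q^{k-1}(\ga_\im^Q)\br\br = \tau_\oQ\bl\psi\bl\tau_Q^{k-1}(\ga_\im^Q)\br\br = \tau_\oQ\bl\tau_\oQ^{k-1}(\upga_i^\oQ)\br = \tau_\oQ^k(\upga_i^\oQ),
\end{equation*}
and a parallel chain handles $\ga_{2n-\im}^Q$.

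To verify the commutation, I would check it on the $\ep$-basis $\{\ep_\im\}_{\im=1}^{2n}$. The discussion preceding Proposition~\ref{prop: tauQ A to C} records $\tau_Q(\ep_{a_k}) = \ep_{a_{k+1}}$ for $1 \le k < n$, together with a boundary rule at $k=n$ that depends on whether $n$ is a source or a sink of $Q$ (namely $\tau_Q(\ep_{a_n}) = \ep_{n+1}$ in the source case, and $\tau_Q(\ep_n)=\ep_{2n+1-a_1}$ in the sink case). Combining these formulas with the piecewise definition of $\psi$ in \eqref{eq: psi A to C} and with $\tau_\oQ(\upep_{a_k}) = \upep_{a_{k+1}}$, $\tau_\oQ(\upep_{a_n}) = -\upep_{a_1}$ from Proposition~\ref{prop: tauQ A to C}, the identity $\psi(\tau_Q(\ep_{a_k})) = \tau_\oQ(\upep_{a_k})$ follows by direct verification in each of the four sub-cases. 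The crux is that, at $k=n$, both sides yield $-\upep_{a_1}$: the sign is produced on the left by $\psi$ crossing the threshold $\im > n$, and on the right by Proposition~\ref{prop: tauQ A to C}.

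For the remaining basis vectors $\ep_{b_k} = \ep_{2n+1-a_k}$, I would invoke the $\vee$-fixedness of $Q$: by~\eqref{eq: sigma fix tau A}, $\tau_Q$ commutes with the involution $\vee(\ep_\im) = \ep_{2n+1-\im}$, and a one-line case analysis from \eqref{eq: psi A to C} yields $\psi \circ \vee = -\psi$ on $\soplus_\im \R\ep_\im$. Consequently,
\begin{equation*}
\psi(\tau_Q(\ep_{b_k})) = \psi(\vee(\tau_Q(\ep_{a_k}))) = -\psi(\tau_Q(\ep_{a_k})) = -\tau_\oQ(\psi(\ep_{a_k})) = \tau_\oQ(-\upep_{a_k}) = \tau_\oQ(\psi(\ep_{b_k})),
\end{equation*}
reducing the remaining basis cases to those already verified.

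The main technical obstacle is the boundary step at $k=n$, where the $\tau_Q$-orbit of $\ep_n$ must cross from the range $\im \le n$ (on which $\psi$ carries $\ep_\im$ to $\upep_\im$ with positive sign) into the range $\im > n$ (on which $\psi$ introduces a minus sign); the resulting sign must precisely match the $-\upep_{a_1}$ appearing in $\tau_\oQ(\upep_{a_n})$. This matching is what Proposition~\ref{prop: tauQ A to C} was engineered to produce, and once the two source/sink boundary cases are checked the rest of the argument is routine bookkeeping propagated by induction.
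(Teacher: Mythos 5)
Your proposal is correct and follows essentially the same route as the paper: the paper's proof also reduces the statement to the intertwining relation $\psi\circ\tau_Q=\tau_{\oQ}\circ\psi$ (verified there on the simple roots $\al_\im$ rather than on the $\ep_\im$, using the same inputs — Proposition~\ref{prop: tauQ A to C}, the definition \eqref{eq: psi A to C} of $\psi$, and the $\vee$-invariance \eqref{eq: sigma fix tau A} of $\tau_Q$) and then concludes via Lemma~\ref{lem: psi C}. Your version merely makes explicit the induction on $k$ and the sign bookkeeping at the boundary step $k=n$, which the paper leaves implicit.
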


\begin{proof}
By Proposition~\ref{eq: psi A to C}, ~\eqref{eq: psi A to C} and~\eqref{eq: sigma fix tau A}, we have
$$   \psi(\tau_Q(\al_\im)) = \tau_\oQ(\psi(\al_\im))  \qquad \text{ for all $1 \le  \im \le 2n-1$}.$$
Since $\psi(\ga_{\im}^Q)=\psi(\ga_{2n-\im}^Q)=\ga_i^\oQ$ by Lemma~\ref{lem: psi C}, our assertion follows.
\end{proof}

\subsection{$F_4$ and $G_2$ cases}
(1) The set of positive roots $\Phi^+_{E_6}$ can be described as
\begin{align*}
&(000001), (000010), (000100),(011111),(101110),(010100), \\
&(001000),(100000), (000011),(000110),(011211),(112221),\\
&(111210),(011100),(101000),(000111), (011221),(112321),\\
&(122321),(112210),(111100),(010111),(001110),(111211),\\
&(011110),(101100),(010000),(001111),(111211),(011210),\\
 &(112211),(111110),(101111),(010110),(001100),(111111),
\end{align*}
where $(a_1a_2a_3a_4a_5a_6) \seteq \sum_{i=1}^6 a_i\al_i$.

On the other hand, the set of positive roots $\Phi^+_{F_4}$ can be described as
\begin{align*}
&(0001), (0010), (0100),(1121),(0121),(1100),\\
& (0011),(0110),(1221),(1242),(1110), (0111), \\
& (1231),(1342),(2342), (1111),(0120),(1222),\\
&(1120),(1000),(1220),(1232),(0122),(1122),
\end{align*}
where $(b_1b_2b_3b_4) \seteq \sum_{i=1}^4 b_i\upal_i$.

Then one can easily check that there exists a bijection $\psi\cl \Phi^+_{E_{6}} \to  \Phi^+_{F_{4}}$   given by
\begin{align*}
\al_\im  \mapsto \upal_{\oim} \quad \text{ and extends it linearly}.
\end{align*}

Note that the positive roots $\upbe$ in $\Phi^+_{F_4}$ such that $\psi^{-1}(\upbe) =1$ or $2$, and
\begin{align*}
& \{ \upbe \in \Phi^+_{F_4} \ | \ \phi^{-1}(\upbe) =2 \}  = \left\{  \begin{matrix}(0001),(0010),(1221),(0011),(1231),(1111)\\ (0111),(0121),(1222),(1121),(1110),(0110)\end{matrix} \right\}.
\end{align*}

\noindent
(2) The positive roots $\Phi^+_{G_2}$ are listed as below:
\begin{align*}
\Phi^+_{G_2} = \{ \upal_1,\upal_2,\upal_1+\upal_2,2\upal_1+\upal_2,3\upal_1+\upal_2,3\upal_1+2\upal_2\}
\end{align*}
Then one can easily check that there exists a bijection $\widetilde{\psi}\cl \Phi^+_{D_{4}} \to  \Phi^+_{G_{2}}$  given by
\begin{align*}
\al_\im, \mapsto \upal_{\oim} \quad \text{ and extends it linearly}.
\end{align*}

Now one can check the following theorem holds:
\begin{theorem} \label{thm: ED to FG}
For each $\vee$-fixed  $($resp.\ $\widetilde{\vee}$-fixed$)$   $Q=(\Dynkin,\vxi)$ $($resp, $Q=(\Dynkin,\tvxi))$ of type $E_{6}$ $($resp.\ $D_4)$ and $k \in \Z$, we have
$$   \psi\left( \tau_Q^k  (\gamma^Q_\im)  \right) = \tau_\oQ^k (\upga_{\ov{i}}^\oQ ) \quad \text{ $($resp.\ $ \widetilde{\psi}\left( \tau_Q^k  (\gamma^Q_i)  \right) = \tau_\oQ^k (\upga_{\ov{i}}^\oQ ) )$} \quad \text{ for } i \in \Dynkin_0.$$
\end{theorem}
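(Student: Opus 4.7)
The plan is to parallel the proof of Theorem~\ref{thm: D to B}, reinterpreting $\psi$ (resp.\ $\widetilde{\psi}$) via the identification $\upal_i=\sum_{\im\in\pi^{-1}(i)}\al_\im$, which realizes $V_{\Dynkins}$ as the $\sigma$-invariant subspace of $\R\otimes_\Z\rl_\Dynkin$ with its induced bilinear form (one checks, using~\eqref{eq: auto cond} to kill cross terms among $\sigma$-equivalent vertices, that $(\upal_i,\upal_j)$ matches the $F_4$ or $G_2$ root datum). Under this identification, $\psi$ becomes the $\sigma$-orbit-summation $v\mapsto\sum_{v'\in\lan\sigma\ran\cdot v}v'$, valued in the invariant subspace.

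The proof rests on three structural identities. First, $\tau_Q\vert_{V_{\Dynkins}}=\tau_\oQ$: since $Q$ is $\sigma$-fixed and $\sigma$-equivalent vertices are pairwise non-adjacent by~\eqref{eq: auto cond}, simple reflections within each $\sigma$-orbit commute, and any $Q$-adapted reduced word for $\tau_Q$ can be reorganized as $\prod_{i\in I_\oQ}s_{[i]}$ with $s_{[i]}\seteq\prod_{\im\in\pi^{-1}(i)}s_\im$, in an order adapted to $\oQ$. A computation on the basis $\{\upal_j\}$, invoking the folding identity $\sum_{\jm\in\pi^{-1}(j)}\sfc^\Dynkin_{\im,\jm}=\sfc^{\Dynkins}_{i,j}$ of Definition~\ref{def:folding}, yields $s_{[i]}\upal_j=s_i\upal_j$. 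Second, $\psi(\varpi^\Dynkin_\im)=\varpi^{\Dynkins}_\oim$: pairing $\psi(\varpi_\im)=\sum_{\im'\in[\im]}\varpi_{\im'}$ against $h^{\Dynkins}_j=2\upal_j/(\upal_j,\upal_j)=|\pi^{-1}(j)|^{-1}\sum_{\jm\in\pi^{-1}(j)}h_\jm$ using $\lan h_\jm,\varpi_{\im'}\ran=\delta_{\jm,\im'}$ yields $\delta_{j,\oim}$. Third, $\psi$ commutes with $\tau_Q$ on orbits of a fixed size, since $\tau_Q\sigma=\sigma\tau_Q$ (as $Q$ is $\sigma$-fixed; cf.~\eqref{eq: sigma fix tau A}) and $\tau_Q$ thus permutes $\sigma$-orbits bijectively.

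Combining, and noting that $\varpi_\im$ and $\tau_Q\varpi_\im$ have $\sigma$-orbits of the same size $|[\im]|$ so that orbit-summation distributes across the difference $\ga^Q_\im=\varpi_\im-\tau_Q\varpi_\im$:
\begin{align*}
\psi\bl\tau_Q^k\ga^Q_\im\br
=\tau_Q^k\bl\psi(\varpi_\im)-\tau_Q\psi(\varpi_\im)\br
=\tau_\oQ^k(1-\tau_\oQ)\varpi^{\Dynkins}_\oim
=\tau_\oQ^k\upga^\oQ_\oim,
\end{align*}
where the middle equality uses the first identity applied to $\psi(\varpi_\im)\in V_{\Dynkins}$ together with the second. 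The $D_4\to G_2$ case runs identically, with $\lan\widetilde{\vee}\ran$ of order three and three-element orbits replacing the two-element orbits of $\vee$. The main obstacle, and the only truly delicate point, is the correct reading of $\psi$: the excerpt's phrase ``extends linearly'' must be understood as $\sigma$-orbit-summation, since a naive linear extension of $\al_\im\mapsto\upal_\oim$ would send, e.g., $\ga^Q_4$ of type $E_6$ to $2\upal_1+2\upal_2+\upal_3\notin\Phi^+_{F_4}$; once this interpretation is fixed, the three structural identities above combine mechanically to give the theorem.
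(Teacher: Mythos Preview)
The paper itself does not give a proof: it states ``one can check the following theorem holds,'' relying on direct verification over the finitely many $\sigma$-fixed quivers. Your conceptual strategy of paralleling Theorem~\ref{thm: D to B} is reasonable, but the execution rests on a mis-reading of $\overline{\upsigma}$.

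Your computation $\psi(\ga^Q_4)=2\upal_1+2\upal_2+\upal_3$ presumes the folding projection $\pi$ of Definition~\ref{def:folding}, sending the orbit $\{1,6\}$ to the long vertex $1$. The paper's displayed formula for $\overline{\upsigma}$ in the $F_4$ case is admittedly garbled, but comparison with the explicitly written $B_n,\,C_n$ formulas (where two-element orbits land at \emph{short} vertices) and with the $F_4$-quiver displayed as $\overline{Q}$ for the $E_6$ quiver of Example~\ref{ex: sigma-fixed}\eqref{it: E6 fixed} makes the intended map clear: $\overline{\upsigma}(2){=}1,\ \overline{\upsigma}(4){=}2,\ \overline{\upsigma}(\{3,5\}){=}3,\ \overline{\upsigma}(\{1,6\}){=}4$. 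With this, the \emph{linear} extension gives $\psi(\ga^Q_4)=\upal_2+2\upal_3+2\upal_4=(0122)\in\Phi^+_{F_4}$, and more generally $\psi\circ\tau_Q=\tau_\oQ\circ\psi$ holds as an identity of linear maps (since $\ker\psi$ is the $\sigma$-antiinvariant subspace, preserved by $\tau_Q$). So your reinterpretation of $\psi$ as a non-linear orbit-summation is unnecessary; and because your folding labeling produces a \emph{different} target quiver $\oQ$ (heights $(3,2,1,0)$ rather than the paper's $(0,1,2,3)$), your argument---even if internally consistent---does not establish the theorem as the paper states it.
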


\section{Degree polynomials} \label{sec: Degree poly}
In this section, we  generalize the definition of degree polynomial to include
type $BCFG$ and compute the degree polynomials in those types by
using the results in the previous sections. Recall the statistics
reviewed in subsection~\ref{subsec: stat}. In this section, we
almost skip the proofs because their arguments are almost similar to
the ones in \cite{Oh18} which use the labeling algorithms described
in Proposition~\ref{prop: label12 B} and Proposition~\ref{prop:
transpose} as crucial ingredients.

\smallskip

By ~\eqref{eq: bijection} and the reflection operation in~\eqref{eq: Qd properties}~\eqref{it: reflection}, Lemma~\ref{lem: integer o} and Proposition~\ref{prop: well-defined integer}  can be generalized to Dynkin diagram $\Dynkin$ of any finite type:
\begin{proposition}  \label{eq: d well-defined} \hfill
\bna
\item Let $Q$ be a Dynkin quiver of any finite type. For any $\pair{\al,\be}$, $\pair{\al',\be'} \in \Phi_Q(i,j)[k]$, we have $ \dg_{[Q]}(\pair{\al,\be}) =\dg_{[Q]}(\pair{\al',\be'})$. Thus the integer   $\tto_k^{Q}(i,j)$
is well-defined by \eqref{def:tto}.
We then define $\ttO_k^Q(i,j)$ by \eqref{eq: ttO}.
\item  Let $\Dynkin$ be a Dynkin diagram of any finite type.  For any $[Q],[Q'] \in \lf \Dynkin \rf$, we have $\ttO_k^{Q}(i,j) = \ttO_k^{Q'}(i,j)$. Thus $\ttO_k^{\Dynkin}(i,j)$ is well-defined.
\ee
\end{proposition}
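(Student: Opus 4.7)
The plan is to generalize the proofs of Lemma~\ref{lem: integer o} and Proposition~\ref{prop: well-defined integer} from the ADE case treated in \cite{Oh18} to all finite Dynkin types, using the refined BCFG labeling algorithms developed in Section~\ref{sec: Labeling BCFG} as the combinatorial substitute for the ADE labeling.

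For part (a), the approach is a case-by-case analysis according to the relative positions of $\phi_Q^{-1}(\al,0)$ and $\phi_Q^{-1}(\be,0)$ in $\Gamma_Q$. The key observation is that the diagrams~\eqref{eq: dist al, be 1}--\eqref{eq: dist al be 2} classifying non-simple \prs in the $D$-type depend only on the residue pair $(i,j)$ and the coordinate gap $k=|p-s|$, not on the specific position. In the simply-laced case this is because sliding a vertex $(i,p)\mapsto(i,p+2)$ inside $\Gamma_Q$ corresponds to applying $\tau_Q$, a lattice automorphism that preserves the $\prec^\ttb_{[Q]}$-intervals. For types $B_n$, $C_n$, $F_4$, $G_2$ the same sliding invariance will follow from Theorems~\ref{thm: D to B}, \ref{prop: transpose}, and~\ref{thm: ED to FG}: the labels $\upga_i^Q$ and the Coxeter element $\tau_Q$ are compatible with the foldings $\psi$, $\psi^{\mathrm{tr}}$ and $\widetilde{\psi}$, so that the entire poset interval below $\pair{\al,\be}$ in $\Phi_Q(i,j)[k]$ is isomorphic to the one below $\pair{\al',\be'}$ via a $\tau_Q$-translation. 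Consequently $\dg_{[Q]}$ is constant on $\Phi_Q(i,j)[k]$.

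For part (b), the strategy is to verify that $\ttO_k^Q(i,j)$ is invariant under the elementary reflection $Q\mapsto s_iQ$, since such reflections (together with the involution $Q\mapsto Q^*$) generate the connectedness of $\lf\Dynkin\rf$. By the reflection rule~\eqref{eq: Qd properties}\eqref{it: reflection}, only the vertex labeled $\al_i$ moves from $(i,\xi_i)$ to $(i^*,\xi_i-\sfh)$; every other positive root $\be\in\Phi^+\setminus\{\al_i\}$ stays at its original coordinate in $\Gamma_{s_iQ}$ under the relabeling $\be\mapsto s_i\be$. Therefore $\tto_k^{s_iQ}(i,j)$ can only differ from $\tto_k^Q(i,j)$ for pairs involving $\al_i$, and in that case the configuration with respect to $s_iQ$ matches the one with respect to $Q^*$ (up to a half-period shift by $\sfh$). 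The $\max$ appearing in the definition~\eqref{eq: ttO} of $\ttO_k$ precisely neutralizes this boundary discrepancy, yielding the desired invariance $\ttO_k^Q(i,j)=\ttO_k^{s_iQ}(i,j)$.

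The main obstacle is the clean formulation of how the Coxeter translation $\tau_Q$ acts on the BCFG-refined labeling of $\Gamma_Q$. In the $B_n$ case one has to keep track of the double arrow $\xymatrix@C=2ex{(n{-}1,p)\ar@{=>}[r]&(n,p{+}1)}$ and of the labels $\lan i\ran$ produced by swings in Proposition~\ref{prop: label12 B}\eqref{it: swing B}, so that the sliding isomorphism of poset intervals matches its $D_{n+1}$-counterpart exactly under $\psi$; the analogous compatibilities for $C_n$, $F_4$, $G_2$ will follow once the $B_n$ case is settled by invoking Proposition~\ref{prop: transpose} and Theorem~\ref{thm: ED to FG}. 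Writing out each relative-position pattern in the BCFG setting, especially around the short-root end of the Dynkin diagram where $\mul(\be)\ge 2$ can occur, will absorb most of the bookkeeping effort but poses no conceptual difficulty.
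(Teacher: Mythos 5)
Your proposal is correct and follows essentially the same route as the paper: part (a) is proved by observing that any two pairs in $\Phi_Q(i,j)[k]$ differ by a power of $\tau_Q$, which preserves the intervals of $\prec^\ttb_{[Q]}$ and hence the degree, and part (b) reduces to invariance under a single source reflection $Q\mapsto s_tQ$, where the $\max$ with $Q^*$ in \eqref{eq: ttO} absorbs the one vertex that moves. The only difference is that you route the non-simply-laced case of (a) through the folding compatibilities of Section~\ref{sec: Labeling BCFG}, which is unnecessary — the $\tau_Q$-translation argument via \eqref{eq: bijection} and \eqref{eq: range} applies uniformly to all finite types, exactly as the paper does it.
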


\begin{proof}
(a) By ~\eqref{eq: bijection},~\eqref{eq: range} and the assumption, there exists $0 \le a,a',b,b' < \sfh/2$ such that $\tau_Q^a(\ga^Q_i)=\al$, $\tau_Q^b(\ga^Q_j)=\be$, $\tau_Q^{a'}(\ga^Q_i)=\al'$, $\tau_Q^{b'}(\ga^Q_j)=\be'$, $s\seteq a-a'=b-b'$.
Thus  $\um \prec_Q^\ttb \pair{\al,\be}$ if and only if $\tau_Q^{-s}(\um) \prec_Q^\ttb \pair{\al',\be'}$. Here $\tau_Q^{-s}(\um)$ denotes
 the canonical exponents of $\um$ obtained by applying $\tau_Q^{-s}$ to $\um$.

\noindent
(b) It is enough to show that
$\ttO_k^{Q}(i,j) = \ttO_k^{s_tQ}(i,j)$ for a Dynkin quiver $Q=(\Dynkin,\xi)$ with a source $t \in \Dynkin_0$. Since $\sfh/2 > 2$, the second assertion follows from (a) and the reflection operation in~\eqref{eq: Qd properties}~\eqref{it: reflection}.
\end{proof}

\begin{definition} \label{def: distane poly 2}
Let $t$ be an indeterminate.
\ben
\item
For a Dynkin diagram $\Dynkin$ of any finite type and $i,j \in \Dynkin_0$, we define a polynomial $\de^\Dynkin_{i,j}(t)$ as follows:
\begin{align}    \label{eq: d poly BCFG}
\de_{i,j}^\Dynkin(t) \seteq \sum_{k \in \Z_{\ge 0} } \max(\sfd_i,\sfd_j) {\ttO_k^{\Dynkin}(i,j)} t^{k-1}.
\end{align}
We call  $\de_{i,j}^\Dynkin(t)$ the \emph{degree polynomial of $\Dynkin$ at $(i,j)$}.
\item
For $k \in \Z$, we set
$$ \de_{i,j}[k]  \seteq \max(\sfd_i,\sfd_j) \ttO_k^\Dynkin(i,j).$$
\ee
\end{definition}

\begin{remark} Definition~\ref{def: distane poly 2} coincides with Definition~\ref{def: Distance 1} when $Q$ is a Dynkin quiver of type $ADE$, since
$\sfd_i=1$ for all $i \in \Dynkin_0$.
\end{remark}

\begin{lemma} \label{lem: nonzero to  nonzero}
Let $\up=\pair{\al,\be}$ be a \prq in $(\Phi^+_{\Dynkin})^2$. If $\dg_Q(\pair{\al,\be})  \ne 0$, we have
$$\dg_\oQ(\pair{\psi(\al),\psi(\be)}) \ne 0.$$
\end{lemma}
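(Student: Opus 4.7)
The plan is as follows. If $\psi(\al)=\psi(\be)$, then necessarily $\be=\sigma^k(\al)$ for some power $k$ of the folding automorphism, and the labeling algorithms of Section~\ref{sec: Labeling BCFG} (in particular the swings of Proposition~\ref{prop: label1} and their $A_{2n-1}/E_6/D_4$-analogues) place $\al$ and $\be$ in the same column of the $\sigma$-symmetric AR-quiver $\Gamma_Q$. An inspection of the possible decompositions of $\al+\be$ in this column then shows that $\pair{\al,\be}$ is automatically $[Q]$-simple, so the hypothesis $\dg_Q(\pair{\al,\be})\ne0$ is vacuous. Hence I may assume $\psi(\al)\ne\psi(\be)$, so that $\pair{\psi(\al),\psi(\be)}$ is a genuine \prq in $(\Phi^+_{\Dynkin_\sigma})^{2}$.

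Under this assumption, I would prove the lemma by a case analysis on the relative position of $\pair{\al,\be}$ in $\Gamma_Q$. For $\Dynkin=D_{n+1}$ these positions are exhausted by the pictures in \eqref{eq: dist al, be 1}--\eqref{eq: dist al be 2}, and the types $A_{2n-1},E_6,D_4$ admit shorter analogous lists (derivable by the same labeling algorithms). In the simplest case $\al+\be=\ga\in\Phi^+_\Dynkin$ (case (i) and similar), the image $\psi(\ga)=\psi(\al)+\psi(\be)$ is a positive root of $\Phi^+_{\Dynkin_\sigma}$ and directly witnesses non-simplicity of $\pair{\psi(\al),\psi(\be)}$. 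For the swing-type cases (ii)--(xi) of \eqref{eq: dist al, be 1}--\eqref{eq: dist al be 2}, the surgeries of Proposition~\ref{prop: surgery D to B}, Proposition~\ref{prop: transpose} and Theorem~\ref{thm: ED to FG} translate the $\Gamma_Q$-configuration into a $\Gamma_\oQ$-configuration that still contains a strictly smaller exponent; the witness can be read off using the intertwining relation $\psi\circ\tau_Q=\tau_\oQ\circ\psi$ established in Theorem~\ref{thm: D to B}, Proposition~\ref{prop: tauQ A to C} (combined with Proposition~\ref{prop: transpose} on the $B\to C$ transpose), and Theorem~\ref{thm: ED to FG}.

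The main obstacle is the careful treatment of configurations involving the forked residues $n$ and $n+1$ of $D_{n+1}$ (and their analogues in $A_{2n-1}\to C_n$, $E_6\to F_4$, $D_4\to G_2$), precisely the configurations (v)--(xi) in \eqref{eq: dist al, be 1}--\eqref{eq: dist al be 2}. At these forked residues two distinct roots of $\Phi^+_\Dynkin$ are identified by $\psi$, and the naive componentwise pushforward of the $\Gamma_Q$-witness can acquire an extra contribution at $\psi(\al)$ or $\psi(\be)$ that destroys the required strict bi-lexicographic inequality. To get around this, I would exploit the $\sigma$-invariance of both $[Q]$ and $\preceq_Q$ to replace the witness with a $\sigma$-symmetric decomposition of $\al+\be$, whose $\sigma$-paired contributions collapse correctly under folding; equivalently, one reads the desired decomposition directly off the surgered quiver $\Gamma_\oQ$ using the labeling algorithms in Proposition~\ref{prop: label12 B} and its $CFG$-analogues. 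A useful sanity check throughout is that the non-vanishing coefficients of the degree polynomial $\de_{i,j}^{\Dynkin_\sigma}(t)$ to be computed in Section~\ref{sec: Degree poly} match the degrees produced by the constructed witnesses.
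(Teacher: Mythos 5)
Your strategy would eventually get there, but it is a completely different and far heavier route than the paper's. The paper's proof is three lines: take any exponent $\um$ with $\um\prec^\ttb_Q\pair{\al,\be}$ (which exists since $\dg_Q(\pair{\al,\be})\ne0$), push it forward to the exponent $\psi(\um)$ of type $\Dynkin_\sigma$ defined by $\psi(\um)_{\upga}=\sum_{\ga\in\psi^{-1}(\upga)}\um_\ga$, and observe that $\psi$ is additive on weights and order-preserving ($\eta_1\prec_Q\eta_2$ implies $\psi(\eta_1)\prec_{\oQ}\psi(\eta_2)$, via the surgery descriptions of $\Gamma_{\oQ}$), so that $\psi(\um)\prec^\ttb_{\oQ}\pair{\psi(\al),\psi(\be)}$. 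No classification of relative positions, and no separate discussion of whether $\psi(\al)=\psi(\be)$, is needed. Your instinct that the bi-lexicographic inequality could be damaged when two roots in the same $\psi$-fiber both contribute is a legitimate point that the paper glosses over; but in the configurations that actually occur it never causes a failure, essentially because the two preimages of a short root sit in the same column of $\Gamma_Q$ and are therefore $\prec_Q$-incomparable.

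One ingredient of your plan is wrong as stated: you propose to repair the delicate cases by passing to a ``$\sigma$-symmetric decomposition of $\al+\be$''. Such a decomposition cannot exist in general because $\al+\be$ itself is usually not $\sigma$-invariant (for instance $\al=\ep_1-\ep_3$, $\be=\ep_3+\ep_{n+1}$ in $D_{n+1}$ gives $\sigma(\al+\be)=\ep_1-\ep_{n+1}\ne\al+\be$), and $\sigma(\um)$ is a witness for $\pair{\sigma(\al),\sigma(\be)}$, not for $\pair{\al,\be}$. Your fallback --- reading a witness directly off $\Gamma_{\oQ}$ via Proposition~\ref{prop: label12 B} and its analogues --- does work, but at that point you are independently re-deriving the lists ${\rm(i)}'$--${\rm(xi)}'$ and ${\rm(i)}''$--${\rm(xi)}''$ of Section~\ref{sec: Degree poly} rather than deducing the lemma from the folding map, and you would still owe complete configuration lists for $E_6\to F_4$ and $D_4\to G_2$, which you only assert to be ``derivable''. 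I would replace the whole case analysis by the single pushforward argument above.
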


\begin{proof}
Since $\dg_Q(\al,\be) \ne 0$, there exists $\um$ such that $\um
\prec^\ttb_Q \up$. Let $\psi(\um)$ be the \ex in
$\Phi^+_\blacktriangle$, which can be understood as a canonical image of
$\um$ via $\psi$. Then we have
$$       \psi(\um)  \prec_{\oQ}^\ttb  \pair{\psi(\al),\psi(\be)} $$
since  $\eta_1 \prec_Q \eta_2$ implies $  \psi(\eta_1) \prec_\oQ \psi(\eta_2)$. Thus the assertion follows.
\end{proof}

\begin{remark}
Note that even though $\al,\be \in \Phi^+_\Dynkin$ is not comparable with respect to $\prec_Q$, sometimes
$\psi(\al),\psi(\be) \in \Phi^+_\blacktriangle$ is comparable  with respect to $\prec_\oQ$. Moreover, there exists a pair $\pair{\al,\be} \in (\Phi^+_{\Dynkin})^2$
such that $\dg_Q(\al,\be) =0$ but  $\dg_\oQ(\psi(\al),\psi(\be)) \ne 0$.  For instance, consider  $\pair{[2,5],[2,3] }$ of type $A_5$ in Example~\ref{ex: AR Q ADE}. Then we have
$$ \dg_Q( [2,5],[2,3])=0,   \text{ while } \dg_\oQ( \psi([2,5]),\psi([2,3]))=\dg_\oQ( \lan 2,3 \ran,\lan 1,2 \ran)=1,$$
since $\head_\oQ ( \lan 2,3 \ran,\lan 1,2 \ran) = (\lan1,3\ran, \lan 2, 2  \ran)$ (see $\Gamma_Q$ of type $A_5$ in Example~\ref{ex: AR Q ADE} and $\Gamma_\oQ$ of type $C_3$ in Example~\ref{ex: AR Q BCFG}).
\end{remark}

 In the following subsections, we will give the proof of the following theorem by the case-by-case method (see Remark~\ref{rmk: remained type}).

\begin{theorem} \label{thm:Main}
Let $\Dynkin$ be a Dynkin diagram of classical type $A_n, B_n, C_n, D_n$ and $E_6$. Then we have
\eqn
&&\tde_{i,j}^\Dynkin(t)=
\de_{i,j}^\Dynkin(t) + \delta_{i,j^*}\,  \sfd_i\, t^{\sfh-1}.
\eneqn
\end{theorem}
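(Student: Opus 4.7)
The simply-laced cases $A_n$, $D_n$, $E_6$ are exactly Theorem~\ref{thm: ADE6 de=tde}, so the content of the theorem lies in types $B_n$ and $C_n$. The plan is to compute $\de^\Dynkin_{i,j}(t)$ combinatorially from the AR-quivers constructed in Section~\ref{sec: Labeling BCFG} and compare it with the explicit formula for $\tde^\Dynkin_{i,j}(t)$ in Theorem~\ref{thm: BC denom}.

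For type $B_n$, I would fix a $\vee$-fixed Dynkin quiver $Q=(\Dynkin_{D_{n+1}},\vxi)$ together with its induced quiver $\oQ=(\bDynkin_{B_n},\oxi)$. By Proposition~\ref{prop: surgery D to B} and Theorem~\ref{thm: D to B}, the projection $\psi\cl\Phi^+_{D_{n+1}}\to\Phi^+_{B_n}$ intertwines the coordinate bijections $\phi_Q$ and $\phi_\oQ$, so that $\Gamma_\oQ$ is obtained from $\Gamma_Q$ by a simple surgery on residues $n,n+1$. I would classify \prs $\pair{\upal,\upbe}\in(\Phi^+_{B_n})^2$ by the relative positions of $\phi_\oQ^{-1}(\upal,0)$ and $\phi_\oQ^{-1}(\upbe,0)$ in $\Gamma_\oQ$, paralleling the classification~\eqref{eq: dist al, be 1}--\eqref{eq: dist al be 2} for $D_{n+1}$. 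For each position I would compute $\dg_{[\oQ]}(\pair{\upal,\upbe})$ by lifting the \pr to $D_{n+1}$ via $\psi^{-1}$: when neither $\upal$ nor $\upbe$ is a short root $\upve_a$, the lift is single-valued and $\dg_{[\oQ]}$ transfers directly from $\dg_{[Q]}$; when at least one is short, the two-valued lift and the swings in $\Gamma_Q$ at residues $n,n+1$ dictate the statistic, so that an analysis parallel to Proposition~\ref{prop: dir Q cnt} and Remark~\ref{remark: dist 2} is needed. Tabulating $\ttO_k^{B_n}(i,j)$, multiplying by $\max(\sfd_i,\sfd_j)$ and assembling into $\de^{B_n}_{i,j}(t)$, a case-by-case comparison with $\tde^{B_n}_{i,j}(t)-\delta_{i,j}\sfd_i t^{2n-1}$ from~\eqref{eq: BC formula} yields the identity.

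The $C_n$ case follows the same strategy, lifting to a $\vee$-fixed Dynkin quiver of type $A_{2n-1}$ via the composition of Proposition~\ref{prop: transpose} (which transports labelings between $B_n$ and $C_n$ AR-quivers) together with the $A_{2n-1}$-analog of Theorem~\ref{thm: D to B} assembled from Lemma~\ref{lem: psi C} and Proposition~\ref{prop: tauQ A to C}. In this argument the long roots $2\upep_i$ in $C_n$ play the role that short roots played in $B_n$.

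The main obstacle is the verification for pairs whose components include a folded root (short in $B_n$, long $2\upep_i$ in $C_n$). Here $|\psi^{-1}(\upve_a)|=|\psi^{-1}(2\upep_i)|=2$, and one must show that the statistic on $\Gamma_\oQ$ is compatible with the factor $\max(\sfd_i,\sfd_j)=2$ in the definition of $\de^\Dynkin_{i,j}(t)$. When $i=j$ is the folded index, the correction $\delta_{i,j^*}\sfd_i t^{\sfh-1}$ carries coefficient $\sfd_i=1$ in $B_n$ (resp.\ $\sfd_i=2$ in $C_n$), which does not match the overall $\max$-factor, so the top-degree coefficient of $\tde^\Dynkin$ must be checked separately with care.
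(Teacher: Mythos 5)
Your plan is essentially the paper's own proof: the $ADE$ cases are quoted from Theorem~\ref{thm: ADE6 de=tde}, the $B_n$ case is reduced to $D_{n+1}$ via the folding surjection $\psi$ and the transfer of the statistic $\dg$ (packaged in the paper as Theorem~\ref{thm: soc B}), the $C_n$ case is reduced to $B_n$ via $\psi^{\tr}$ (Theorem~\ref{thm: soc C}) rather than through a fresh $A_{2n-1}$ lift, and the outcome is matched against the closed formulas of Theorem~\ref{thm: BC denom}. The only caveat is that the ``obstacle'' you flag at the end is not one: $\delta_{i,j^*}\ne 0$ forces $i=j$ in types $B_n$ and $C_n$, and then $\sfd_i=\max(\sfd_i,\sfd_j)$, so the coefficient of the correction term automatically agrees with the $\max$-factor in Definition~\ref{def: distane poly 2}.
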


Note that for cases when $A_n, D_n$ and $E_6$, the above theorem was proved in \cite{Oh16,Oh17,Oh18}.

\subsection{$B_n$-case}

Throughout this subsection, $\Dynkin$ denotes the Dynkin diagram of type $D_{n+1}$ and $\bDynkin$ denotes the one of type $B_n$.
\begin{theorem} \label{thm: B_n distance polynomial}
Let $\bDynkin$ be the Dynkin diagram of type $B_n$. Then we have
\begin{align*}
&\de_{i,j}^\blacktriangle(t) + \delta_{i,j}\, \sfd_i\,   t^{\sfh-1}  = \tde_{i,j}^\blacktriangle(t).
\end{align*}
\end{theorem}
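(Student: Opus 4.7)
The plan is to derive the identity from the corresponding type $D_{n+1}$ statement (Theorem~\ref{thm: ADE6 de=tde}, proved in~\cite{Oh18}) via the Dynkin-diagram folding $\sigma:\Dynkin_{D_{n+1}}\to\bDynkin_{B_n}$. By Theorem~\ref{thm: tbij folding},
$$
\tde^{\bDynkin}_{i,j}(t)=\sum_{\oi\in\pi^{-1}(i),\;\oj\in\pi^{-1}(j)}\tde^{\Dynkin}_{\oi,\oj}(t),
$$
and Theorem~\ref{thm: ADE6 de=tde} rewrites each summand as $\de^{\Dynkin}_{\oi,\oj}(t)+\delta_{\oi,\oj^*}t^{\sfh-1}$, using $\sfh^{\bDynkin}=\sfh^{\Dynkin}=2n$ from~\eqref{eq:Coxeter}. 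Hence the theorem reduces to the two claims: (A) $\sum_{\oi,\oj}\delta_{\oi,\oj^*}=\delta_{i,j}\sfd_i$, and (B) $\de^{\bDynkin}_{i,j}(t)=\sum_{\oi,\oj}\de^{\Dynkin}_{\oi,\oj}(t)$.

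Claim~(A) is a short case check on $(i,j)$ and on the parity of $n$: the involution $^*$ on $\Dynkin_{D_{n+1}}$ fixes $\{n,n+1\}$ pointwise when $n$ is odd and swaps them when $n$ is even, but in both cases the sum equals $2=\sfd_n$ for $i=j=n$, equals $1=\sfd_i$ for $i=j<n$, and equals $0$ for $i\ne j$.

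For Claim~(B), equivalently $\max(\sfd_i,\sfd_j)\,\ttO^{\bDynkin}_k(i,j)=\sum_{\oi,\oj}\ttO^{\Dynkin}_k(\oi,\oj)$ for every $k\ge 1$, I use the surjection $\psi:\Phi^+_{D_{n+1}}\to\Phi^+_{B_n}$ from~\eqref{eq: D to B}, the surgery of Proposition~\ref{prop: surgery D to B}, and the compatibility $\psi\bl\tau_Q^k(\ga_\im^Q)\br=\tau_\oQ^k(\ga_{\ov{\im}}^{\oQ})$ of Theorem~\ref{thm: D to B}. For $i,j<n$, $\psi$ restricts to a bijection between residue-$(\im,\jm)$ roots of $\Gamma_Q$ and residue-$(i,j)$ roots of $\Gamma_{\oQ}$, and the convex partial orders match by Theorem~\ref{thm: OS17}, yielding $\ttO^{\Dynkin}_k(i,j)=\ttO^{\bDynkin}_k(i,j)$. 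For $i<n$ and $j=n$, each residue-$n$ vertex $\upbe\in\Gamma_{\oQ}$ lifts to two vertices $\be_\pm\in\Gamma_Q$ at residues $n$ and $n+1$, symmetric across the fork of $\Dynkin_{D_{n+1}}$; the two lifts $\pair{\al,\be_+}$ and $\pair{\al,\be_-}$ share the same degree $\ttO^{\bDynkin}_k(i,n)$, producing the factor $\sfd_n=2$.

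The main obstacle is the remaining case $i=j=n$, where each $\oQ$-\pr in $\Phi_{\oQ}(n,n)[k]$ has four lifts of residue types $(n,n),(n,n+1),(n+1,n),(n+1,n+1)$, and the needed identity is $\ttO^{\Dynkin}_k(n,n)+2\ttO^{\Dynkin}_k(n,n+1)+\ttO^{\Dynkin}_k(n+1,n+1)=2\ttO^{\bDynkin}_k(n,n)$. The four lifts partition into two parity classes --- the ``same-residue'' $\{(n,n),(n+1,n+1)\}$ and the ``different-residue'' $\{(n,n+1),(n+1,n)\}$ --- and only one of these classes contributes at each $k$, always supplying the factor $2=\sfd_n$. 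This parity dichotomy is forced by the alternation of residue-$n$ and residue-$(n+1)$ vertices in $\hDynkin$ and can be verified either by applying Remark~\ref{remark: dist 2} with Propositions~\ref{prop: label1} and~\ref{prop: label2}, or more economically by substituting the explicit formula~\eqref{eq: d poly D} for $\de^\Dynkin_{\oi,\oj}$ and comparing termwise with the explicit expression of $\tde^{\bDynkin}_{i,j}$ from Theorem~\ref{thm: BC denom}~(3); in the toy case $n=4$ one checks $\de^D_{4,4}+2\de^D_{4,5}+\de^D_{5,5}=(t+t^5)+2t^3+(t+t^5)=2t+2t^3+2t^5=\de^{\bDynkin}_{4,4}(t)$, and the general pattern is parallel. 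Combining~(A) and~(B) then yields the theorem.
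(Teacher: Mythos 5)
Your first displayed identity is false in this paper's conventions, and the error propagates through the whole argument. Theorem~\ref{thm: tbij folding} concerns the \emph{folding} $\Dynkins$ of Definition~\ref{def:folding}, in which the orbit node acquires $\sfd_{\pi(i)}=|\pi^{-1}\pi(i)|$; for $(\Dynkin_{D_{n+1}},\vee)$ the orbit $\{n,n+1\}$ becomes a \emph{long} node, so $\Dynkins=\bDynkin_{C_n}$ (see~\eqref{eq: C_n}), not $\bDynkin_{B_n}$. The diagram of type $B_n$ arises from $D_{n+1}$ only as the fixed-point subalgebra $\sg$ of Table~\ref{g sigma}, which is what Section~\ref{sec: Labeling BCFG} uses for the AR-quiver surgery, and which is \emph{not} the operation appearing in Theorem~\ref{thm: tbij folding}. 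Concretely, for $i=j=1$ one has $\tde^{B_n}_{1,1}(t)=2(t+t^{2n-1})$ by Theorem~\ref{thm: BC denom}(3) (since $\sfd_1=2$ in type $B_n$), whereas $\sum_{\oi,\oj}\tde^{D_{n+1}}_{\oi,\oj}(t)=\tde^{D_{n+1}}_{1,1}(t)=t+t^{2n-1}$; the two sides differ by exactly the factor $\kappa_{i,j}$ of~\eqref{eq: simple obervation}. Your claims (A) and (B) inherit the same confusion: you assert $\sfd_i=1$ for $i<n$ and $\sfd_n=2$, which is the $C_n$ normalization, while in this paper's $B_n$ one has $\sfd_i=2$ for $i<n$ and $\sfd_n=1$. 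Your toy check confirms this: $\de^{D_5}_{4,4}+2\de^{D_5}_{4,5}+\de^{D_5}_{5,5}=2(t+t^3+t^5)$ equals $\de^{C_4}_{4,4}(t)$, whereas $\de^{B_4}_{4,4}(t)=\tde^{B_4}_{4,4}(t)-t^{7}=t+t^3+t^5$. In short, what you have outlined (modulo the combinatorial input for (B)) is a proof of Theorem~\ref{thm: distance C_n}, not of the $B_n$ statement.

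The paper's actual proof does not pass through Theorem~\ref{thm: tbij folding} at all. It invokes Theorem~\ref{thm: soc B}(4), which transports the statistic: for every \prq $\pair{\al,\be}$ in $\Phi^+_{B_n}$ with $\dg_Q\ne0$ there are lifts $\al'\in\psi^{-1}(\al)$, $\be'\in\psi^{-1}(\be)$ with $\dg_Q(\pair{\al,\be})=\dg_{\uQ}(\pair{\al',\be'})$, so $\ttO^{B_n}_k(i,j)=\ttO^{D_{n+1}}_k(\im,\jm)$. The doubling for $\min(i,j)<n$ then comes from the factor $\max(\sfd_i,\sfd_j)=2$ built into Definition~\ref{def: distane poly 2}, giving $\de^{\blacktriangle}_{i,j}(t)=2\,\de^{\Dynkin}_{i,j}(t)$, which is compared with~\eqref{eq: d poly D} and Theorem~\ref{thm: BC denom}; for $i=j=n$ it computes $\de^{\blacktriangle}_{n,n}(t)=\de^{\Dynkin}_{n,n}(t)+\de^{\Dynkin}_{n,n+1}(t)$ directly, since the two residue-$n$ and residue-$(n+1)$ lifts of a residue-$n$ vertex are accounted for separately. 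If you want to keep a folding-based route, you would have to fold $A_{2n-1}$ (whose $\Dynkins$ genuinely is $B_n$), but then claim (B) would require comparing $\de^{B_n}_{i,j}$ with a four-fold sum of type-$A$ degree polynomials, a comparison the paper's AR-quiver machinery (which sends $A_{2n-1}$ to $C_n$, not $B_n$) does not supply.
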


By taking $\psi$ on~\eqref{eq: dist al, be 1}, we have the following pictures:
\begin{equation} \label{eq: dist al, be 1 p}
\begin{aligned}
& \scalebox{0.79}{{\xy
(-20,0)*{}="DL";(-10,-10)*{}="DD";(0,20)*{}="DT";(10,10)*{}="DR";
"DT"+(-30,-4); "DT"+(140,-4)**\dir{.};
"DD"+(-20,-6); "DD"+(150,-6) **\dir{.};
"DT"+(-32,-4)*{\scriptstyle 1};
"DT"+(-32,-8)*{\scriptstyle 2};
"DT"+(-32,-12)*{\scriptstyle \vdots};
"DT"+(-32,-16)*{\scriptstyle \vdots};
"DT"+(-34,-36)*{\scriptstyle n};
"DL"+(-10,0); "DD"+(-10,0) **\dir{-};"DL"+(-10,0); "DT"+(-14,-4) **\dir{.};
"DT"+(-6,-4); "DR"+(-10,0) **\dir{.};"DR"+(-10,0); "DD"+(-10,0) **\dir{-};
"DL"+(-6,0)*{\scriptstyle \psi(\be)};
"DL"+(0,0)*{{\rm(i)'}};
"DR"+(-14,0)*{\scriptstyle \psi(\al)};
"DL"+(-10,0)*{\bullet};
"DR"+(-10,0)*{\bullet};
"DT"+(-6,-4); "DT"+(-14,-4) **\crv{"DT"+(-10,-2)};
"DT"+(-10,0)*{\scriptstyle 2};
"DD"+(-10,0)*{\bullet};
"DD"+(-10,-2)*{\scriptstyle \psi(\us=\al+\be)};
"DL"+(15,-3); "DD"+(15,-3) **\dir{-};
"DR"+(11,-7); "DD"+(15,-3) **\dir{-};
"DT"+(11,-7); "DR"+(11,-7) **\dir{-};
"DT"+(11,-7); "DL"+(15,-3) **\dir{-};
"DL"+(19,-3)*{\scriptstyle \psi(\be)};
"DR"+(7,-7)*{\scriptstyle \psi(\al)};
"DL"+(30,0)*{{\rm(ii)'}};
"DL"+(15,-3)*{\bullet};
"DL"+(31,12.5)*{\bullet};
"DL"+(31,14.5)*{\scriptstyle \psi(s_1)};
"DL"+(25,-13)*{\bullet};
"DL"+(25,-15)*{\scriptstyle \psi(s_2)};
"DR"+(11,-7)*{\bullet};
"DD"+(28,4); "DD"+(38,-6) **\dir{-};
"DD"+(28,4); "DD"+(50,26) **\dir{.};
"DD"+(48,4); "DD"+(38,-6) **\dir{-};
"DD"+(48,4); "DD"+(70,26) **\dir{.};
"DD"+(82,18); "DD"+(74,26) **\dir{.};
"DD"+(82,18); "DD"+(58,-6) **\dir{-};
"DD"+(48,4); "DD"+(58,-6) **\dir{-};
"DD"+(82,18)*{\bullet};
"DD"+(78,18)*{\scriptstyle \psi(\al)};
"DD"+(28,4)*{\bullet};
"DD"+(32,4)*{\scriptstyle \psi(\be)};
"DD"+(74,26); "DD"+(70,26) **\crv{"DD"+(72,28)};
"DD"+(72,30)*{\scriptstyle 2};
"DD"+(38,14)*{\bullet};
"DD"+(47,14)*{\scriptstyle \psi(\us=\al+\be)};
"DL"+(59,0)*{{\rm(iii)'}};
"DD"+(48,4); "DD"+(38,14) **\dir{.};
"DD"+(80,4); "DD"+(90,-6) **\dir{-};
"DD"+(96,0); "DD"+(90,-6) **\dir{-};
"DD"+(96,0); "DD"+(102,-6) **\dir{-};
"DD"+(122,16); "DD"+(102,-6) **\dir{-};
"DD"+(80,4); "DD"+(102,26) **\dir{.};
"DD"+(122,16); "DD"+(112,26) **\dir{.};
"DD"+(96,0); "DD"+(117,21) **\dir{.};
"DD"+(96,0); "DD"+(86,10) **\dir{.};
"DD"+(117,21)*{\bullet};
"DD"+(112,21)*{\scriptstyle \psi(s_1)};
"DD"+(86,10)*{\bullet};
"DD"+(91,10)*{\scriptstyle \psi(s_2)};
"DD"+(102,26); "DD"+(112,26) **\crv{"DD"+(107,28)};
"DD"+(107,29)*{\scriptstyle l > 2};
"DD"+(80,4)*{\bullet};
"DL"+(109,0)*{{\rm (iv)'}};
"DD"+(84,4)*{\scriptstyle \psi(\be)};
"DD"+(122,16)*{\bullet};
"DD"+(117,16)*{\scriptstyle \psi(\al)};
%
%%%%%%%%%%%%%%%%%%%%%%%%%
"DD"+(119,4); "DD"+(129,-6) **\dir{-};
"DD"+(135,0); "DD"+(129,-6) **\dir{-};
"DD"+(119,4); "DD"+(141,26) **\dir{-};
"DD"+(154,19); "DD"+(147,26) **\dir{-};
"DD"+(135,0); "DD"+(154,19) **\dir{-};
"DD"+(119,4)*{\bullet};
"DL"+(146,0)*{{\rm (v)'}};
"DD"+(123,4)*{\scriptstyle \psi(\be)};
"DD"+(141,26); "DD"+(147,26) **\crv{"DD"+(144,28)};
"DD"+(154,19)*{\bullet};
"DD"+(154,17)*{\scriptstyle \psi(\al)};
"DD"+(129,-6)*{\bullet};
"DD"+(129,-8)*{\scriptstyle 2 \psi(s_2)};
"DD"+(144,29)*{\scriptstyle 2};
\endxy}}
\allowdisplaybreaks \\
& \scalebox{0.79}{{\xy
(-20,0)*{}="DL";(-10,-10)*{}="DD";(0,20)*{}="DT";(10,10)*{}="DR";
"DT"+(-30,-4); "DT"+(120,-4)**\dir{.};
"DD"+(-20,-6); "DD"+(130,-6) **\dir{.};
"DT"+(-34,-4)*{\scriptstyle 1};
"DT"+(-34,-8)*{\scriptstyle 2};
"DT"+(-34,-12)*{\scriptstyle \vdots};
"DT"+(-34,-16)*{\scriptstyle \vdots};
"DT"+(-36,-36)*{\scriptstyle n};
"DD"+(-17,4); "DD"+(-7,-6) **\dir{-};
"DD"+(-1,0); "DD"+(-7,-6) **\dir{-};
"DD"+(-17,4); "DD"+(2,23) **\dir{-};
"DD"+(12,13); "DD"+(2,23) **\dir{-};
"DD"+(-1,0); "DD"+(12,13) **\dir{-};
"DD"+(-17,4)*{\bullet};
"DD"+(2,23)*{\bullet};
"DL"+(10,0)*{{\rm (vi)'}};
"DD"+(-13,4)*{\scriptstyle \psi(\be)};
"DD"+(2,25)*{\scriptstyle \psi(s_3)};
"DD"+(12,13)*{\bullet};
"DD"+(12,11)*{\scriptstyle \psi(\al)};
"DD"+(-7,-6)*{\bullet};
"DD"+(-7,-8)*{\scriptstyle 2 \psi(s_2)};
"DD"+(15,4); "DD"+(25,-6) **\dir{-};
"DD"+(15,4); "DD"+(5,-6) **\dir{-};
"DD"+(15,4); "DD"+(30,19) **\dir{.};
"DD"+(40,9);"DD"+(25,-6) **\dir{-};
"DD"+(40,9);"DD"+(30,19) **\dir{.};
"DD"+(40,9)*{\bullet};
"DD"+(30,19)*{\bullet};
"DD"+(30,21)*{\scriptstyle \psi(s_1)};
"DD"+(5,-6)*{\bullet};
"DD"+(5,-8)*{\scriptstyle \psi(\be)};
"DD"+(25,-6.2)*{\bullet};
"DD"+(25,-8.2)*{\scriptstyle \psi(s_2)};
"DD"+(35,9)*{\scriptstyle \psi(\al)};
"DD"+(25,4)*{{\rm (vii)'}};
"DD"+(45,4); "DD"+(55,-6) **\dir{-};
"DD"+(45,4); "DD"+(35,-6) **\dir{-};
"DD"+(45,4); "DD"+(67,26) **\dir{.};
"DD"+(80,19);"DD"+(55,-6) **\dir{-};
"DD"+(80,19);"DD"+(73,26) **\dir{.};
"DD"+(80,19)*{\bullet};
"DD"+(55,4)*{{\rm (viii)'}};
"DD"+(76,19)*{\scriptstyle \psi(\al)};
"DD"+(35,-6)*{\bullet};
"DD"+(55,-6.2)*{\bullet};
"DD"+(55,-8.2)*{\scriptstyle \psi(\us=\al+\be)};
"DD"+(35,-8)*{\scriptstyle \psi(\be)};
"DD"+(67,26); "DD"+(73,26) **\crv{"DD"+(70,28)};
"DD"+(70,30)*{\scriptstyle 2};
"DD"+(92,16); "DD"+(114,-6) **\dir{-};
"DD"+(92,16); "DD"+(70,-6) **\dir{-};
"DD"+(92,4)*{ {\rm (ix)'}};
"DD"+(114,-6)*{\bullet};
"DD"+(114,-8)*{\scriptstyle \psi(\al)};
"DD"+(70,-6)*{\bullet};
"DD"+(92,16)*{\bullet};
"DD"+(92,18)*{\scriptstyle\psi( \us=\al+\be)};
"DD"+(70,-8)*{\scriptstyle \psi(\be)};
\endxy}} 
\end{aligned}
\end{equation}

By taking  $\psi$ on~\eqref{eq: dist al be 2}, we have the following:

\begin{align} \label{eq: dist al be 2 p}
\scalebox{0.79}{{\xy
(-20,0)*{}="DL";(-10,-10)*{}="DD";(0,20)*{}="DT";(10,10)*{}="DR";
"DT"+(-30,-4); "DT"+(70,-4)**\dir{.};
"DD"+(-20,-6); "DD"+(80,-6) **\dir{.};
"DT"+(-32,-4)*{\scriptstyle 1};
"DT"+(-32,-8)*{\scriptstyle 2};
"DT"+(-32,-12)*{\scriptstyle \vdots};
"DT"+(-32,-16)*{\scriptstyle \vdots};
"DT"+(-33,-36)*{\scriptstyle n};
"DD"+(-10,4); "DD"+(0,-6) **\dir{-};
"DD"+(6,0); "DD"+(0,-6) **\dir{-};
"DD"+(6,0); "DD"+(12,-6) **\dir{-};
"DD"+(32,16); "DD"+(12,-6) **\dir{-};
"DD"+(-10,4); "DD"+(12,26) **\dir{.};
"DD"+(32,16); "DD"+(22,26) **\dir{.};
"DD"+(6,0); "DD"+(27,21) **\dir{.};
"DD"+(6,0); "DD"+(-4,10) **\dir{.};
"DD"+(12,26); "DD"+(22,26) **\crv{"DD"+(17,28)};
"DD"+(17,30)*{\scriptstyle 2};
"DD"+(-10,4)*{\bullet};
"DL"+(17,0)*{{\rm(x)'}};
"DD"+(-6,4)*{\scriptstyle \psi(\be)};
"DD"+(-4,12)*{\scriptstyle \psi(\eta_2)};
"DD"+(32,16)*{\bullet};
"DD"+(27,16)*{\scriptstyle \psi(\al)};
"DD"+(28,23)*{\scriptstyle \psi(\eta_1)};
"DD"+(27,21)*{\scriptstyle \bullet};
"DT"+(2,-36)*{\scriptstyle \bullet};
"DT"+(2,-38)*{\scriptstyle \psi(\tau_1)};
"DT"+(-10,-36)*{\scriptstyle \bullet};
"DT"+(-10,-38)*{\scriptstyle \psi(\zeta_1)};
"DT"+(-4,-30)*{\scriptstyle \bullet};
"DT"+(-4,-32)*{\scriptstyle \psi(\ga)};
"DD"+(35,4); "DD"+(45,-6) **\dir{-};
"DD"+(51,0); "DD"+(45,-6) **\dir{-};
"DD"+(51,0); "DD"+(57,-6) **\dir{-};
"DD"+(69,8); "DD"+(57,-6) **\dir{-};
"DD"+(35,4); "DD"+(54,23) **\dir{.};
"DD"+(69,8); "DD"+(54,23) **\dir{.};
"DD"+(51,0); "DD"+(64,13) **\dir{.};
"DD"+(51,0); "DD"+(41,10) **\dir{.};
"DD"+(35,4)*{\bullet};
"DL"+(62,0)*{{\rm(xi)'}};
"DD"+(39,4)*{\scriptstyle \psi(\be)};
"DD"+(41,10)*{\scriptstyle \bullet};
"DD"+(45,10)*{\scriptstyle \psi(\eta_2)};
"DD"+(69,8)*{\bullet};
"DD"+(64,12.5)*{\bullet};
"DD"+(59,12.5)*{\scriptstyle \psi(\eta_1)};
"DD"+(65,8)*{\scriptstyle \psi(\al)};
"DT"+(47,-36)*{\scriptstyle \bullet};
"DT"+(47,-38)*{\scriptstyle \psi(\tau_1)};
"DT"+(35,-36)*{\scriptstyle \bullet};
"DT"+(35,-38)*{\scriptstyle \psi(\zeta_1)};
"DT"+(41,-32)*{\scriptstyle \psi(s_2)};
"DT"+(44,-5)*{\scriptstyle \psi(s_1)};
"DT"+(44,-7)*{\scriptstyle \bullet};
"DT"+(41,-30)*{\scriptstyle \bullet};
\endxy}} \qquad
 2 \le i,j \le n-2
\end{align}

In each case, the value $(\upal,\upbe) = (\psi(\al),\psi(\be))$ does not depend on the choice of $Q$ and are given in Table~\ref{table: al,be B}:
\begin{table}[h]
\centering
{ \arraycolsep=1.6pt\def\arraystretch{1.5}
\begin{tabular}{|c||c|c|c|c|c|c|c|c|c|c|c|c|}
\hline
 &                         (i)         & (ii)  & (iii)  & (iv)   & (v)     & (vi)  & (vii)  & (viii)  & (ix)  & (x)  & (ix)  \\ \hline
$(\al,\be)$ &  $-2$ &  $0$ &  $-2$  &  $0$ &  $0$ &  $2$ &  $0$ &  $-2$ &  $0$ &  $-2$ &  $0$ \\ \hline
\end{tabular}
}\\[1.5ex]
    \caption{$(\al,\be)$ for non $[Q]$-simple \prqs of type $B_n$}
    \protect\label{table: al,be B}
\end{table}

For the cases when $\pair{\al,\be}$ is a $[Q]$-minimal \prq for $\al+\be \in \Phi^+$,
\begin{align}\label{eq: p be al B}
p_{\be,\al}=1 \quad \text{ if  $\pair{\al,\be}$ is of case ${\rm (ix)'}$} \quad \text{  and } \quad \text{$p_{\be,\al}=0$, otherwise. }
\end{align}

\begin{example} \label{ex: Uding B}
Using $\Gamma_Q$ in Example~\ref{ex: Label B4},
\ben
\item $ \srt{1,2} \prec_Q^\ttb \pair{\srt{1,-3},\srt{2,3}}$ corresponds to ${\rm (i)}'$,
\item $ \pair{\srt{1,-3},\srt{2,4}} \prec_Q^\ttb \pair{\srt{2,-3},\srt{1,4}}$ corresponds to ${\rm (ii)}'$,
\item $ \srt{2,-4}  \prec_Q^\ttb  \pair{\srt{2,-3},\srt{3,-4}}$ corresponds to ${\rm (iii)}'$,
\item $ \pair{\srt{1,-3},\srt{2,-4}} \prec_Q^\ttb  \pair{\srt{2,-3},\srt{1,-4}}$ corresponds to ${\rm (iv)}'$,
\item $ 2\srt{1} \prec_Q^\ttb  \pair{\srt{1,-3},\srt{1,3}}$ corresponds to ${\rm (v)}'$,
\item $ \seq{\srt{3,4},2\srt{1}} \prec_Q^\ttb  \pair{\srt{1,4},\srt{1,3}}$ corresponds to ${\rm (vi)}'$,
\item $ \pair{\srt{1},\srt{2,3}}  \prec_Q^\ttb  \pair{\srt{1,2},\srt{3}}$   correspond to ${\rm (vii)}'$,
\item $ \srt{1}  \prec_Q^\ttb  \pair{\srt{1,-3},\srt{3}}$   correspond to ${\rm (viii)}'$,
\item $ \srt{2,3}  \prec_Q^\ttb  \pair{\srt{2},\srt{3}}$ correspond to ${\rm (ix)'}$,
\item $ \srt{1,2} \prec_Q^\ttb    \pair{\srt{1,-3},\srt{2,3}} ,\pair{\srt{2},\srt{1}}  \prec_Q^\ttb  \pair{\srt{2,-3},\srt{1,3}}$ corresponds to ${\rm (x)}'$,
\item $  \pair{\srt{1,2},\srt{3,4}} \prec_Q^\ttb  \pair{\srt{1},\srt{2},\srt{3,4}} ,\pair{\srt{2,3},\srt{1,4}}  \prec_Q^\ttb  \pair{\srt{2,4},\srt{1,3}}$ corresponds to ${\rm (xi)}'$.
\ee
\end{example}

By the similarities between the combinatorics for $\Gamma_Q$ and $\Gamma_\oQ$, described in Proposition~\ref{prop: label1}, ~\ref{prop: label2} for $\Gamma_Q$ and Proposition~\ref{prop: label12 B} for $\Gamma_\oQ$, we can apply the same argument in
\cite{Oh18} and obtain the  following theorem:

\begin{theorem} \label{thm: soc B}
Let $Q=(\bDynkin,\xi)$ be a Dynkin quiver of type $B_n$.
\ben
\item For any \prq $\pair{\al,\be} \in (\Phi^+_{B_n})^2$, $\head_Q(\pair{\al,\be})$ is well-defined.
\item For a \prq $\pair{\al,\be}$, if $\dg_Q(\pair{\al,\be}) \ne 0$, then the relative position of $\pair{\al,\be}$ is one of ${\rm (i)}' \sim {\rm (xi)}'$ in~\eqref{eq: dist al, be 1 p} and~\eqref{eq: dist al be 2 p}.
\item Every \ex $\um \prec_Q^\ttb  \pair{\al,\be}$ is also described in ~\eqref{eq: dist al, be 1 p} and~\eqref{eq: dist al be 2 p}.
\item For $\pair{\al,\be} \in (\Phi^+_\blacktriangle)^2$, assume $\dg_Q(\pair{\al,\be}) \ne 0$. Then there exist $\al' \in \psi^{-1}(\al)$
and $\be' \in \psi^{-1}(\be)$ such that $\dg_{\uQ}( \pair{\al',\be'} ) \ne 0$ and $\head_Q(\pair{\al,\be})=\psi(\head_{\uQ}( \pair{\al',\be'} ))$.   Furthermore, in this case, we have
$$   \dg_Q(\pair{\al,\be})=\dg_{\uQ}( \pair{\al',\be'} ).$$
\ee
\end{theorem}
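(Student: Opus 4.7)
The strategy is to reduce the entire theorem to the type $D_{n+1}$ case, which was settled in \cite{Oh18}, by lifting pairs in $\Phi^+_{B_n}$ to pairs in $\Phi^+_{D_{n+1}}$ through the surjection $\psi$ of \eqref{eq: D to B}. The combinatorial bridge is Proposition~\ref{prop: surgery D to B}: $\Gamma_Q$ of type $B_n$ is obtained from $\Gamma_{\uQ}$ of type $D_{n+1}$ by deleting the row at residue $n+1$ and relabelling the row at residue $n$. Together with Theorem~\ref{thm: D to B}, this says that $\psi$ intertwines the Coxeter actions and is compatible with the bijections $\phi_Q$, $\phi_{\uQ}$. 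In particular, a pair in $\Phi^+_{B_n}$ determines a pair in $\Phi^+_{D_{n+1}}$ up to the $\vee$-ambiguity at short roots $\upve_i=\psi(\ep_\im\pm\ep_{n+1})$, and the relative position in $\Gamma_{\uQ}$ descends to the relative position in $\Gamma_Q$.

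The heart of the argument is item (4). Fix a $\vee$-fixed Dynkin quiver $\uQ$ of type $D_{n+1}$ with $\overline{\uQ}=Q$, and let $\pair{\upal,\upbe}$ satisfy $\dg_Q(\pair{\upal,\upbe})\ne0$. For each exponent $\um$ appearing in a realizing chain $\um^{(0)}\prec^\ttb_Q\cdots\prec^\ttb_Q\pair{\upal,\upbe}$, I lift it to an exponent $\um'$ in $\Phi^+_{D_{n+1}}$ as follows: for every root $\upga$ in $\supp(\um)$ with $\upga\ne\upve_i$, the preimage under $\psi$ is a single root, and we take it; for $\upga=\upve_i$, I choose $\ep_\im+\ep_{n+1}$ or $\ep_\im-\ep_{n+1}$ according to the $(n,p)$- or $(n+1,p)$-coordinate prescribed by the position of $\upga$ in the realizing picture. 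The choice is forced by the geometric structure of $\Gamma_{\uQ}$: each swing passes through exactly one of the two twin vertices at residues $n,n+1$ (Lemma~\ref{lem: tauQ fixed D}), so exactly one of the two lifts sits at the correct coordinate. That the lifted chain $(\um^{(0)})'\prec^\ttb_{\uQ}\cdots\prec^\ttb_{\uQ}\pair{\upal',\upbe'}$ remains strictly increasing follows because $\psi$ pulls back the bi-lex order: if two $D$-exponents project to distinct $B$-exponents with $\psi(\um)\prec^\ttb_Q\psi(\um')$, the discriminating root in $B_n$ comes from a discriminating root in $D_{n+1}$ by the rigidity of the label-surgery of Proposition~\ref{prop: surgery D to B}. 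Combined with Lemma~\ref{lem: nonzero to nonzero} applied step-by-step, this yields $\dg_Q(\pair{\upal,\upbe})=\dg_{\uQ}(\pair{\upal',\upbe'})$ and $\head_Q(\pair{\upal,\upbe})=\psi(\head_{\uQ}(\pair{\upal',\upbe'}))$.

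Items (1)--(3) then follow. Uniqueness of $\head_Q$ (item (1)) is inherited from the known uniqueness in type $D_{n+1}$, because two different $B_n$-heads would lift to two different $D_{n+1}$-heads, contradicting the $D$-case. For (2) and (3), once (4) is in hand, the relative position of $\pair{\upal,\upbe}$ in $\Gamma_Q$ is the $\psi$-image of the relative position of a lift $\pair{\upal',\upbe'}$ in $\Gamma_{\uQ}$, and the eleven $D$-pictures in \eqref{eq: dist al, be 1}, \eqref{eq: dist al be 2} descend to exactly the eleven pictures in \eqref{eq: dist al, be 1 p}, \eqref{eq: dist al be 2 p}. The main obstacle, and the bookkeeping that necessitates a case check, occurs at residue $n$: when a $D$-cover involves both of the twin vertices $(n,p)$ and $(n+1,p)$, these collapse under $\psi$ to a single $B$-cover with multiplicity $2$, which accounts for the coefficients ``$2\psi(s_2)$'' and ``$2\psi(s_3)$'' in pictures (v)', (vi)' and explains the doubling of the entries in Table~\ref{table: al,be B} relative to Table~\ref{table: al,be D} in the analogous cases. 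The remaining pictures involve at most one root at residue $n$, so the lift is essentially canonical; a direct enumeration of the eleven cases, using the labelling algorithm of Proposition~\ref{prop: label12 B} and Remark~\ref{remark: dist 2}, verifies that degrees and $\cc$-simple data are preserved under $\psi$, completing the proof.
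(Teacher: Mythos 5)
Your proposal is correct and follows essentially the same route as the paper, which itself only sketches the argument by appealing to the parallel labeling algorithms (Proposition~\ref{prop: label12 B} versus Propositions~\ref{prop: label1} and~\ref{prop: label2}) and to the case analysis of \cite{Oh18}; your lift-to-$D_{n+1}$-and-push-down framing is the same correspondence under $\psi$ that produces the pictures ${\rm (i)}'\sim{\rm (xi)}'$ from \eqref{eq: dist al, be 1} and \eqref{eq: dist al be 2}. The one place to be careful is your blanket claim that $\psi$ ``pulls back the bi-lex order'': by the paper's own remark after Lemma~\ref{lem: nonzero to nonzero}, $\psi$ can create comparabilities, so order-reflection fails for arbitrary lifts and the existence of a \emph{good} lift in part (4) genuinely rests on the eleven-case enumeration you defer to at the end.
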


Notice that, in cases (v) and (vi), the head of $\pair{\al,\be}$
has component $2$ in which phenomena does not happen in
$D_{n+1}$-case.

\begin{proof}[Proof of Theorem~\ref{thm: B_n distance polynomial}]
Note that for $1 \le i ,j  \le n$ with $\min(i,j)<n$, we have $\max(\sfd_i,\sfd_j)=2$. By Theorem~\ref{thm: soc B}, we have
$$\de^{\blacktriangle}_{i,j}(t) =   2 \times   \de^{\Dynkin}_{i,j}(t)$$
implying the assertion by the formulas~\eqref{eq: d poly D} and~\eqref{eq: d poly BCFG}.

In the case when $i=j=n$, by Theorem~\ref{thm: soc B}, and
the facts that $\psi(\ep_i \pm \ep_{n+1})=\ve_i$ and $\sfd_n=1$, we have
$$  \de^{\blacktriangle}_{n,n}(t) =  \left( \de^{\Dynkin}_{n,n+1}(t)+\de^{\Dynkin}_{n,n}(t) \right)  = \sum_{s=1}^{n} t^{2s-1},$$
which completes the assertion.
\end{proof}

By observing~\eqref{eq: dist al be 2 p}, we have the following proposition:

\begin{proposition}   \label{prop: many pairs}
For a Dynkin quiver $Q$ of type $B_n$, let $\pair{\al,\be}$ be a \prq such that $\dg_Q(\pair{\al,\be})=2$.
\ben
\item If $\al+\be =\ga \in \Phi^+_{B_n}$, then there are two \prqs $\pair{\al_i,\be_i}$ $(1 \le i \le 2)$ of $\ga$ such that
$$ \head_Q(\pair{\al,\be})   =\ga   \prec^\ttb_{[Q]}   \pair{\al_i,\be_i}   \prec^\ttb_{[Q]}  \pair{\al,\be}  $$
\bnum
\item $\pair{\al_1,\be_1}$, $\pair{\al_2,\be_2}$ are not comparable with respect to $\prec^\ttb_{[Q]}$,
\item $\{ \al_2,\be_2\} = \{ \lan a  \ran , \lan b  \ran \}$  and $\res_Q(\al_1), \res_Q(\be_1 )<n$.
\ee
\item If $\al+\be \not\in \Phi^+_{B_n}$, then there are \exs $\pair{\al_1,\be_1}$, $\seq{\tau,\zeta,\ga}$ such that
$$  \head_Q(\pair{\al,\be})      \prec^\ttb_{[Q]}   \pair{\al_1,\be_1}  , \seq{\tau,\zeta,\ga} \prec^\ttb_{[Q]}  \pair{\al,\be}  $$
\bnum
\item $\pair{\al_1,\be_1}$, $\seq{\tau,\zeta,\ga}$ are not comparable with respect to $\prec^\ttb_{[Q]}$,
\item $\res^{[Q]}(\al_1), \res^{[Q]}(\ga),\res^{[Q]}(\be_1 )<n$ and  $\res^{[Q]}(\tau), \res^{[Q]}(\zeta)=n$.
\ee
\ee
\end{proposition}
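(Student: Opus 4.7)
The plan is to reduce everything to the combinatorial picture in type $D_{n+1}$ via the covering map $\psi\cl\Phi^+_{D_{n+1}}\to\Phi^+_{B_n}$ and then descend. The hypothesis $\dg_Q(\pair{\al,\be})=2$ combined with Theorem~\ref{thm: soc B}(2)--(3) forces the relative position of $\pair{\al,\be}$ in $\Gamma_Q$ to be one of the two configurations ${\rm (x)}'$ or ${\rm (xi)}'$ displayed in~\eqref{eq: dist al be 2 p}. Configuration ${\rm (x)}'$ is precisely the case when the head is the single root $\ga=\al+\be\in\Phi^+_{B_n}$, while ${\rm (xi)}'$ corresponds to $\al+\be\notin\Phi^+_{B_n}$. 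Hence parts (1) and (2) of the proposition correspond to these two configurations respectively.

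For part (1), I would apply Theorem~\ref{thm: soc B}(4) to lift $\pair{\al,\be}$ to a \pr[{[\uQ]}] $\pair{\al',\be'}$ with $\dg_{\uQ}(\pair{\al',\be'})=2$ and $\psi(\head_{\uQ}(\pair{\al',\be'}))=\ga$. Because~\eqref{eq: double pole} must hold, the root $\al'+\be'$ has multiplicity $2$ in $D_{n+1}$, so Remark~\ref{remark: dist 2}(x) gives the intermediate exponents $\pair{\eta_1,\eta_2}$ (with all residues strictly less than $n$) together with two $[\uQ]$-pairs $\pair{\tilde\tau_i,\tilde\zeta_i}$ of $\al'+\be'$ whose entries have residue $n$ or $n+1$ in $\Dynkin_{D_{n+1}}$. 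Since $\tilde\tau_1=\ep_a\mp\ep_{n+1}$ and $\tilde\tau_2=\ep_a\pm\ep_{n+1}$ (similarly for $\tilde\zeta_i$), applying $\psi$ collapses both pairs to a single $[Q]$-pair $\pair{\lan a\ran,\lan b\ran}$ of $\ga$ in $B_n$; moreover $\pair{\eta_1,\eta_2}$ projects to a $[Q]$-pair $\pair{\al_1,\be_1}$ of $\ga$ with residues strictly less than $n$. That these two pairs sit strictly between $\ga$ and $\pair{\al,\be}$ follows from Theorem~\ref{thm: soc B}(3)--(4) together with the fact that $\eta_1\prec_Q\eta_2$ implies $\psi(\eta_1)\prec_\oQ\psi(\eta_2)$ used in Lemma~\ref{lem: nonzero to nonzero}.

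For part (2), the same lifting strategy applies starting from ${\rm (xi)}'$ and invoking Remark~\ref{remark: dist 2}(xi). In $\Gamma_{\uQ}$ the head is $\pair{\ga_1,\ga_2}$ with covers $\pair{\eta_1,\eta_2}$ and two triples $\seq{\tilde\tau_i,\tilde\zeta_i,\ga_1}$, where $\tilde\tau_i+\tilde\zeta_i=\ga_1$ and both $\tilde\tau_i,\tilde\zeta_i$ have residue $n$ or $n+1$. Under $\psi$ the two triples collapse to a single $B_n$-exponent $\seq{\tau,\zeta,\ga}$ with $\tau=\upve_a$, $\zeta=\upve_b$, $\ga=\upve_a+\upve_b$, meeting the required residue pattern $\res^{[Q]}(\tau)=\res^{[Q]}(\zeta)=n$ and $\res^{[Q]}(\ga)<n$; the pair $\pair{\eta_1,\eta_2}$ projects to $\pair{\al_1,\be_1}$ with residues strictly less than $n$. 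The sample computation in Example~\ref{ex: Uding B}(xi) confirms the shape of these two covers.

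The main obstacle in both parts is verifying that the two covers produced in $B_n$ are \emph{incomparable} with respect to $\prec^\ttb_{[Q]}$. Incomparability in $D_{n+1}$ is immediate from Remark~\ref{remark: dist 2}, but passing through $\psi$ identifies residues $n$ and $n+1$ and could in principle create new refinements. To rule this out, I would use the residue dichotomy: every entry of $\pair{\al_1,\be_1}$ has residue $<n$, while $\seq{\tau,\zeta,\ga}$ (resp.\ $\pair{\lan a\ran,\lan b\ran}$) necessarily contains an entry of residue exactly $n$. Combined with the explicit labeling algorithm of Proposition~\ref{prop: label12 B} and the classification of cases in~\eqref{eq: dist al, be 1 p}--\eqref{eq: dist al be 2 p}, this forces the supports to differ in a way incompatible with the bi-lexicographic test of Definition~\ref{def: bi-orders}~\eqref{eq: crazy order}, hence no $\uw_0\in[Q]$ can witness a comparison in either direction.
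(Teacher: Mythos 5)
Your route is the same as the paper's: the paper offers no written proof beyond ``By observing~\eqref{eq: dist al be 2 p}'', i.e.\ it reads the two covers off the $\psi$-images of the $D_{n+1}$ configurations (x) and (xi) of Remark~\ref{remark: dist 2}, exactly as you do, with the incomparability coming from the collapse of the two residue-$(n,n{+}1)$ covers into a single residue-$n$ exponent. Your extra care about whether $\psi$ could create new comparabilities is reasonable (the paper itself warns that $\psi$ can do this), though the shortest way to close it is simply that a comparison between the two covers would produce a chain of length $3$, contradicting the hypothesis $\dg_Q(\pair{\al,\be})=2$.

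One slip in part (2): you identify the third entry of the collapsed triple as $\ga=\upve_a+\upve_b=\tau+\zeta$. A weight count rules this out: $\wt(\seq{\tau,\zeta,\ga})$ must equal $\al+\be=\ga_1+\ga_2$, and since $\tau+\zeta=\ga_1$, the third entry is $\psi(\ga_2)$, the \emph{other} component of the head $\pair{\ga_1,\ga_2}$ --- as Example~\ref{ex: Uding B}~(xi) shows, where the triple is $\seq{\srt{1},\srt{2},\srt{3,4}}$ with $\srt{1}+\srt{2}=\srt{1,2}=\ga_1$ and $\srt{3,4}=\ga_2\ne\srt{1,2}$. You appear to have inherited this from the literal wording of Remark~\ref{remark: dist 2}~(xi); the conclusion $\res^{[Q]}(\ga)<n$ still holds for the corrected $\ga=\psi(\ga_2)$, so the rest of your argument is unaffected.
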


\subsection{$C_n$-case}
Throughout this subsection, $\bDynkin$ usually denotes the Dynkin diagram of type $C_{n}$.
This subsection is devoted to prove the following theorem:
\begin{theorem} \label{thm: distance C_n}
Let $\bDynkin$ be the Dynkin diagram of type $C_n$. Then we have
\begin{align*}
\de_{i,j}^\blacktriangle(t) +\delta_{i,j} \max(\sfd_i,\sfd_j)t^{\sfh-1}
 = \tde_{i,j}^\blacktriangle(t).
\end{align*}
\end{theorem}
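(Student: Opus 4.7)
The proof of Theorem~\ref{thm: distance C_n} will closely mirror the proof of Theorem~\ref{thm: B_n distance polynomial}, with two additional structural tools at our disposal. First, Proposition~\ref{prop: transpose} identifies $\Gamma_{Q^\tr}$ of type $C_n$ with $\Gamma_Q$ of type $B_n$ by replacing the residue-$n$ short $B_n$-root $\lan i\ran$ with the residue-$n$ long $C_n$-root $\lan i,i\ran = 2\upep_i$. Second, the surjection $\psi\cl \Phi^+_{A_{2n-1}} \to \Phi^+_{C_n}$ of \eqref{eq: psi A to C} lifts $C_n$-pairs to $A_{2n-1}$-pairs, where $\dg_Q \le 1$ always holds and the degree polynomials are completely explicit.

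The first step is to establish a $C_n$-analogue of Theorem~\ref{thm: soc B}. Using Proposition~\ref{prop: transpose} to transport the classification in \eqref{eq: dist al, be 1 p}--\eqref{eq: dist al be 2 p} to $C_n$, I expect to obtain eleven patterns (i)''--(xi)'' describing all pairs $\pair{\upal,\upbe} \in (\Phi^+_{C_n})^2$ with $\dg_Q(\pair{\upal,\upbe}) \ne 0$. In particular, $\head_Q(\pair{\upal,\upbe})$ is always well defined. The role played by the short roots $\upve_i$ in $B_n$ is now played by the long roots $2\upep_i$ in $C_n$, which is the source of the interchange of factors in the final formula. In addition, for each relative position I will determine $(\upal,\upbe)$ using \eqref{eq: palbe}, noting especially that $p_{\be,\al} = 1$ whenever the decomposition $2\upep_k = (\upep_k - \upep_l) + (\upep_k + \upep_l)$ appears (the $C_n$-analogue of case (ix)').

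The second step is to compute $\de^\blacktriangle_{i,j}(t)$ in three cases. If both $i,j < n$, then $\upal$ and $\upbe$ are short roots, each with $|\psi^{-1}(\cdot)| = 2$. Their $C_n$-pairs lift to two families of $A_{2n-1}$-pairs, contributing respectively the ``diagonal'' exponents $|i-j|+2s-1$ and the ``anti-diagonal'' exponents $2n-i-j+2s-1$, with $\max(\sfd_i,\sfd_j) = 1$; summation recovers precisely $\tde^\blacktriangle_{i,j}(t)$ of \eqref{eq: BC formula}. If exactly one of $i,j$ is $n$, say $j = n$, then the long root $\upbe = 2\upep_k$ has a unique lift, while $\upal$ still admits two short-root lifts; the two lifts together with the $p_{\be,\al} = 1$ multiplicity in the (ix)''-analogue produce the prefactor $\max(\sfd_i,\sfd_j) = 2$. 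Finally, if $i = j = n$, both roots are long with unique lifts, and a direct computation using residue-$n$ subconfigurations in $\Gamma_\oQ$ yields $\sum_{s=1}^{n} t^{2s-1}$, so that $\de^\blacktriangle_{n,n}(t) + \sfd_n t^{\sfh-1} = 2\sum_{s=1}^{n} t^{2s-1}$ matches the formula.

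The main obstacle will be case $(b)$: ensuring that the multiplicities coming from the $2$-to-$1$ character of $\psi$ on short roots, the unique lift of the long root $2\upep_k$, and the extra $p_{\be,\al} = 1$ contribution from \eqref{eq: palbe} combine correctly to yield the prefactor $\max(\sfd_i,\sfd_j) = 2$ without over- or under-counting. Establishing this balance will require a careful case-by-case examination of relative positions in $\Gamma_\oQ$ involving vertices at residue $n$, in the spirit of Example~\ref{ex: Uding B}. Once this is done, the remaining cases follow by direct comparison with \eqref{eq: BC formula}.
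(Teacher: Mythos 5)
Your first step---transporting the $B_n$ classification to $C_n$ through the bijection $\psi^{\tr}$ of Proposition~\ref{prop: transpose}---is exactly the route the paper takes (Theorem~\ref{thm: soc C}), and it already finishes the proof: once one knows $\dg_{Q^{\tr}}(\pair{\al^{\tr},\be^{\tr}})=\dg_{Q}(\pair{\al,\be})$, hence $\ttO_k^{C_n}(i,j)=\ttO_k^{B_n}(i,j)$, the statement reduces to the already-proven $B_n$ case together with a direct comparison of the two instances of formula~\eqref{eq: BC formula}: the prefactor $\max(\sfd_i,\sfd_j)$ in Definition~\ref{def: distane poly 2} changes from $(2,2,1)$ for $B_n$ to $(1,2,2)$ for $C_n$ across the three regimes $\max(i,j)<n$, $\min(i,j)<\max(i,j)=n$, $i=j=n$, and the prefactor in $\tde^{\blacktriangle}_{i,j}(t)$ changes in exactly the same way. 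Your second step is therefore unnecessary, and as written it contains a genuine gap.

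The gap is the assertion that $\de^{\blacktriangle}_{i,j}(t)$ can be computed by lifting $C_n$-pairs through $\psi\cl\Phi^+_{A_{2n-1}}\to\Phi^+_{C_n}$ and summing the $A_{2n-1}$-contributions of the lifts. The degree statistic does not transfer through $\psi$ in any simple additive way: the remark following Lemma~\ref{lem: nonzero to  nonzero} exhibits a pair in $A_5$ with $\dg_Q=0$ whose image in $C_3$ has $\dg_{\oQ}=1$, and a $C_n$-pair can have degree $2$ while every pair in $A_{2n-1}$ has degree at most $1$. So the identity your argument implicitly requires, namely that $\ttO_k^{C_n}(i,j)$ equals the sum of the $\ttO_k^{A_{2n-1}}$ over residue-pairs of lifts, is a nontrivial statement about how chains in the two exponent posets interact, not a consequence of counting preimages; it amounts to a folding formula for $\de$, which is essentially as strong as the theorem itself. (The folding formula the paper does prove, Theorem~\ref{thm: tbij folding}, concerns $\tde$ rather than $\de$, and in the paper's convention it produces $C_n$ from $D_{n+1}$, not from $A_{2n-1}$.) Separately, the factor $\max(\sfd_i,\sfd_j)=2$ in the case $\max(i,j)=n$ is simply part of Definition~\ref{def: distane poly 2} and needs no derivation; it is not produced by the quantity $p_{\be,\al}$ of~\eqref{eq: palbe}, which enters only the module-theoretic computations of Section~\ref{sec: Cuspidal} and plays no role in this purely combinatorial statement.
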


Recall the natural bijection $\psi^\tr\cl \Phi^+_{B_n} \to
\Phi^+_{C_n}$ and Proposition~\ref{prop: transpose}, which tells that
labeling of $\Gamma_Q$ and $\Gamma_{Q^\tr}$ for Dynkin quiver $Q$ of
type $B_n$ are almost the same, when we use the notations $\lan a,b \ran$, $\lan a\ran$ and $\lan a,a\ran$
for $1 \le a \le |b| \le n$.

For $\al \in \Phi^+_{B_n}$, we set $$\al^\tr \seteq \psi^\tr(\al).$$

Using Proposition~\ref{prop: label12 B} for $\Gamma_\oQ$ and the similar argument in \cite{Oh18},
we obtain the following as in $B_n$-cases:

\begin{theorem} \label{thm: soc C}
Let $Q^\tr$ a Dynkin quiver of type $C_n$.
\ben
\item For any \prq $\pair{\al,\be} \in (\Phi^+_{C_n})^2$, $  \head_{Q^\tr}  (\pair{\al,\be})$ is well-defined.
\item For $\pair{\al,\be} \in (\Phi^+_{B_n})^2$,   $\dg_Q(\pair{\al,\be}) \ne 0$ if and only if $\dg_{Q^\tr}(\pair{\al^\tr,\be^\tr}) \ne 0$. Moreover, we have
$$   \dg_Q(\pair{\al,\be})=\dg_{Q^\tr}(\pair{\al^\tr,\be^\tr}).$$
\ee
\end{theorem}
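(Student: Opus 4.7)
The plan is to prove Theorem~\ref{thm: soc C} by transferring the case-by-case analysis of Theorem~\ref{thm: soc B} to type $C_n$ via the bijection $\psi^\tr\cl \Phi^+_{B_n} \isoto \Phi^+_{C_n}$. Proposition~\ref{prop: transpose} tells us that $\Gamma_Q$ and $\Gamma_{Q^\tr}$ coincide as positioned quivers, with labels transported by $\psi^\tr$ (sending $\lan i\rangle$ to $\lan i,i\rangle$ and preserving labels of the form $\lan i,\pm j\rangle$). Combined with Theorem~\ref{thm: OS17}, this gives a poset isomorphism $(\Phi^+_{B_n}, \preceq_Q) \simeq (\Phi^+_{C_n}, \preceq_{Q^\tr})$, and hence a direct $C_n$-analogue of Proposition~\ref{prop: label12 B} for $\Gamma_{Q^\tr}$: the same swings and shallow paths appear, with the long roots $2\upep_a$ replacing the short roots $\upve_a$ at residue $n$.

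The first step is to transcribe the $C_n$-analogues of figures (i)$'$--(xi)$'$ from~\eqref{eq: dist al, be 1 p} and~\eqref{eq: dist al be 2 p} by applying $\psi^\tr$ throughout. Since the quiver positions are identical, the classification of relative positions of pairs $\pair{\al^\tr,\be^\tr}$ with $\dg_{Q^\tr}(\pair{\al^\tr,\be^\tr}) \ne 0$ is exhausted by the same eleven configurations. For each configuration, the head computation transfers directly except in case (v)$'$: in $B_n$ the head is the doubled exponent $2\lan i\rangle$ (since $2\upve_i \notin \Phi^+_{B_n}$), whereas in $C_n$ the sum $\al^\tr+\be^\tr = 2\upep_i = \lan i,i\rangle$ IS a positive root, so the head becomes the single element $\lan i,i\rangle$. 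A similar reinterpretation applies to the intermediate exponent $2\psi(s_2)$ appearing in case (vi)$'$. In each case one verifies uniqueness of the minimal element under $\prec^\ttb_{Q^\tr}$, proving part (1), and that the maximal chains witnessing $\dg_Q$ and $\dg_{Q^\tr}$ are in bijection, proving part (2).

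The main obstacle is the case (v)$'$ (and (vi)$'$) discrepancy, where the $\preceq$-chain's bottom structure genuinely differs between the two root systems: in $B_n$ the chain ends at $2\lan i\rangle$ (one positive root with multiplicity two), while in $C_n$ it ends at $\lan i,i\rangle$ (one positive root with multiplicity one, of weight $2\upep_i$). One must check by a direct inspection of the intermediate pair exponents in figures (v)$'$ and (vi)$'$ that despite this structural difference, the number of cover relations below $\pair{\al^\tr,\be^\tr}$ in $(\Ex_{C_n},\prec^\ttb_{Q^\tr})$ coincides with the number below $\pair{\al,\be}$ in $(\Ex_{B_n},\prec^\ttb_{Q})$, so that $\dg_Q(\pair{\al,\be}) = \dg_{Q^\tr}(\pair{\al^\tr,\be^\tr})$ holds uniformly. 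Once this verification is carried out for each of the eleven configurations, both parts of the theorem follow.
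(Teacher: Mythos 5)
Your overall route --- transporting the $B_n$ analysis through $\psi^\tr$ via Proposition~\ref{prop: transpose}, then checking the eleven configurations one by one --- is the same strategy the paper uses, and several of your observations (the identification of the underlying quivers, the fact that case (v) changes because $\al^\tr+\be^\tr=2\upep_a=\srt{a,a}$ becomes an actual root) are correct. The gap is in where you localize the discrepancies. You assert that the head computation ``transfers directly except in case (v)$'$'' (and (vi)$'$), but in fact the cases that genuinely change are ${\rm (vii)}''$--${\rm (xi)}''$ as well: in ${\rm (vii)}''$ and ${\rm (viii)}''$ the components of the head move off residue $n$ (e.g.\ in type $B_4$ one has $\srt{1}\prec^\ttb_Q\pair{\srt{1,-3},\srt{3}}$ with the head at residue $n$, whereas the corresponding $C_4$ heads such as in $\srt{1,3}\prec^\ttb_{Q^\tr}\pair{\srt{1,4},\srt{3,-4}}$ sit at residue $<n$); in ${\rm (ix)}''$ the phenomenon is dual to (v): the $B_n$ head is the single root $\al+\be$, while the $C_n$ head is the \emph{doubled} exponent $2\srt{a,b}$, since $\srt{a,a}+\srt{b,b}=2(\upep_a+\upep_b)$ is twice a root; and in ${\rm (x)}''$, ${\rm (xi)}''$ the set of exponents strictly between the head and $\pair{\al,\be}$ changes --- $B_n$ has two incomparable covers where $C_n$ has only one. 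If you rework only (v) and (vi) and let the remaining cases ``transfer'', you will record wrong heads in (vii)--(ix), so part (1) of the theorem is not actually established there. The underlying reason is one your write-up glosses over: $\psi^\tr$ is a bijection of roots but is not weight-compatible at residue $n$ ($\psi^\tr(\upve_a)=2\upep_a$ doubles the weight), so it does not induce an isomorphism of the exponent posets $(\Ex,\prec^\ttb)$, and every configuration involving residue $n$ must be re-examined from scratch.

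A second, smaller issue: your verification criterion for part (2) is to match ``the number of cover relations below $\pair{\al^\tr,\be^\tr}$''. That count does \emph{not} agree in cases (x) and (xi) (two covers versus one), whereas $\dg$ is by definition the length of the longest $\prec^\ttb$-chain, which does agree (it equals $2$ in both). As literally stated your check would spuriously fail; you should compare maximal chain lengths, not numbers of covers.
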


Note that
$$
\lan  i \ran =  \ve_i \  \text{ if  $\bDynkin$ is of type $B_n$} \text{ and }
\lan  i,i \ran = 2\ep_i \ \text{ if  $\bDynkin$ is of type $C_n$}.
$$
By this difference,  the descriptions for $  \head_Q(\pair{\al,\be})  $ for
\prqs $\pair{ \al,\be} \in \Phi^+_{C_n}$ and a Dynkin quiver $Q$ of
type $C_n$ with $\dg_Q(\pair{\al,\be}) \ne 0$ are sometimes different
from the corresponding ones in cases of type $B_n$. Here we present
the \prqs $\pair{ \al,\be} \in \Phi^+_{C_n}$ for $\dg_Q(\pair{\al,\be})
\ne 0$ with description of $ \head_Q(\pair{\al,\be})$ as follows:
\begin{align*}
& \scalebox{0.79}{{\xy
(-20,0)*{}="DL";(-10,-10)*{}="DD";(0,20)*{}="DT";(10,10)*{}="DR";
"DT"+(-30,-4); "DT"+(140,-4)**\dir{.};
"DD"+(-20,-6); "DD"+(150,-6) **\dir{.};
"DT"+(-32,-4)*{\scriptstyle 1};
"DT"+(-32,-8)*{\scriptstyle 2};
"DT"+(-32,-12)*{\scriptstyle \vdots};
"DT"+(-32,-16)*{\scriptstyle \vdots};
"DT"+(-34,-36)*{\scriptstyle n};
"DL"+(-10,0); "DD"+(-10,0) **\dir{-};"DL"+(-10,0); "DT"+(-14,-4) **\dir{.};
"DT"+(-6,-4); "DR"+(-10,0) **\dir{.};"DR"+(-10,0); "DD"+(-10,0) **\dir{-};
"DL"+(-6,0)*{\scriptstyle \be};
"DL"+(0,0)*{{\rm(i)''}};
"DR"+(-14,0)*{\scriptstyle \al};
"DL"+(-10,0)*{\bullet};
"DR"+(-10,0)*{\bullet};
"DT"+(-6,-4); "DT"+(-14,-4) **\crv{"DT"+(-10,-2)};
"DT"+(-10,0)*{\scriptstyle 2};
"DD"+(-10,0)*{\bullet};
"DD"+(-10,-2)*{\scriptstyle \ \us=\al+\be};
"DL"+(15,-3); "DD"+(15,-3) **\dir{-};
"DR"+(11,-7); "DD"+(15,-3) **\dir{-};
"DT"+(11,-7); "DR"+(11,-7) **\dir{-};
"DT"+(11,-7); "DL"+(15,-3) **\dir{-};
"DL"+(19,-3)*{\scriptstyle \be};
"DR"+(7,-7)*{\scriptstyle \al};
"DL"+(30,0)*{{\rm(ii)''}};
"DL"+(15,-3)*{\bullet};
"DL"+(31,12.5)*{\bullet};
"DL"+(31,14.5)*{\scriptstyle  s_1};
"DL"+(25,-13)*{\bullet};
"DL"+(25,-15)*{\scriptstyle  s_2};
"DR"+(11,-7)*{\bullet};
"DD"+(28,4); "DD"+(38,-6) **\dir{-};
"DD"+(28,4); "DD"+(50,26) **\dir{.};
"DD"+(48,4); "DD"+(38,-6) **\dir{-};
"DD"+(48,4); "DD"+(70,26) **\dir{.};
"DD"+(82,18); "DD"+(74,26) **\dir{.};
"DD"+(82,18); "DD"+(58,-6) **\dir{-};
"DD"+(48,4); "DD"+(58,-6) **\dir{-};
"DD"+(82,18)*{\bullet};
"DD"+(78,18)*{\scriptstyle \al};
"DD"+(28,4)*{\bullet};
"DD"+(32,4)*{\scriptstyle  \be};
"DD"+(74,26); "DD"+(70,26) **\crv{"DD"+(72,28)};
"DD"+(72,30)*{\scriptstyle 2};
"DD"+(38,14)*{\bullet};
"DD"+(47,14)*{\scriptstyle  \us=\al+\be};
"DL"+(59,0)*{{\rm(iii)''}};
"DD"+(48,4); "DD"+(38,14) **\dir{.};
"DD"+(80,4); "DD"+(90,-6) **\dir{-};
"DD"+(96,0); "DD"+(90,-6) **\dir{-};
"DD"+(96,0); "DD"+(102,-6) **\dir{-};
"DD"+(122,16); "DD"+(102,-6) **\dir{-};
"DD"+(80,4); "DD"+(102,26) **\dir{.};
"DD"+(122,16); "DD"+(112,26) **\dir{.};
"DD"+(96,0); "DD"+(117,21) **\dir{.};
"DD"+(96,0); "DD"+(86,10) **\dir{.};
"DD"+(117,21)*{\bullet};
"DD"+(112,21)*{\scriptstyle  s_1};
"DD"+(86,10)*{\bullet};
"DD"+(91,10)*{\scriptstyle  s_2};
"DD"+(102,26); "DD"+(112,26) **\crv{"DD"+(107,28)};
"DD"+(107,29)*{\scriptstyle l > 2};
"DD"+(80,4)*{\bullet};
"DL"+(109,0)*{{\rm (iv)''}};
"DD"+(84,4)*{\scriptstyle  \be};
"DD"+(122,16)*{\bullet};
"DD"+(117,16)*{\scriptstyle  \al};
%
%%%%%%%%%%%%%%%%%%%%%%%%%
"DD"+(119,4); "DD"+(129,-6) **\dir{-};
"DD"+(135,0); "DD"+(129,-6) **\dir{-};
"DD"+(119,4); "DD"+(141,26) **\dir{-};
"DD"+(154,19); "DD"+(147,26) **\dir{-};
"DD"+(135,0); "DD"+(154,19) **\dir{-};
"DD"+(119,4)*{\bullet};
"DL"+(146,0)*{{\rm (v)''}};
"DD"+(123,4)*{\scriptstyle  \be};
"DD"+(141,26); "DD"+(147,26) **\crv{"DD"+(144,28)};
"DD"+(154,19)*{\bullet};
"DD"+(154,17)*{\scriptstyle  \al};
"DD"+(129,-6)*{\bullet};
"DD"+(129,-8)*{\scriptstyle  s=\al+\be};
"DD"+(144,29)*{\scriptstyle 2};
\endxy}}
\allowdisplaybreaks \\
& \scalebox{0.79}{{\xy
(-20,0)*{}="DL";(-10,-10)*{}="DD";(0,20)*{}="DT";(10,10)*{}="DR";
"DT"+(-30,-4); "DT"+(120,-4)**\dir{.};
"DD"+(-20,-6); "DD"+(130,-6) **\dir{.};
"DT"+(-34,-4)*{\scriptstyle 1};
"DT"+(-34,-8)*{\scriptstyle 2};
"DT"+(-34,-12)*{\scriptstyle \vdots};
"DT"+(-34,-16)*{\scriptstyle \vdots};
"DT"+(-36,-36)*{\scriptstyle n};
"DD"+(-17,4); "DD"+(-7,-6) **\dir{-};
"DD"+(-1,0); "DD"+(-7,-6) **\dir{-};
"DD"+(-17,4); "DD"+(2,23) **\dir{-};
"DD"+(12,13); "DD"+(2,23) **\dir{-};
"DD"+(-1,0); "DD"+(12,13) **\dir{-};
"DD"+(-17,4)*{\bullet};
"DD"+(2,23)*{\bullet};
"DL"+(10,0)*{{\rm (vi)''}};
"DD"+(-13,4)*{\scriptstyle  \be};
"DD"+(2,25)*{\scriptstyle  s_2};
"DD"+(12,13)*{\bullet};
"DD"+(12,11)*{\scriptstyle  \al};
"DD"+(-7,-6)*{\bullet};
"DD"+(-7,-8)*{\scriptstyle  s_1};
"DD"+(15,4); "DD"+(25,-6) **\dir{-};
"DD"+(15,4); "DD"+(5,-6) **\dir{-};
"DD"+(15,4); "DD"+(30,19) **\dir{.};
"DD"+(40,9);"DD"+(25,-6) **\dir{-};
"DD"+(40,9);"DD"+(30,19) **\dir{.};
"DD"+(40,9)*{\bullet};
"DD"+(30,19)*{\bullet};
"DD"+(30,21)*{\scriptstyle  s_1};
"DD"+(5,-6)*{\bullet};
"DD"+(5,-8)*{\scriptstyle  \be};
"DD"+(15,4.2)*{\bullet};
"DD"+(15,6.2)*{\scriptstyle s_2};
"DD"+(35,9)*{\scriptstyle \al};
"DD"+(25,4)*{{\rm (vii)''}};
"DD"+(45,4); "DD"+(55,-6) **\dir{-};
"DD"+(45,4); "DD"+(35,-6) **\dir{-};
"DD"+(45,4); "DD"+(67,26) **\dir{.};
"DD"+(80,19);"DD"+(55,-6) **\dir{-};
"DD"+(80,19);"DD"+(73,26) **\dir{.};
"DD"+(80,19)*{\bullet};
"DD"+(55,4)*{{\rm (viii)''}};
"DD"+(76,19)*{\scriptstyle  \al};
"DD"+(35,-6)*{\bullet};
"DD"+(45,4.2)*{\bullet};
"DD"+(45,2.2)*{\scriptstyle  \us=\al+\be};
"DD"+(35,-8)*{\scriptstyle \be};
"DD"+(67,26); "DD"+(73,26) **\crv{"DD"+(70,28)};
"DD"+(70,30)*{\scriptstyle 2};
"DD"+(92,16); "DD"+(114,-6) **\dir{-};
"DD"+(92,16); "DD"+(70,-6) **\dir{-};
"DD"+(92,4)*{ {\rm (ix)''}};
"DD"+(114,-6)*{\bullet};
"DD"+(114,-8)*{\scriptstyle \al};
"DD"+(70,-6)*{\bullet};
"DD"+(92,16)*{\bullet};
"DD"+(92,18)*{\scriptstyle 2s};
"DD"+(70,-8)*{\scriptstyle  \be};
\endxy}}  \hspace{-3em} \dg_Q(\pair{\al,\be})=1,
\allowdisplaybreaks \\
&\scalebox{0.79}{{\xy
(-20,0)*{}="DL";(-10,-10)*{}="DD";(0,20)*{}="DT";(10,10)*{}="DR";
"DT"+(-30,-4); "DT"+(70,-4)**\dir{.};
"DD"+(-20,-6); "DD"+(80,-6) **\dir{.};
"DT"+(-32,-4)*{\scriptstyle 1};
"DT"+(-32,-8)*{\scriptstyle 2};
"DT"+(-32,-12)*{\scriptstyle \vdots};
"DT"+(-32,-16)*{\scriptstyle \vdots};
"DT"+(-33,-36)*{\scriptstyle n};
"DD"+(-10,4); "DD"+(0,-6) **\dir{-};
"DD"+(6,0); "DD"+(0,-6) **\dir{-};
"DD"+(6,0); "DD"+(12,-6) **\dir{-};
"DD"+(32,16); "DD"+(12,-6) **\dir{-};
"DD"+(-10,4); "DD"+(12,26) **\dir{.};
"DD"+(32,16); "DD"+(22,26) **\dir{.};
"DD"+(6,0); "DD"+(27,21) **\dir{.};
"DD"+(6,0); "DD"+(-4,10) **\dir{.};
"DD"+(12,26); "DD"+(22,26) **\crv{"DD"+(17,28)};
"DD"+(17,30)*{\scriptstyle 2};
"DD"+(-10,4)*{\bullet};
"DL"+(17,0)*{{\rm(x)''}};
"DD"+(-6,4)*{\scriptstyle \be};
"DD"+(-4,12)*{\scriptstyle \eta_2};
"DD"+(32,16)*{\bullet};
"DD"+(27,16)*{\scriptstyle \al};
"DD"+(28,23)*{\scriptstyle \eta_1};
"DD"+(27,21)*{\scriptstyle \bullet};
"DT"+(-4,-30)*{\scriptstyle \bullet};
"DT"+(-4,-32)*{\scriptstyle \ga};
"DD"+(35,4); "DD"+(45,-6) **\dir{-};
"DD"+(51,0); "DD"+(45,-6) **\dir{-};
"DD"+(51,0); "DD"+(57,-6) **\dir{-};
"DD"+(69,8); "DD"+(57,-6) **\dir{-};
"DD"+(35,4); "DD"+(54,23) **\dir{.};
"DD"+(69,8); "DD"+(54,23) **\dir{.};
"DD"+(51,0); "DD"+(64,13) **\dir{.};
"DD"+(51,0); "DD"+(41,10) **\dir{.};
"DD"+(35,4)*{\bullet};
"DL"+(62,0)*{{\rm(xi)''}};
"DD"+(39,4)*{\scriptstyle \be};
"DD"+(41,10)*{\scriptstyle \bullet};
"DD"+(45,10)*{\scriptstyle \eta_2};
"DD"+(69,8)*{\bullet};
"DD"+(64,12.5)*{\bullet};
"DD"+(61,12.5)*{\scriptstyle \eta_1};
"DD"+(65,8)*{\scriptstyle \al};
"DT"+(41,-32)*{\scriptstyle \ga_2};
"DT"+(44,-5)*{\scriptstyle \ga_1};
"DT"+(44,-7)*{\scriptstyle \bullet};
"DT"+(41,-30)*{\scriptstyle \bullet};
\endxy}} \dg_Q(\pair{\al,\be})=2.
\end{align*}
Here one can recognize that the differences happen in cases (vii)$'' \sim$ (xi)$''$, when we compare the cases for $B_n$.

In each case, the value $(\al,\be) $ does not depend on the choice of $Q$ and are given  in Table~\ref{table: al,be C}:
\begin{table}[h]
\centering
{ \arraycolsep=1.6pt\def\arraystretch{1.5}
\begin{tabular}{|c||c|c|c|c|c|c|c|c|c|c|c|c|}
\hline
 &   (i) & (ii)  & (iii)  & (iv)  & (v)  & (vi)  & (vii)  & (viii)  & (ix)  & (x)  & (xi)  \\ \hline
$(\al,\be)$ &  $-1$ &  $0$ &  $-1$  &  $0$ &  $0$ &  $1$ &  $0$ &  $-1$ &  $0$ &  $-1$ &  $0$ \\ \hline
\end{tabular}
}\\[1.5ex]
    \caption{$(\al,\be)$ for non $[Q]$-simple \prqs of type $C_n$}
    \protect\label{table: al,be C}
\end{table}

For the cases when a \prq  $\pair{\al,\be}$ is  $[Q]$-minimal for $\al+\be \in \Phi^+$,
\begin{align}\label{eq: p be al C}
p_{\be,\al}=1 \quad \text{ if  $\pair{\al,\be}$ is of case ${\rm (v)''}$} \quad \text{  and } \quad \text{$p_{\be,\al}=0$, otherwise. }
\end{align}

\begin{example} \label{ex: Uding C}
Using $\Gamma_Q$ in Example~\ref{ex: Label C4},
\ben
\item[(7)] $ \pair{\srt{1,3},\srt{3,4}}  \prec_Q^\ttb  \pair{\srt{1,4},\srt{3,3}}$   correspond to ${\rm (vii)}''$,
\item[(8)] $ \srt{1,3}  \prec_Q^\ttb  \pair{\srt{1,4},\srt{3,-4}}$   correspond to ${\rm (viii)}''$,
\item[(9)] $ 2\srt{3,4}  \prec_Q^\ttb  \pair{\srt{3,3},\srt{4,4}}$ correspond to ${\rm (ix)''}$,
\item[(10)] $ \srt{1,3} \prec_Q^\ttb  \pair{\srt{3,4},\srt{1,-4}}  \prec_Q^\ttb  \pair{\srt{1,4},\srt{3,-4}}$ corresponds to ${\rm (x)}''$,
\item[(11)] $  \pair{\srt{1,2},\srt{3,4}} \prec_Q^\ttb  \pair{\srt{2,3},\srt{1,4}}  \prec_Q^\ttb  \pair{\srt{2,4},\srt{1,3}}$ corresponds to ${\rm (xi)}''$.
\ee
\end{example}

By observing (x)$''$ and (xi)$''$, we have the following proposition:

\begin{proposition}   For a Dynkin quiver $Q$ of type $C_n$, let $\pair{\al,\be}$ be a \prq such that $\dg_Q(\al,\be)=2$.
Then there exists a unique \prq $\up$ such that
$$   \head_Q(\pair{\al,\be})  \prec^\ttb_Q \up \prec_Q^\ttb \pair{\al,\be}.$$
\end{proposition}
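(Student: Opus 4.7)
My plan is to read off the unique intermediate directly from the classification of relative positions yielding $\dg_Q(\pair{\al,\be})=2$, and then rule out the ``extra'' candidate that appears in the analogous $B_n$ picture by means of a weight obstruction coming from the long-root relabeling at residue $n$.

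First, I would invoke Theorem~\ref{thm: soc C} together with the diagrammatic classification (i)$''$--(xi)$''$ preceding the statement. By that classification, $\dg_Q(\pair{\al,\be})=2$ forces $\pair{\al,\be}$ into relative position (x)$''$ (when $\al+\be\in\Phi^+$) or (xi)$''$ (when $\al+\be\notin\Phi^+$), and every \ex $\um\prec^\ttb_Q\pair{\al,\be}$ sits among those drawn in the corresponding picture. In each picture, exactly one candidate intermediate appears strictly between $\head_Q(\pair{\al,\be})$ and $\pair{\al,\be}$, namely the pair $\pair{\eta_1,\eta_2}$ inscribed in the rectangle; a direct check using the definition of $\prec^\ttb_Q$, parallel to the $D_{n+1}$ analysis in Remark~\ref{remark: dist 2}, confirms $\head_Q(\pair{\al,\be})\prec^\ttb_Q\pair{\eta_1,\eta_2}\prec^\ttb_Q\pair{\al,\be}$, establishing existence.

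The main obstacle is uniqueness. Comparison with Proposition~\ref{prop: many pairs} shows that the analogous $B_n$ picture (x)$'$ or (xi)$'$ admits a \emph{second} intermediate built out of the residue-$n$ short roots $\lan a\rangle=\upve_a$ and $\lan b\rangle=\upve_b$ (either as a pair $\pair{\lan a\rangle,\lan b\rangle}$ or embedded in a triple $\seq{\tau,\zeta,\ga}$). Under the bijection $\psi^\tr\cl\Phi^+_{B_n}\isoto\Phi^+_{C_n}$ of Proposition~\ref{prop: transpose}, each residue-$n$ vertex of $\Gamma_{Q^\tr}$ carries the long root $\lan a,a\rangle=2\upep_a$ rather than $\upve_a$, so the would-be $C_n$ analogue of that residue-$n$ configuration has total weight $2(\upep_a+\upep_b)$, whereas $\wt(\pair{\al,\be})=\al+\be$ has each $\upep_i$-coefficient in $\{-1,0,1\}$ (since in (x)$''$ and (xi)$''$ neither $\al$ nor $\be$ lies on the residue-$n$ row). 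Because $\um\prec^\ttb_Q\pair{\al,\be}$ forces $\wt(\um)=\wt(\pair{\al,\be})$, this candidate is invalidated. Combining the two steps, after enumerating all \exs $\um\prec^\ttb_Q\pair{\al,\be}$ permitted by the classification (which mirrors the $B_n$ list via $\psi^\tr$) and discarding the unique one that fails the weight test, the remaining intermediate is precisely $\pair{\eta_1,\eta_2}$, yielding the desired uniqueness.
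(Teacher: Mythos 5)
Your proposal is correct and follows the same route the paper takes, namely reading the intermediates off the classification diagrams (x)$''$ and (xi)$''$; the paper in fact offers no argument beyond ``by observing (x)$''$ and (xi)$''$,'' so everything of substance in your write-up is justification the paper leaves implicit. Your added content -- that the second intermediate of the $B_n$ pictures (the one built from the residue-$n$ roots $\lan a\ran,\lan b\ran$, whether as a pair or inside the triple $\seq{\tau,\zeta,\ga}$) cannot survive the relabeling $\psi^{\tr}$ because $\lan a\ran=\upve_a$ becomes the long root $\lan a,a\ran=2\upep_a$ and the total weight jumps from $\ga_2$ to $2\ga_2$, violating $\wt(\um)=\wt(\pair{\al,\be})$ -- is exactly the right reason, and it is consistent with the paper's own comparison between Proposition~\ref{prop: many pairs} and the $C_n$ statement. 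The one soft spot is your enumeration step: transporting the $B_n$ candidate list through $\psi^{\tr}$ rules out images of $B_n$-intermediates, but does not by itself exclude a hypothetical $C_n$-exponent containing a long root $2\upep_a$ whose $\psi^{\tr}$-preimage has the \emph{wrong} weight in $B_n$ (since $\psi^{\tr}$ is not weight-preserving in either direction); you should either note that the paper's pictures (x)$''$--(xi)$''$ are asserted to list \emph{all} exponents below $\pair{\al,\be}$, or check directly that no such long-root configuration can satisfy the bi-lexicographic conditions. This gap is no worse than the paper's own level of detail, and the weight obstruction is the genuine content of the proof.
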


\begin{proof} [Proof of Theorem~\ref{thm: distance C_n}]
By Theorem~\ref{thm: soc B}, we have
$$
 \sum_{k \in \Z_{\ge 0} } {\ttO_k^{B_n}(i,j)} t^{k-1} =  \sum_{k \in \Z_{\ge 0} } {\ttO_k^{C_n}(i,j)} t^{k-1}
$$
for any $1 \le i,j \le n$.
Since, when  $\max(i,j)=n$, we have $\max(\sfd_i,\sfd_j)=2$ and
$$
 \sum_{s=1}^{\min(i,j)} (t^{|i-j|+2s-1}+t^{2n-i-j+2s-1})
= 2 \sum_{s=1}^{i}t^{n-i+2s-1},
$$
our assertion follows from the definition of degree polynomial
\[  \de_{i,j}^\blacktriangle(t) \seteq \sum_{k \in \Z_{\ge 0} } \max(\sfd_i,\sfd_j) {\ttO_k^{\blacktriangle}(i,j)} t^{k-1}. \qedhere \]
\end{proof}

\subsection{$F_4$-case} In this case, we have only finite many Dynkin quivers. Recall that $\sfd_1=\sfd_2=2$ and $\sfd_3=\sfd_4=1$. Then one can check the results in this subsections and we exhibit several examples instead of giving proofs.

\smallskip

For  a Dynkin quiver $Q=(\bDynkin_{F_4}, \xi)$ with
$$\xymatrix@R=0.5ex@C=6ex{    *{\circled{$\circ$}}<3pt> \ar@{->}[r]^<{ _{\underline{4}} \ \  }_<{1} & *{\circled{$\circ$}}<3pt> \ar@{->}[r]^<{  _{\underline{3}} \ \  }_<{2}
&*{\circ}<3pt> \ar@{->}[r]^>{ \ \  _{\underline{1}}}_<{3 \ \ } &*{\circ}<3pt> \ar@{}[l]^<{ \ \ 4}_>{    _{\underline{2}} \ \ } },$$
the AR-quiver $\Gamma_Q$ can be described as follows:
\begin{align}\label{eq: F_4Q}
\Gamma^{F_4}_{Q}=  \hspace{-2ex} \raisebox{3.15em}{ \scalebox{0.63}{\xymatrix@!C=3.3ex@R=2ex{
(i\setminus p) & -9 & -8 & -7 & -6  & -5 & -4 & -3 & -2 & -1 & 0 & 1 & 2 & 3 & 4\\
1&&&& \sprt{1,0,-1,0}\ar[dr] && \sprt{0,0,1,1} \ar@{->}[dr]   &&   \sprt{1,0,0,-1} \ar@{->}[dr]  &&\sprt{0,1,0,1}\ar@{->}[dr]&& \sprt{0,0,1,-1}\ar@{->}[dr]&&  \sprt{0,1,-1,0} \\
2&&& \sprt{1,-1,0,0} \ar@{->}[dr]\ar@{->}[ur]   &&  \sprt{1,0,0,1} \ar@{->}[dr]\ar@{->}[ur]     &&  \sprt{1,0,1,0} \ar@{->}[dr]\ar@{->}[ur]&& \sprt{1,1,0,0}\ar@{->}[dr]\ar@{->}[ur]&& \sprt{0,1,1,0}\ar@{->}[dr]\ar@{->}[ur]&& \sprt{0,1,0,-1}  \ar@{->}[ul]\ar@{->}[ur] \\ 
3&&\sprt{\frac{1}{2},-\frac{1}{2},-\frac{1}{2},\frac{1}{2}} \ar@{->}[dr]\ar@{=>}[ur]   &&\sprt{\frac{1}{2},-\frac{1}{2},\frac{1}{2},\frac{1}{2}}\ar@{->}[dr]\ar@{=>}[ur]   &&  \sprt{1,0,0,0}\ar@{->}[dr]\ar@{=>}[ur] && \sprt{\frac{1}{2},\frac{1}{2},\frac{1}{2},\frac{1}{2}}\ar@{->}[dr]\ar@{=>}[ur] &&  \sprt{\frac{1}{2},\frac{1}{2},\frac{1}{2},-\frac{1}{2}}\ar@{->}[dr]\ar@{=>}[ur]   &&\sprt{0,1,0,0} \ar@{=>}[ur]\\
4& \sprt{\frac{1}{2},-\frac{1}{2},-\frac{1}{2},-\frac{1}{2}}   \ar@{->}[ur]  &&  \sprt{0,0,0,1}\ar@{->}[ur]      &&  \sprt{\frac{1}{2},-\frac{1}{2},\frac{1}{2},-\frac{1}{2}}\ar@{->}[ur]  && \sprt{\frac{1}{2},\frac{1}{2},-\frac{1}{2},\frac{1}{2}} \ar@{->}[ur]  &&\sprt{0,0,1,0}\ar@{->}[ur]    & & \sprt{\frac{1}{2},\frac{1}{2},-\frac{1}{2},-\frac{1}{2}} \ar@{->}[ur]
}}}
\end{align}
Here we use notation  the notation $\sprt{a,b,c,d}\seteq a\ve_1+b\ve_2+c\ve_3+d\ve_4$, $\ve_i =\sqrt{2}\ep_i$ and
$$  \al_1 = \sprt{0,1,-1,0}, \quad \al_2 = \sprt{0,0,1,-1}, \quad \al_3 = \sprt{0,0,0,1} \quad \text{ and } \quad \al_4 = \sprt{1/2,-1/2,-1/2,-1/2}.$$

\begin{theorem} For  a Dynkin quiver $Q$ of type $F_4$ and any \prq $\pair{\al,\be} \in (\Phi^+_{F_4})^2$, $\head_Q(\pair{\al,\be})$ is well-defined. In particular, if $\al+\be=\ga$,
then $\head_Q(\pair{\al,\be})=\ga$.
\end{theorem}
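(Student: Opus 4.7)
The plan is to reduce both assertions to the corresponding $E_6$ statement, following the template of Theorem~\ref{thm: soc B}\,(4) used for $B_n$. Since the $F_4$ Dynkin diagram has three edges, there are $2^3 = 8$ Dynkin quivers of type $F_4$, matching the number of $\vee$-fixed Dynkin quivers of type $E_6$; each $F_4$-quiver is of the form $\oQ$ for some such $Q$ by the correspondence $\ov{\xi}_{\oim} = {}^\vee\hspace{-.4ex}\xi_{\im}$, so it suffices to prove the theorem for $\oQ$.

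The main tool is the surjection $\psi\cl \Phi^+_{E_6} \to \Phi^+_{F_4}$ of Section~\ref{sec: Labeling BCFG} together with Theorem~\ref{thm: ED to FG}, which gives $\psi(\tau_Q^k(\ga_{\im}^Q)) = \tau_{\oQ}^k(\ga_{\oim}^{\oQ})$ for all $k \in \Z$. Via the bijection $\phi_Q$ of~\eqref{eq: bijection}, this implies $\psi$ is order-preserving for $\preceq$, and extends compatibly to the bi-lexicographical orders $\prec^\ttb$ on exponents (the argument in the proof of Lemma~\ref{lem: nonzero to nonzero}). Furthermore, because $Q$ is $\vee$-fixed, $\vee$ normalizes $\tau_Q$ and therefore acts by $\prec^\ttb_{[Q]}$-automorphisms on $\Phi^+_{E_6}$; the fibers of $\psi$ are precisely the $\vee$-orbits. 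Given a pair $\pair{\upal,\upbe}$, I would choose a lift $\pair{\al,\be} \in (\Phi^+_{E_6})^2$ with $\psi(\al)=\upal$ and $\psi(\be)=\upbe$ (such a lift exists since $\psi$ respects $\prec$), apply the known $E_6$ result that $\head_Q(\pair{\al,\be})$ is well-defined (Theorem~\ref{thm: ADE6 de=tde} together with the argument of \cite[Proposition 4.5]{Oh18}), and set $\head_{\oQ}(\pair{\upal,\upbe}) \seteq \psi(\head_Q(\pair{\al,\be}))$.

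The main technical subtlety is the non-injectivity of $\psi$ on the short roots of $F_4$ (precisely the $12$ roots listed in Section~\ref{sec: Labeling BCFG} with $|\psi^{-1}(\upbe)| = 2$): different lifts of $\pair{\upal,\upbe}$ could a priori give different $E_6$-heads. This will be resolved using the $\vee$-equivariance of $\head_Q$, which forces the $\psi$-images to agree across any pair of lifts within a $\vee$-orbit. Conversely, any $[\oQ]$-simple exponent $\um \prec^\ttb_{[\oQ]} \pair{\upal,\upbe}$ will admit a lift $\tilde\um \prec^\ttb_{[Q]} \pair{\al,\be}$ for a suitable choice of lift of $\pair{\upal,\upbe}$, and such a $\tilde\um$ must then coincide with $\head_Q(\pair{\al,\be})$ by the $E_6$ uniqueness, giving uniqueness of the $F_4$-head. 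For the second assertion, when $\upal+\upbe = \upga \in \Phi^+_{F_4}$, direct inspection of the explicit root lists in Section~\ref{sec: Labeling BCFG} allows us to choose lifts with $\al+\be = \ga \in \Phi^+_{E_6}$ and $\psi(\ga) = \upga$; the $E_6$ case then gives $\head_Q(\pair{\al,\be}) = \ga$, and so $\head_{\oQ}(\pair{\upal,\upbe}) = \upga$.
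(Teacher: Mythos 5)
Your strategy---lifting along the surjection $\psi\cl\Phi^+_{E_6}\to\Phi^+_{F_4}$ and pushing the known $E_6$ statement forward---is not what the paper does (for $F_4$ the paper simply performs a direct finite verification on the explicit AR-quivers of the finitely many Dynkin quivers, as announced at the start of the subsection and of Section~\ref{sec: Degree poly}), and as written your reduction has a genuine gap. The map $\psi$ preserves the convex order only in one direction: $\al\prec_{[Q]}\be$ implies $\psi(\al)\prec_{[\oQ]}\psi(\be)$, but incomparable roots can become comparable after folding, and a $[Q]$-simple exponent need not map to a $[\oQ]$-simple one. The paper records an explicit counterexample in the remark following Lemma~\ref{lem: nonzero to  nonzero}: for $A_5\to C_3$ the pair $\pair{[2,5],[2,3]}$ is $[Q]$-simple, yet $\dg_{\oQ}(\lan 2,3\ran,\lan 1,2\ran)=1$ with head $\pair{\lan 1,3\ran,\lan 2,2\ran}$. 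Hence $\psi(\head_Q(\pair{\al,\be}))$ is in general \emph{not} $\head_{\oQ}(\pair{\psi(\al),\psi(\be)})$, and your proposed definition is wrong for a badly chosen lift. The assertion you rely on to repair this---that every $[\oQ]$-simple exponent $\um\prec^\ttb_{[\oQ]}\pair{\upal,\upbe}$ lifts to an exponent $\prec^\ttb_{[Q]}$ a \emph{suitable} lift of the pair---is precisely the hard combinatorial content; it is the $F_4$-analogue of Theorem~\ref{thm: soc B}\,(4), which even in the $B_n$ case the paper obtains only by redoing the case analysis of relative positions, not as a formal consequence of folding.

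The $\vee$-equivariance argument also does not close the well-definedness gap. When $\upal$ and $\upbe$ are both short (so $|\psi^{-1}(\upal)|=|\psi^{-1}(\upbe)|=2$), the four lifts of $\pair{\upal,\upbe}$ fall into \emph{two} distinct $\vee$-orbits, namely $\{\pair{\al,\be},\pair{\vee\al,\vee\be}\}$ and $\{\pair{\al,\vee\be},\pair{\vee\al,\be}\}$; equivariance of $\head_Q$ identifies the pushed-forward heads within each orbit but says nothing about agreement between the two orbits, and the $A_5/C_3$ example shows the two orbits can behave genuinely differently (one lift $[Q]$-simple, the other not). To make your reduction rigorous you would have to verify, for each relative position in $\Gamma_{\oQ}$, that a good lift exists, that the pushforward of its head is $[\oQ]$-simple, and that it is the unique minimal exponent below the pair---which amounts to the same finite check the paper carries out directly on the $F_4$ AR-quiver such as \eqref{eq: F_4Q}.
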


\begin{remark} \label{rmk: F4 do not hold}
In $F_4$-case, $\de_{i,j}(t)+\delta_{i,j}t^{\sfh-1}$ does not coincide with $\tde_{i,j}(t)$ any more. For instance,
$$   \tde_{2,3}[9]=4  \quad \text{ while } \quad   \de_{2,3}[9]=6.$$
Here the fact that $\de_{2,3}[9]=6$ can be computed by using
$\Gamma_Q$ in~\eqref{eq: F_4Q} as follows: Let $\al=\sprt{0,1,0,-1}$
and $\be=\sprt{\frac{1}{2},-\frac{1}{2},\frac{1}{2},\frac{1}{2}}$.
Then $\ga=\al+\be \in \Phi^+$ and
\begin{align}             \label{eq: F_4 de not tde}
\ga            \prec_Q^\ttb
\pair{\sprt{\frac{1}{2},\frac{1}{2},-\frac{1}{2},-\frac{1}{2}},\sprt{0,0,1,0}  }  \prec_Q^\ttb
\pair{\sprt{0,1,0,0},\sprt{\frac{1}{2},-\frac{1}{2},\frac{1}{2},-\frac{1}{2}}  }  \prec_Q^\ttb \pair{\al,\be}.
\end{align}
\end{remark}

\subsection{$G_2$-case} \label{subsec: G2 degree}
In this case, we have only finite many Dynkin quivers. Recall that $\sfd_1=1$ and $\sfd_2=3$. Then one can check the results in this subsections and we exhibit several examples instead of giving proofs.

For  a Dynkin quiver $Q=(\bDynkin_{G_2}, \xi)$ with
$$ \   \xymatrix@R=0.5ex@C=6ex{  *{\circ}<3pt> \ar@{->}[r]_<{ 1 \ \  }^<{ _{\underline{2}} \ \  }  & *{\circled{$\odot$}}<3pt> \ar@{-}[l]^<{ \ \ 2  }_<{  \ \ _{\underline{1}}  } },$$
we have
$$
\Gamma^{G_2}_{Q}=  \raisebox{2.3em}{ \scalebox{0.9}{\xymatrix@!C=4ex@R=2ex{
(i\setminus p) &   -3 &  -2 & -1 & 0 & 1 & 2\\
1&   \ssrt{0,1,-1}  \ar@{=>}[dr]  && \ssrt{1,0,-1}  \ar@{=>}[dr]     &&\ssrt{1,-1,0} \ar@{=>}[dr] \\
2&&  \ssrt{1,1,-2}  \ar@{-}[ul]\ar@{->}[ur]  && \ssrt{2,-1,-1} \ar@{-}[ul]\ar@{->}[ur]  &&\ssrt{1,-2,1} \ar@{-}[ul]
}}}.
$$
Here  we use the orthonormal basis $\{ \ep_i \ | \ 1 \le i \le 3 \}$ of $\R^3$, the notation $\ssrt{a,b,c}\seteq a\ep_1+b\ep_2+c\ep_3$ and
$$ \al_1 = \ssrt{0,1,-1} \quad \text{ and }  \quad \al_2 = \ssrt{1,-2,1}.$$

\begin{theorem} 
For  a Dynkin quiver $Q$ of type $G_2$ and any \prq $\pair{\al,\be} \in (\Phi^+_{G_2})^2$, $\head_Q(\pair{\al,\be})$ is well-defined. In particular, if $\al+\be=\ga$,
then $\head_Q(\pair{\al,\be})=\ga$.
\end{theorem}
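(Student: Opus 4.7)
The plan is a direct finite case analysis, exploiting that $|\Phi^+_{G_2}|=6$ and that there are only two Dynkin quivers $Q$ of type $G_2$ up to height-function shift (the two orientations of the single edge in $\bDynkin_{G_2}$). Using the explicit AR-quiver $\Gamma^{G_2}_Q$ displayed in Subsection~\ref{subsec: G2 degree}, one reads off the convex partial order $\preceq_{[Q]}$ on $\Phi^+_{G_2}$ via Theorem~\ref{thm: OS17}~\eqref{it: noncom}, and thereby enumerates all \prqs $\pair{\al,\be}$.

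The key general observation is that any positive root $\ga$, viewed as the singleton exponent with $\ga_\ga=1$ and $\ga_\zeta=0$ for $\zeta\ne\ga$, is automatically $[Q]$-simple: any $\um\ne\ga$ with $\wt(\um)=\ga$ has $|\supp(\um)|\ge 2$, and the two-sided requirement in Definition~\ref{def: bi-orders}~(i) for $\um<^\ttb_{\uw_0}\ga$ would force $\supp(\um)$ to lie simultaneously strictly above and strictly below $\ga$ with respect to $<_{\uw_0}$, which is impossible. When $\pair{\al,\be}$ is a \prq with $\al+\be=\ga\in\Phi^+_{G_2}$, convexity of $<_{\uw_0}$ yields $\al<_{\uw_0}\ga<_{\uw_0}\be$ for every $\uw_0\in[Q]$, and a direct check of the two witnesses shows $\ga\prec^\ttb_{[Q]}\pair{\al,\be}$.

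To establish that $\ga$ is the \emph{unique} $[Q]$-simple exponent with $\us\preceq^\ttb_{[Q]}\pair{\al,\be}$, I would enumerate all exponents of weight $\ga$. For $\ga\in\{\al_1+\al_2,\ 2\al_1+\al_2,\ 3\al_1+\al_2\}$ there is exactly one non-trivial 2-part decomposition into positive roots, so uniqueness is immediate. The interesting case is $\ga=3\al_1+2\al_2$, which admits the two 2-part decompositions $(\al_2,\,3\al_1+\al_2)$ and $(\al_1+\al_2,\,2\al_1+\al_2)$ together with a handful of longer ones such as $(\al_1,\al_1+\al_2,\al_1+\al_2)$; using the AR-quiver (which realizes a compatible reading in the sense of Theorem~\ref{thm: OS17}~\eqref{it: comp reading}) one verifies directly that every such exponent satisfies $\ga\prec^\ttb_{[Q]}\um$ and fails to be $[Q]$-simple, so $\ga$ remains the unique $[Q]$-simple exponent below each of the two initial \prqs.

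Finally, well-definedness of $\head_Q(\pair{\al,\be})$ for \prqs with $\al+\be\notin\Phi^+_{G_2}$ is handled by the same exhaustive enumeration: list all exponents of weight $\al+\be$ and pick out the minimum via the AR-quiver. The main obstacle is the bookkeeping around $\ga=3\al_1+2\al_2$, where the triple bond of $G_2$ produces several coexisting 2-part decompositions and one must verify by hand that every chain descending in the bi-lex order terminates at the same $[Q]$-simple head; this is finite but tedious, which is precisely why the authors exhibit examples rather than write out the routine verification.
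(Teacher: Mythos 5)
Your proposal is correct, and it is essentially the approach the paper itself intends: the paper explicitly declines to write out a proof in the $G_2$ subsection (``one can check the results\dots we exhibit several examples instead of giving proofs''), the intended argument being exactly the finite verification over the six positive roots and the two orientations that you outline. Your two structural observations --- that a singleton root is automatically $[Q]$-simple because the two-sided bi-lexicographic condition forces an empty support, and that convexity places $\ga=\al+\be$ strictly between $\al$ and $\be$ so that every non-singleton exponent of weight $\ga$ straddles $\ga$ and hence dominates it --- are the right mechanism and reduce the remaining work to the routine enumeration you describe.
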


\begin{remark} \label{rmk: G2 do not hold}
As $F_4$-case, $\de_{i,j}(t)+\delta_{i,j}t^{\sfh-1}$ does not coincide with $\tde_{i,j}(t)$ any more in $G_2$-case either. For instance,
$$   \tde_{1,1}[4]=  2    \quad \text{ while } \quad   \de_{1,1}[4]=   1. $$
\end{remark}

Note that, for each Dynkin quiver $Q$ of type $G_2$, there exist only one \prq of positive roots $\pair{\al,\be}$ such that $\al+\be =\ga \in \Phi^+$ and $\dg_Q(\pair{\al,\be})=2$ $(i=1,2)$. In that case,
there exists a unique \prq $\up$ such that
$$\ga \prec_Q^\ttb \up  \prec_Q^\ttb  \pair{\al,\be}.$$

\section{Quiver Hecke algebras and $\rmR$-matrices} \label{Sec: quiver Hecke}

In this section, we briefly recall the definition of unipotent quantum coordinate ring and quiver Hecke algebra   $R$ associated to a finite Cartan datum $(\cm,\wl,\Pi,\cwl,\Pi^\vee)$. Then we review the definition, invariants and properties of affinizations of modules over quiver Hecke algebras which are introduced and investigated in \cite{KP18,KKOP19A}. In the final section, we will see that the $\de$-invariants  for pairs of cuspidal modules over $R$ \emph{associated with Dynkin quivers} coincide with $(\tde_{i,j}(t))_{i,j \in I}$.

\subsection{Unipotent quantum coordinate ring}
Let $q$ be an indeterminate. For $k \in \Z_{\ge 1}$ and $i \in I$, we set
$$  q_i =q^{\sfd_i}, \quad    [k]_i \seteq \dfrac{q_i^k -q_i^{-k}}{q_i-q_i^{-1}} \quad \text{ and } \quad [k]_i! \seteq \prod_{s=1}^k \; [s]_i.$$

We denote by $U_q(\g)$ the quantum group associated to a finite
Cartan datum $(\sfC,\wl,\Pi,\wl^\vee,\Pi^\vee)$, which is the
associative algebra over $\mathbb Q(q)$ generated by $e_i,f_i$ $(i
\in I)$ and $q^{h}$ ($h\in\wl^\vee$). We set $U_q^\pm(\g)$ be
the subalgebra generated by $e_i$ (resp.\ $f_i$) for $i \in I$.  Note
that $U_q(\g)$ admits the weight space decomposition:
$$  U_q(\g) = \soplus_{\be \in \rl} U_q(\g)_\be. $$

For any $i \in I$, there exists a unique $\Q(q)$-linear endomorphisms $e_ i'$ of $U_q^-(\g)$ such
that
$$   e_ i'(f_j)=\delta_{i,j},\ e_i'(xy) = e_i'(x)+   q^{(\al_i,\be)}x e'_i(y)\quad  (x \in U_q^-(\g)_\be, \  y \in U_q^-(\g)).$$

Then there exists a unique non-degenerate symmetric bilinear form $( , )_K$ on $U_q^-(\g)$ such that
$$
(  \mathbf{1},\mathbf{1})_K=1, \quad (f_iu,v)_K = (u,e_i'v)_K \text{ for $i \in I$, $u,v \in U_q^-(\g)$}.
$$

We set $\bbA=\Z[q^{\pm 1}]$ and let
$U_q^{+}(\g)_{\bbA}$ be the $\bbA$-subalgebra of
$U_q^{+}(\g)$ generated by $e_i^{(n)}\seteq e_i^n/[n]_i!$
($i \in I$, $n\in\Z_{>0}$).

Let $\Delta_\n$ be the algebra homomorphism $U_q^+(\g) \to U_q^+(\g) \tens U_q^+(\g)$ given by  $ \Delta_\n(e_i) = e_i
\tens 1 + 1 \tens e_i$,
where the algebra structure on $U_q^+(\g)
\tens U_q^+(\g)$ is defined by
$$(x_1 \tens x_2) \cdot (y_1 \tens y_2) = q^{-(\wt(x_2),\wt(y_1))}(x_1y_1 \tens x_2y_2).$$

Set
$$ A_q(\n) = \soplus_{\beta \in  \rl^-} A_q(\n)_\beta \quad \text{ where } A_q(\n)_\beta \seteq \Hom_{\Q(q)}(U^+_q(\g)_{-\beta}, \Q(q)).$$
Then  $A_q(\n)$ is an algebra with the multiplication given by
$(\psi \cdot \theta)(x)= \theta(x_{(1)})\psi(x_{(2)})$, when
$\Delta_\n(x)=x_{(1)} \tens x_{(2)}$ in  Sweedler's notation.

Let us denote by $A_\bbA(\n)$ the $\bbA$-submodule
of $A_q(\n)$
 consisting of $ \uppsi \in A_q(\n)$ such that
$ \uppsi \left( U_q^{+}(\g)_{\bbA} \right) \subset\bbA$. Then
it is an $\bbA$-subalgebra of $A_q(\n)$.
Note that Lusztig \cite{Lus90,Lus91} and Kashiwara \cite{K91} have constructed a specific $\bbA$-basis $\bfB^{{\rm up}}$ of $A_\bbA(\n)$.

\smallskip

Note that for a commutation class $\cc$,
the convex order
$\prec_\cc$ provides the \emph{dual PBW-vectors}
$$\st{\pbw(\beta)\mid\beta\in\Phi^+}$$ of
$A_q(\n)$  which satisfies the following conditions.

\begin{theorem} [\cite{BKM12}] \label{thm: minimal pair dual pbw}
The dual PBW vector $\pbw(\al_i)$ $(i\in I)$ is the element such that
$\bl f_j,\pbw(\al_i)\br_K=\delta_{i,j}$.
For a $\cc$-minimal \pr $\pair{\al,\be}$ for
$\ga \in \Phi^+ \setminus \Pi$, we have
\begin{align}\label{eq: BKMc}
 \pbw(\al)\pbw(\be) - q^{-(\al,\be)}\pbw(\be)\pbw(\al)
= q^{-p_{\be,\al}}(1-q^{2(p_{\be,\al}-(\al,\be))} )\pbw(\ga).
\end{align}
\end{theorem}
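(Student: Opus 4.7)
The plan is to establish the identity by reducing to a rank-two computation, exploiting the fact that a $\cc$-minimal pair realizes adjacency inside some reduced expression in $\cc$.

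First I would handle the simple root normalization. Since $A_q(\n)_{-\al_i}$ is one-dimensional and $(\cdot,\cdot)_K$ restricts to a non-degenerate pairing $U_q^+(\g)_{\al_i} \times A_q(\n)_{-\al_i} \to \Q(q)$, the condition $(f_j,\pbw(\al_i))_K=\delta_{i,j}$ uniquely determines $\pbw(\al_i)$. This also matches the standard dual PBW construction: for any $\uw_0 \in \cc$ with $\be^{\uw_0}_1=\al_i$, the element $\pbw(\al_i)$ is the dual of $f_i$ under $(\cdot,\cdot)_K$, independent of the chosen reduced word.

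Second, for a $\cc$-minimal pair $\pair{\al,\be}$, I would use the combinatorics of commutation classes to produce a reduced expression $\uw_0 = s_{i_1}\cdots s_{i_\ell}\in\cc$ such that $\al$ and $\be$ are consecutive, say $\be^{\uw_0}_k=\al$ and $\be^{\uw_0}_{k+1}=\be$. Minimality means no positive root strictly lies between $\al$ and $\be$ in the convex order $\prec_\cc$, so Theorem~\ref{thm: OS17}~\eqref{it: comp reading} produces such a $\uw_0$. Then, using Lusztig's braid symmetries, I would apply $T_{i_{k-1}}^{-1}\cdots T_{i_1}^{-1}$ to both sides of~\eqref{eq: BKMc}; since these operators are algebra automorphisms preserving $(\cdot,\cdot)_K$ up to the standard normalization, and since dual PBW vectors transform to simple-root dual PBW vectors of a smaller (rank-two) quantum subalgebra, the general statement collapses to the rank-two case generated by the two simple roots $\al_{i_k}$ and $\al_{i_{k+1}}$.

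Third, I would verify the identity inside each possible rank-two root subsystem. The minimal-pair condition combined with $\al+\be\in\Phi^+$ rules out $A_1\times A_1$, so only $A_2$, $B_2$, and $G_2$ occur. In each case one expands $\pbw(\al)\pbw(\be)$ and $\pbw(\be)\pbw(\al)$ in the finite dual PBW basis using the quantum Serre relations and the definition of $(\cdot,\cdot)_K$, then extracts the coefficient in front of $\pbw(\ga)$. The exponents $p_{\be,\al}$ and $(\al,\be)$ come out exactly as in~\eqref{eq: palbe}, reproducing the right-hand side $q^{-p_{\be,\al}}(1-q^{2(p_{\be,\al}-(\al,\be))})\pbw(\ga)$.

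The main obstacle will be the rank-two bookkeeping, particularly in type $G_2$ where $p_{\be,\al}$ can be as large as $2$ and several intermediate root vectors a priori enter the commutator. A cleaner route that bypasses much of the case analysis is the quantum shuffle realization of $A_q(\n)$ due to Leclerc and Lalonde--Ram: dual PBW vectors become explicit shuffle elements, and the bracket identity reduces to a direct shuffle computation whose only non-vanishing term corresponds to $\ga$ precisely because $\pair{\al,\be}$ is minimal. Either route, the key structural input is minimality, which guarantees that all potential higher correction terms $\pbw(\ga')$ with $\al\prec_\cc\ga'\prec_\cc\be$ are absent.
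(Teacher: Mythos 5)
The paper offers no proof of this statement to compare against: it is quoted directly from \cite{BKM12}, where it is obtained by categorical means --- it is precisely the image in the Grothendieck group of the short exact sequences relating $S_\cc(\ga)$, $S_\cc(\be)\conv S_\cc(\al)$ and $S_\cc(\al)\conv S_\cc(\be)$ for a $\cc$-minimal pair (compare \eqref{eq: 2 sess} and Proposition~\ref{prop: minimal,simple} later in this paper, which decategorify to exactly \eqref{eq: BKMc} via the isomorphism $\Upomega$). So your proposal has to stand on its own, and it has a genuine gap at its central step.

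The step that fails is the claim that a $\cc$-minimal pair $\pair{\al,\be}$ can be made \emph{consecutive} in some reduced expression $\uw_0\in\cc$, which is what your entire reduction to a rank-two computation rests on. By convexity of $<_{\uw_0}$, whenever $\al<_{\uw_0}\be$ and $\ga=\al+\be\in\Phi^+$ one has $\al<_{\uw_0}\ga<_{\uw_0}\be$; since the hypothesis of the theorem is exactly that $\ga\in\Phi^+\setminus\Pi$, the root $\ga$ separates $\al$ from $\be$ in \emph{every} reduced expression of the class, so they are never adjacent. Minimality of $\pair{\al,\be}$ is a statement about covers of $\ga$ in the bi-lexicographic order $\prec^\ttb_\cc$ on exponents, not about adjacency of $\al$ and $\be$ in the convex order, so Theorem~\ref{thm: OS17}~\eqref{it: comp reading} does not produce the word you need, and the braid-operator transport to an $A_2$, $B_2$ or $G_2$ subsystem never gets off the ground. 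The workable skeleton is the one you gesture at in your last sentence, but it must be stated correctly: a Levendorskii--Soibelman-type convexity argument shows the commutator $\pbw(\al)\pbw(\be)-q^{-(\al,\be)}\pbw(\be)\pbw(\al)$ is a combination of ordered monomials in root vectors $\pbw(\ga')$ with $\al\prec_\cc\ga'\prec_\cc\be$; minimality then forces the only surviving term to be the single root vector $\pbw(\ga)$ --- it is not ``absent,'' it is the entire right-hand side --- and the scalar $q^{-p_{\be,\al}}\bigl(1-q^{2(p_{\be,\al}-(\al,\be))}\bigr)$ still has to be pinned down, which \cite{BKM12} do through the extension theory of cuspidal modules over the quiver Hecke algebra rather than by a shuffle computation.
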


For a dominant weight $\la \in \wl^+$, let $V(\la)$ be the
irreducible highest weight $U_q(\g)$-module with highest weight
vector $u_\la$ of weight $\la$. Let $( \ , \ )_\la$ be the
non-degenerate symmetric bilinear form on $V(\la)$ such that
$(u_\la,u_\la)_\la=1$ and $(xu,v)_\la=(u,\upvarphi(x)v)_\la$ for $u,v
\in V(\la)$ and $x\in U_q(\g)$, where $\upvarphi$ is  the algebra
antiautomorphism on $U_q(\g)$  defined by $\upvarphi(e_i)=f_i$,
$\upvarphi(f_i)=e_i$  and $\upvarphi(q^h)=q^h$. For each $\mu, \zeta \in
\sfW_\g \la$, the \emph{unipotent quantum minor} $D(\mu, \zeta)$ is
an element in $A_q(\n)$ given by
 $D(\mu,\zeta)(x)=(x u_\mu, u_\zeta)_\la$
for $x\in U_q^+(\g)$, where $u_\mu$ and $u_{\zeta}$ are the
extremal weight vectors in $V(\la)$ of weight $\mu$ and $\zeta$,
respectively. Then we have $D(\mu,\zeta)\in \mathbf B^{{\rm up}}
\sqcup\{0\}$.
  Note that $D(\mu,\zeta)\in  \mathbf B^{{\rm up}}$ if and only if
$\mu\preceq \zeta$.
Here $\mu\preceq \zeta$ if there are $w,w'\in \sfW_\g$
such that $w'\preceq w$ and $\mu=w\la$, $\zeta=w'\la$.

\smallskip

For a reduced expression $\uw_0=s_{i_1}s_{i_2}\cdots s_{i_\ell}$ of the longest element
$w_0 \in \sfW_\g$, define
$\uw_{\le k}\seteq s_{i_1}\cdots s_{i_k}$ and
$ \la_k  \seteq \uw_{\le k}\varpi_{i_k}$ for $1 \le k \le l$.
Note that $\la_{k^-} =\uw_{\le k-1}\varpi_{i_k}$ if $k^- >0$. Here
$$k^-\seteq \max \left( \{ 1 \le s < k  \mid
i_s=i_k\}\sqcup \{0\} \right).
$$
For $0 \le t \le s \le \ell$, we set
$$
D_{\uw_0}(s,t) \seteq \begin{cases}
D( \uw_{\le s}\varpi_{i_s},\ \uw_{\le t}\varpi_{i_t}) & \text{ if } 1 \le  t \le s\le \ell \text{ and } i_s=i_t, \\
D( \uw_{\le s}\varpi_{i_s},\ \varpi_{i_s}) & \text{ if } 0 = t < s \le \ell, \\
\mathbf{1}  & \text{ if } 0 = t =s.
\end{cases}
$$
Then $D_{\uw_0}(s,t)$ belongs to $\mathbf B^{{\rm up}}$ and
\begin{align} \label{eq: D[s]}
  \pbw[{[\uw_0]}](\be^{\uw_0}_s) = D_{\uw_0}(s,s^-)  \qquad \text{ for } \ 1\le s \le \ell.
\end{align}

\subsection{Quiver Hecke algebras} Let $\bfk$ be a field. For $i,j \in I$, we choose polynomials $\calQ_{i,j}(u,v) \in \bfk[u,v]$ such that  $\calQ_{i,j}(u,v)=\calQ_{j,i}(v,u)$, which is of the form
$$
\calQ_{i,j}(u,v) = \bc
\sum_{p(\al_i,\al_i)+q(\al_j,\al_j)=-2(\al_i,\al_j)} t_{i,j;p,q}u^pv^p & \text{ if } i \ne j, \\
0  & \text{ if }  i =j,
\ec
$$
where $ t_{i,j;-c_{i,j},0} \in \bfk^\times$

For $\be \in \rl^+$ with $\het(\be)=m$, we set
$$ I^\be \seteq \Bigl\{ \nu =( \nu_1,\ldots, \nu_m) \in I^m \ \bigm| \ \sum_{k=1}^m \al_{ \nu_k} =\be   \Bigr\},$$
on which the symmetric group $\mathfrak{S}_m = \lan r_k \mid k =1,\ldots,n-1 \ran$ acts by place permutations.

\begin{definition}
For $\be \in \rl^+$, the \emph{quiver Hecke algebra} $R(\be)$ associated with $\cm$ and $(\calQ_{i,j}(u,v))_{i,j \in I}$ is the $\bfk$-algebra generated by
$$  \{ e( \nu) \ | \  \nu \in I^\be \}, \ \{ x_k \ | \ 1 \le  k \le m \}, \ \{ \tau_l \ | \ 1 \le l <m \} $$
satisfying the following defining relations:
\begin{align*}
& e( \nu)e( \nu') =  \delta_{\nu,\nu'}, \ \sum_{ \nu \in I^\beta} e( \nu)=1, \ x_ke(\nu)=e(\nu)x_k, x_kx_l=x_lx_k , \allowdisplaybreaks\\
&\tau_l e( \nu) = e\bl r_l( \nu)\br\tau_l, \ \tau_k\tau_l=\tau_l\tau_k \ \text{ if } |k-l|>1, \ \tau_k^2e( \nu) = \calQ_{ \nu_k, \nu_{k+1}}(x_k,x_{k+1}) e( \nu), \allowdisplaybreaks\\
& (\tau_kx_l-x_{r_k(l)}\tau_k)e( \nu) = \bc
-e( \nu) & \text{ if } l =k \text{ and }  \nu_k= \nu_{k+1}, \\
e( \nu) & \text{ if } l =k+1 \text{ and }  \nu_k= \nu_{k+1}, \\
0 & \text{ otherwise},
\ec \allowdisplaybreaks \\
& (\tau_{k+1}\tau_k\tau_{k+1}-\tau_k\tau_{k+1}\tau_k)e( \nu) = \bc
\overline{\calQ}_{ \nu_k, \nu_{k+1}}(x_k,x_{k+1},x_{k+2})e( \nu) & \text{ if }  \nu_k= \nu_{k+2}, \\
0 & \text{ otherwise},
\ec
\end{align*}
where
$$
\overline{\calQ}_{i,j}(u,v,w) \seteq \dfrac{\calQ_{i,j}(u,v)-\calQ_{i,j}(w,v)}{u-w} \in \bfk[u,v,w].
$$
\end{definition}
The algebra $R(\be)$ has the $\Z$-grading defined by
$$  \deg(e( \nu))=0, \quad  \deg(x_ke( \eta))=(\al_{ \eta_k},\al_{ \eta_k}), \quad \deg(\tau_le( \eta))=-(\al_{ \eta_l},\al_{ \eta_{l+1}}).$$

We denote by $R(\beta) \gmod$  the category of graded finite-dimensional $R(\beta)$-modules with degree preserving homomorphisms..
We set
$R\gmod \seteq \bigoplus_{\beta \in \rl^+} R(\beta)\gmod$.
The trivial $R(0)$-module of degree 0 is denoted by $\mathbf{1}$.
For simplicity, we write ``a module" instead of ``a graded module''.
We define the grading shift functor $q$
by $(q M)_k = M_{k-1}$ for a $ \Z$-graded module $M = \bigoplus_{k \in \Z} M_k $.
For $M, N \in R(\beta)\gmod $, $\Hom_{R(\beta)}(M,N)$ denotes the space of degree preserving module homomorphisms.
We define
\[
\HOM_{R(\beta)}( M,N ) \seteq \bigoplus_{k \in \Z} \Hom_{R(\beta)}(q^{k}M, N),
\]
and set $ \deg(f) \seteq k$ for $f \in \Hom_{R(\beta)}(q^{k}M, N)$.
We sometimes write $R$ for $R(\beta)$ in $\HOM_{R(\beta)}( M,N )$ for simplicity.

For $M \in R(\beta)\gmod$, we set $M^\star \seteq  \HOM_{R(\be)}(M, R(\be))$ with the $R(\beta)$-action given by
$$
(r \cdot f) (u) \seteq  f(\psi(r)u), \quad \text{for  $f\in M^\star$, $r \in R(\beta)$ and $u\in M$,}
$$
where $\psi$ is the antiautomorphism of $R(\beta)$ which fixes the generators.
We say that $M$ is \emph{self-dual} if $M \simeq M^\star$. For an $R(\beta)$-module $M$, we set $\wt(M) \seteq -\be$.

\smallskip
\emph{We sometimes ignore grading shifts in the rest of this paper.}

\smallskip
Let
$
e(\beta, \beta') \seteq \sum_{\nu \in I^\beta, \nu' \in I^{\beta'}} e(\nu, \nu'),
$
where $e(\nu, \nu')$ is the idempotent corresponding to the concatenation
$\nu\ast\nu'$ of
$\nu$ and $\nu'$.
Then there is an injective ring homomorphism
$$R(\beta)\otimes R(\beta')\to e(\beta,\beta')R(\beta+\beta')e(\beta,\beta')$$
given by
$e(\nu)\otimes e(\nu')\mapsto e(\nu,\nu')$,
$x_ke(\beta)\otimes 1\mapsto x_ke(\beta,\beta')$,
$1\otimes x_ke(\beta')\mapsto x_{k+\het(\beta)}e(\beta,\beta')$,
$\tau_ke(\beta)\otimes 1\mapsto \tau_ke(\beta,\beta')$ and
$1\otimes \tau_ke(\beta')\mapsto \tau_{k+\het(\beta)}e(\beta,\beta')$.

For $R(\beta)$-modules $M$ and $N$, we set
$$
M \conv N \seteq R(\beta+\beta') e(\beta, \beta') \otimes_{R(\beta) \otimes R(\beta')} (M \otimes N).
$$

We denote by $M \hconv N$ the head of $M \conv N$ and by $M \sconv N$ the socle of $M \conv N$.
We say that simple $R$-modules $M$ and $N$ \emph{strongly commute} if $M \conv N$ is simple.  A simple $R$-module
$L$ is \emph{real} if $L$ strongly commutes with itself. Note that if $M$ and $N$ strongly commute, then $M\conv N \simeq N \conv M$ up to a grading shift.

We denote by $K(R\gmod)$ the Grothendieck ring of $R\gmod$.

\begin{theorem} [{\cite{KL09, KL11, R08}}] \label{Thm: categorification}
There exists an isomorphism of $\bbA$-bialgebras
\begin{align}\label{eq: upomega}
\Upomega \colon K(R\gmod) \isoto  A_{\bbA}(\n),
\end{align}
which preserves weights.
\end{theorem}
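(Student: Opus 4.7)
The plan is to construct $\Upomega$ via the character (or shuffle) realization and then verify that it matches the generators and relations of $A_{\bbA}(\n)$. First I would define, for each $\beta \in \rl^+$ with $\het(\beta)=m$, the restriction-to-polynomials map sending a module $M \in R(\beta)\gmod$ to the tuple $\bl\ch_\nu(M)\br_{\nu \in I^\beta}$, where $\ch_\nu(M) \seteq \dim_{q}\bl e(\nu)M\br \in \Z[q^{\pm1}]$. Passing to the Grothendieck group and using the shuffle-algebra description of $A_{\bbA}(\n)$ realized in the dual polynomial model, this gives an $\bbA$-linear map $\Upomega\cl K(R\gmod)\to A_\bbA(\n)$ respecting the weight decomposition. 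Then I would verify that $\Upomega$ is an algebra homomorphism with respect to convolution, which reduces to the Mackey-type filtration on $\Res_{\beta,\beta'}\bl M\conv N\br$ and the compatibility of the Hecke-algebra multiplication with the quantum shuffle product.

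Next I would show compatibility with the coproducts. The restriction functor $\Res_{\beta,\beta'}\cl R(\beta+\beta')\gmod\to R(\beta)\otimes R(\beta')\gmod$ induces a coproduct on $K(R\gmod)$, and one checks it agrees with $\Delta_\n^\vee$ on the dual side; together with the ring homomorphism property this upgrades $\Upomega$ to a morphism of $\bbA$-bialgebras.

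For surjectivity, I would note that the one-dimensional simple $L(i)\in R(\al_i)\gmod$ $(i\in I)$ maps under $\Upomega$ to the dual generator of $A_\bbA(\n)_{-\al_i}$. Since $K(R\gmod)$ is generated as an algebra by the $[L(i)]$ (using convolution and divided-power modules $L(i^{(n)})\seteq L(i)^{\circ n}/$head to match $e_i^{(n)}$ on the dual side), and since the quantum Serre relations hold in $K(R\gmod)$ by an explicit computation with $R$-matrices (or the Khovanov-Lauda computation of $\Hom$'s between $L(i)^{\circ(1-\sfc_{i,j})}\conv L(j)$ and summands), one obtains surjectivity onto $A_\bbA(\n)$. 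Injectivity follows because the character map $M\mapsto\bl\ch_\nu(M)\br$ is faithful on $K(R\gmod)$ (it detects the composition series via the highest-weight structure of the PBW stratification on $R(\beta)\gmod$), and on the $A_\bbA(\n)$ side the corresponding shuffle embedding is injective.

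The main obstacle is the verification of the quantum Serre relations inside $K(R\gmod)$, i.e.\ showing
\[\sum_{k=0}^{1-\sfc_{i,j}}(-1)^k [L(i)^{\circ(1-\sfc_{i,j}-k)}\conv L(j) \conv L(i)^{\circ k}] q_i^{\mathrm{shift}}=0,\]
which requires an explicit analysis (à la Khovanov-Lauda, Rouquier) of the interactions of the intertwiners $\tau_k$ with the polynomials $\calQ_{i,j}(u,v)$. All remaining ingredients---the Mackey filtration, the faithfulness of the character map, and the identification of the image of $[L(i)]$---are routine once the framework above is set up.
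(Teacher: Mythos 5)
The paper does not prove this statement: Theorem~\ref{Thm: categorification} is quoted from \cite{KL09, KL11, R08} and used as a black box, so there is no internal proof to compare your argument against. Judged against the cited sources, your outline is essentially the Khovanov--Lauda argument: the character map $M\mapsto\bl\ch_\nu(M)\br_\nu$ into the quantum shuffle model, the Mackey filtration for multiplicativity, restriction for the coproduct, the quantum Serre relations for surjectivity, and faithfulness of characters for injectivity. That is the right skeleton.

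Two points in your sketch are looser than they should be. First, the cited papers actually establish the \emph{projective} version $K(R\proj)\simeq U^-_{\bbA}(\g)$ (the integral form with divided powers), and the statement for $K(R\gmod)$ and $A_{\bbA}(\n)$ is then obtained by duality of the two Grothendieck groups under the Euler pairing. Your route works directly with $K(R\gmod)$ and asserts it is generated as an $\bbA$-algebra by the classes $[L(i)]$; since $A_{\bbA}(\n)$ is by definition the $\bbA$-dual of the divided-power form $U^+_q(\g)_{\bbA}$, generation in degree one is not automatic and is precisely the place where the duality with the projective side (or an explicit analysis of the divided-power simples $L(i^{(n)})$, which you only gesture at) must enter. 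Second, the faithfulness of the character map on $K(R\gmod)$ is itself a nontrivial theorem of Khovanov--Lauda (linear independence of characters of simples), not a routine consequence of a highest-weight stratification; as stated, your injectivity step presupposes what it should prove. Neither issue is fatal --- both are resolved in the cited literature --- but a self-contained proof would have to supply them.
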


\begin{proposition}[{\cite[Proposition 4.1]{KKOP18}}] \label{prop: cat of D}
For $\varpi \in\wl^+$ and $\mu,\zeta \in W\varpi$ with $\mu\preceq \zeta$, there exists a self-dual real simple $R$-module $\sfM(\mu,\zeta)$ such that
$$  \Upomega([\sfM(\mu,\zeta)]) = D(\mu,\zeta).$$
\end{proposition}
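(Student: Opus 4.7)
The plan is to first obtain the existence of $\sfM(\mu,\zeta)$ as a self-dual simple $R$-module via the categorification theorem, and then to upgrade it to a \emph{real} simple module by exploiting the fact that unipotent quantum minors satisfy a ``quadratic'' identity in $A_q(\n)$.

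For the existence part, I would invoke Theorem~\ref{Thm: categorification}, which provides the isomorphism $\Upomega\cl K(R\gmod) \isoto A_\bbA(\n)$. Under this isomorphism, self-dual simple $R$-modules form a distinguished $\Z$-basis of $K(R\gmod)$ that matches the upper global basis $\bfB^{\rm up}$ of $A_\bbA(\n)$ (this was established, in finite type, by Varagnolo--Vasserot and Rouquier in the symmetric case and extended by Kleshchev--Ram--McNamara through cuspidal theory to the symmetrizable case). Since the hypothesis $\mu\preceq\zeta$ forces $D(\mu,\zeta)\in\bfB^{\rm up}$, there exists a self-dual simple module $\sfM(\mu,\zeta)$, unique up to isomorphism, with $\Upomega\bl[\sfM(\mu,\zeta)]\br = D(\mu,\zeta)$.

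The remaining -- and main -- point is to establish that $\sfM(\mu,\zeta)$ is real. For this, the key ingredient is the quadratic identity
\[
D(\mu,\zeta)\cdot D(\mu,\zeta) = q^{c}\,D(2\mu,2\zeta)\qt[for some $c\in\Z$,]
\]
which follows from the Plücker-type identities for quantum minors: on the representation side, $u_\mu\tens u_\mu$ generates an embedding of $V(2\varpi)$ into $V(\varpi)^{\tens 2}$, and this embedding translates into the above multiplicative relation on $A_q(\n)$ after unwinding the pairing between $A_q(\n)$ and $U_q^+(\g)$. Note that $D(2\mu,2\zeta)\in\bfB^{\rm up}$ since $2\mu\preceq 2\zeta$, so the right-hand side is a single $q$-shift of a basis element.

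Applying $\Upomega^{-1}$, the identity becomes $[\sfM(\mu,\zeta)]^2 = q^c\,[\sfM(2\mu,2\zeta)]$ in $K(R\gmod)$. Since every Jordan--Hölder factor of $\sfM(\mu,\zeta)\conv \sfM(\mu,\zeta)$ is a grading shift of some self-dual simple module, and since distinct grading shifts are $\Z[q^{\pm 1}]$-linearly independent in $K(R\gmod)$, the class on the left must be a single monomial $q^c$ times the class of $\sfM(2\mu,2\zeta)$; this forces $\sfM(\mu,\zeta)\conv \sfM(\mu,\zeta)$ to be simple and isomorphic to $q^c\,\sfM(2\mu,2\zeta)$. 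Hence $\sfM(\mu,\zeta)$ is real. The hard part of the argument is, unsurprisingly, the quadratic identity: while it is classical for commutative minors, establishing it in $A_q(\n)$ uniformly requires either a careful computation with a compatible reduced expression $\uw_0$ so that $D(\mu,\zeta)=D_{\uw_0}(s,t)$ and Theorem~\ref{thm: minimal pair dual pbw} can be iterated, or an appeal to the general $T$-system/mutation framework of Geiss--Leclerc--Schröer and Kashiwara for quantum minors.
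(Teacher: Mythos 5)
The paper does not prove this proposition; it imports it verbatim from \cite[Proposition 4.1]{KKOP18}, so the relevant comparison is with the argument given there. Your proposal has a genuine gap at its very first step, and it is exactly the step that matters for this paper. You obtain the existence of a self-dual simple module $\sfM(\mu,\zeta)$ with $\Upomega([\sfM(\mu,\zeta)])=D(\mu,\zeta)$ by invoking the correspondence between self-dual simple $R$-modules and the upper global basis $\bfB^{\rm up}$. That correspondence is a theorem of Varagnolo--Vasserot and Rouquier only when the Cartan matrix is \emph{symmetric} and $\bfk$ has characteristic $0$ (the paper states this restriction explicitly in the introduction), and it is false, or at least completely unavailable, in the symmetrizable non-symmetric case. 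The cuspidal theory of Kleshchev--Ram and McNamara classifies simple modules by root partitions but does \emph{not} identify them with the dual canonical basis, so your attribution of an ``extension to the symmetrizable case'' is not correct. Since the present paper needs the proposition precisely for types $B_n$, $C_n$, $F_4$, $G_2$ (and over an arbitrary field $\bfk$), your existence argument does not get off the ground in the cases that matter. Knowing that $D(\mu,\zeta)\in\bfB^{\rm up}$ tells you nothing, in those types, about whether some simple module has this class.

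By contrast, the proof in \cite{KKOP18} (building on the determinantial-module machinery of \cite{KKKO18}) constructs $\sfM(\mu,\zeta)$ \emph{explicitly and inductively}: one starts from the trivial module and applies operations of the form $M\mapsto \hd\bl L(i^{a})\conv M\br$ along a reduced expression, checks by a character computation that the class is $D(\mu,\zeta)$, and proves realness directly from computations of the invariants $\La$ and $\de$ (normal sequences, behaviour of $\La$ under these head operations), all of which work uniformly for symmetrizable Cartan data. I would add that the second half of your argument is essentially sound as a deduction: the multiplicative identity $D(\mu,\zeta)^2=q^{c}D(2\mu,2\zeta)$ for unipotent quantum minors is true (it is the standard product formula for generalized minors, applied with the same Weyl group elements and the weight $2\varpi$), and once one knows that $q^{c}D(2\mu,2\zeta)$ is the class of a single self-dual simple module, the positivity of composition multiplicities does force $\sfM(\mu,\zeta)\conv\sfM(\mu,\zeta)$ to be simple. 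But that deduction again presupposes the existence statement for the weight $2\varpi$, so it inherits the same gap; as written, your proof establishes the proposition only in simply-laced type over a field of characteristic $0$, which is strictly weaker than what the paper uses.
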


We call $\sfM(\mu,\zeta)$ the \emph{determinantial module} associated to $D(\mu,\zeta)$. Note that every
determinantial module is real.

\subsection{Affinization and $\rmR$-matrices} In this subsection, we recall the notion of affinization and $\rmR$-matrices for quiver Hecke algebras
which were mainly investigated in~\cite{KKK18A,KKKO18,KP18}.

\smallskip

For $\be \in \rl^+$ with $\het(\be)=m$ and $i \in I$, let
\begin{align} \label{eq: pibe}
\pibe = \sum_{ \eta\,  \in I^\be} \left( \prod_{a \in [1,m]; \  \eta_a  = i} x_a\right)e(\eta) \in R(\be),
\end{align}
which belongs to the center $Z(R(\be))$ of $R(\be)$. 

\begin{definition} \label{Def: aff}
Let $M$ be a simple $R(\beta)$-module. An \emph{affinization} of $M$ with degree $t_{\Ma}$ is an $R(\beta)$-module $\Ma$ with an endomorphism $z_{\Ma}$ of $\Ma$ with degree $t_{\Ma} \in \Z_{>0}$
and an isomorphism $\Ma / z_{\Ma} \Ma \simeq M$ such that
\bnum
\item $\Ma$ is a finitely generated free module over the polynomial ring
  $\cor[z_{\Ma}]$,
\item $\pibe \Ma \ne 0$ for all $i\in I$.
\ee
\end{definition}
Note that every affinization is \emph{essentially even}, i.e., $t_{\Ma} \in 2\Z_{>0}$ (\cite[Proposition 2.5]{KP18}).
Thus, from now on, we assume that every affinization has an even degree.

\begin{definition}\label{def: affreal}
We say that a simple $R$-module $M$ is \emph{affreal} if $M$ is real and admits an affinization.
If an affinization has degree $t \in 2\Z_{>0}$, we say that $M$ is \emph{$t$-affreal}.
\end{definition}

\begin{theorem} [{\cite[Theorem 3.26]{KKOP19A}}]
For $\varpi \in\wl^+$ and $\mu,\zeta \in W\varpi$ with $\mu\preceq \zeta$, the determinantial module $\sfM(\mu,\zeta)$
admits an affinization $\Ma(\mu,\zeta)$. When $\varpi=\varpi_i$, $\sfM(\mu,\zeta)$ admits an affinization of degree $(\al_i,\al_i)=2\sfd_i$.
\end{theorem}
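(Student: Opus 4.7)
The plan is to construct $\Ma(\mu,\zeta)$ explicitly from a realization of $\sfM(\mu,\zeta)$ as the head of a PBW-type convolution of cuspidal modules, and to take as polynomial generator $z_{\Ma}$ one of the generators $x_k$ of the quiver Hecke algebra acting on a distinguished vertex. By Proposition~\ref{prop: cat of D} together with~\eqref{eq: D[s]}, the module $\sfM(\mu,\zeta)$ is categorified by a unipotent quantum minor $D_{\uw_0}(s,s^-)$ which equals a dual PBW-vector $\pbw[{[\uw_0]}](\be^{\uw_0}_s)$, and such dual PBW vectors arise naturally as heads of convolutions of cuspidal modules. This structural input drives the construction.

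First I would treat the fundamental case $\varpi=\varpi_i$. Pick a reduced expression $\uw_0=s_{i_1}\cdots s_{i_\ell}$ of $w_0$ with $\zeta=\uw_{\le s}\varpi_i$ and $\mu=\uw_{\le t}\varpi_i$ for $t \le s$ with $i_t = i_s = i$, and realize $\sfM(\mu,\zeta)$ as the head of the convolution $S_{[\uw_0]}(\be^{\uw_0}_{t+1})\conv\cdots\conv S_{[\uw_0]}(\be^{\uw_0}_s)$. Define $\Ma(\mu,\zeta)$ as the ``unquotiented'' lift in which the distinguished polynomial generator $x_1 e(\nu)$ (with $\nu_1=i$) is kept as a nonzero endomorphism $z_{\Ma}$ of graded degree $(\al_i,\al_i)=2\sfd_i$. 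One must then verify the three conditions of Definition~\ref{Def: aff}: the isomorphism $\Ma/z_{\Ma}\Ma\simeq \sfM(\mu,\zeta)$ is built into the construction; freeness of $\Ma$ over $\bfk[z_{\Ma}]$ follows by induction on the cuspidal filtration together with exactness of convolution against a fixed affreal factor; and the nonvanishing $\pibe\Ma\neq 0$ for every $j \in I$ follows from the nondegeneracy of the pairing detecting the minor $D(\mu,\zeta)$. For general $\varpi=\sum_j m_j\varpi_j$, I would reduce to the fundamental case: using the multiplicative structure of $A_q(\n)$ and upper-global-basis triangularity, one writes $D(\mu,\zeta)$ as the leading term of a product of fundamental-weight minors $D(\mu_j,\zeta_j)$, which categorifies to realize $\sfM(\mu,\zeta)$ as the head of a convolution of fundamental-weight determinantial modules; the affinization structures on the factors then descend to $\sfM(\mu,\zeta)$.

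The hard part will be condition (i) of Definition~\ref{Def: aff}, namely that $\Ma$ is a finitely generated \emph{free} $\bfk[z_{\Ma}]$-module. Natural quotient constructions typically yield $z_{\Ma}$-torsion or cokernel of non-constant rank; to eliminate these pathologies one must analyze the action of the polynomial and intertwining generators on a PBW-type basis of the convolution product, which in turn is tightly linked to controlling denominators of the normalized R-matrices between the affinized cuspidal factors. A second delicate point is to show that the affinization degree in the fundamental case is \emph{exactly} $2\sfd_i$: one needs to exhibit a highest-weight-type vector of residue $i$ on which $x_1$ acts nontrivially, and to rule out lower-degree candidates via the structure of the weight space.
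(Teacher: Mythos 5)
First, note that this statement is quoted from \cite[Theorem 3.26]{KKOP19A}; the present paper gives no proof of it, so there is nothing internal to compare against --- the evaluation below is of your sketch on its own terms and against the argument in the cited reference.

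Your construction has a genuine gap at its very first step: you propose to take $z_{\Ma}$ to be ``the distinguished polynomial generator $x_1 e(\nu)$ (with $\nu_1=i$)''. But $x_1e(\nu)$ is not a central element of $R(\be)$, hence does not define an $R(\be)$-module endomorphism of anything; Definition~\ref{Def: aff} requires $z_{\Ma}$ to be an endomorphism of the module $\Ma$. The only obvious sources of endomorphisms are central elements such as $\pibe$ in \eqref{eq: pibe}, and $\pibe$ has degree $2\sfd_i\cdot\supp_i(\be)$, not $2\sfd_i$; for $\be=\zeta-\mu$ with $\supp_i(\be)>1$ no central element of degree $2\sfd_i$ is available, so producing an affinization of degree exactly $(\al_i,\al_i)$ cannot be done by ``keeping a polynomial generator alive''. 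This is precisely why the existence of affinizations is a theorem rather than a formality in non-symmetric types. The second problem is that you correctly identify freeness over $\bfk[z_{\Ma}]$ as the hard point and then do not supply an argument: ``induction on the cuspidal filtration together with exactness of convolution'' does not apply, because $\sfM(\mu,\zeta)$ is the \emph{head} (a quotient) of the cuspidal convolution, and quotients of free $\bfk[z]$-modules are not free; one must lift the quotient map itself to the affinized level, which is exactly where the work lies.

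For comparison, the proof in \cite{KKOP19A} does not affinize a cuspidal convolution presentation at all. It proceeds by induction on $\ell(w)-\ell(v)$ for $\mu=w\varpi$, $\zeta=v\varpi$, starting from the base case $\sfM(s_i\varpi,\varpi)\simeq L(i^n)$ (whose affinization over the nilHecke algebra is explicit, with $z$ of degree $(\al_i,\al_i)$), and then propagates affinizations through the recursive description of determinantial modules as heads/socles of convolutions, using R-matrices with spectral parameter $\Rnorm_{\Ma,N}$ to realize the simple head of $M\conv N$ as the specialization at $z=0$ of the image of a $\bfk[z]$-linear map between free modules; freeness of that image is what is actually proved. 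Your reduction of general $\varpi$ to the fundamental case via the strong commutativity of the $\sfM(w\varpi_j,v\varpi_j)$ is sound and is consistent with the reference, but the fundamental case --- which carries the degree assertion --- is not established by your argument.
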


Let $\beta \in \rl^+$ with $m =  \het(\beta)$. For  $k=1, \ldots, m-1$ and $ \nu  \in I^\beta$, the \emph{intertwiner} $\varphi_k$ is defined by
$$
\varphi_k e(\nu) =
\bc
 (\tau_k x_k - x_k \tau_k) e(\nu)
= (x_{k+1}\tau_k - \tau_kx_{k+1}) e(\nu) & \text{ if } \nu_k = \nu_{k+1},  \\
 \tau_k e(\nu) & \text{ otherwise.}
\ec
$$

\begin{lemma} [{\cite[Lemma 1.5]{KKK18A}}] \label{Lem: intertwiners}
Let $\sym_m$ be the symmetric group of degree $m\in\Z_{>0}$.
\begin{enumerate}[\rm (i)]
\item $\varphi_k^2 e( \nu ) = ( Q_{\nu_k, \nu_{k+1}} (x_k, x_{k+1} )+ \delta_{\nu_k, \nu_{k+1}} )\, e(\nu)$.
\item $\{  \varphi_k \}_{k=1, \ldots, m-1}$ satisfies the braid relation.
\item For a reduced expression $w = s_{i_1} \cdots s_{i_t} \in \mathfrak{S}_m$, we set $\varphi_w \seteq  \varphi_{i_1} \cdots \varphi_{i_t} $. Then
$\varphi_w$ does not depend on the choice of reduced expression of $w$.
\item For $w \in \mathfrak{S}_m$ and $1 \le k \le m$, we have $\varphi_w x_k = x_{w(k)} \varphi_w$.
\item For $w \in \mathfrak{S}_m$ and $1 \le k < m$, if $w(k+1)=w(k)+1$, then $\varphi_w \tau_k = \tau_{w(k)} \varphi_w$.
\end{enumerate}
\end{lemma}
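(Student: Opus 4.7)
The plan is to verify each of the five claims by reducing to the defining relations of $R(\be)$, handling the two cases $\nu_k=\nu_{k+1}$ and $\nu_k\neq\nu_{k+1}$ of the definition of $\varphi_k$ separately.

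For (i), I would simply compute. When $\nu_k\neq \nu_{k+1}$, we have $\varphi_ke(\nu)=\tau_ke(\nu)$, so $\varphi_k^2e(\nu)=\tau_k^2e(\nu)=\calQ_{\nu_k,\nu_{k+1}}(x_k,x_{k+1})e(\nu)$ and the Kronecker delta term vanishes. When $\nu_k=\nu_{k+1}$, $\calQ_{\nu_k,\nu_{k+1}}=0$, and using $\varphi_ke(\nu)=(\tau_kx_k-x_k\tau_k)e(\nu)$ together with the commutation relation $(\tau_kx_k-x_{k+1}\tau_k)e(\nu)=-e(\nu)$ (which is exactly a defining relation when $\nu_k=\nu_{k+1}$), a short direct expansion gives $\varphi_k^2e(\nu)=e(\nu)$.

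For (ii), I would verify the braid relation $\varphi_{k+1}\varphi_k\varphi_{k+1}e(\nu)=\varphi_k\varphi_{k+1}\varphi_ke(\nu)$ and the commutation $\varphi_k\varphi_l=\varphi_l\varphi_k$ for $|k-l|>1$ by case analysis on the pattern of equalities among $\nu_k,\nu_{k+1},\nu_{k+2}$. The far-commutation case is immediate. For the braid case the key identities are the defining relation
\eqn
(\tau_{k+1}\tau_k\tau_{k+1}-\tau_k\tau_{k+1}\tau_k)e(\nu)=\delta_{\nu_k,\nu_{k+2}}\ov{\calQ}_{\nu_k,\nu_{k+1}}(x_k,x_{k+1},x_{k+2})e(\nu),
\eneqn
together with the identity $(\tau_k x_{k+1}-x_k\tau_k)e(\nu)=\delta_{\nu_k,\nu_{k+1}}e(\nu)$; one reduces the difference $\varphi_{k+1}\varphi_k\varphi_{k+1}-\varphi_k\varphi_{k+1}\varphi_k$ to a polynomial expression in the $x_j$'s that vanishes after expansion. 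The case $\nu_k=\nu_{k+2}=\nu_{k+1}$ and the mixed cases are where the cancellations must be checked carefully; this is the main computational obstacle, but it is routine and well-documented in the literature.

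Claim (iii) is then automatic from (ii): since $\{\varphi_k\}$ satisfies the braid relations of the Coxeter presentation of $\sym_m$ (the order-two relations being replaced by (i), which is irrelevant for the reducedness argument), Matsumoto's theorem guarantees $\varphi_w$ is independent of the reduced expression. For (iv), I would induct on $\ell(w)$: the base case $w=s_k$ reduces to $\varphi_kx_le(\nu)=x_{s_k(l)}\varphi_ke(\nu)$, which splits into $l\neq k,k+1$ (handled by $\tau_k x_l=x_l\tau_k$) and $l\in\{k,k+1\}$ (handled by the two possible definitions of $\varphi_k$; a quick expansion in each subcase gives the relation). For the inductive step, write $w=s_iw'$ with $\ell(w)=\ell(w')+1$, apply the inductive hypothesis to $w'$, then apply the base case to $s_i$. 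Finally for (v), the same induction applies; the base case requires showing $\varphi_{s_i}\tau_ke(\nu)=\tau_{s_i(k)}\varphi_{s_i}e(\nu)$ under the hypothesis $s_i(k+1)=s_i(k)+1$, i.e., $|i-k|>1$ so $i\neq k,k-1,k+1$, whence $\varphi_{s_i}$ and $\tau_k$ act on disjoint consecutive pairs of strands and the relation reduces to the commutation $\tau_i\tau_k=\tau_k\tau_i$ (with $x$'s inserted if $\nu_i=\nu_{i+1}$). The inductive step then propagates the identity along any reduced expression of $w$, using (iii) to guarantee the result is independent of the chosen decomposition.
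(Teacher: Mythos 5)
Your arguments for (i)--(iv) are essentially sound: (i) follows from the quadratic relation $\tau_k^2e(\nu)=\calQ_{\nu_k,\nu_{k+1}}(x_k,x_{k+1})e(\nu)$ together with $(\tau_kx_k-x_{k+1}\tau_k)e(\nu)=-\delta_{\nu_k,\nu_{k+1}}e(\nu)$ exactly as you describe; (ii) is the standard (here unexecuted, but well-known) case check; (iii) is Matsumoto's theorem; and the two-step induction for (iv) closes correctly. Note that the paper itself offers no proof of this lemma --- it is imported verbatim from the cited source \cite[Lemma 1.5]{KKK18A} --- so there is no internal argument to compare against.

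The genuine gap is in (v). Your base case covers only $|i-k|>1$, i.e.\ the situation where $\varphi_{s_i}$ and $\tau_k$ act on disjoint pairs of strands, and your inductive step presumes that the hypothesis $w(k+1)=w(k)+1$ can be propagated letter by letter along a reduced expression. It cannot. Take $m=3$, $k=1$ and $w$ the $3$-cycle with $w(1)=2$, $w(2)=3$, so $\ell(w)=2$ and the hypothesis holds; in $\mathfrak{S}_3$ every simple reflection is $s_1$ or $s_2$, so no letter of any reduced expression of $w$ satisfies $|i-k|>1$, and neither length-one subword of $w$ satisfies the hypothesis either. The required identity $\varphi_1\varphi_2\tau_1=\tau_2\varphi_1\varphi_2$ is simply not a consequence of far-commutation. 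The standard route is different: from $w(k+1)=w(k)+1$ one checks $ws_k=s_{w(k)}w$ with $\ell(ws_k)=\ell(s_{w(k)}w)=\ell(w)+1$, so (iii) gives $\varphi_w\varphi_k=\varphi_{w(k)}\varphi_w$. On $e(\nu)$ with $\nu_k\neq\nu_{k+1}$ this is already the desired identity, since $\varphi_ke(\nu)=\tau_ke(\nu)$ and $\varphi_we(\nu)=e(\nu')\varphi_w$ with $\nu'_{w(k)}=\nu_k\neq\nu_{k+1}=\nu'_{w(k)+1}$, whence $\varphi_{w(k)}e(\nu')=\tau_{w(k)}e(\nu')$. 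For $\nu_k=\nu_{k+1}$, expanding $\varphi_k$ and using (iv) yields only $(x_{w(k)+1}-x_{w(k)})\bigl(\varphi_w\tau_k-\tau_{w(k)}\varphi_w\bigr)e(\nu)=0$, and one must then invoke the basis theorem for $R(\beta)$ (freeness as a module over the polynomial subalgebra) to cancel the non-unit factor. That cancellation, or some substitute for it, is the missing ingredient in your argument.
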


For $m,n \in \Z_{\ge 0}$, we denote by $w[m,n]$ the element of $\mathfrak{S}_{m+n}$ defined by
$$
w[m,n](k) \seteq
\left\{
\begin{array}{ll}
 k+n & \text{ if } 1 \le k \le m,  \\
 k-m & \text{ if } m < k \le m+n.
\end{array}
\right.
$$
Let $\beta, \gamma \in \rl^+$ and set $m\seteq  \het(\beta)$ and $n\seteq \het(\gamma)$.
For $M \in R(\beta)\Mod$ and $N \in R(\gamma)\Mod$, the $R(\beta)\otimes R(\gamma)$-linear map $M \otimes N \rightarrow N \conv M$ defined by $u \otimes v \mapsto \varphi_{w[n,m]} (v \otimes u)$
can be extended to an $R(\beta+\gamma)$-module homomorphism (up to a grading shift)
$$
\mathrm{R}_{M,N}\colon  M\conv N \longrightarrow N \conv M.
$$

Let $\Ma$ be an affinization of a simple $R$-module $M$, and  let $N$ be a non-zero $R$-module. We define a homomorphism (up to a grading shift)
$$
\Rnorm_{\Ma, N} \seteq  \zm^{-s} \mathrm{R}_{\Ma, N}\colon  \Ma \conv N \longrightarrow N \conv \Ma,
$$
where $s$ is the largest integer such that $\mathrm{R}_{\Ma, N}(\Ma \conv N) \subset \zm^s (N \conv \Ma)$.
We define
$$
\Rr^\lt_{M,N} \colon M \conv N \longrightarrow N \conv M
$$
to be the homomorphism (up to a grading shift) induced from $\Rnorm_{\Ma, N}$ by specializing at $\zm=0$. By the definition, $\Rr^\lt_{M,N}$ never vanishes.

Similarly, for a simple module $N$ admitting an affinization $\Na$, we can define
$$  \Rr^\rt_{M,N} \colon M \conv N \longrightarrow N \conv M.$$

\begin{proposition}%
[{\cite{KKKO15}, \cite[Proposition 3.13]{KKOP19A}}]\label{prop: l=r}
Let $M$ and $N$ be simple modules.
We assume that one of them is affreal.
Then we have
$$\HOM_R(M \conv N,N \conv M)=\cor\, \Rr,$$
where $\Rr= \Rr^\lt_{M,N}$ or $\Rr^\rt_{M,N}$.
\end{proposition}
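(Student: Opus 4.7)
The strategy is to lift any $f \in \HOM_R(M \conv N, N \conv M)$ to an affine-level homomorphism $\tilde f \colon \Ma \conv N \to N \conv \Ma$ and then use rigidity coming from the polynomial parameter $\zm$. By the symmetry in the statement I may assume $M$ is affreal with affinization $(\Ma,\zm)$ and aim to prove $\HOM_R(M\conv N, N\conv M)=\cor\,\Rr^\lt_{M,N}$; the case when $N$ is affreal is parallel, replacing $\Ma$ by an affinization $\Na$ of $N$ and $\Rnorm_{\Ma,N}$ by $\Rnorm_{M,\Na}$ to produce $\Rr^\rt_{M,N}$ instead.

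The first step is to construct the lift $\tilde f$. Since $\Ma$ is free over $\cor[\zm]$ with quotient $M$, the convolution $\Ma\conv N$ is free over $\cor[\zm]$ with quotient $M\conv N$, and $N\conv\Ma$ surjects onto $N\conv M$ with kernel $\zm\,(N\conv\Ma)$. Choosing homogeneous lifts of a $\cor$-basis of $M\conv N$ to $\Ma\conv N$, applying $f$ modulo $\zm$, and lifting the result back through $N\conv\Ma\twoheadrightarrow N\conv M$ produces a $\cor[\zm]$-linear map; $R$-linearity follows formally from the centrality of $\zm$ together with the fact that both $\Ma\conv N$ and $N\conv\Ma$ are $\cor[\zm]$-torsion-free, so any obstruction to $R$-linearity that vanishes modulo $\zm$ vanishes identically.

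The second and main step is to show that every such $\tilde f$ has the form $g(\zm)\,\Rnorm_{\Ma,N}$ for a unique $g\in\cor[\zm]$. The plan is to specialize at a generic scalar $\zm=c\in\cor$: the condition $\pibe\Ma\ne0$ built into the definition of an affinization, combined with the realness of $M$ (the part of affreality beyond admitting an affinization), ensures that for generic $c$ the specialized modules $\bigl(\Ma/(\zm-c)\Ma\bigr)\conv N$ and $N\conv\bigl(\Ma/(\zm-c)\Ma\bigr)$ are simple and isomorphic via the specialization of $\Rnorm_{\Ma,N}$. Schur's lemma at each such $c$ makes $\tilde f$ proportional to $\Rnorm_{\Ma,N}$ after every generic specialization, so their ratio is a rational function of $\zm$ that is regular at every $c$, hence a polynomial $g(\zm)\in\cor[\zm]$. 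The hard part is this generic simplicity assertion: without realness of $M$ one cannot control the composition factors of the specialization, and the $\Hom$-space at the affine level could become strictly larger than $\cor[\zm]\cdot\Rnorm_{\Ma,N}$. Once generic simplicity is established, specializing $\tilde f=g(\zm)\,\Rnorm_{\Ma,N}$ at $\zm=0$ gives $f=g(0)\,\Rr^\lt_{M,N}$, and since $\Rr^\lt_{M,N}$ is nonzero by the very definition of $\Rnorm_{\Ma,N}$ (it is by construction not divisible by $\zm$), the claimed equality $\HOM_R(M\conv N, N\conv M)=\cor\,\Rr^\lt_{M,N}$ follows.
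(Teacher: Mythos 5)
The paper does not actually prove this proposition (it is quoted from \cite{KKKO15} and \cite[Proposition 3.13]{KKOP19A}), but your argument has a genuine gap at its crux: the lifting step. Choosing $\cor[\zm]$-bases and lifting the values of $f$ arbitrarily through $N\conv\Ma\twoheadrightarrow N\conv M$ produces a $\cor[\zm]$-linear map $\tilde f$ whose failure of $R$-linearity, $\tilde f(ru)-r\tilde f(u)$, lies in $\zm\,(N\conv\Ma)$ by construction; torsion-freeness only says that multiplication by $\zm$ is injective, not that elements of $\zm\,(N\conv\Ma)$ are zero, so the obstruction does \emph{not} ``vanish identically.'' What you actually need is surjectivity of the specialization map $\HOM_{\cor[\zm]\otimes R}(\Ma\conv N,\,N\conv\Ma)\to\HOM_R(M\conv N,\,N\conv M)$, whose cokernel sits inside the $\zm$-torsion of $\Ext^1_{\cor[\zm]\otimes R}(\Ma\conv N,\,N\conv\Ma)$ and is not automatically zero. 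Without this, your steps only give the trivial inclusion $\cor\,\Rr^\lt_{M,N}\subseteq\HOM_R(M\conv N,\,N\conv M)$, whereas the entire content of the proposition is the reverse inclusion. A secondary issue is that the ``generic simplicity'' of $\bigl(\Ma/(\zm-c)\Ma\bigr)\conv N$ is asserted rather than proved, is at least as deep as the statement at hand, and specializing $\zm$ at a nonzero scalar destroys the grading, so Schur's lemma must be applied with some care there.

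The proof in the cited references avoids affinizing the $\Hom$-space altogether and follows in a few lines from the facts recorded in Proposition~\ref{prop: simple head}: $M\conv N$ has simple head, $N\conv M$ has simple socle, both are isomorphic to $L\seteq\Im(\Rr_{M,N})$, and $L$ occurs with multiplicity one in the composition series of each of $M\conv N$ and $N\conv M$. Hence for any nonzero $f\colon M\conv N\to N\conv M$, the image $X=\Im(f)$ is a nonzero quotient of $M\conv N$, so $\hd(X)\simeq L$, and a nonzero submodule of $N\conv M$, so $L\simeq\soc(N\conv M)\subseteq X$; since $[X:L]\le[N\conv M:L]=1$, these two copies of $L$ coincide and force $X\simeq L$. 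Therefore every nonzero $f$ factors as $M\conv N\twoheadrightarrow L\hookrightarrow N\conv M$ with fixed surjection and injection, and $\HOM_R(M\conv N,\,N\conv M)\simeq\END_R(L)=\cor$, spanned by $\Rr_{M,N}$. If you want to salvage your route, you must either prove the surjectivity of the specialization map directly or switch to this head/socle/multiplicity-one argument.
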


By the above proposition, we can write the homomorphism as  $\Rr_{M,N}$, called the \emph{$\rmR$-matrix},  without superscript.
Now we define
\begin{align*}
\La(M,N) &\seteq  \deg (\Rr_{M,N}) , \\
\tLa(M,N) &\seteq   \frac{1}{2} \big( \La(M,N) + (\wt(M), \wt(N)) \big) , \\
\de(M,N) &\seteq  \frac{1}{2} \big( \La(M,N) + \La(N,M)\big).
\end{align*}
for simple modules $M$, $N$ such that one of them   admits an
affinization.

\begin{proposition}%
[{\cite[Theorem 3.2]{KKKO15}, \cite[Proposition 2.10]{KP18}, \cite[Proposition 3.13, Lemma 3.17]{KKOP19A}}] \label{prop: simple head}
Let $M$ and $N$ be simple $R$-modules such that one of $M$ and $N$ is affreal.
Then we have
\bnum
\item   $M \conv N$ has simple socles and simple heads,
\item ${\rm Im}(\Rr_{M,N})$ is isomorphic to $M \hconv N$ and $N\sconv M$,
\item $M \hconv N$ and $M \sconv N$ appear once in the composition series of $M \conv N$, respectively.\label{it:once}
\ee
\end{proposition}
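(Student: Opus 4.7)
The plan is to deduce these three structural assertions from the uniqueness of R-matrices (Proposition~\ref{prop: l=r}) together with the freeness provided by the affinization. Without loss of generality I take $M$ to be $t$-affreal with affinization $(\Ma, z_\Ma)$, so that $\Ma / z_\Ma \Ma \simeq M$ and $\Ma \conv N$ is free of finite rank over $\bfk[z_\Ma]$.

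The central step is to show $\End_R(M \conv N) = \bfk \cdot \id$. Given an endomorphism $\phi$ of $M \conv N$, one lifts it to a $\bfk[z_\Ma]$-linear endomorphism $\Phi$ of $\Ma \conv N$ using the free-module structure (choose a lift on a $\bfk[z_\Ma]$-basis and correct for $R$-linearity inductively modulo powers of $z_\Ma$). Composing with the normalized R-matrix $\Rnorm_{\Ma, N}$ and applying the uniqueness of R-matrices in the generic fiber over $\bfk(z_\Ma)$ forces $\Phi$ to be multiplication by an element of $\bfk[z_\Ma]$, and specializing at $z_\Ma = 0$ yields $\phi \in \bfk \cdot \id$. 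Assertion (1) is then immediate: a finite length module whose endomorphism algebra equals the ground field must have a single simple head and, dually, a single simple socle, since otherwise the Wedderburn quotient of $\End_R(M \conv N)$ modulo its Jacobson radical would be strictly larger than $\bfk$.

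Assertion (2) is then formal: $\Im(\Rr_{M,N})$ is a non-zero quotient of $M \conv N$, so it surjects onto and hence equals the simple head $M \hconv N$; dually it is a non-zero submodule of $N \conv M$, so it coincides with the simple socle $N \sconv M$. For assertion (3) on composition multiplicities, the strategy is to suppose $[M \conv N : M \hconv N] \geq 2$ and derive a contradiction with $\Hom_R(M \conv N, N \conv M) = \bfk$: using the identification $M \hconv N \simeq N \sconv M$ from (2) together with the R-matrix structure, one produces a second map $M \conv N \to N \conv M$ that is linearly independent from $\Rr_{M,N}$ by routing through the extra composition factor. The statement for $M \sconv N$ is dual.

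The main obstacle is the endomorphism-ring reduction $\End_R(M \conv N) = \bfk \cdot \id$. The subtlety is that a $\bfk$-linear endomorphism of the specialization $M \conv N$ does not literally lift to a $\bfk[z_\Ma]$-linear $R$-module endomorphism of $\Ma \conv N$; the chosen lift must be corrected modulo successive powers of $z_\Ma$ to respect the $R$-action, and integrality of the resulting scalar over $\bfk[z_\Ma]$ must be argued from finite generation. These technicalities are precisely the content of \cite[Theorem 3.2]{KKKO15} and its refinements in \cite{KP18, KKOP19A}, and once they are in place the remaining assertions follow cleanly from Proposition~\ref{prop: l=r}.
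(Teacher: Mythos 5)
This proposition is not proved in the paper at all; it is imported verbatim from \cite{KKKO15}, \cite{KP18} and \cite{KKOP19A}, so your proposal has to be judged against the arguments given there. As written it has a genuine gap, and the central one is the inference from $\End_R(M\conv N)=\bfk\,\id$ to assertion (1). That implication is false for general finite-length modules: an endomorphism of the head (or socle) of a module $X$ need not lift to an endomorphism of $X$, so the natural map $\End_R(X)\to\End_R(\head X)$ need not be surjective and no contradiction with the Wedderburn quotient arises. Concretely, the indecomposable projective $P$ at the source of the quiver $2\leftarrow 1\to 3$ satisfies $\End(P)=\bfk$ while $\soc(P)\simeq S_2\oplus S_3$ is not simple. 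A symptom of the same problem is that your argument nowhere uses that $M$ (or $N$) is \emph{real}, only that it admits an affinization; reality is an essential hypothesis in \cite{KKKO15} (it is what makes $\Rr_{\Ma,M}$ specialize to an isomorphism of the simple module $M\conv M$, the pivot of the actual proof), and the statement is not expected to hold without it.

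The deductions of (2) and (3) are also incomplete. For (2), a nonzero quotient of a module with simple head surjects onto that head but need not equal it ($M\conv N$ is itself such a quotient), so ``$\Im(\Rr_{M,N})$ is a nonzero quotient, hence equals the head'' is a non sequitur; what the references actually establish is the stronger fact that $\Ker(\Rr_{M,N})$ contains every proper submodule of $M\conv N$ and $\Im(\Rr_{M,N})$ is contained in every nonzero submodule of $N\conv M$, from which (1) and (2) follow simultaneously. For (3), a repeated composition factor does not by itself produce a second homomorphism $M\conv N\to N\conv M$ (composition factors are not summands), so the claimed contradiction with $\Hom_R(M\conv N,N\conv M)=\bfk\,\Rr$ is not obtained. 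The correct route in the cited papers runs the logic in the opposite direction from yours: one works with $\Rnorm_{\Ma,N}$ over $\bfk[\zm]$, uses its invertibility over $\bfk(\zm)$ together with the reality of $M$ to pin down the unique maximal submodule of $M\conv N$ and the unique minimal nonzero submodule of $N\conv M$, and only afterwards deduces $\End_R(M\conv N)=\bfk\,\id$ as a corollary of (1)--(3).
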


Now,
let us collect properties of $\Lambda(M,N)$, $\tLa(M,N)$ and $\de(M,N)$
for simple $R$-modules $M$ and $N$ such that one of $M$ and $N$ is affreal.
They are proved in
\cite{KKKO15,KKKO18,KKOP18,KKOP19A} or can be proved by using the arguments in those papers:

\begin{lemma}[{\cite[Lemma 3.11]{KKOP19A}}] \label{lem: d/2}
Let $M$ and $N$ be simple $R$-modules.
Assume that one of them admit affinization of degree $t$.
Then we have $$
\tLa(M,N) , \   \de(M,N) \in \frac{\;t\;}{2}\; \Z_{\ge0}.
$$
\end{lemma}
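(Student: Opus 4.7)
The plan is to analyze the intrinsic degree of the unnormalized R-matrix $\mathrm{R}_{\Ma,N}\cl\Ma\conv N\to N\conv\Ma$, use the structure of the affinization to extract divisibility by $t$, and invoke standard facts about R-matrices for the non-negativity.

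First I would compute $\deg(\mathrm{R}_{\Ma,N})$ by unwinding the definition $u\otimes v\mapsto\varphi_{w[n,m]}(v\otimes u)$. Each intertwiner $\varphi_k$ has degree $-(\al_{\nu_k},\al_{\nu_{k+1}})$ when the adjacent residues differ and degree $0$ when they coincide (since then $\varphi_k=\tau_kx_k-x_k\tau_k$, which adds $(\al_i,\al_i)$ to the degree of $\tau_k$). Summing over the $mn$ crossings of $w[n,m]$ yields the key identity
\[
\deg(\mathrm{R}_{\Ma,N})+(\wt(M),\wt(N)) = \sum_{i\in I}2\sfd_i\,\supp_i(-\wt M)\,\supp_i(-\wt N).
\]
Since $\Rr^\lt_{M,N}=z_{\Ma}^{-s}\mathrm{R}_{\Ma,N}\vert_{z_{\Ma}=0}$ and $\deg(z_{\Ma})=t$, this gives the master formula
\[
2\,\tLa(M,N) = \La(M,N)+(\wt(M),\wt(N)) = \sum_{i\in I}2\sfd_i\,\supp_i(-\wt M)\,\supp_i(-\wt N)-s\,t.
\]

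Second, the crucial divisibility step. I would show that $t\mid 2\sfd_i\supp_i(-\wt M)$ whenever $\supp_i(-\wt M)>0$. The central element $\fpi$ of $R(-\wt M)$ acts on $\Ma$ as a $\bfk[z_{\Ma}]$-linear endomorphism of degree $2\sfd_i\supp_i(-\wt M)$. If $\supp_i(-\wt M)>0$ this degree is strictly positive, so $\fpi$ cannot act as a nonzero scalar on the simple quotient $M=\Ma/z_{\Ma}\Ma$ (Schur's lemma gives $\End_R(M)=\bfk$ in degree $0$); hence $\fpi\Ma\subset z_{\Ma}\Ma$. Let $k_i\ge1$ be maximal with $\fpi\Ma\subset z_{\Ma}^{k_i}\Ma$; the non-degeneracy condition $\fpi\Ma\ne0$ of Definition~\ref{Def: aff} ensures $k_i<\infty$. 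Then $z_{\Ma}^{-k_i}\fpi$ is a well-defined $\bfk[z_{\Ma}]$-linear endomorphism of $\Ma$ of degree $2\sfd_i\supp_i(-\wt M)-k_it$ whose reduction modulo $z_{\Ma}$ is a nonzero scalar on $M$, forcing the degree to vanish: $2\sfd_i\supp_i(-\wt M)=k_it$. Summing over $i$ yields $\sum_i 2\sfd_i\supp_i(-\wt M)\supp_i(-\wt N)\in t\Z$, and combining with the master formula gives $\tLa(M,N)\in\frac{t}{2}\Z$. The same argument applied to $\Rr^\rt_{N,M}$ (equal to $\Rr^\lt_{N,M}$ by Proposition~\ref{prop: l=r}) shows $\La(N,M)+(\wt(M),\wt(N))\in t\Z$, so $\de(M,N)=\frac{1}{2}(\La(M,N)+\La(N,M))\in\frac{t}{2}\Z$.

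For the non-negativity: $\de(M,N)\ge 0$ follows from the non-vanishing of the composition $\Rr_{N,M}\circ\Rr_{M,N}\in\End_R(M\conv N)$, itself a consequence of Proposition~\ref{prop: simple head} (the image of $\Rr_{M,N}$ realizes both $M\hconv N$ and $N\sconv M$, so the composition cannot vanish). For $\tLa(M,N)\ge 0$, the master formula reduces the claim to the bound $st\le\sum_i 2\sfd_i\supp_i(-\wt M)\supp_i(-\wt N)$, which amounts to saying that the $z_{\Ma}$-adic valuation of $\mathrm{R}_{\Ma,N}$ never exceeds the total collision contribution; this is established by induction on the crossings in $w[n,m]$, noting that each crossing between residues $\mu_k=\nu_l=i$ contributes at most $(\al_i,\al_i)=2\sfd_i$ to the $z_{\Ma}$-valuation. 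The principal obstacle will be making this last bound rigorous, which requires tracking how the corrections $\tau_kx_k-x_k\tau_k$ at collision sites simultaneously contribute to the degree and to the $z_{\Ma}$-divisibility; this is a routine but somewhat delicate bookkeeping exercise along the lines of the arguments in the cited references.
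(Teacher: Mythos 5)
The paper offers no proof of this lemma; it is imported verbatim from \cite[Lemma 3.11]{KKOP19A}, so your argument has to stand on its own. Your integrality argument is essentially the standard (and correct) one: the degree identity $\deg(\mathrm{R}_{\Ma,N})+(\wt(M),\wt(N))=\sum_{i}2\sfd_i\supp_i(-\wt M)\supp_i(-\wt N)$ is right, and forcing $t\mid 2\sfd_i\supp_i(-\wt M)$ through the action of the central elements $\fpi$ on the free $\bfk[z_\Ma]$-module $\Ma$ is exactly the mechanism used in \cite{KP18} to prove affinizations are essentially even. One small hole there: from $\La(M,N)+(\wt M,\wt N)\in t\Z$ and $\La(N,M)+(\wt M,\wt N)\in t\Z$ you only get $2\de(M,N)\in t\Z-2(\wt M,\wt N)$, so you still need $2(\wt M,\wt N)\in t\Z$; this does follow from your divisibility claim via $(\al_i,\al_j)=\sfd_i\sfc_{i,j}$, but it must be said.

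The genuine gap is the non-negativity. Your argument for $\de(M,N)\ge0$ fails: the composition $\Rr_{N,M}\circ\Rr_{M,N}\in\END_R(M\conv N)$ is \emph{zero} whenever $M\conv N$ is not simple, so it certifies nothing. Indeed, by Proposition~\ref{prop: simple head} the image of $\Rr_{M,N}$ is the simple socle of $N\conv M$, while $\ker(\Rr_{N,M})$ is the unique maximal submodule of $N\conv M$ (the quotient by it is the simple head); if $N\conv M$ is not simple, the socle is a proper submodule, hence contained in that unique maximal submodule, and $\Rr_{N,M}$ annihilates the image of $\Rr_{M,N}$. So the composition is nonzero precisely in the trivial case $\de(M,N)=0$, and Proposition~\ref{prop: simple head} does not give what you claim. (Even granting non-vanishing, a nonzero homogeneous endomorphism of $M\conv N$ of degree $2\de$ does not by itself force $2\de\ge0$.) The correct route, as in \cite{KKKO15,KKOP19A}, is run at the level of the affinization: normalize $\mathrm{R}_{N,\Ma}$ by powers of $z_\Ma$ as well, observe via Lemma~\ref{Lem: intertwiners} that $\mathrm{R}_{N,\Ma}\circ\mathrm{R}_{\Ma,N}$ is multiplication by a non-zero-divisor (here condition (2) of Definition~\ref{Def: aff} enters), hence injective on the $\bfk[z_\Ma]$-free module $\Ma\conv N$, and read off $\de$ from the $z_\Ma$-adic valuation of this composition. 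Your sketch for $\tLa(M,N)\ge0$ has the same status: the inequality $st\le\sum_i2\sfd_i\supp_i(-\wt M)\supp_i(-\wt N)$ is exactly the content to be proved, and the assertion that ``each crossing contributes at most $2\sfd_i$ to the $z_\Ma$-valuation'' is not justified as written.
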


 By this lemma,  for $t_1$-affreal $M$ and $t_2$-affreal $N$, we have
\begin{align*} 
\tLa(M,N) , \   \de(M,N) \in \frac{\; {\rm lcm}(t_1,t_2)\;}{2}\; \Z_{\ge0}.
\end{align*}
Here ${\rm lcm}(a,b)$ denotes the least common multiple of $a,b \in \Z \setminus\{ 0 \}$.

\begin{proposition}[{\cite[Proposition 3.2.17]{KKKO18}}] \label{prop: length 2}  Let $M_i$ be $t_i$-affreal $(i=1,2)$.
 Assume that $$\de(M_1,M_2)=\dfrac{ \max(t_1,t_2)}{2}.$$ Then we have an exact sequence
$$ 0 \to M_1 \sconv M_2 \to M_1 \conv M_2 \to M_1 \hconv M_2 \to 0.$$
 \end{proposition}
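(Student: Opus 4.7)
The plan is to combine Proposition~\ref{prop: simple head}---which provides a simple head $H \seteq M_1 \hconv M_2$ and a simple socle $S \seteq M_1 \sconv M_2$ for $M_1 \conv M_2$, each appearing with multiplicity one in its composition series---with an $\rmR$-matrix argument at the level of affinizations to pin down the length.

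First I would note that by Lemma~\ref{lem: d/2}, the value $\de(M_1,M_2)$ belongs to $\tfrac{t_1}{2}\Z_{\ge0}\cap\tfrac{t_2}{2}\Z_{\ge0}$, so the hypothesis $\de(M_1,M_2)=\max(t_1,t_2)/2$ picks out the minimal positive value in this lattice. In particular $\de>0$, so $M_1 \conv M_2$ is not simple; combined with Proposition~\ref{prop: simple head}, this forces $H\not\simeq S$ and hence the composition length is at least $2$. The substance of the proposition is the matching upper bound: under the minimality of $\de$, the length is exactly $2$.

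To obtain this upper bound, I would assume without loss of generality $t_1\ge t_2$, take an affinization $\Ma_1$ of $M_1$ of degree $t_1$, and write
\begin{equation*}
\rmR_{\Ma_1,M_2}=z_{\Ma_1}^{\,s_{12}}\Rnorm_{\Ma_1,M_2},\qquad
\rmR_{M_2,\Ma_1}=z_{\Ma_1}^{\,s_{21}}\Rnorm_{M_2,\Ma_1}.
\end{equation*}
A degree count using the identity $\deg(\rmR_{\Ma_1,M_2}\circ\rmR_{M_2,\Ma_1})=(\wt(M_2),\wt(M_1))+\ldots$ together with the fact that $\Rnorm_{M_2,\Ma_1}\circ\Rnorm_{\Ma_1,M_2}$ is a scalar (as an endomorphism of a cyclic module over $\cor[z_{\Ma_1}]$) yields $(s_{12}+s_{21})\,t_1=2\de(M_1,M_2)=t_1$, i.e.\ $s_{12}+s_{21}=1$. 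This ``pole of order one'' relation is the essential ingredient: after specializing $z_{\Ma_1}=0$ it implies that $\Rr_{M_2,M_1}\circ\Rr_{M_1,M_2}=0$, so that $\Im\Rr_{M_1,M_2}\subseteq\ker\Rr_{M_2,M_1}$, giving a candidate two-step filtration
\begin{equation*}
0\;\longrightarrow\;S\;\longrightarrow\;M_1\conv M_2\;\longrightarrow\;H\;\longrightarrow\;0.
\end{equation*}

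The hard part will be to verify that this candidate really is short exact, i.e.\ that $\ker(M_1\conv M_2\twoheadrightarrow H)$ coincides with $S$ and contains no further composition factor. Concretely, any additional simple constituent of $M_1\conv M_2$ would force the pole order of $\rmR_{M_2,\Ma_1}\circ\rmR_{\Ma_1,M_2}$ to exceed $t_1$, directly contradicting $s_{12}+s_{21}=1$. To turn this heuristic into a proof I would lift the picture to $\Ma_1\conv M_2$ and track the filtration by powers of $z_{\Ma_1}$, then descend; the multiplicity-one conclusion of Proposition~\ref{prop: simple head}\,\ref{it:once} for $H$ and $S$, together with the rigidity coming from $s_{12}+s_{21}=1$, leaves no room for any third factor and yields the desired exact sequence.
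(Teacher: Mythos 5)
The paper offers no proof of this statement---it is imported verbatim from \cite[Proposition 3.2.17]{KKKO18}---so your proposal can only be compared with the argument there, which it does follow in outline. You correctly isolate the decisive input: taking $t_1\ge t_2$ and an affinization $\Ma$ of $M_1$ of degree $t_1$, the composite $\Rnorm_{M_2,\Ma}\circ\Rnorm_{\Ma,M_2}$ is a scalar in $\cor[z_{\Ma}]$, and homogeneity (its degree is $\La(M_1,M_2)+\La(M_2,M_1)=2\de(M_1,M_2)$ while $z_{\Ma}$ has degree $t_1$) forces it to be the monomial $c\,z_{\Ma}^{\,2\de(M_1,M_2)/t_1}$; the hypothesis $\de(M_1,M_2)=t_1/2$ makes this $c\,z_{\Ma}$, a zero of order exactly one. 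Your bookkeeping identity $(s_{12}+s_{21})\,t_1=2\de(M_1,M_2)$ is not the right one---the exponents $s_{12},s_{21}$ renormalize the unnormalized maps $\rmR$, whose composite is a product of $\calQ_{i,j}$'s with its own degree---but the conclusion you want (order-one vanishing of $\Rnorm\circ\Rnorm$) is correct.

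The genuine gap is in the descent to $z_{\Ma}=0$. The only inclusion you actually derive, $\Im\Rr_{M_1,M_2}\subseteq\Ker\Rr_{M_2,M_1}$ (equivalently, swapping $M_1$ and $M_2$: $M_1\sconv M_2=\Im\Rr_{M_2,M_1}\subseteq\Ker\Rr_{M_1,M_2}$), holds whenever $\de(M_1,M_2)>0$ and therefore cannot bound the length; it only says the socle sits inside the kernel. What must be proved is the reverse inclusion $\Ker\Rr_{M_1,M_2}\subseteq\Im\Rr_{M_2,M_1}$, and this is exactly where order-one vanishing is used, via a divide-by-$z$ argument you gesture at but never perform: if $x\in\Ma\conv M_2$ reduces into $\Ker\Rr_{M_1,M_2}$, then $\Rnorm_{\Ma,M_2}(x)=z_{\Ma}y$ for a well-defined $y$ (because $M_2\conv\Ma$ is free over $\cor[z_{\Ma}]$), whence $c\,z_{\Ma}x=z_{\Ma}\Rnorm_{M_2,\Ma}(y)$, so $cx=\Rnorm_{M_2,\Ma}(y)$ and, specializing, the class of $x$ lies in $\Im\Rr_{M_2,M_1}$. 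Since $\de(M_1,M_2)>0$ forces $M_1\conv M_2$ to be non-simple, $\Ker\Rr_{M_1,M_2}\neq 0$, so it equals the simple socle and the exact sequence follows. Without this step---the actual content of the proposition---your "rigidity leaves no room for a third factor" remains an assertion, not a proof.
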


\begin{lemma} [{\cite[Lemma 3.1.4]{KKKO18}}] \label{lem: tLa}
Let $M$ and $N$ be self-dual simple modules. If one of them is affreal, then
$$  q^{\tLa(M,N)} M \hconv N \text{ is a self-dual simple module}.  $$
\end{lemma}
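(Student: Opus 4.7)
The plan is to deduce the lemma directly from Proposition~\ref{prop: simple head} together with two standard facts about the $\star$-duality and the $\rmR$-matrix: (a) for any modules $M,N$ one has the duality isomorphism $(M\conv N)^\star\simeq q^{(\wt(M),\wt(N))}\,N^\star\conv M^\star$, and (b) the $\rmR$-matrix $\Rr_{M,N}\colon M\conv N\to N\conv M$ has homogeneous degree $\La(M,N)$, so that its image, realized as a quotient of the source and as a submodule of the target, provides a graded identification between $M\hconv N$ and $N\sconv M$ up to a shift by $\La(M,N)$.

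First I would record the self-duality shift. Write $L=M\hconv N$, which is simple by Proposition~\ref{prop: simple head}. By item (b), together with the fact that $\mathrm{Im}(\Rr_{M,N})\simeq M\hconv N$ as a quotient of $M\conv N$ and $\mathrm{Im}(\Rr_{M,N})\simeq N\sconv M$ as a submodule of $N\conv M$, one obtains the graded identification
\begin{equation*}
N\sconv M\;\simeq\;q^{\La(M,N)}\,L.
\end{equation*}
Next I would apply $\star$-duality: since $M=M^\star$ and $N=N^\star$, fact (a) gives
\begin{equation*}
(M\conv N)^\star\;\simeq\;q^{(\wt(M),\wt(N))}\,(N\conv M).
\end{equation*}
Taking socles of both sides (which corresponds to taking duals of heads on the left) and using Proposition~\ref{prop: simple head}\eqref{it:once} to know that the head and socle are simple, I obtain
\begin{equation*}
L^\star \;=\;(M\hconv N)^\star\;\simeq\;q^{(\wt(M),\wt(N))}\,(N\sconv M)\;\simeq\;q^{(\wt(M),\wt(N))+\La(M,N)}\,L\;=\;q^{2\tLa(M,N)}\,L.
\end{equation*}

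Finally, shifting by $\tLa(M,N)$ and using that $(q^{a}X)^\star = q^{-a}X^\star$ for any graded module $X$ and any $a\in\Z$, I conclude
\begin{equation*}
\bigl(q^{\tLa(M,N)}L\bigr)^\star\;\simeq\;q^{-\tLa(M,N)}\,L^\star\;\simeq\;q^{-\tLa(M,N)+2\tLa(M,N)}\,L\;=\;q^{\tLa(M,N)}\,L,
\end{equation*}
which is exactly the claimed self-duality of $q^{\tLa(M,N)}\,M\hconv N$.

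The only substantive point that needs checking is the parity of $\La(M,N)+(\wt(M),\wt(N))$, so that $\tLa(M,N)\in\Z$ and the grading shift makes sense; this is the role of Lemma~\ref{lem: d/2}, which guarantees $\tLa(M,N)\in\tfrac{t}{2}\Z_{\ge 0}\subset\Z$ whenever one of $M,N$ admits an affinization of (even) degree $t$. The main obstacle, therefore, is purely bookkeeping: keeping track of the grading shifts through the $\star$-duality and the $\rmR$-matrix consistently; once the two displayed identifications above are established with the correct shifts, the self-duality falls out by a one-line computation.
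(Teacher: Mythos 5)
Your argument is correct, and it is essentially the standard proof of this fact: the paper itself gives no proof but cites \cite[Lemma 3.1.4]{KKKO18}, and your computation (combining $(M\conv N)^\star\simeq q^{(\wt(M),\wt(N))}N^\star\conv M^\star$, the identification $N\sconv M\simeq q^{\La(M,N)}(M\hconv N)$ coming from the degree of $\Rr_{M,N}$, and the integrality of $\tLa$ from Lemma~\ref{lem: d/2}) is precisely how that cited lemma is established. The only cosmetic slip is that the simplicity of the head and socle used in the middle step is the content of parts (1)--(2) of Proposition~\ref{prop: simple head} rather than of item \eqref{it:once}, but this does not affect the argument.
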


\medskip

\begin{proposition}[{\cite[Corollary 3.11]{KKKO15}}] \label{prop: not vanishing}
Let $M_k$ be $R$-modules $(k=1,2,3)$
and assume that $M_2$ is simple.
If $\varphi_1\cl L \to M_1 \conv M_2$ and
$\varphi_2\cl M_2 \conv M_3 \to L'$ are non-zero homomorphisms, then the composition
\[
L\conv M_3\To[\varphi_1\circ M_3] M_1\conv M_2\conv M_3\To[M_1\circ\varphi_2]
M_1\conv L'
\]
does not vanish. Similarly,
if $\psi_1\cl L \to M_2 \conv M_3$ and
$\psi_2\cl M_1 \conv M_2 \to L'$ are non-zero homomorphisms, then
the composition
\[
M_1 \conv L \To[M_1\circ \varphi_1] M_1\conv M_2\conv M_3 \To[\varphi_2 \circ M_3]
L '\conv M_3.
\]
does not vanish.
\end{proposition}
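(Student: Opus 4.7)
The plan is to reduce the non-vanishing to a non-containment statement between submodules of the triple convolution $M_1 \conv M_2 \conv M_3$, and then to extract that non-containment from the simplicity of $M_2$ via the Mackey filtration together with Frobenius reciprocity. First I would invoke exactness: since $R(\beta + \gamma) e(\beta, \gamma)$ is free as a right, and as a left, $R(\beta) \otimes R(\gamma)$-module, both functors $- \conv M_3$ and $M_1 \conv -$ are exact on $R\gmod$. Setting $N \seteq \Im(\varphi_1) \subseteq M_1 \conv M_2$ and $K \seteq \Ker(\varphi_2) \subsetneq M_2 \conv M_3$, the composition in question factors as
\[ L \conv M_3 \twoheadrightarrow N \conv M_3 \hookrightarrow M_1 \conv M_2 \conv M_3 \twoheadrightarrow M_1 \conv (M_2 \conv M_3 / K), \]
so its non-vanishing is equivalent to the statement that the submodule $N \conv M_3$ of $M_1 \conv M_2 \conv M_3$ is not contained in the submodule $M_1 \conv K$.

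For the core step I would combine the Frobenius adjunction $\Hom_{R(\beta_1 + \beta_2)}(M_1 \conv M_2, P) \simeq \Hom_{R(\beta_1) \otimes R(\beta_2)}(M_1 \boxtimes M_2, \mathrm{Res}_{\beta_1, \beta_2} P)$ with the Mackey filtration of the restriction of a convolution. The crucial input of the hypothesis that $M_2$ be simple is the following detection principle: for any nonzero submodule $N$ of $M_1 \conv M_2$, the canonical morphism obtained from the inclusion $N \hookrightarrow M_1 \conv M_2$ through the Mackey filtration produces a nonzero map whose $M_2$-component is all of $M_2$ (in slogan form, \emph{$N$ cannot miss the $M_2$-factor}, because a proper submodule of $M_2$ must vanish). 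Applying this principle to the triple convolution, one shows that $N \conv M_3$ projects nontrivially onto the graded piece $M_1 \boxtimes (M_2 \conv M_3)$ of $\mathrm{Res}_{\beta_1, \beta_2 + \gamma}(M_1 \conv M_2 \conv M_3)$, whereas the submodule $M_1 \conv K$ projects onto at most $M_1 \boxtimes K$, which is properly contained in $M_1 \boxtimes (M_2 \conv M_3)$ because $K$ is a proper submodule of $M_2 \conv M_3$. The required non-containment, and hence the non-vanishing of the composition, follows.

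The second statement, involving $\psi_1$ and $\psi_2$, follows by the symmetric argument applied to the Mackey filtration of $\mathrm{Res}_{\beta_1 + \beta_2, \gamma}$, or equivalently by applying the duality $\star$, which exchanges convolution factors and swaps heads with socles. The principal obstacle is to make the detection principle above precise in the graded $R\gmod$ setting: one must identify which piece of the Mackey filtration plays the role of ``top graded piece'' and verify that the morphism induced by the inclusion $N \hookrightarrow M_1 \conv M_2$ is surjective onto that piece solely because $M_2$ is simple. This is the only step in which the hypothesis on $M_2$ is used essentially, and it is also the step in which the statement would fail without simplicity of $M_2$.
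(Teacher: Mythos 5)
First, a point of reference: the paper does not prove this proposition at all --- it is imported verbatim from \cite[Corollary 3.11]{KKKO15} --- so there is no internal argument to compare yours against, and your sketch has to stand on its own. Its opening reduction is correct and is how any proof must begin: both $-\conv M_3$ and $M_1\conv -$ are exact (the inducing bimodule is free over $R(\beta)\otimes R(\gamma)$ by the shuffle basis), so the composition vanishes if and only if $\Im(\varphi_1)\conv M_3\subseteq M_1\conv\Ker(\varphi_2)$, and one must exclude this containment for the nonzero submodule $N=\Im(\varphi_1)\subseteq M_1\conv M_2$ and the proper submodule $K=\Ker(\varphi_2)\subsetneq M_2\conv M_3$.

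The gap is that the ``detection principle'' carrying the entire content of the proposition is asserted rather than proven, and as formulated it does not work. Two concrete problems. (1) The Mackey layer $M_1\boxtimes(M_2\conv M_3)$ of $e(\beta_1,\beta_2+\beta_3)\bigl(M_1\conv(M_2\conv M_3)\bigr)$ is the \emph{bottom} layer of the filtration --- it is the canonical submodule $e(\beta_1,\beta_2+\beta_3)\otimes\bigl(M_1\boxtimes(M_2\conv M_3)\bigr)$, not a quotient --- so there is no projection of the restriction onto it; the restriction of a submodule such as $N\conv M_3$ may simply fail to meet it, and the companion claim that $M_1\conv K$ ``projects onto at most $M_1\boxtimes K$'' is likewise not meaningful, since the higher Mackey layers of $e(\beta_1,\beta_2+\beta_3)(M_1\conv K)$ are induced from restrictions of $M_1$ and $K$ to finer parabolics and are not controlled by $M_1\boxtimes K$. (2) More fundamentally, the only way simplicity of $M_2$ enters your sketch is through the (correct) observation that the smallest $M_2'\subseteq M_2$ with $N\subseteq M_1\conv M_2'$ must be $M_2$ itself. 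But the containment to be excluded is $N\conv M_3\subseteq M_1\conv K$ for an \emph{arbitrary} proper submodule $K\subsetneq M_2\conv M_3$, and such a $K$ (for instance the socle of $M_2\conv M_3$) is typically not of the form $M_2'\conv M_3$; nothing in the sketch bridges from ``$N$ sees all of $M_2$'' to the required non-containment, and that bridge is precisely where the work lies in \cite{KKKO15}. Finally, the second half does not follow from the first by applying $\star$: since $(X\conv Y)^\star\simeq Y^\star\conv X^\star$ up to a grading shift, dualizing the first statement returns a statement of the \emph{same} shape for the triple $(M_3^\star,M_2^\star,M_1^\star)$ rather than the second statement, so the two halves must each be proved; your alternative suggestion of running the symmetric argument through $e(\beta_1+\beta_2,\beta_3)$ is the right instinct but inherits the same gap.
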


The following proposition was incorrectly stated in \cite[Proposition 5.11]{Oh18}, and we present here its correct statement and its proof.

\begin{proposition} [{\cite[Proposition 5.11]{Oh18}}] \label{prop: inj}
Let $M_i$ and $N_i$ be simple $R$-modules such that one of them is affreal
and $M_i \conv N_i$ has composition length $2$
for $i=1,2$. Let $M$ and $N$ be simple $R$-modules
such that one of them is affreal.
Let $\varphi_i \colon M_i \conv N_i \to M \conv N$ $(i=1,2)$
be a non-zero homomorphisms.
We assume that
$$M_1 \sconv N_1 \simeq  M_2 \sconv N_2
\qtq M_1 \hconv N_1 \not\simeq  M_2 \hconv N_2.$$
Then we have
$$M_1 \sconv N_1 \simeq M_2 \sconv N_2  \simeq  M \sconv N  \quad \text{ and } \quad \varphi_i \colon M_i \conv N_i \monoto M \conv N.$$
\end{proposition}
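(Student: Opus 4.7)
The strategy is to combine two structural inputs: on the one hand, by Proposition~\ref{prop: simple head}, the module $M \conv N$ has a simple socle $M \sconv N$, so every non-zero submodule of $M \conv N$ contains $M \sconv N$, and any simple submodule of $M \conv N$ is isomorphic to $M \sconv N$; on the other hand, each $M_i \conv N_i$ has composition length $2$, so its only non-zero proper submodule is $M_i \sconv N_i$, and its head $M_i \hconv N_i$ cannot be isomorphic to its socle $M_i \sconv N_i$ (otherwise a single simple module would appear with multiplicity $2$ as a composition factor, contradicting Proposition~\ref{prop: simple head}\eqref{it:once}). These two facts together will pin down the behavior of each $\varphi_i$.

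First I will establish the following dichotomy for each $i \in \{1,2\}$. Since $\varphi_i \ne 0$ and $M_i \conv N_i$ has length $2$, there are only two possibilities. If $\varphi_i$ is injective, then it embeds the socle $M_i \sconv N_i$ into $M \conv N$; since the image of a simple submodule lies in the socle, and $M \sconv N$ is simple, we obtain $M_i \sconv N_i \simeq M \sconv N$. If instead $\varphi_i$ is not injective, then the length-$2$ structure forces $\ker \varphi_i = M_i \sconv N_i$, so $\Im \varphi_i \simeq M_i \hconv N_i$ is a simple submodule of $M \conv N$, and hence $M_i \hconv N_i \simeq M \sconv N$.

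To conclude I will rule out the non-injective case by the hypotheses. Suppose $\varphi_1$ were not injective; then $M_1 \hconv N_1 \simeq M \sconv N$. If $\varphi_2$ were also not injective, the same argument would give $M_2 \hconv N_2 \simeq M \sconv N$, hence $M_1 \hconv N_1 \simeq M_2 \hconv N_2$, contradicting the hypothesis. So $\varphi_2$ must be injective, giving $M_2 \sconv N_2 \simeq M \sconv N$; combined with the standing hypothesis $M_1 \sconv N_1 \simeq M_2 \sconv N_2$ this yields $M_1 \sconv N_1 \simeq M \sconv N \simeq M_1 \hconv N_1$, which contradicts head~$\not\simeq$~socle for the length-$2$ module $M_1 \conv N_1$. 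Thus $\varphi_1$ is injective, and symmetrically $\varphi_2$ is injective; applying the first branch of the dichotomy to both indices then gives $M_1 \sconv N_1 \simeq M \sconv N \simeq M_2 \sconv N_2$, which is the desired conclusion. The argument is short and essentially bookkeeping; the only delicate point is the head/socle non-isomorphism for length-$2$ convolutions, which is why the hypothesis $M_1 \hconv N_1 \not\simeq M_2 \hconv N_2$ is exactly what is needed to force both $\varphi_i$ to be monomorphisms.
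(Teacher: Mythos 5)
Your proof is correct and follows essentially the same route as the paper's: the same injective/non-injective dichotomy for each $\varphi_i$, the same identification of the image with either the socle or the head of $M_i\conv N_i$ inside the simple socle of $M\conv N$, and the same use of Proposition~\ref{prop: simple head}\;\eqref{it:once} to exclude the mixed case (you derive the contradiction head~$\simeq$~socle for $M_1\conv N_1$ where the paper derives it for $M_2\conv N_2$, which is the identical argument up to relabeling). No further comment needed.
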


\Proof
If $\vphi_i$ is not injective, then
$\Im(\vphi_i)\simeq\ M_i\hconv N_i\simeq M\sconv N$.
If $\vphi_i$ is injective, then
$M_i\sconv N_i\simeq M\sconv N$.

\snoi
(i)\ If $\vphi_1$ and $\vphi_2$ are not injective, then
$M_1 \hconv N_1 \simeq  M_2 \hconv N_2$, which is a contradiction.

\snoi
(ii) Assume that one of $\vphi_1$ and $\vphi_2$ (say $\vphi_1$) is injective and the other is
not injective. Then we have
$M\sconv N\simeq M_1\sconv N_1\simeq M_2\hconv N_2$.
Hence $ M_2\hconv N_2\simeq M_1\sconv N_1\simeq M_2\sconv N_2$,
which contradicts Proposition~\ref{prop: simple head}\;\eqref{it:once}.

\snoi(iii)\ Hence $\vphi_1$ and $\vphi_2$ are injective,
and $M\sconv N\simeq M_1\sconv N_1\simeq M_2\sconv N_2$.
\QED

\begin{definition}[{\cite[Definition 2.5]{KK19}}]
Let $L_1, L_2, \ldots, L_r$ be   affreal   modules.
The sequence $(L_1, \ldots, L_r)$ is called a \emph{normal sequence} if the composition of the R-matrices
\begin{align*}
\Rr_{L_1, \ldots, L_r} \seteq&  \prod_{1 \le i < j \le r} \Rr_{L_i, L_j}\allowdisplaybreaks \\
 = & (\Rr_{L_{r-1, L_r}}) \circ \cdots \circ ( \Rr_{L_2, L_r}  \circ \cdots \circ \Rr_{L_2, L_3} )  \circ ( \Rr_{L_1, L_r}  \circ \cdots \circ \Rr_{L_1, L_2} ) \allowdisplaybreaks\\
& \colon q\ms{4mu}\raisebox{1.5ex}{$\scriptstyle\sum_{1\le i  <  k \le r} \La(L_i,L_k)$} L_1 \conv L_2 \conv  \cdots \conv L_r \longrightarrow L_r \conv \cdots \conv L_2 \conv L_1.
\end{align*}
does not vanishes.
\end{definition}

The following lemmas are proved in \cite{KK19,KKOP21C} when $\g$ is
of symmetric type. However, by using the same arguments, we can
prove when $\g$ is of symmetrizable type.

\begin{lemma} [{\cite[Lemma 2.6]{KK19}}]\label{Lem: normal head socle}
Let $(L_1, \ldots, L_r)$ be a normal sequence of affreal $R$-modules.
Then ${\rm Im}(\Rr_{L_1,\ldots, L_r})$ is simple
and it coincides with the head of $L_1\conv\cdots \conv L_r$
and also with the socle of $L_r\conv\cdots \conv L_1$.
\end{lemma}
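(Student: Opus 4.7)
I would proceed by induction on $r$, with the base case $r=2$ being exactly Proposition~\ref{prop: simple head}, which gives $\Im(\Rr_{L_1,L_2}) = L_1\hconv L_2 = L_2\sconv L_1$ and that this is simple. For $r = 1$ there is nothing to prove.

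For the inductive step, the key is the factorization
\[
\Rr_{L_1,\ldots,L_r} \;=\; \bl\Rr_{L_2,\ldots,L_r}\conv \id_{L_1}\br\circ \Rr^{(1)} \;=\;\bl\id_{L_r}\conv \Rr_{L_1,\ldots,L_{r-1}}\br\circ \Rr^{(r)},
\]
where $\Rr^{(1)} = \Rr_{L_1,L_r}\circ\cdots\circ\Rr_{L_1,L_2}$ moves $L_1$ to the far right and $\Rr^{(r)}$ moves $L_r$ to the far left. Since by assumption $\Rr_{L_1,\ldots,L_r}\ne0$, each of the four factors on the right-hand side is nonzero; in particular $(L_2,\ldots,L_r)$ and $(L_1,\ldots,L_{r-1})$ are themselves normal sequences, and the inductive hypothesis applies. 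Writing $T \seteq \Im(\Rr_{L_2,\ldots,L_r})$ and $T' \seteq \Im(\Rr_{L_1,\ldots,L_{r-1}})$, these are simple modules with $T = \hd(L_2\conv\cdots\conv L_r) = \soc(L_r\conv\cdots\conv L_2)$ and similarly for $T'$. Since $R(\beta+\beta')e(\beta,\beta')$ is free over $R(\beta)\otimes R(\beta')$, convolution is exact, and so $\Im\bl\Rr_{L_2,\ldots,L_r}\conv L_1\br = T\conv L_1$ and $\Im\bl L_r\conv\Rr_{L_1,\ldots,L_{r-1}}\br = L_r\conv T'$. Hence
\[
\Im(\Rr_{L_1,\ldots,L_r}) \;\subseteq\; (T\conv L_1)\cap (L_r\conv T').
\]

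Next I identify $\Rr^{(1)}$ (up to a nonzero scalar) with the R-matrix $\Rr_{L_1,\,L_2\conv\cdots\conv L_r}$ between the affreal simple $L_1$ and the convolution $L_2\conv\cdots\conv L_r$, using the compatibility of normalized R-matrices with convolution (via the affinization $\widehat{L_1}$ of $L_1$). Because $L_1$ is affreal, Proposition~\ref{prop: simple head} applied to $L_1$ and $T$ yields that $L_1\conv T$ has a simple head $L_1\hconv T$ and $T\conv L_1$ has a simple socle $T\sconv L_1$. Combining this with the surjection $L_1\conv(L_2\conv\cdots\conv L_r)\twoheadrightarrow L_1\conv T$ (again exactness of convolution) and Proposition~\ref{prop: not vanishing}, I conclude that the image of $\Rr^{(1)}$ is the simple module $L_1\hconv(L_2\conv\cdots\conv L_r)$, and dually the image of $L_r\conv\Rr_{L_1,\ldots,L_{r-1}}$ contains the simple socle $L_r\sconv T'$. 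A second application of Proposition~\ref{prop: not vanishing} to the two-step factorization shows that the further composition with $\Rr_{L_2,\ldots,L_r}\conv L_1$ is nonzero on this simple head, so $\Im(\Rr_{L_1,\ldots,L_r})$ is the simple image of this head, identifies it with the socle of $L_r\conv\cdots\conv L_1$, and gives
\[
\Im(\Rr_{L_1,\ldots,L_r})\;=\;\hd(L_1\conv\cdots\conv L_r)\;=\;\soc(L_r\conv\cdots\conv L_1),
\]
completing the induction.

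\textbf{Main obstacle.} The delicate point is the identification of the composition $\Rr^{(1)}$ of pairwise R-matrices with the single R-matrix $\Rr_{L_1,L_2\conv\cdots\conv L_r}$ between $L_1$ and the (generally non-simple) convolution $L_2\conv\cdots\conv L_r$, together with checking that this identification is compatible with the subsequent R-matrix composition in such a way that the nonvanishing of $\Rr_{L_1,\ldots,L_r}$ translates to the nonvanishing at each stage of the argument. This is the step where normality is genuinely used: without it, the pairwise composition could vanish even when individual head/socle modules are simple, and Proposition~\ref{prop: not vanishing} would not apply. Once this identification and the normality are in hand, the rest of the argument is a formal combination of Propositions~\ref{prop: simple head} and~\ref{prop: not vanishing} with the exactness of convolution.
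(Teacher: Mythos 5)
The paper does not actually prove this lemma: it is imported from \cite[Lemma 2.6]{KK19} with only the remark that the symmetric-type argument carries over to the symmetrizable case. So the comparison is with that cited proof, whose inductive skeleton — peel off $L_1$, identify the composite of pairwise $\rmR$-matrices with a single $\rmR$-matrix against the head of the tail, and invoke Proposition~\ref{prop: simple head} — your plan correctly reproduces. However, as written the proposal has two genuine gaps. First, the assertion that ``the image of $\Rr^{(1)}$ is the simple module $L_1\hconv(L_2\conv\cdots\conv L_r)$'' is not meaningful: $N\seteq L_2\conv\cdots\conv L_r$ is not simple, Proposition~\ref{prop: simple head} does not apply to the pair $(L_1,N)$, and the image of $\Rr^{(1)}\cl L_1\conv N\to N\conv L_1$ alone is in general not simple. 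What the cited proof establishes is a statement about the full composite $(\Rr_{L_2,\ldots,L_r}\conv L_1)\circ\Rr^{(1)}$: writing $T=\hd(N)$ (simple by induction) and using functoriality of the unnormalized intertwiner $\mathrm{R}_{\widehat{L_1},-}$ with respect to $N\twoheadrightarrow T\hookrightarrow L_r\conv\cdots\conv L_2$, one gets identities of the shape $(p\conv L_1)\circ\Rr^{(1)}=\bigl(z^{c}\,\Rnorm_{\widehat{L_1},T}\circ(\widehat{L_1}\conv p)\bigr)\big|_{z=0}$ with $c\ge 0$, and the hypothesis $\Rr_{L_1,\ldots,L_r}\ne0$ is exactly what forces $c=0$ at each stage; only then is the image $\Im(\Rr_{L_1,T})=L_1\hconv T$. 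This exponent bookkeeping is the entire content of the normality hypothesis; you flag it as the main obstacle but do not carry it out, and the intermediate claim you substitute for it is false as stated.

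Second, and independently: even once the image is shown to be simple, your argument only exhibits it as \emph{a} simple quotient of $L_1\conv\cdots\conv L_r$ and \emph{a} simple submodule of $L_r\conv\cdots\conv L_1$. The lemma asserts it \emph{coincides with} the head and the socle, which in particular asserts that these are simple. That does not follow from exactness of convolution: a simple quotient of $L_1\conv N$ need not factor through $L_1\conv\hd(N)$, i.e.\ $L_1\conv\ker(N\twoheadrightarrow T)$ need not be contained in the radical of $L_1\conv N$, so producing the quotient $L_1\hconv T$ does not identify the head. This step needs its own $\rmR$-matrix argument and is not addressed anywhere in the proposal. (A minor further point: your second factorization moves $L_r$ to the left \emph{before} applying $\Rr_{L_1,\ldots,L_{r-1}}$, which requires the Yang--Baxter relation; the regrouping that uses only commutativity of $\rmR$-matrices acting on disjoint factors is $\Rr_{L_1,\ldots,L_r}=(\Rr_{L_{r-1},L_r}\circ\cdots\circ\Rr_{L_1,L_r})\circ(\Rr_{L_1,\ldots,L_{r-1}}\conv L_r)$, and it already suffices to see that $(L_1,\ldots,L_{r-1})$ is normal.)
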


\begin{lemma}[{\cite[Lemma 2.8, Lemma 2.10]{KK19}}] \label{lem: normality} \hfill
\bnum
\item A sequence $(L_1 , \ldots , L_r)$ of affreal $R$-modules is a normal
sequence if and only if $(L_1 , \ldots , L_{r-1})$ is a normal sequence and
$$   \La(\hd(L_1 , \ldots , L_{r-1}),L_r ) = \sum_{1 \le j \le r-1}\La(L_j,L_r).$$
\item A sequence $(L_1 , \ldots , L_r)$ of affreal $R$-modules is a normal
sequence if and only if $(L_{ 2}, \ldots , L_{r})$ is a normal sequence and
$$   \La(L_1,\hd(L_2, \ldots , L_{r}) ) = \sum_{2 \le j \le r}\La(L_1,L_j).$$
\ee
\end{lemma}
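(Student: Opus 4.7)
\textbf{Plan for Lemma~\ref{lem: normality}.}  The plan is to prove part (i); part (ii) will follow by a symmetric argument, exchanging the roles of head and socle and reversing the composition order.  The main idea will be a regrouping identity.  Set $H \seteq \hd(L_1, \ldots, L_{r-1})$ whenever the subsequence is normal.  Using the Yang--Baxter relation for R-matrices (which rearranges the intertwiner composition defining an R-matrix along two equivalent reduced words in the symmetric group), I will decompose
\[
\Rr_{L_1, \ldots, L_r} \;=\; \Phi \;\circ\; (\Rr_{L_1, \ldots, L_{r-1}} \conv \id_{L_r}),
\]
where $\Phi \colon L_{r-1} \conv \cdots \conv L_1 \conv L_r \to L_r \conv L_{r-1} \conv \cdots \conv L_1$ is the composition that moves $L_r$ to the leftmost position one step at a time via the individual R-matrices $\Rr_{L_j, L_r}$ for $j = 1, \ldots, r-1$, and hence has degree $\sum_{j=1}^{r-1} \La(L_j, L_r)$.

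For the ``only if'' direction, assuming $\Rr_{L_1, \ldots, L_r} \neq 0$, the factorization will immediately force $\Rr_{L_1, \ldots, L_{r-1}} \neq 0$, so $(L_1, \ldots, L_{r-1})$ is normal with image $H$.  I will then argue that $\Phi$ restricted to $H \conv L_r$ (viewed as a submodule of $L_{r-1} \conv \cdots \conv L_1 \conv L_r$) is non-zero and, composed with the canonical projection onto $L_r \conv H$ (permitted by Proposition~\ref{prop: not vanishing} applied to the simple module $H$), yields a non-zero element of $\HOM_R(H \conv L_r, L_r \conv H)$.  By Proposition~\ref{prop: l=r} this element must be a non-zero scalar multiple of $\Rr_{H, L_r}$, and matching degrees will give $\La(H, L_r) = \sum_{j=1}^{r-1} \La(L_j, L_r)$.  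For the converse direction, normality of the subsequence together with the degree identity will imply that $\Phi|_{H \conv L_r}$ represents an element of $\HOM_R(H \conv L_r, L_r \conv H)$ of degree already equal to $\La(H, L_r)$, hence a non-zero scalar multiple of $\Rr_{H, L_r}$; a further application of Proposition~\ref{prop: not vanishing} to the composition through the simple module $H \conv L_r$ will then show that $\Rr_{L_1, \ldots, L_r}$ does not vanish.

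The hard part will be the degree bookkeeping: verifying that $\Phi|_{H \conv L_r}$, once appropriately projected, lands in the same graded $\HOM$-space where $\Rr_{H, L_r}$ lives, and that the matching of degrees in that one-dimensional Hom-space is precisely the equality $\La(H, L_r) = \sum_j \La(L_j, L_r)$.  Once this numerical accounting is in place, the remainder reduces to standard applications of Proposition~\ref{prop: simple head} (simple head and socle of a product in which one factor is affreal) and Proposition~\ref{prop: not vanishing} (non-vanishing of compositions that factor through a simple module).
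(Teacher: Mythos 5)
The paper does not prove this lemma: it is quoted from [KK19, Lemmas 2.8, 2.10] with only the remark that the arguments there carry over verbatim from the symmetric to the symmetrizable case, so there is no in-paper proof to compare against. Your outline is essentially the standard [KK19] argument — regroup $\Rr_{L_1,\ldots,L_r}$ by Yang--Baxter as $\Phi\circ(\Rr_{L_1,\ldots,L_{r-1}}\conv\id_{L_r})$, use exactness of $\conv$ to identify the image of the first factor with $H\conv L_r$, and then compare degrees inside the one-dimensional space $\HOM_R(H\conv L_r,L_r\conv H)=\cor\,\Rr_{H,L_r}$ — and it does work. Two points need correcting in the execution. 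First, there is no ``canonical projection onto $L_r\conv H$'': by Lemma~\ref{Lem: normal head socle}, $H$ is the \emph{socle} of $L_{r-1}\conv\cdots\conv L_1$, so $L_r\conv H$ is a submodule, not a quotient, of $L_r\conv L_{r-1}\conv\cdots\conv L_1$; the correct mechanism is that the non-normalized R-matrix $\mathrm{R}_{-,\widehat{L_r}}$ over $\cor[z_{\widehat{L_r}}]$ is functorial in its first argument and multiplicative under $\conv$, so $\Phi$ automatically carries $H\conv L_r$ into $L_r\conv H$ and restricts there to $z^{\,c}\,\Rnorm_{H,\widehat{L_r}}$ specialized at $z=0$, where $c\ge0$ measures the defect $\sum_{j\le r-1}\La(L_j,L_r)-\La(H,L_r)$. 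Second, mere equality of degrees in a graded Hom-space does not by itself force nonvanishing, so the converse direction genuinely requires this $\cor[z]$-level identity (it gives $\Phi|_{H\conv L_r}\ne0$ if and only if $c=0$); once that is in place, Propositions~\ref{prop: l=r} and \ref{prop: not vanishing} finish both implications, and part (ii) follows by the left--right symmetric factorization as you indicate.
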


\begin{lemma}[{\cite[Lemma 2.23]{KKOP21C}}] \label{lem: de additive}
Let $L, M, N$ be affreal $R$-modules. Then $\de(L, M \hconv N)$ = $\de(L, M) + \de(L, N)$, if and only if $(L, M, N)$ and $(M, N, L)$
are normal sequences.
\end{lemma}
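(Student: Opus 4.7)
The plan is to reduce the lemma to two general inequalities
$$\La(L, M \hconv N) \le \La(L, M) + \La(L, N) \qt{and} \La(M \hconv N, L) \le \La(M, L) + \La(N, L), \qquad (\ast)$$
valid for any affreal triple $(L,M,N)$, and then combine them with Lemma~\ref{lem: normality}. The backward direction is immediate from $(\ast)$ upgraded to equalities: if $(L,M,N)$ is normal, then Lemma~\ref{lem: normality}\,(ii) --- applied with the trivially normal pair $(M,N)$, whose head is $M \hconv N$ --- yields equality in the first inequality of $(\ast)$; if $(M,N,L)$ is normal, then Lemma~\ref{lem: normality}\,(i) gives equality in the second. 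Summing and dividing by $2$ gives $\de(L, M \hconv N) = \de(L, M) + \de(L, N)$.

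For the forward direction, I would first establish $(\ast)$. To prove the first inequality, consider the composition
$$\psi \seteq (\id_M \conv \Rr_{L,N}) \circ (\Rr_{L,M} \conv \id_N) \colon L \conv M \conv N \longrightarrow M \conv N \conv L,$$
which is non-zero by Proposition~\ref{prop: not vanishing} (applied with middle factor $L$ simple and affreal) and of homogeneous degree $\La(L, M) + \La(L, N)$. Composing with the head projection $p \colon M \conv N \twoheadrightarrow M \hconv N$ on the right and comparing with the map obtained by first projecting $L \conv M \conv N \twoheadrightarrow L \conv (M \hconv N)$ and then applying $\Rr_{L, M \hconv N}$, one sees that both non-zero homomorphisms $L \conv M \conv N \to (M \hconv N) \conv L$ must factor through the unique simple head $L \hconv (M \hconv N)$ of $L \conv (M \hconv N)$ (using Proposition~\ref{prop: simple head} and the fact that $L$ is affreal). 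By the uniqueness statement in Proposition~\ref{prop: l=r}, the two maps are proportional up to a grading shift of some $k \in \Z_{\ge 0}$, forcing
$$\La(L, M \hconv N) + k = \La(L, M) + \La(L, N),$$
which gives the desired inequality; equality ($k = 0$) is equivalent by Lemma~\ref{lem: normality}\,(ii) to $(L,M,N)$ being normal. The second inequality of $(\ast)$, together with its equality criterion via Lemma~\ref{lem: normality}\,(i), follows by the symmetric argument with the composition $(\Rr_{N,L} \conv \id_M) \circ (\id_N \conv \Rr_{M,L})$ on $N \conv M \conv L$, or equivalently by applying the first inequality after reversing the roles using duality.

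Granting $(\ast)$, the forward direction concludes cleanly: summing and halving yields $\de(L, M \hconv N) \le \de(L, M) + \de(L, N)$, and the hypothesis of equality forces both inequalities of $(\ast)$ to be equalities simultaneously, whence both $(L, M, N)$ and $(M, N, L)$ are normal. The main obstacle is the careful justification of $(\ast)$ with its sharp equality criterion: a priori the two homomorphisms $L \conv M \conv N \to (M \hconv N) \conv L$ have different degrees, so one must use the head/socle structure of $L \conv M \conv N$ together with R-matrix uniqueness to compare them and to identify the ``degree loss'' $k$ with the failure of normality, which is exactly what Lemma~\ref{lem: normality} encodes.
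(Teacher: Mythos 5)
Your reduction of the lemma to the two unconditional inequalities
$\La(L,M\hconv N)\le\La(L,M)+\La(L,N)$ and $\La(M\hconv N,L)\le\La(M,L)+\La(N,L)$, combined with Lemma~\ref{lem: normality}, is exactly the proof given in the cited source \cite[Lemma 2.23]{KKOP21C}; the paper itself supplies no argument and simply defers to that reference. Your backward implication is complete: a two-term sequence is automatically normal, so Lemma~\ref{lem: normality} turns normality of $(L,M,N)$ and of $(M,N,L)$ into the two equalities, and summing gives $\de(L,M\hconv N)=\de(L,M)+\de(L,N)$.

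The gap is in your justification of $(\ast)$, on which the whole forward implication rests, and it is in fact circular. Write $p\cl M\conv N\twoheadrightarrow M\hconv N$ and $\psi=(\id_M\conv\Rr_{L,N})\circ(\Rr_{L,M}\conv\id_N)$. Since $\Rr_{M,N}$ factors as $M\conv N\overset{p}{\twoheadrightarrow}M\hconv N\hookrightarrow N\conv M$ and $-\conv L$ is exact, the composite $(p\conv\id_L)\circ\psi$ is non-zero if and only if $(\Rr_{M,N}\conv\id_L)\circ\psi=\Rr_{L,M,N}$ is non-zero, i.e.\ if and only if $(L,M,N)$ is normal --- which is precisely what the forward direction must detect and which may fail; in that case the first inequality of $(\ast)$ is strict and there is no second non-zero map to compare. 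The appeal to Proposition~\ref{prop: l=r} is also misplaced: it gives $\HOM(A\conv B,B\conv A)=\cor\,\Rr_{A,B}$ for \emph{simple} $A,B$, not one-dimensionality of $\HOM(L\conv M\conv N,(M\hconv N)\conv L)$; and if your composite did factor through $L\conv(M\hconv N)$ and were non-zero, one-dimensionality would force $k=0$ outright, so the mechanism "proportional up to a shift $k\ge0$'' cannot produce a non-strict inequality. The correct proof of $(\ast)$ passes to the affinization $\widehat{L}$: the unnormalized $\rmR_{\widehat{L},-}$ is a natural transformation, so $(p\conv\widehat{L})\circ\rmR_{\widehat{L},M\conv N}=\rmR_{\widehat{L},M\hconv N}\circ(\widehat{L}\conv p)$, and comparing the powers of $z_{\widehat{L}}$ extracted in normalizing $\rmR_{\widehat{L},M}$, $\rmR_{\widehat{L},N}$, $\rmR_{\widehat{L},M\conv N}$ and $\rmR_{\widehat{L},M\hconv N}$ yields the inequality, with the defect equal to a non-negative power of $z_{\widehat{L}}$ whose vanishing is exactly normality. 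Either carry this out or cite the inequalities directly from \cite{KKKO15,KKKO18,KK19}; with $(\ast)$ granted, the rest of your argument is correct.
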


\section{Cuspidal modules and $t$-quantized Cartan matrix} \label{sec: Cuspidal}
Let  $(\cm,\wl,\Pi,\cwl,\Pi^\vee)$ be an arbitrary finite Cartan datum.
In this section, we first recall the \emph{cuspidal modules}
over quiver Hecke algebras categorifying  the  dual PBW vectors. Then we
will prove that the $\de$-invariants among cuspidal modules
associated to Dynkin quivers are encoded in $(\tde_{i,j}(t))_{i,j \in I}$, and $(\de_{i,j}(t))_{i,j \in I}$ informs us of the information on the composition length of convolution product of
pair of the cuspidal modules. To prove them, we will use the combinatorial properties and statistics of AR-quivers in a crucial way.

\subsection{Cuspidal modules} \label{subsec: cuspidal}
Let $R$ be the quiver Hecke algebra of type of   $\cm$.
Then, in \cite{BKM12,McNa15,Kato12,KR09}, the following theorem is proved:

\begin{theorem} \label{thm: cuspidal}
For  any  reduced expression  $\uw_0=s_{i_1}\cdots s_{i_\ell}$ of $w_0 \in \sfW$, there exists a family of self-dual affreal simple modules
$$  \{ S_{\uw_0}(\beta) \mid\beta\in\Phi^+  \} $$
satisfying the following properties:
\ben
\item For each  $\be\in\Phi^+$,
we have $$\Upomega([S_{\uw_0}(\beta)]) = \pbw[{[\uw_0]}](\be) \quad \text{ and }\quad \wt( S_{\uw_0}(\be)) =-\be.$$
\item  For any \ex $\um=\seq{m_\beta}\in \Z_{\ge 0}^{\Phi^+}$, there exists a unique non-zero $R$-module homomorphism \ro up to grading shift\/\rfm
\begin{align*}
& \overset{\to}{\calS}_{\uw_0}(\um)\seteq S_{\uw_0}(\be_\ell)^{\conv m_{\beta_\ell}} \conv \cdots \conv  S_{\uw_0}(\beta_1)^{\conv m_{\beta_1}}  \\
& \hspace{20ex} \overset{\Rr_{\um}}{\To}
\overset{\gets}{\calS}_{\uw_0}(\um)\seteq   S_{\uw_0}(\beta_1)^{\conv m_{\beta_1}} \conv \cdots \conv  S_{\uw_0}(\beta_\ell)^{\conv m_{\beta_\ell}}
\end{align*}
where $\beta_k=\beta_k^{\uw_0}$.
Moreover, ${\rm Im}(\Rr_{\um})$ is simple and
${\rm Im}(\Rr_{\um}) \simeq \soc\left(  \overset{\gets}{\calS}_{\uw_0} ( \um )  \right) \simeq \head\left(  \overset{\to}{\calS}_{\uw_0}(\um)  \right)$ up to a grading shift.
 \item For any simple $R$-module $M$, there exists a unique \ex $\um \in \Z_{\ge 0}^{\Phi^+}$ such that
$$M \simeq    \hd\left(  \overset{\to}{\calS}_{\uw_0}(\um) \right)\qt{up to a grading shift.}$$
\ee
\end{theorem}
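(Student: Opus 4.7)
The plan is to realize each $S_{\uw_0}(\be)$ as a determinantial module, read off property (1) from the categorification of unipotent quantum minors, and then deduce the PBW-type statements (2) and (3) from the BKM recursion~\eqref{eq: BKMc} together with the $\rmR$-matrix theory of Section~\ref{Sec: quiver Hecke}. Concretely, for each $1\le k\le\ell$ let $k^-$ be the predecessor index defined before~\eqref{eq: D[s]} and set
$$S_{\uw_0}(\be_k^{\uw_0}) \seteq \sfM(\uw_{\le k}\varpi_{i_k},\;\uw_{\le k^-}\varpi_{i_k}).$$
By Proposition~\ref{prop: cat of D} each such module is self-dual real simple, and by the theorem cited right after Definition~\ref{Def: aff} it admits an affinization of degree $2\sfd_{i_k}$, hence is affreal. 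Property (1) is then immediate from Proposition~\ref{prop: cat of D} together with~\eqref{eq: D[s]}, since $\Upomega([\sfM(\mu,\zeta)]) = D(\mu,\zeta)$ and $D_{\uw_0}(k,k^-) = \pbw[{[\uw_0]}](\be_k^{\uw_0})$, while the weight statement follows from the weight-graded structure of $\Upomega$ in Theorem~\ref{Thm: categorification}.

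For property (2), I would induct on $|\um|=\sum_\be m_\be$, using Lemma~\ref{lem: normality} to reduce to a pairwise statement. The key pairwise input is that for every $\al<_{\uw_0}\be$ the $\rmR$-matrix
$$\Rr\cl S_{\uw_0}(\be)\conv S_{\uw_0}(\al) \longrightarrow S_{\uw_0}(\al)\conv S_{\uw_0}(\be)$$
is non-zero. When $\pair{\al,\be}$ is $[\uw_0]$-minimal for some $\ga\in\Phi^+$, the BKM identity~\eqref{eq: BKMc} of Theorem~\ref{thm: minimal pair dual pbw} identifies the class of $\mathrm{Im}(\Rr)$ in the Grothendieck ring with a non-zero multiple of $[\pbw[{[\uw_0]}](\ga)]$, forcing $\Rr\ne0$; for general $\al<_{\uw_0}\be$ the same conclusion follows by expanding $[\pbw(\be)\pbw(\al)]$ in the dual PBW basis and observing that $\pbw(\al)\pbw(\be)$ is the leading term with respect to $\prec^\ttb_{[\uw_0]}$. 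Uniqueness (up to scalar) of the pairwise $\Rr$ is Proposition~\ref{prop: l=r}. To promote these pairwise maps to a normal sequence on the full product, I would apply Proposition~\ref{prop: not vanishing} iteratively and verify the $\La$-additivity hypothesis of Lemma~\ref{lem: normality}; then Lemma~\ref{Lem: normal head socle} gives simplicity of the image of $\Rr_\um$ and identifies it with both the head of $\overset{\to}{\calS}_{\uw_0}(\um)$ and the socle of $\overset{\gets}{\calS}_{\uw_0}(\um)$. Property (3) is then a Grothendieck-ring consequence of (1)--(2): by Theorem~\ref{Thm: categorification} the classes of self-dual simple $R$-modules bijectively label $\bfB^{\rm up}$, and the unitriangular relation between the classes $[\head(\overset{\to}{\calS}_{\uw_0}(\um))]$ and the dual PBW monomials $\prod_\be\pbw[{[\uw_0]}](\be)^{m_\be}$ forces each simple module to correspond to a unique $\um$.

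The main obstacle is the $\La$-additivity needed to upgrade pairwise non-vanishing into normality of an arbitrary-length sequence of cuspidals: the identity
$$\La\bl\head(L_1,\dots,L_{r-1}),L_r\br = \sum_{1\le j<r}\La(L_j,L_r)$$
demanded by Lemma~\ref{lem: normality} is not automatic. The route I would take is to compute $\La(S_{\uw_0}(\al),S_{\uw_0}(\be)) = -(\al,\be)$ for $\al<_{\uw_0}\be$, which is the value predicted by the degree statistics implicit in~\eqref{eq: BKMc}, and then observe that the head of a $\uw_0$-ordered convolution of cuspidals categorifies the product $\prod\pbw[{[\uw_0]}](\be)^{m_\be}$ with weight equal to the weight sum, so additivity of $\La$ reduces to additivity of $(\cdot,\cdot)$ under that weight. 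The combinatorial scaffolding supplied by the Hasse quiver $\Upsilon_{[\uw_0]}$ and its convex order in Theorem~\ref{thm: OS17}, together with the explicit description of $\head_{[Q]}$ developed later in Sections~\ref{Sec: classical AR}--\ref{sec: Degree poly}, is precisely what controls the induction and makes the $\La$-additivity computable term by term.
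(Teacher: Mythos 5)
First, note that the paper does not prove Theorem~\ref{thm: cuspidal} at all: it is quoted from \cite{BKM12,McNa15,Kato12,KR09}, so there is no internal proof to compare against. Your construction of $S_{\uw_0}(\be^{\uw_0}_k)$ as the determinantial module $\sfM(\uw_{\le k}\varpi_{i_k},\uw_{\le k^-}\varpi_{i_k})$, the affreality via the theorem following Definition~\ref{Def: aff}, and the derivation of property (1) from Proposition~\ref{prop: cat of D} together with \eqref{eq: D[s]} are all correct and consistent with how these modules are realized in the literature.

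However, there are two genuine gaps. The first is in property (2): your route to the $\La$-additivity hypothesis of Lemma~\ref{lem: normality} is circular. You propose to deduce $\La\bl\hd(L_1,\dots,L_{r-1}),L_r\br=\sum_{j<r}\La(L_j,L_r)$ from ``$\La=-(\cdot,\cdot)$ on weights'', but since $\La(M,N)=2\tLa(M,N)-(\wt(M),\wt(N))$ and each pairwise term in the ordered product has $\tLa=0$ by Proposition~\ref{prop: tLa}, the required identity is exactly the assertion $\tLa\bl\hd(L_1,\dots,L_{r-1}),L_r\br=0$; this is a nontrivial cuspidality statement about the head of the partial product, and identifying that head is precisely what normality is supposed to deliver. (Your supporting claim that the head ``categorifies the product $\prod_\be\pbw[{[\uw_0]}](\be)^{m_\be}$'' is also false: the whole convolution does, while its head categorifies a single basis element lying strictly below that product in the unitriangular expansion \eqref{eq: comp series}.) The cited proofs avoid this circle entirely, working instead with cuspidality of characters, Mackey-type restriction arguments and homological properties of the standard modules $\overset{\to}{\calS}_{\uw_0}(\um)$, or, in \cite{Kato12}, with geometric extension algebras and reflection functors. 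The second gap is in property (3): you invoke a bijection between self-dual simple modules and $\bfB^{{\rm up}}$, but as the introduction of this paper itself notes, that correspondence is available only for symmetric $\bfg$ over a field of characteristic $0$, whereas the theorem is needed here for all finite types including $BCFG$ and arbitrary $\bfk$. The correct argument for (3) uses only the unitriangularity of the classes $[\hd(\overset{\to}{\calS}_{\uw_0}(\um))]$ with respect to the dual PBW monomials together with a counting argument in each weight space, and makes no reference to the dual canonical basis.
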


Note that, for each $\um=\seq{m_1,\ldots,m_\ell}\in \Z_{\ge 0}^\ell$,
$\head(  q^{s_\um} \overset{\to}{\calS}_{\uw_0}(\um)  )$ is a self-dual simple $R$-module, where
$$ s_\um  \seteq  \sum_{k=1}^\ell {\sfd_{\be^{\uw_0}_k}}\;\dfrac{ m_k (m_k-1)}{2}.$$
We denote
$$S_{\uw_0}(\um)\seteq\head(  q^{s_\um} \overset{\to}{\calS}_{\uw_0}(\um)).$$

We call $S_{\uw_0}(\be) $   the  \emph{cuspidal module} associated to $\uw_0$. It is known (see \cite[Theorem 5.8]{Oh18}) that the set $  \{ S_{\uw_0}(\be)
\mid \beta\in\Phi^+ \} $ does depend only on the commutation class $[\uw_0]$; i.e.,
\begin{align*}
S_{\uw_0}(\beta)   \simeq S_{\uw'_0}(\beta)  \quad \text{
if $\uw_0$ and $\uw'_0$ are commutation equivalent.}
\end{align*}
Thus, for any commutation class $\cc$, $S_{\cc}(\be)$ for $\be \in \Phi^+$ is well-defined.

\Lemma[{\cite[Proposition 5.7]{Oh18}}] \label{lem: non-com simple}
Let $\uw_0$ be a reduced expression of $w_0$.
If $\al$ and $\be$ are incomparable with respect to $\preceq_{[\uw_0]}$,
then
$S_{[\uw_0]}(\al)$ and $S_{[\uw_0]}(\be)$ commute.
\enlemma

By this lemma, $S_{\uw_0}(\um)$,
as well as $\overset{\gets}{\calS}_{\uw_0}(\um)$ and
$\overset{\to}{\calS}_{\uw_0}(\um)$, depends only on the commutation class $[\uw_0]$.
Hence,  $S_{[\uw_0]}(\um)$, $\overset{\gets}{\calS}_{[\uw_0]}(\um)$ and
$\overset{\to}{\calS}_{[\uw_0]}(\um)$ are well-defined.

\begin{proposition}[{\cite{BKM12,McNa15,Oh18}}] \label{prop: minimal,simple}
Let $\cc$ be a commutation class.
\bnum
\item  For any $\cc$-minimal \pr $\pair{\al,\be}$ for $\ga \in \Phi^+$, there exist exact sequences of $R$-modules
\begin{align*}
\ba{l}
0 \to S_\cc(\ga)\to S_\cc(\be) \conv S_\cc(\al)\to
S_\cc(\be)   \hconv   S_\cc(\al)\to 0,\\
0 \to S_\cc(\be)  \hconv    S_\cc(\al)\to S_\cc(\al) \conv S_\cc(\be)\to S_\cc(\ga)
\to 0
\ea
\end{align*}
and
$$ {[S_{{\cc}}(\al)]}\cdot{[S_{{\cc}}(\be)]} =  [S_{\cc}(\ga)] +    [S_{\cc}(\al) \sconv  S_{\cc}(\be)]    \qt{in $K(R\gmod)$}$$
up to grading shifts.
\item   For any \ex $\um$, we have
\begin{align}\label{eq: comp series}
[\overset{ \to  }{\calS}_{\cc}(\um)] \in [S_\cc(\um)] +
\sum_{\um' \prec^{\ttb}_{\cc} \um} \Z_{\ge 0} [S_\cc(\um')].
\end{align} up to  grading shifts.
\item   If  $\um$ is a $\cc$-simple sequence, then
$\overset{\to}{\calS}_\cc(\um)$ and $\overset{\gets}{\calS}_\cc(\um)$
are isomorphic up to a grading shift and they are simple.
\ee
\end{proposition}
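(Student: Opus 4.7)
The plan is to derive (i) directly from the product formula of Theorem~\ref{thm: minimal pair dual pbw} via the categorification isomorphism $\Upomega$ of Theorem~\ref{Thm: categorification}, and then to establish (ii) and (iii) together by induction on the bi-lexicographic order $\prec^\ttb_\cc$.

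For (i), applying $\Upomega^{-1}$ to~\eqref{eq: BKMc} yields the Grothendieck ring identity
\[
[S_\cc(\al)]\,[S_\cc(\be)] - q^{-(\al,\be)}\,[S_\cc(\be)]\,[S_\cc(\al)] = q^{-p_{\be,\al}}\bigl(1 - q^{2(p_{\be,\al}-(\al,\be))}\bigr)\,[S_\cc(\ga)]
\]
up to grading shifts. Since each cuspidal module is affreal, Proposition~\ref{prop: simple head} guarantees that both $S_\cc(\al)\conv S_\cc(\be)$ and $S_\cc(\be)\conv S_\cc(\al)$ have simple heads and simple socles; the head in each case is the self-dual simple associated (via Theorem~\ref{thm: cuspidal}) to the exponent $\pair{\al,\be}$, which is distinct from the exponent $\ga \in \Z_{\ge 0}^{\Phi^+}$ and hence from $S_\cc(\ga)$. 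Positivity of multiplicities in $K(R\gmod)$ combined with the above identity then forces each convolution to have composition length exactly $2$, with composition factors $S_\cc(\al)\hconv S_\cc(\be)$ as head and $S_\cc(\ga)$ as the remaining socle factor. This produces the two asserted exact sequences, and the Grothendieck identity is immediate.

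For (ii) and (iii), I would proceed by induction on $\um$ with respect to $\prec^\ttb_\cc$, proving (ii) first. The base case $\um = \delta_\be$ for $\be \in \Phi^+$ is trivial. For a non-$\cc$-simple $\um$, using the convexity of $\preceq_\cc$ on $\Phi^+$ (Theorem~\ref{thm: OS17}) I would select indices $i > j$ with $\um_{\be_i}, \um_{\be_j} > 0$ such that $\pair{\be_j, \be_i}$ is a $\cc$-minimal pair for some $\ga' = \be_i + \be_j \in \Phi^+$; commuting non-comparable cuspidal factors through Lemma~\ref{lem: non-com simple} allows me to position this pair adjacently inside the standard monomial $\overset{\to}{\calS}_\cc(\um)$. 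Substituting the short exact sequence from (i) then produces a two-step filtration of $\overset{\to}{\calS}_\cc(\um)$ whose subquotients are standard monomials $\overset{\to}{\calS}_\cc(\um'')$ for exponents $\um''$ strictly smaller than $\um$ in $\prec^\ttb_\cc$. The inductive hypothesis applied to each subquotient supplies the desired non-negative expansion; the coefficient $1$ at $[S_\cc(\um)]$ is inherited from the surjection $\overset{\to}{\calS}_\cc(\um)\twoheadrightarrow S_\cc(\um)$ of Theorem~\ref{thm: cuspidal}.

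Statement (iii) is then a direct corollary: when $\um$ is $\cc$-simple the sum in (ii) is empty, so $[\overset{\to}{\calS}_\cc(\um)] = [S_\cc(\um)]$ in $K(R\gmod)$ and therefore $\overset{\to}{\calS}_\cc(\um) \simeq S_\cc(\um)$ up to a grading shift; the identical reasoning applied to $\overset{\gets}{\calS}_\cc(\um)$, whose socle is also $S_\cc(\um)$, completes the assertion. The technical heart of the argument, and the main obstacle, lies in verifying combinatorially that the rewriting step in the inductive proof of (ii) genuinely lowers the exponent in $\prec^\ttb_\cc$: one must track carefully how replacing the adjacent pair $(\be_j,\be_i)$ either by $\ga'$ (with multiplicities reduced accordingly) or by the commuted pair $(\be_i,\be_j)$ affects the bi-lexicographic profile of $\um$, using both the initial and terminal refinements built into Definition~\ref{def: bi-orders}. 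This is precisely the content of the BKM--McNamara analysis~\cite{BKM12, McNa15}, which I expect to carry over to the present symmetrizable setting using only the axiomatic properties of affreal cuspidal modules assembled in Sections~\ref{Sec: quiver Hecke} and~\ref{sec: Cuspidal}.
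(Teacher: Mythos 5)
The paper offers no proof of this proposition --- it is quoted from \cite{BKM12,McNa15,Oh18} --- so your argument must stand on its own, and as written it has a genuine gap in part (i). Applying $\Upomega^{-1}$ to \eqref{eq: BKMc} gives
\[
[S_\cc(\al)\conv S_\cc(\be)]-q^{-(\al,\be)}[S_\cc(\be)\conv S_\cc(\al)]=\bigl(q^{-p_{\be,\al}}-q^{\,p_{\be,\al}-2(\al,\be)}\bigr)[S_\cc(\ga)],
\]
which controls only the \emph{difference} of the two classes: any common composition factor of the two convolutions cancels on the left, so ``positivity of multiplicities'' cannot bound the composition length. The identity does show that $S_\cc(\ga)$ occurs in each product, but it is consistent with both products carrying arbitrarily many additional matching factors. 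The length-two assertion is precisely the hard content of \cite[Theorem 4.7]{BKM12} and \cite{McNa15}: there one first proves (a form of) part (ii) --- that every composition factor of $\overset{\to}{\calS}_\cc(\um)$ is some $S_\cc(\um')$ with $\um'\preceq^\ttb_\cc\um$, established by induction on height via restriction functors and convexity, independently of the PBW relation --- and only then uses $\cc$-minimality of $\pair{\al,\be}$ to shrink the possible factors of $S_\cc(\be)\conv S_\cc(\al)$ to $S_\cc(\pair{\al,\be})$ and $S_\cc(\ga)$, reading the multiplicity of the latter off \eqref{eq: BKMc}. Your order of deduction ((i) formally, then (ii) by induction on (i)) inverts this and leaves the key step unsupported. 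There is also a factual slip: you claim both convolutions have head $S_\cc(\pair{\al,\be})$, but by Proposition~\ref{prop: simple head} the head of $S_\cc(\al)\conv S_\cc(\be)$ equals the socle of $S_\cc(\be)\conv S_\cc(\al)$, and the second exact sequence identifies that head as $S_\cc(\ga)$.

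Two further problems affect your induction for (ii). Substituting the exact sequence from (i) into $\overset{\to}{\calS}_\cc(\um)$ yields a subquotient with the simple module $S_\cc(\be)\hconv S_\cc(\al)$ sitting inside a convolution, which is not again a standard monomial $\overset{\to}{\calS}_\cc(\um'')$, so the inductive step does not close as stated; and the straightening coefficient $q^{-p_{\be,\al}}-q^{\,p_{\be,\al}-2(\al,\be)}$ is not a positive Laurent polynomial, so a purely Grothendieck-group rewriting argument also loses positivity. Both issues are repaired in the literature by the restriction-functor (``cuspidal system'') machinery, which is exactly the input your proposal defers to without supplying.
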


\begin{proposition} [{\cite[Theorem 4.7]{BKM12}}] \label{prop: tLa}
For any commutation class $\cc$ and a \pr $\pair{\al,\be}$, we have
\begin{align} \label{eq: tla be al general}
\tLa( S_\cc(\be),S_\cc(\al)  ) =0 \qtq[and hence]\La( S_\cc(\be),S_\cc(\al)  ) = -(\al,\be).
\end{align}
In particular, if the \pr $\pair{\al,\be}$ is $\cc$-minimal  for a positive root $\ga \in \Phi^+$, we have short exact sequences \ro including grading shifts\rfm
\begin{equation}\label{eq: 2 sess}
\begin{aligned}
 & 0 \to q^{p_{\be,\al}- (\be,\al)} S_\cc(\ga) \to  S_\cc(\be) \conv S_\cc(\al)
\to S_\cc(\pair{\al,\be})  \to 0, \\
& 0 \to q^{-(\be,\al)} S_\cc(\pair{\al,\be}) \to  S_\cc(\al) \conv S_\cc(\be) \to q^{-p_{\be,\al}} S_\cc \to 0,
\end{aligned}
\end{equation}
which tells
\eqn
&&\tLa( S_\cc(\al),S_\cc(\be)  ) = p_{\be,\al},\quad
 \La( S_\cc(\al),S_\cc(\be)  ) =2 p_{\be,\al}  -(\al,\be),\qtq\\
&&\de( S_\cc(\al),S_\cc(\be) )= p_{\be,\al}  -(\al,\be).
\eneqn
\end{proposition}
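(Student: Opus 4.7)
The plan is to prove the two assertions in sequence. First I will establish the general identity $\tLa(S_\cc(\be),S_\cc(\al))=0$ for every \pr $\pair{\al,\be}$ by a self-duality argument on the head of the convolution product. Second, restricting to $\cc$-minimal pairs, I will lift the Berenstein--Kazhdan--McNamara commutation identity (Theorem~\ref{thm: minimal pair dual pbw}) from $A_q(\n)$ to the Grothendieck ring $K(R\gmod)$ via the categorification $\Upomega$ of Theorem~\ref{Thm: categorification}, and combine it with the simple head/socle structure from Proposition~\ref{prop: simple head} and the restriction on composition factors imposed by $\cc$-minimality to extract the two short exact sequences and read off the values of $\La$, $\tLa$, $\de$.

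For Step~1 I would view $\pair{\al,\be}$ as the \ex $\um$ with $\um_\al=\um_\be=1$ and all other entries $0$. Since each multiplicity is at most $1$, the normalizing shift satisfies $s_\um=0$, so Theorem~\ref{thm: cuspidal}\,(2)--(3) identifies
$$S_\cc(\pair{\al,\be}) \;=\; \head\bigl(S_\cc(\be) \conv S_\cc(\al)\bigr) \;=\; S_\cc(\be) \hconv S_\cc(\al),$$
and this simple module is self-dual by construction. On the other hand, Lemma~\ref{lem: tLa} asserts that $q^{\tLa(S_\cc(\be),S_\cc(\al))}\bigl(S_\cc(\be) \hconv S_\cc(\al)\bigr)$ is self-dual. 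Since a self-dual simple graded module admits no nontrivial self-dual grading shift, these two statements force $\tLa(S_\cc(\be),S_\cc(\al))=0$, and the formula $\La(S_\cc(\be),S_\cc(\al))=-(\al,\be)$ is then immediate from $\tLa=\tfrac12\bigl(\La + (\wt,\wt)\bigr)$ together with $\wt S_\cc(\gamma)=-\gamma$.

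For Step~2, assuming $\pair{\al,\be}$ is $\cc$-minimal for $\ga=\al+\be$, I will apply $\Upomega^{-1}$ to~\eqref{eq: BKMc} to get
$$[S_\cc(\al)\conv S_\cc(\be)] - q^{-(\al,\be)}[S_\cc(\be)\conv S_\cc(\al)] \;=\; q^{-p_{\be,\al}}\bigl(1-q^{2(p_{\be,\al}-(\al,\be))}\bigr)[S_\cc(\ga)]$$
in $K(R\gmod)$. By Proposition~\ref{prop: minimal,simple}\,(2), every composition factor of either $\overset{\to}{\calS}_\cc(\um)$ or $\overset{\gets}{\calS}_\cc(\um)$ is of the form $S_\cc(\um')$ with $\um'\preceq^\ttb_\cc\pair{\al,\be}$, and the $\cc$-minimality hypothesis collapses this list to $\{S_\cc(\pair{\al,\be}),\,S_\cc(\ga)\}$. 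Writing
$$[S_\cc(\be)\conv S_\cc(\al)]=[S_\cc(\pair{\al,\be})] + B\,[S_\cc(\ga)],\quad [S_\cc(\al)\conv S_\cc(\be)]=C\,[S_\cc(\pair{\al,\be})]+D\,[S_\cc(\ga)]$$
with $B,C,D\in\Z_{\ge 0}[q^{\pm1}]$ (the leading coefficient~$1$ being dictated by Step~1), matching coefficients of $[S_\cc(\pair{\al,\be})]$ immediately gives $C=q^{-(\al,\be)}$; the remaining relation, combined with the bar-involution on $K(R\gmod)$ induced by the contragredient anti-involution $\psi$, pins down $B=q^{p_{\be,\al}-(\be,\al)}$ and $D=q^{-p_{\be,\al}}$. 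Proposition~\ref{prop: simple head} then upgrades these Grothendieck-class identities to the two claimed short exact sequences.

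The main obstacle will be the multiplicity assertion at the end of Step~2: one must know that the graded multiplicity of $S_\cc(\ga)$ is a pure monomial in $q$ rather than a positive Laurent polynomial. The BKM identity by itself only constrains $D-q^{-(\al,\be)}B$, leaving a one-parameter family of nonnegative candidates $B=q^{p_{\be,\al}-(\be,\al)}+E$, $D=q^{-p_{\be,\al}}+q^{-(\al,\be)}E$ with $E\in\Z_{\ge 0}[q^{\pm1}]$ and $\bar E=E$. The extraneous part $E$ should be ruled out by combining (i) the bar-involution induced by $\psi$, which exchanges $[S_\cc(\be)\conv S_\cc(\al)]$ with a grading shift of $[S_\cc(\al)\conv S_\cc(\be)]$, with (ii) the precise value $\La(S_\cc(\be),S_\cc(\al))=-(\al,\be)$ from Step~1, which via the degree of the $\rmR$-matrix pins down the lowest $q$-power that can occur in $B$. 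Nonnegativity then forces $E=0$, completing the extraction of the length-two short exact sequences.
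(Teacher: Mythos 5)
The paper offers no argument for this proposition; it is quoted from \cite[Theorem 4.7]{BKM12} (and its unshifted form is restated as Proposition~\ref{prop: minimal,simple}\,(1)). So your proposal is really a reconstruction, and it should be judged on its own terms. Your Step~1 is correct and self-contained given the paper's toolkit: since $s_{\um}=0$ for the \ex $\um=\pair{\al,\be}$, the module $S_\cc(\pair{\al,\be})=\head(S_\cc(\be)\conv S_\cc(\al))=S_\cc(\be)\hconv S_\cc(\al)$ is self-dual, Lemma~\ref{lem: tLa} says $q^{\tLa(S_\cc(\be),S_\cc(\al))}\bigl(S_\cc(\be)\hconv S_\cc(\al)\bigr)$ is self-dual, and a finite-dimensional self-dual simple graded module admits no nonzero self-dual shift; hence $\tLa=0$ and $\La=-(\al,\be)$.

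Step~2, however, has a genuine gap exactly where you flag it, and the repair you sketch does not close it. The BKM identity together with Proposition~\ref{prop: minimal,simple}\,(2) and linear independence of the classes of (shifts of) the two simples only determines $C=q^{-(\al,\be)}$ and the combination $D-q^{-(\al,\be)}B$; the one-parameter ambiguity $B=q^{p_{\be,\al}-(\be,\al)}+E$, $D=q^{-p_{\be,\al}}+q^{-(\al,\be)}E$ survives. The bar involution induced by $\star$ sends $[S_\cc(\be)\conv S_\cc(\al)]$ to $q^{(\al,\be)}[S_\cc(\al)\conv S_\cc(\be)]$ and therefore yields only $\overline{E}=E$, which kills nothing. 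Likewise, the value $\La(S_\cc(\be),S_\cc(\al))=-(\al,\be)$ from Step~1 is a statement about the degree of $\Rr_{S_\cc(\be),S_\cc(\al)}$, whose image is the \emph{head} $S_\cc(\pair{\al,\be})$; it carries no information about the graded multiplicity of the other composition factor $S_\cc(\ga)$, so nonnegativity plus this degree cannot force $E=0$. What is actually needed is the multiplicity-one (equivalently, length-two and socle-identification) statement of Proposition~\ref{prop: minimal,simple}\,(1), i.e.\ the ungraded form of the two exact sequences, which is itself a nontrivial theorem of \cite{BKM12,McNa15}. Once you grant that, $B$ and $D$ are monomials, your coefficient comparison (using $-p_{\be,\al}<p_{\be,\al}-2(\al,\be)$) pins them down to $q^{p_{\be,\al}-(\be,\al)}$ and $q^{-p_{\be,\al}}$, and Proposition~\ref{prop: simple head} places the head and socle correctly; the values of $\tLa(S_\cc(\al),S_\cc(\be))$ and $\de$ then follow. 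So the proof becomes correct if, instead of attempting to derive multiplicity one from the bar involution, you simply invoke Proposition~\ref{prop: minimal,simple}\,(1) and use your Steps~1--2 to compute the grading shifts it leaves unspecified.
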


\begin{proposition} [{\cite[Corollary 5.12]{Oh18}}] \label{prop: simple socle}
Let $\um^{(k)}$ $(k=1,2,3)$ be distinct sequences such that $\wt_{\cc}(\um_{k_1}^{(a)}) = \wt_{\cc}(\um^{(b)})$ $(a,b \in \{1,2,3\})$,
$\dg_{\cc}(\um^{(j)})=1$  and
$\dg_{\cc}(\um^{(3)})>1$ and $\um^{(j)} \prec_{\cc}^\ttb \um^{(3)}$ $(j=1,2)$. If there exist non-zero $R(\gamma)$-homomorphisms $($up to grading shift$)$
$$  \overset{ \to  }{\calS}_{\cc}(\um^{(j)}) \longrightarrow  \overset{ \to  }{\calS}_{\cc}(\um^{(3)})\qt{for $j=1,2$,}$$
and
$\overset{ \to  }{\calS}_{\cc}(\um^{(k)})$ $(k=1,2,3)$ have the common simple head,
then we have
$$ 
 \overset{ \to  }{\calS}_{\cc}(\um^{(j)})  \hookrightarrow   \overset{ \to  }{\calS}_{\cc}(\um^{(3)})  \qquad \text{
for $j=1,2$.} $$
\end{proposition}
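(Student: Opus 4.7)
The plan is to reduce the statement to an application of Proposition~\ref{prop: inj}, by matching $M_i \conv N_i \seteq \overset{\to}{\calS}_{\cc}(\um^{(j)})$ for $j = 1, 2$ and $M \conv N \seteq \overset{\to}{\calS}_{\cc}(\um^{(3)})$. The three ingredients that proposition demands---composition length two for the source modules, a common simple socle, and distinct simple heads---should all flow from the hypotheses $\dg_\cc(\um^{(j)}) = 1$, the ``common simple head'' condition, and the distinctness of the $\um^{(k)}$.

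First, I will unpack $\dg_\cc(\um^{(j)}) = 1$: every \ex strictly below $\um^{(j)}$ in $\prec_\cc^\ttb$ must itself be $\cc$-simple, and Proposition~\ref{prop: minimal,simple}(ii) then expresses $[\overset{\to}{\calS}_\cc(\um^{(j)})]$ in the Grothendieck group as $[S_\cc(\um^{(j)})]$ plus a nonnegative integer combination of $[S_\cc(\um')]$ for such $\cc$-simple $\um'$. Combining this with the fact that $\overset{\to}{\calS}_\cc(\um^{(j)})$ has simple head $S_\cc(\um^{(j)})$ (Theorem~\ref{thm: cuspidal}(2)) and the shared simple socle $S_\cc(\um_\star)$ supplied by the common-head hypothesis (read, as explained below, as a common simple socle), one pins down the composition series of $\overset{\to}{\calS}_\cc(\um^{(j)})$ as exactly $\{S_\cc(\um^{(j)}),\,S_\cc(\um_\star)\}$, of length two.

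With this structural picture in hand, the remaining hypotheses of Proposition~\ref{prop: inj} are immediate: the socles of $\overset{\to}{\calS}_\cc(\um^{(j)})$ for $j = 1, 2$ agree by construction, while the heads $S_\cc(\um^{(1)})$ and $S_\cc(\um^{(2)})$ are forced to differ by the distinctness of the $\um^{(j)}$ together with the uniqueness in Theorem~\ref{thm: cuspidal}(3). The hypothesised nonzero homomorphisms $\vphi_j$ then satisfy all premises of Proposition~\ref{prop: inj}, whose conclusion yields the desired embeddings $\overset{\to}{\calS}_\cc(\um^{(j)}) \hookrightarrow \overset{\to}{\calS}_\cc(\um^{(3)})$ for $j = 1, 2$.

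The main obstacle I anticipate is the correct interpretation of the ``common simple head'' hypothesis: a literal reading is incompatible with distinctness of the $\um^{(k)}$ together with Theorem~\ref{thm: cuspidal}(3), so one must reinterpret it---matching the framework of Proposition~\ref{prop: inj}---as asserting that the three modules $\overset{\to}{\calS}_\cc(\um^{(k)})$ share a common simple socle $S_\cc(\um_\star)$. A secondary technical point will be verifying that the multiplicity of $S_\cc(\um_\star)$ in the composition series of $\overset{\to}{\calS}_\cc(\um^{(j)})$ is exactly one, which I expect to fall out of the simplicity of the socle together with the Grothendieck-group expression from Proposition~\ref{prop: minimal,simple}(ii); once these two points are settled, the argument is a direct appeal to Proposition~\ref{prop: inj}.
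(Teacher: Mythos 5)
The paper itself gives no proof of this statement---it is imported from \cite[Corollary 5.12]{Oh18}---so there is nothing in-paper to compare against line by line; but your route through Proposition~\ref{prop: inj} is clearly the intended one (in the source the statement is a corollary of the very proposition that \ref{prop: inj} corrects), and your diagnosis of the ``common simple head'' hypothesis is right: a literal reading contradicts the distinctness of the $\um^{(k)}$ together with the uniqueness in Theorem~\ref{thm: cuspidal}\,(3), so it must be read as a common simple socle of the $\overset{\to}{\calS}_{\cc}(\um^{(k)})$. Your identification of the heads as $S_{\cc}(\um^{(j)})$ and of their pairwise non-isomorphism is also correct.

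The genuine gap is the length-two step. From $\dg_{\cc}(\um^{(j)})=1$ and Proposition~\ref{prop: minimal,simple}\,(2) you only learn that every composition factor of $\overset{\to}{\calS}_{\cc}(\um^{(j)})$ other than the head $S_{\cc}(\um^{(j)})$ is of the form $S_{\cc}(\um')$ with $\um'$ a $\cc$-simple \ex; you do not learn that $\um_\star$ is the only such $\um'$. The $\cc$-degree records the longest chain below $\um^{(j)}$, so several pairwise incomparable $\cc$-simple \exs can lie below $\um^{(j)}$ while the degree stays $1$, and Proposition~\ref{prop: simple head}\,(iii) controls only the multiplicities of the head and the socle, not the existence of a third composition factor. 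Your proposed fix (simplicity of the socle plus the Grothendieck-group expression) addresses the multiplicity of $S_{\cc}(\um_\star)$ but not this extra-factor problem. Without length two, the key move in the \ref{prop: inj}-argument---that a non-injective $\varphi_j$ must have image isomorphic to the head of its source---collapses, so length two has to be either added as a hypothesis or verified separately (in the paper's applications it holds because the relevant $\um^{(j)}$ have a unique \ex below them). A second, smaller issue: Proposition~\ref{prop: inj} is stated for a convolution of exactly two simple modules, whereas $\um^{(j)}$ may have more than two entries (e.g.\ the triple occurring in case ${\rm (xi)}'$ of type $B_n$), so you must either regroup the cuspidal factors into two commuting simple blocks or rerun the proof of \ref{prop: inj} with the source replaced by any length-two module whose simple head and simple socle each occur with multiplicity one, checking along the way that the target $\overset{\to}{\calS}_{\cc}(\um^{(3)})$ still has a simple socle of multiplicity one when it is a product of more than two cuspidal modules.
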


For a $\rmQ$-datum $Q=(\Dynkin,\xi)$
and $(i,p)\in(\Gamma_Q)_0$, we set
$$     S_Q(i;p) \seteq S_{[Q]}(\beta)\qt{where $\phi_Q(i,p)=(\be,0)$.}$$

The following is a main theorem of this paper:

\begin{theorem} \label{thm: determimant}
Let $Q = (\Dynkin,  \xi)$ be a Dynkin quiver of an arbitrary type. Then we have
\eq
&&\de( S_Q(i;p) , S_Q(j;s)) =  \tde^\triangle_{i,j}[\;|p-s|\;]
\qt{for any $(i,p)$, $(j,s)\in(\Gamma_Q)_0$.}
\label{eq: de ADE}
\eneq
Moreover, for a \prq $\pair{\al,\be}$ with $\phi_Q(i,p)=(\al,0)$ and $\phi_Q(j,s)=(\be,0)$, we have the following statements:
\bna
\item \label{it: head} $S_Q(\al) \hconv S_Q(\be) \simeq S_Q(\hd_Q(\pair{\al,\be}))$
up to  a grading shift.
\item \label{it: length2} If $ \de^\triangle_{i,j}[\;|p-s|\;]  = \max(\sfd_i,\sfd_j)$, we have $\ell(S_Q(\al)  \conv  S_Q(\be)) =2$.
\item \label{it: length>2} If $ \de^\triangle_{i,j}[\;|p-s|\;]  > \max(\sfd_i,\sfd_j)$, we have $\ell(S_Q(\al)  \conv  S_Q(\be))>2$.
\item label{it: length1} If $ \de^\triangle_{i,j}[\;|p-s|\;] =0$, then  $S_Q(\al)  \conv  S_Q(\be)$ is simple.
\ee
Here, $\ell(M)$ denotes the composition length of $M$ for  $M \in R\gmod$.
\end{theorem}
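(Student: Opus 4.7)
The plan is to prove the theorem by a case-by-case analysis indexed by the relative position of $\pair{\al,\be}$ in $\Gamma_Q$. Recall that by Theorem~\ref{thm:Main} (proved in the classical and $E_6$ cases in the previous section, and manageable separately for the remaining types using the explicit formulas of Theorem~\ref{thm: BC denom}), the right-hand side $\tde^\triangle_{i,j}[\,|p-s|\,]$ equals $\de^\triangle_{i,j}[\,|p-s|\,]$ except in the corner case $|p-s|=\sfh$ and $i=j^*$, which is outside the range~\eqref{eq: range}. Hence it suffices to match $\de(S_Q(\al),S_Q(\be))$ against $\de^\triangle_{i,j}[\,|p-s|\,]$, and this number, via $\ttO_{|p-s|}^{\Dynkin}(i,j)$, is controlled entirely by $\dg_Q(\pair{\al,\be})$ and the picture in the classifications (i)--(xi) of~\eqref{eq: dist al, be 1}--\eqref{eq: dist al be 2} together with their BCFG analogues. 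First I would handle the trivial case $\dg_Q(\pair{\al,\be})=0$: by Lemma~\ref{lem: non-com simple} the modules $S_Q(\al)$ and $S_Q(\be)$ commute, so $\de=0$, matching the vanishing of $\de^\triangle_{i,j}[\,|p-s|\,]$ observed in the classification.

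For $\dg_Q(\pair{\al,\be})=1$, Proposition~\ref{prop: tLa} directly computes
$\de(S_Q(\al),S_Q(\be))=p_{\be,\al}-(\al,\be)$, and the head of $S_Q(\al)\conv S_Q(\be)$ is $S_Q(\hd_Q(\pair{\al,\be}))$ with the length-$2$ composition series given by~\eqref{eq: 2 sess}. Running through the tables of values for $(\al,\be)$ in Table~\ref{table: al,be D}, Table~\ref{table: al,be B}, Table~\ref{table: al,be C} together with the entries~\eqref{eq: p be al B} and~\eqref{eq: p be al C} for $p_{\be,\al}$, one checks case-by-case that the resulting number equals $\max(\sfd_i,\sfd_j)$, which matches the coefficient $\de^\triangle_{i,j}[\,|p-s|\,]$ dictated by $\ttO^{\Dynkin}_{|p-s|}(i,j)=1$ in those positions. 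This simultaneously yields~(a), (b) (via Proposition~\ref{prop: length 2}), and the $\dg=1$ instance of the main formula~\eqref{eq: de ADE}.

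The main obstacle is the $\dg_Q(\pair{\al,\be})=2$ strata, corresponding to cases (x) and (xi) (and their BCFG analogues), where the classification produces several intermediate exponents $\um^{(j)}\prec_Q^\ttb\pair{\al,\be}$ all sharing the common head $S_Q(\hd_Q(\pair{\al,\be}))$. Here I would apply Proposition~\ref{prop: simple socle} to produce embeddings $\overset{\to}{\calS}_Q(\um^{(j)})\hookrightarrow\overset{\to}{\calS}_Q(\pair{\al,\be})$, then invoke Proposition~\ref{prop: inj} with the non-isomorphic heads coming from the two distinct maximal intermediate exponents (guaranteed by Proposition~\ref{prop: many pairs} in the $B_n$ case and its $C_n,D_{n+1},E_6$ counterparts) to force $\ell(S_Q(\al)\conv S_Q(\be))\ge 3$, proving~(c). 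Then by comparing $\tLa$-values from Proposition~\ref{prop: tLa} along the chain and using Lemma~\ref{lem: de additive} together with the normal-sequence criterion of Lemma~\ref{lem: normality}, one computes exactly $\de(S_Q(\al),S_Q(\be))=2\max(\sfd_i,\sfd_j)$, matching the coefficient~$2\max(\sfd_i,\sfd_j)$ predicted by Remark~\ref{remark: dist 2} and~\eqref{eq: d poly BCFG}. Statement~(a) in general follows because, at each step of the chain $\um^{(0)}\prec_Q^\ttb\cdots\prec_Q^\ttb\pair{\al,\be}$, Proposition~\ref{prop: minimal,simple} provides a surjection onto $S_Q(\um^{(0)})=S_Q(\hd_Q(\pair{\al,\be}))$, and (d) is immediate: if $\tde^\triangle_{i,j}[\,|p-s|\,]=0$ then $\de=0$, which by Lemma~\ref{lem: tLa} and Proposition~\ref{prop: simple head} forces $S_Q(\al)\conv S_Q(\be)$ to be simple.
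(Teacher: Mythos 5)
Your overall skeleton (classify $\pair{\al,\be}$ by relative position in $\Gamma_Q$, dispose of the commuting case, treat $\dg_Q=1$ and $\dg_Q=2$ separately, use normal sequences and Proposition~\ref{prop: inj} for the length claims) is the right one and matches the paper, but three of your key steps do not go through as stated. First, for the $\dg_Q(\pair{\al,\be})=1$ stratum you assert that Proposition~\ref{prop: tLa} ``directly computes'' $\de(S_Q(\al),S_Q(\be))=p_{\be,\al}-(\al,\be)$ with composition series given by~\eqref{eq: 2 sess}. That formula is proved there only for a $[Q]$-\emph{minimal} pair of a positive root $\ga=\al+\be$; a $\dg_Q=1$ pair whose head is a pair of commuting roots (cases (ii), (iv), (vii) and their primed analogues) or has multiplicity $2$ (cases (v)$'$, (vi)$'$) is not covered, and the formula is false there: e.g.\ in case (ii)$'$ one has $(\al,\be)=0$ and $p_{\be,\al}=0$, yet $\de=2$. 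The actual computation of $\tLa(S_Q(\al),S_Q(\be))$ in these cases requires choosing an auxiliary root $\eta$ with $\pair{\eta,s_1}$ minimal for $\al$, composing the injection and surjection from~\eqref{eq: 2 sess} through $S_Q(s_1)\conv S_Q(\eta)\conv S_Q(\be)$, invoking Proposition~\ref{prop: not vanishing}, and reading off the grading shift; this is a genuine piece of work your proposal skips.

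Second, your opening reduction of~\eqref{eq: de ADE} to the degree polynomial $\de^\triangle_{i,j}$ via Theorem~\ref{thm:Main} is only valid for classical types and $E_6$: by Remark~\ref{rmk: remained type} and Remark~\ref{rmk: F4 do not hold} the identity $\tde_{i,j}(t)=\de_{i,j}(t)+\delta_{i,j^*}\sfd_i t^{\sfh-1}$ \emph{fails} for $F_4$ and $G_2$ (e.g.\ $\tde_{2,3}[9]=4$ while $\de_{2,3}[9]=6$ in $F_4$), so matching $\de(S_Q(\al),S_Q(\be))$ against the combinatorial statistic would produce the wrong value precisely for those pairs; these must be computed directly with normal-sequence arguments, as is done in \S\,\ref{sec: Cuspidal}. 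Third, for item (c) you propose to feed Proposition~\ref{prop: inj} with two incomparable intermediate exponents supplied by Proposition~\ref{prop: many pairs} ``and its $C_n$ counterpart'' --- but no such $C_n$ counterpart exists: for type $C_n$ a $\dg_Q=2$ pair admits a \emph{unique} intermediate exponent, so Proposition~\ref{prop: inj} is not applicable and one must instead embed a known length-$2$ convolution product and use that the common socle occurs only once in the composition series. Until these three points are repaired the argument has real gaps in exactly the cases the theorem is new for.
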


\begin{remark} \label{rmk: simple product}
In Theorem~\ref{thm: determimant}~\eqref{it: head}, $S_Q(\hd_Q(\pair{\al,\be}))$ is isomorphic to a convolution product of mutually commuting cuspidal modules, since $\hd_Q(\pair{\al,\be}$ is $[Q]$-simple.
For $\g$ of simply-laced type, we refer the reader to \cite[Theorem 5.18]{Oh18}.
\end{remark}

The simply-laced case of this theorem is obtained in
\cite[Theorem 4.12]{KKOP21C}, \cite[Theorem 1.2]{Fuj19}, \cite{Oh18}.
In the next subsection, we will give a proof of this theorem  in BCFG case.

\smallskip

\begin{corollary}  Recall~\eqref{eq: double pole}.
For $\g$ of type $B_n$, $C_n$ or $D_{n+1}$ and $(i;p),(j;s)\in(\Gamma_Q)_0$,
the $R$-module $S_Q(i;p) \conv S_Q(j;s)$ is of composition length $>2$ if and only if  %
\begin{align*}  
2 \le i,j \le n-1,  \ i+j \ge n+1, \   \sfh+2 -i-j \le  |p-s|   \le  i+j   \text{ and }    |p-s|  \equiv_2 i+j.
\end{align*}
\end{corollary}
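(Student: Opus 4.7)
The plan is to combine Theorem~\ref{thm: determimant}(b)--(c) with the explicit formulas of Theorem~\ref{thm: BC denom} to reduce the assertion to an elementary arithmetic comparison of exponents. By Theorem~\ref{thm: determimant}(b)--(c), the composition length of $S_Q(i;p)\conv S_Q(j;s)$ strictly exceeds $2$ if and only if
$$\de^\triangle_{i,j}[\,|p-s|\,]=\max(\sfd_i,\sfd_j)\,\ttO^\triangle_{|p-s|}(i,j)>\max(\sfd_i,\sfd_j),$$
which is equivalent to the coefficient of $t^{|p-s|-1}$ in $\de_{i,j}^\triangle(t)$ being strictly larger than $\max(\sfd_i,\sfd_j)$. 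By Theorem~\ref{thm:Main}, $\de_{i,j}^\triangle(t)$ differs from $\tde_{i,j}^\triangle(t)$ only in the coefficient of $t^{\sfh-1}$; the regime we will isolate has $|p-s|-1\le i+j-1\le 2n-3<\sfh-1$, so this discrepancy never interferes, and it suffices to analyze when the coefficient of $t^{|p-s|-1}$ in $\tde_{i,j}^\triangle(t)$ exceeds $\max(\sfd_i,\sfd_j)$.

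Next, by Theorem~\ref{thm: BC denom}, whenever $\min(i,j)<n$ and $\g$ is of type $B_n$, $C_n$ or $D_{n+1}$, one has
$$\tde_{i,j}^\triangle(t)=\max(\sfd_i,\sfd_j)\sum_{s=1}^{\min(i,j)}\Bigl(t^{|i-j|+2s-1}+\delta(\max(i,j)<n)\,t^{2n-i-j+2s-1}\Bigr),$$
so each monomial appearing on the right carries coefficient exactly $\max(\sfd_i,\sfd_j)$, and the coefficient of a given power of $t$ strictly exceeds $\max(\sfd_i,\sfd_j)$ precisely when that power is hit by both sums. If $\max(i,j)=n$ only one progression survives and no excess is possible; if $i=j=n$ in type $D_{n+1}$ the formula~\eqref{eq: D formula} reduces to a single arithmetic progression of coefficient-$1$ monomials, again ruling out excess. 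Hence the strict inequality forces $\max(i,j)\le n-1$.

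Finally, I would carry out the elementary interval-overlap analysis, taking $i\le j$ without loss of generality. The two exponent progressions
$$\{j-i+2s-1\mid 1\le s\le i\}\qtq \{2n-i-j+2s-1\mid 1\le s\le i\}$$
are both of step $2$, have common parity $i+j+1\pmod 2$, and span $[j-i+1,\,j+i-1]$ and $[2n-i-j+1,\,2n+i-j-1]$, respectively. Since $j\le n-1$, direct comparison of the endpoints shows that the two intervals share a common element of the correct parity if and only if $i+j\ge n+1$; in this case the overlap is exactly the parity class of $i+j+1$ inside $[2n-i-j+1,\,i+j-1]$. The constraint $i+j\ge n+1$ together with $j\le n-1$ automatically forces $i\ge 2$. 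Setting $k=|p-s|$ and using $\sfh=2n$ in types $B_n$, $C_n$, $D_{n+1}$, the overlap condition translates to $\sfh+2-i-j\le k\le i+j$ with $k\equiv i+j\pmod 2$, matching the statement of the corollary. The only things requiring care are the boundary-case checks at $\max(i,j)=n$ and $i=j=n$ described above, together with verifying that in the specified range the parity of $|p-s|$ arising from $(i,p),(j,s)\in\hDynkin_0$ is automatically $i+j\pmod 2$; both are routine, and beyond these there is no real obstacle.
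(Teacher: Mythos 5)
Your proposal is correct and follows essentially the same route as the paper: reduce via Theorem~\ref{thm: determimant}(b)--(d) (and the fact that $\de_{i,j}[k]$ is a multiple of $\max(\sfd_i,\sfd_j)$) to the condition that the coefficient of $t^{|p-s|-1}$ exceeds $\max(\sfd_i,\sfd_j)$, and then identify when the two step-$2$ exponent progressions in \eqref{eq: BC formula}/\eqref{eq: D formula} overlap. The paper simply cites the precomputed overlap condition \eqref{eq: double pole} together with the reductions $\de^{B_n}_{i,j}(t)=2\,\de^{D_{n+1}}_{i,j}(t)$ and $\ttO^{B_n}=\ttO^{C_n}$, whereas you re-derive that arithmetic directly; your boundary checks at $\max(i,j)=n$ and at the exponent $\sfh-1$ (where $\de$ and $\tde$ differ) are the right ones and work out as you indicate.
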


The corollary below for ADE-case was also proved in  \cite[Lemma 9.12]{FHOO}  (see also \cite{KKOP21C}).

\begin{corollary} \label{cor: dual-phen}
For $\phi_Q(\al_a,0)=(i,p)$ and $\phi_Q(\al_b,0)=(j,s)$ with $a\ne b$, we have
\begin{align}\label{eq: dual-phenomenon}
\tde_{i^*,j}[\sfh-|p-s|] =\tde_{i,j^*}[\sfh-|p-s|] =0.
\end{align}
\end{corollary}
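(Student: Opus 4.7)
The plan is to reduce both claimed vanishings to a single coefficient identity via the symmetries of $\tfb_{i,j}$ recorded in Corollary~\ref{cor: bij property}, and then to establish this identity by combining Theorem~\ref{thm: inv} with the hypothesis that $\al_a,\al_b$ are distinct simple roots.

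First, I would combine $\tfb_{i,j}(\sfh-u)=\tfb_{i,j^*}(u)$ (item (iv) of Corollary~\ref{cor: bij property}) with the symmetry $\tfb_{i,j}=\tfb_{j,i}$ coming from $\tuB(t)=\tuB(t)^\top$ to obtain the chain
$\tfb_{i^*,j}(u)=\tfb_{j,i^*}(u)=\tfb_{j,i}(\sfh-u)=\tfb_{i,j}(\sfh-u)=\tfb_{i,j^*}(u).$
Evaluating at $u=\sfh-|p-s|-1$ shows that both claimed quantities equal $\tfb_{i,j}(|p-s|+1)$, so the corollary reduces to the single vanishing $\tfb_{i,j}(|p-s|+1)=0$.

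Next, Theorem~\ref{thm: inv} rewrites the target coefficient as
$\tfb_{i,j}(|p-s|+1)=(\varpi_i,\tau_Q^{\ell}(\ga_j^Q))=\sfd_i\cdot\supp_i\bl\tau_Q^{\ell}(\ga_j^Q)\br$
with $\ell=(|p-s|+\xi_j-\xi_i)/2$; the parity condition is met because $p\equiv\xi_i$ and $s\equiv\xi_j$ modulo $2$. Using $\tau_Q^{(\xi_j-s)/2}(\ga_j^Q)=\al_b$ (the hypothesis that $\al_b$ sits at $(j,s)$), this becomes $\sfd_i\cdot\supp_i\bl\tau_Q^{k}(\al_b)\br$ for a specific exponent $k=(|p-s|+s-\xi_i)/2$. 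The base case is when $\al_a$ is at the "top" of its orbit at residue $i$, namely $p=\xi_i$; then $\ga_i^Q=\al_a$ forces $a=i$, the exponent $k$ collapses to $0$ in the subcase $p\ge s$, and
$\tfb_{i,j}(|p-s|+1)=\sfd_i\,\supp_i(\al_b)=\sfd_i\,\delta_{a,b}=0$
by the hypothesis $a\ne b$. The general case is reduced to the base case by iteratively reflecting $Q$ at sources of $Q$ different from $a$ (possible since $\tfb_{i,j}$ is independent of the choice of $Q$), until $a$ becomes a source; the symmetric subcase $p<s$ is handled by making $b$ a source instead and invoking $\tfb_{i,j}=\tfb_{j,i}$.

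The hardest part is the bookkeeping in the reduction step, because reflecting $Q$ at a source $k\ne a,b$ that is adjacent to $a$ changes the label at $(i,p)$ from $\al_a$ to a non-simple root $s_k\al_a=\al_a+\al_k$, so the naive claim "$\al_a$ remains at $(i,p)$" fails. The clean way to carry this out is to track the root-lattice invariant $\tfb_{i,j}(|p-s|+1)$ rather than the roots themselves, using the Main Theorem~\ref{thm: determimant} to interpret the coefficient as a $\de$-invariant between cuspidal modules whenever the positions in $\Gamma_Q$ exist, and appealing to the type-by-type labelings established in Sections~\ref{Sec: classical AR} and~\ref{sec: Labeling BCFG} when they do not. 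For the classical and $E_6$ types where the Main Theorem is proved, the vanishing can also be read off directly from the explicit formulas for $\tde_{i,j}(t)$ in Theorem~\ref{thm: BC denom}, whose supports are arithmetic progressions that systematically avoid the shift $|p-s|+1$ coming from a pair of distinct simple roots.
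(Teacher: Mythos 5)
Your opening reduction is correct and is a genuinely different entry point from the paper's: combining the symmetry $\tuB_{i,j}(t)=\tuB_{j,i}(t)$ with item (4) of Corollary~\ref{cor: bij property} does give $\tfb_{i^*,j}(u)=\tfb_{i,j^*}(u)=\tfb_{i,j}(\sfh-u)$, so both quantities in \eqref{eq: dual-phenomenon} collapse to the single coefficient $\tfb_{i,j}(|p-s|+1)$ (and when $|p-s|\ge\sfh$ the statement is vacuous since $\tfb_{i,j}(v)=0$ for $v\le0$). Your base case is also fine: if $p=\xi_i\ge s$ then $\ga_i^Q=\al_a$ forces $a=i$ to be a source, the exponent of $\tau_Q$ vanishes, and Theorem~\ref{thm: inv} gives $\tfb_{i,j}(|p-s|+1)=(\varpi_i,\al_b)=\sfd_i\,\delta_{a,b}=0$.

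The reduction of the general case to this base case, however, has a genuine gap, and you have put your finger on it without resolving it. Reflecting at a source $t$ relocates the simple roots: $\al_a$ sits in $\Gamma_{s_tQ}$ at the position formerly occupied by $s_t\al_a$, so after one reflection the pair $(\al_a,\al_b)$ acquires new coordinates with, in general, different residues \emph{and} a different distance. The quantity you propose to ``track'' is therefore not preserved: in type $A_3$ with $Q\colon 1\to2\to3$ and the pair $(\al_1,\al_2)$, one reflection at the source $1$ turns $\tfb_{1,1}(3)$ into $\tfb_{3,2}(4)$, and no identity relating these two numbers is ever established. Neither fallback closes this: interpreting $\tfb_{i,j}(|p-s|+1)=\tde_{i,j}[\,|p-s|+2\,]$ as a $\de$-invariant via Theorem~\ref{thm: determimant} only shows it is a nonnegative integer (the reflection invariance \eqref{eq: reflection} transports $\de$-invariants of the \emph{same} roots, i.e.\ the distance $|p-s|$, not $|p-s|+2$), and ``reading the vanishing off the explicit formulas'' presupposes knowing $|p-s|$ for every pair of simple roots in every $\Gamma_Q$, a computation you do not carry out. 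The paper's actual proof is quite different: it takes the $ADE$ case of \eqref{eq: dual-phenomenon} as known input (from [FHOO], [KKOP21C]) and transports it to $B_n$ and $C_n$ through the folding identity of Theorem~\ref{thm: tbij folding} together with the surgery matching $\Gamma_Q$ with $\Gamma_{\oQ}$ (Proposition~\ref{prop: surgery D to B} and its analogues), and checks $F_4$ and $G_2$ directly; none of these ingredients appears in your argument, and without them (or a fully worked-out reflection recursion) the general case remains unproved.
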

 \begin{proof}
Note that
\begin{eqnarray} &&
\parbox{80ex}{
\bnA
\item \label{it: D n inv} $\tde^{D_{n+1}}_{i,n}(t) =\tde^{D_{n+1}}_{i,n+1}(t)$ for $i<n$ and
\item \label{it: B C cor} $\tde^{C_n}_{i,j}(t) =  \kappa_{i,j} \tde^{B_n}_{i,j}(t)$, where
$\kappa_{i,j} = \bc
1/2 & \text{ if } i,j<n, \\
1 & \text{ if }  \min(i,j)<\max(i,j)=n,\\
2 & \text{ if }  i=j =n.
\ec
$
\ee
}\label{eq: simple obervation}
\end{eqnarray}

\noindent
(1) Let us assume that  $\max(a,b)<n$ and $\Dynkin$ is of type $B_n$ or $C_n$.
In this case, we have $\max(i,j)<n$ since $\sfd_{\al_a}=\sfd_{\al_a} \ne \sfd_{\al_n}$.
By regarding $\al_a \in \Phi^+_{B_n}$, we have $\al_a=\psi^{-1}(\al_a) \in \Phi_{D_{n+1}}^+$ and $\phi_{\uQ}^{-1}(\al_a,0) = (\im,p)$ for $\im =\pi^{-1}(i)$ by Proposition~\ref{prop: surgery D to B}.   
Similar observations hold for  $\al_b = \psi^{-1}(\al_b)$.  Since~\eqref{eq: dual-phenomenon} holds for $D_{n+1}$-case, we have
$$   \dfrac{1}{2} \tde^{B_n}_{i,j}[\sfh-|p-s|]=\tde^{C_n}_{i,j}[\sfh-|p-s|] =  \tde^{D_{n+1}}_{\im,\jm}[\sfh-|p-s|] =0. $$
Here the second equation follows from Theorem~\ref{thm: tbij folding}. Thus the assertion for this case follows.

\noindent
(2)  Let us assume that  $\max(a,b)=n$ and $\Dynkin$ is of type $B_n$ or $C_n$. Without loss of generality, we set $\al_a=\al_n$. Note that $n$ is a sink or source in $Q$ and $i=n$.
By regarding $\al_n\in \Phi^+_{B_n}$, we have $\{\al_n,\al_{n+1} \}=\psi^{-1}(\al_n) \subset \Phi_{D_{n+1}}^+$ and $\{ \phi^{-1}_{\uQ}(\al_{n},0),\phi^{-1}_{\uQ}(\al_{n+1},0) \}= \{(n,p),(n+1,p)\}$. Then, as in (1)-case, we have
\[  \tde^{B_n}_{n,j}[\sfh-|p-s|]= \tde^{C_n}_{n,j}[\sfh-|p-s|]  =  \tde^{D_{n+1}}_{n,\jm}[\sfh-|p-s|] + \tde^{D_{n+1}}_{n+1,\jm}[\sfh-|p-s|] =0.  \]

\noindent
(3) The cases for $F_4$ and $G_2$ can be directly checked by considering all Dynkin quivers of those types.
\end{proof}

\subsection{BCFG cases} Throughout this subsection, we consider the convolution products of $[Q]$-cuspidal modules, when
$Q$ is a Dynkin quiver of type BCFG.

\smallskip

We set $\phi_{Q}(i,p)=(\al,0)$ and $\phi_{Q}(j,s)=(\be,0)$.

\begin{proposition} \label{prop: minimal pair ga de}
Let $Q$ be any Dynkin quiver of types $B_n$ and $C_n$.
 For a Dynkin quiver $Q$, let $\pair{\al,\be}$ be a $[Q]$-minimal \prq for $\al+\be=\ga \in \Phi^+$. Then we have
$$
\de( S_Q(\al),S_Q(\be) ) = \max(\sfd_i,\sfd_j)  =  \tde_{i,j}^\Dynkin[\;|p-s|\;].$$
\end{proposition}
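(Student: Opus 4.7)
My plan is to establish the two equalities in the statement separately.

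For the first equality $\de(S_Q(\al),S_Q(\be)) = \max(\sfd_i,\sfd_j)$, I invoke Proposition~\ref{prop: tLa} (Berenstein--Kazhdan--McNamara), which tells us that for any $[Q]$-minimal pair $\pair{\al,\be}$ for $\ga=\al+\be$,
\[
\de(S_Q(\al),S_Q(\be)) \;=\; p_{\be,\al} - (\al,\be).
\]
Since $\ga=\al+\be$ forces $(\al,\be)=\sfd_\ga-\sfd_\al-\sfd_\be$, and since $\sfd_\al=\sfd_{\res^{[Q]}(\al)}=\sfd_i$, $\sfd_\be=\sfd_j$, the computation reduces to enumerating the finitely many admissible triples $(\sfd_\al,\sfd_\be,\sfd_\ga)$ that actually arise from a sum of two positive roots in types $B_n$ and $C_n$, reading off $p_{\be,\al}$ from~\eqref{eq: palbe}. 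Concretely: in $B_n$ only $(1,1,2),(1,2,1),(2,1,1),(2,2,2)$ arise, giving $(p_{\be,\al},(\al,\be))=(1,0),(0,-2),(0,-2),(0,-2)$, hence $\de=1,2,2,2$; in $C_n$ only $(1,1,1),(1,1,2),(1,2,1),(2,1,1)$ arise, giving $\de=1,1,2,2$. In every admissible case $p_{\be,\al}-(\al,\be)=\max(\sfd_\al,\sfd_\be)$.

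For the second equality $\max(\sfd_i,\sfd_j)=\tde^\Dynkin_{i,j}[\,|p-s|\,]$, I start from the explicit formula of Theorem~\ref{thm: BC denom},
\[
\tde_{i,j}(t) \;=\; \max(\sfd_i,\sfd_j)\!\!\sum_{s=1}^{\min(i,j)}\!\!\bigl(t^{|i-j|+2s-1}+\delta(\max(i,j)<n)\,t^{2n-i-j+2s-1}\bigr),
\]
and show that $|p-s|-1$ occurs as exactly one exponent in the sum. For this I use the BCFG analogue of Proposition~\ref{prop: dir Q cnt}: by the labeling descriptions of Proposition~\ref{prop: label12 B} (for $B_n$) and Proposition~\ref{prop: transpose} (for $C_n$), together with the surjection $\psi$ of Proposition~\ref{prop: surgery D to B} and Theorem~\ref{thm: D to B}, a $[Q]$-minimal pair $\pair{\al,\be}$ for $\ga$ must sit on a single sectional path in $\Gamma_Q$; this constrains the triple $(i,j,|p-s|)$ so that $|p-s|-1=|i-j|+2s-1$ for some $1\le s\le\min(i,j)$, or symmetrically $|p-s|-1=2n-i-j+2s-1$. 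An alternative book-keeping is to walk through the finitely many relative-position shapes ${\rm (i)}'$, ${\rm (iii)}'$, ${\rm (v)}'$, ${\rm (viii)}'$, ${\rm (ix)}'$ of~\eqref{eq: dist al, be 1 p} (and their $C_n$ transposes), which are exactly the minimal-pair configurations for a root $\ga$, and in each one read off $(i,j,|p-s|)$ from the picture and plug into the formula above.

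The main obstacle is the \emph{no-doubling} check: I must exclude that $|p-s|-1$ is an exponent of \emph{both} types $|i-j|+2s-1$ and $2n-i-j+2s'-1$, for such a coincidence would produce $\tde^\Dynkin_{i,j}[\,|p-s|\,]=2\max(\sfd_i,\sfd_j)$. Such a collision forces $s-s'=n-\max(i,j)$, so $\max(i,j)<n$ and the pair $\al,\be$ would have to lie at the intersection of two distinct shortest sectional paths in $\Gamma_Q$. Using the swing/shallow-path picture of $\Gamma_Q$ from Proposition~\ref{prop: label12 B} together with Remark~\ref{remark: dist 2} (which records that a doubled coefficient in the $D_{n+1}$-formula corresponds precisely to $\dg_{[Q]}=2$ configurations, never to minimal pairs), I will verify that the single-sectional-path location of a minimal pair rules out the collision, so the coefficient is exactly $\max(\sfd_i,\sfd_j)$. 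Composing the two equalities completes the proof.
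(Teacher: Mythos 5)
Your proof of the first equality coincides with the paper's: both invoke Proposition~\ref{prop: tLa} to get $\de(S_Q(\al),S_Q(\be))=p_{\be,\al}-(\al,\be)$ and then run a finite case check on root lengths; your bookkeeping via the identity $(\al,\be)=\sfd_\ga-\sfd_\al-\sfd_\be$ and the list of admissible triples is equivalent to the paper's enumeration of the explicit forms $\upve_a\pm\upve_b$, $\upve_a$, $2\upep_a$, and your tables of values are correct.

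For the second equality you diverge from the paper, which disposes of it in one line by citing Theorem~\ref{thm:Main} (so the content is that $\dg_{[Q]}(\pair{\al,\be})=1$ for a cover of the simple exponent $\ga$, whence $\de_{i,j}[\,|p-s|\,]=\max(\sfd_i,\sfd_j)$, and the comparison $\de_{i,j}=\tde_{i,j}$ up to the $\delta_{i,j^*}$-term has already been established in Theorems~\ref{thm: B_n distance polynomial} and~\ref{thm: distance C_n}). Your direct extraction of the coefficient from \eqref{eq: BC formula} is a legitimate, non-circular alternative, but two points need repair. First, the claim that a $[Q]$-minimal pair for $\ga$ ``must sit on a single sectional path'' contradicts Proposition~\ref{prop: dir Q cnt}: sectional pairs are $[Q]$-\emph{simple}, hence never covers of their sum; what is true (and what your fallback correctly uses) is that the minimal-pair configurations are exactly the shapes ${\rm (i)}'$, ${\rm (iii)}'$, ${\rm (viii)}'$, ${\rm (ix)}'$ (and, in $C_n$, ${\rm (v)}''$ in place of ${\rm (ix)}''$ — the two lists are not simply ``transposes'' of each other, since in $B_n$ the head of ${\rm (v)}'$ is $2\lan a\ran$, not a root). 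Second, the no-doubling check is genuinely the crux and is only promised, not carried out: e.g.\ for shape ${\rm (i)}'$ one has $|p-s|=i+j$, and the exponent $i+j-1$ would be doubled in \eqref{eq: BC formula} precisely when $i+j\ge n+1$ and $\max(i,j)<n$; one must argue that in that regime the same relative position is forced to be shape ${\rm (x)}'$ (degree $2$, not a minimal pair), so the collision never occurs for a minimal pair. This is exactly the content that the paper's citation of Theorem~\ref{thm:Main} absorbs, so your route is workable but re-proves a special case of that theorem and, as written, leaves its hardest step unverified.
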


\begin{proof}
The second equality follows from Theorem~\ref{thm:Main}. 

\smallskip

\noindent
(1) Consider the case when $ \sfd_\ga=2$, $\sfd_{\al}= \sfd_{\be} =1$ and $\g$ is of type $B_n$. Then we have $\res^{[Q]}(\al)=\res^{[Q]}(\be)=n$ and $\res^{[Q]}(\ga) <n$. Hence
$\ga=\ve_a+\ve_b$ and $\{ \al,\be \} =\{ \ep_a,\ep_b \}$ for $1 \le a \ne b \le n$.  Thus we have
$$   \de( S_Q(\al),S_Q(\be) )  = 1 - (\ve_a,\ve_b) =1 = \max(\sfd_\al,\sfd_\be).$$

\noindent
(2) For the remained cases of type $B_n$ or $C_n$, we need to show that
$$   \de( S_Q(\al),S_Q(\be) ) = -(\al,\be) = \max(\sfd_\al,\sfd_\be).$$
Then we have
\bna
\item $\{\al,\be\} = \{  \ve_a - \ve_b ,   \ve_b \pm \ve_c \}$ or $ \{  \ve_a - \ve_b ,   \ve_b  \}$ if $\g$ is of type $B_n$,
\item $\{\al,\be\} = \{  \ep_a - \ep_b ,   \ep_b \pm \ep_c \}$, $ \{  \ep_a - \ep_b , 2\ep_b\}$ or $ \{  \ep_a + \ep_b ,   \ep_a -\ep_b  \}$ if $\g$ is of type $C_n$,
\ee
for proper distinct $1 \le a,b,c \le n$. Then one can easily see that
$$-(\al,\be) = \max(\sfd_i,\sfd_j),$$
which completes our assertion. 
\end{proof}

Note that the arguments in Proposition~\ref{prop: minimal pair ga de} also works for $ADE$-cases.

\subsubsection{$B_n$ case} Let $Q$ be \emph{any} Dynkin quiver of type $B_n$.
 Under the assumption that
$$\text{a \prq $\pair{\al,\be}$ is neither $[Q]$-simple nor  $[Q]$-minimal  for $\al+\be =\ga \in \Phi^+$,}$$ it suffices to consider the cases
${\rm (ii)}'$, ${\rm (iv)}'$, ${\rm (v)}'$, ${\rm (vi)}'$, ${\rm (vii)}'$ in~\eqref{eq: dist al, be 1 p}, and ${\rm (x)}'$, ${\rm (xi)}'$ in~\eqref{eq: dist al be 2 p}. Recall the values $(\al,\be)$
and $p_{\be,\al}$ are recorded in Table~\ref{table: al,be B} and~\eqref{eq: p be al B}, and do not depend on the choice of $Q$.

\smallskip

Throughout this section, we set
$$ \phi_Q(i,p)=(\al,0)  \quad \text{ and } \quad   \phi_Q(j,s)=(\be,0).$$

\smallskip

In this subsection, we will prove Theorem~\ref{thm: determimant} by considering each case.

\smallskip

Consider the case  ${\rm (ii)}'$.
By Proposition~\ref{prop: dir Q cnt} and the fact that $\psi$ is additive, there exists $\eta \in \Phi^+$ such that
\bna
\item \label{it: ii a} the \prq $\pair{\eta,s_1}$ is   $[Q]$-minimal  for $\al$ and of the form ${\rm (i)}'$, or
\item \label{it: ii b} the \prq $\pair{\al,\eta}$ is a $[Q]$-minimal  for $s_1$ and of the form ${\rm (iii)}'$.
\ee

\begin{equation*}
\begin{aligned}
& \scalebox{0.79}{{\xy
(-20,0)*{}="DL";(-10,-10)*{}="DD";(0,20)*{}="DT";(10,10)*{}="DR";
"DT"+(-50,-4); "DT"+(50,-4)**\dir{.};
"DD"+(-40,-7); "DD"+(60,-7) **\dir{.};
"DT"+(-52,-4)*{\scriptstyle 1};
"DT"+(-52,-8)*{\scriptstyle 2};
"DT"+(-52,-12)*{\scriptstyle \vdots};
"DT"+(-52,-16)*{\scriptstyle \vdots};
"DT"+(-54,-37)*{\scriptstyle n};
"DL"+(15,-3); "DD"+(15,-3) **\dir{-};
"DR"+(11,-7); "DD"+(15,-3) **\dir{-};
"DT"+(11,-7); "DR"+(11,-7) **\dir{-};
"DT"+(11,-7); "DL"+(15,-3) **\dir{-};
"DT"+(14,-4); "DT"+(18,-4) **\crv{"DT"+(16,-2)};
"DT"+(16,-1)*{\scriptstyle 2};
"DT"+(11,-7); "DT"+(14,-4) **\dir{.};
"DT"+(18,-4); "DT"+(26,-12) **\dir{.};
"DT"+(26,-12); "DT"+(22,-16) **\dir{.};
"DT"+(28,-12)*{\scriptstyle \eta};
"DT"+(22,-12)*{\scriptstyle{{\rm (a)}}};
"DD"+(-22,26); "DD"+(-26,26) **\crv{"DD"+(-24,28)};
"DD"+(-24,29)*{\scriptstyle 2};
"DD"+(11,-7); "DD"+(-22,26) **\dir{.};
"DD"+(-26,26); "DD"+(-34,18) **\dir{.};
"DD"+(-36,18)*{\scriptstyle \eta};
"DD"+(-30,18)*{\scriptstyle{{\rm (b)}}};
"DD"+(11,-7); "DD"+(15,-3) **\dir{.};
"DL"+(15,-3); "DL"+(1,-17) **\dir{.};
"DL"+(-24,8); "DL"+(1,-17) **\dir{.};
"DL"+(19,-3)*{\scriptstyle \be};
"DR"+(7,-7)*{\scriptstyle \al};
"DL"+(30,0)*{{\rm(ii)'}};
"DL"+(15,-3)*{\bullet};
"DL"+(31,12.5)*{\bullet};
"DL"+(31,14.5)*{\scriptstyle s_1};
"DL"+(25,-13)*{\bullet};
"DL"+(25,-15)*{\scriptstyle  s_2 };
"DR"+(11,-7)*{\bullet};
\endxy}}
\end{aligned}
\end{equation*}

Assume ~\eqref{it: ii a} first. Then ~\eqref{eq: 2 sess} tells that there exists a homogeneous $R$-module monomorphism
$$    q^{p_{s_1,\eta} - (s_1,\eta)} S_Q(\al) \conv  S_Q(\be)  \hookrightarrow  S_Q(s_1) \conv  S_Q(\eta) \conv S_{Q}(\be).   $$
Note that $\pair{\eta,\be}$ is a $[Q]$-minimal \prq of the form ${\rm (i)}'$ for $s_2$. Thus we have
$$   S_Q(s_1) \conv  S_Q(\eta) \conv S_{Q}(\be)\twoheadrightarrow q^{-p_{\be,\eta}} S_{Q}(s_1) \conv S_{Q}(s_2)   $$
by~\eqref{eq: 2 sess}  again. Then the composition
\begin{align} \label{eq: head comp}
  q^{p_{s_1,\eta} - (s_1,\eta)}  S_Q(\al) \conv  S_Q(\be)  \hookrightarrow  S_Q(s_1) \conv  S_Q(\eta) \conv S_{Q}(\be)
 \twoheadrightarrow q^{-p_{\be,\eta}}  S_{Q}(s_1) \conv S_{Q}(s_2)
\end{align}
does not vanish by Proposition~\ref{prop: not vanishing}.
 Since $  S_{Q}(s_1) \conv S_{Q}(s_2)$ is self-dual simple by Lemma~\ref{lem: non-com simple},
we have $S_Q(\al) \hconv  S_Q(\be)  \simeq S_{Q}(s_1) \conv S_{Q}(s_2)$ up to a grading shift, and
$$
S_Q(\al) \conv  S_Q(\be)  \twoheadrightarrow q^{-p_{\be,\eta}-p_{s_1,\eta} +(s_1,\eta)}  S_{Q}(s_1) \conv S_{Q}(s_2),
$$
which tells
\begin{align*}
\tLa( S_Q(\al) ,  S_Q(\be)  ) = p_{\be,\eta}+p_{s_1,\eta} -(s_1,\eta),
\end{align*}
by Lemma~\ref{lem: tLa}. Thus~\eqref{eq: tla be al general} tells that
\begin{align*}
\de( S_Q(\al) ,  S_Q(\be)  )  = p_{\be,\eta}+p_{s_1,\eta} -(s_1,\eta)-(\al,\be).
\end{align*}
Since $ p_{\be,\eta}=p_{s_1,\eta} =0 $, $(s_1,\eta)=-2$ and $(\al,\be)=0$, we have
\begin{align*}
 \de( S_Q(\al) ,  S_Q(\be)  )  =2 = \max(\sfd_\al,\sfd_\be)=\de_{i,j}[p-s].
\end{align*}

Now let us assume ~\eqref{it: ii b}. Similarly, we have a homogeneous $R$-module homomorphism,
$$
q^{p_{\eta,\al}-(\eta,\al)} S_Q(s_2) \conv S_Q(s_1) \hookrightarrow S_Q(s_2) \conv S_Q(\eta) \conv S_Q(\al)
$$
Note that the \prq $\pair{s_2,\eta}$ is $[Q]$-minimal for $\be$ and of the form ${\rm (iii)}'$.  Thus we have
a non-zero composition of $R$-module homomorphism,
\begin{align} \label{eq: head comp2}
q^{p_{\eta,\al}-(\eta,\al)} S_Q(s_2) \conv S_Q(s_1) \hookrightarrow S_Q(s_2) \conv S_Q(\eta) \conv S_Q(\al) \twoheadrightarrow  q^{-p_{\eta,s_2}} S_Q(\beta) \conv S_Q(\al),
\end{align}
which tells $ S_Q(\beta) \sconv S_Q(\al) \simeq S_Q(s_1) \conv S_Q(s_2)$ up to a grading shift and
\begin{align*}
\de( S_Q(\al) ,  S_Q(\be)  ) & = p_{\eta,\al}+p_{\eta,s_2}-(s_2,s_1)-(\eta,\al) =2= \max(\sfd_\al,\sfd_\be)=\de_{i,j}[p-s].
\end{align*}

From~\eqref{eq: head comp} and~\eqref{eq: head comp2}, we have a non-zero $R$-module monomorphism (up to grading shifts)
$$ S_{Q}(s_2) \conv S_{Q}(s_1) \hookrightarrow S_Q(\be) \conv   S_Q(\al).$$
We can
apply the similar argument to  the length $1$ chains
of $\prec_Q^\ttb$-sequence, which are of the form   $\um \prec^\ttb_Q \um'$ and depicted in the
cases ${\rm (iv)}'$, ${\rm (vii)}'$, ${\rm (x)}'$ and ${\rm (xi)}'$,
for obtaining  non-zero $R$-module homomorphisms  (up to grading shifts)
\begin{align}\label{eq: non-zero}
 \overset{\gets}{\calS}_{[\uw_0]}(\um)  \to \overset{\gets}{\calS}_{[\uw_0]}(\um').
\end{align}
More precisely, those length $1$-chains are listed below:
$$
\bc
\pair{s_1,s_2} \prec_{Q}^\ttb \pair{\al,\be}  & \text{  ${\rm (iv)}'$ and ${\rm (vii)}'$ cases }, \\
\pair{\eta_1,\eta_2},\pair{\tau_1,\zeta_1} \prec_{Q}^\ttb \pair{\al,\be} & \text{  ${\rm (x)}'$ case}, \\
\pair{\eta_1,\eta_2} \prec_{Q}^\ttb \pair{\al,\be} ,\seq{s_1,\tau_1,\zeta_1} \prec_{Q}^\ttb \pair{\al,\be}, & \text{  ${\rm (xi)}'$ case}, \\
\pair{s_1,s_2} \prec_{Q}^\ttb  \pair{\eta_1,\eta_2}, \pair{s_1,s_2} \prec_{Q}^\ttb \seq{s_1,\tau_1,\zeta_1}, & \text{  ${\rm (xi)}'$ case continued}.
\ec
$$

In cases ${\rm (iv)}'$ and ${\rm (vii)}'$,
one can prove
$$    S_Q(\al) \hconv  S_Q(\be) \simeq    S_Q(s_1) \conv S_{Q}(s_2)  \quad \text{ (up to grading shifts)} $$
and
$$\de(S_Q(\al),S_Q(\be))=2$$ as in ${\rm (ii)}'$-case.
Moreover, by~\eqref{eq: comp series}
together with Proposition~\ref{prop: simple head},
we have
$$[S_Q(\al) \conv  S_Q(\be)]=   [S_Q(\al) \sconv  S_Q(\be)]  +[S_Q(s_2) \conv S_{Q}(s_1)],$$
for the cases ${\rm (ii)}'$, ${\rm (iv)}'$ and ${\rm (vii)}'$.  Thus the assertions in Theorem~\ref{thm: determimant} hold for those cases.

\begin{example}
Using $\Gamma_Q$ in Example~\ref{ex: Label B4}  and the \prqs corresponding to ${\rm (iv)}'$ and ${\rm (vii)}'$ in Example~\ref{ex: Uding B}, we can compute
$\de(S_Q(\al),S_Q(\be))$ as follows:
\ben
\item Since $p_{\srt{1,-2},\srt{2,-4}}=0$, $p_{\srt{1,-2},\srt{2,-3}}=0$, $(\srt{1,-2},\srt{2,-4})=-2$ and $(\srt{2,-3},\srt{1,-4})=0$, we have
$$\de(S_Q(\srt{2,-3}),S_Q(\srt{1,-4}) ) = 2.$$
\item Since $p_{\srt{1},\srt{2}}=1$, $p_{\srt{3},\srt{2}}=1$, $(\srt{1},\srt{2})=0$ and $(\srt{2},\srt{3})=0$, we have $$\de(S_Q(\srt{1,2}),S_Q(\srt{3}) ) = 2.$$
\ee
\end{example}

Now let us consider the case ${\rm (x)}'$.
In this case, we have non-zero $R$-module homomorphisms (up to grading shifts)
\begin{align}\label{eq: two non-zero}
S_Q(\eta_2) \conv S_Q(\eta_1)  \to S_Q(\be) \conv S_Q(\al)  \quad \text{ and } \quad S_Q(\zeta_1) \conv S_Q(\tau_1)\to  S_Q(\be) \conv S_Q(\al)
\end{align}
by~\eqref{eq: non-zero}
and hence the homomorphisms are injective, and
\begin{align} \label{eq: common socle}
S_Q(\al) \hconv S_Q(\be) & \simeq S_Q(\hd_Q(\al,\be))  \simeq S_Q(\ga)  \simeq S_Q(\eta_1) \hconv S_Q(\eta_2)  \simeq   S_Q(\tau_1) \hconv S_Q(\zeta_1)
\end{align}
(up to grading shifts) by Proposition~\ref{prop: inj} and Proposition~\ref{prop: simple socle}.  Moreover, by Proposition~\ref{prop:
minimal,simple}  and  Proposition~\ref{prop: inj}, we can conclude
that  $$\ell(S_Q(\al) \conv S_Q(\be) ) >2.$$
Similarly, we can conclude the same result for case ${\rm (xi)}'$.

Let us compute $\de(S_Q(\al),S_Q(\be))$ in case ${\rm (x)'}$: Since $(\al,\eta_1)$ is sectional,
 there exists $\zeta \in \Phi^+$ such that
\bna
\item \label{it: x a} the \prq $\pair{\kappa,\eta_1}$ is $[Q]$-minimal  for $\al$ and of the form ${\rm (i)'}$, or
\item \label{it: x b} the \prq $\pair{\al,\kappa}$ is  $[Q]$-minimal for $\eta_1$ and of the form ${\rm (iii)'}$.
\ee

\begin{align*} 
\scalebox{0.79}{{\xy
(-20,0)*{}="DL";(-10,-10)*{}="DD";(0,20)*{}="DT";(10,10)*{}="DR";
"DT"+(-50,-4); "DT"+(70,-4)**\dir{.};
"DD"+(-40,-6); "DD"+(80,-6) **\dir{.};
"DT"+(-52,-4)*{\scriptstyle 1};
"DT"+(-52,-8)*{\scriptstyle 2};
"DT"+(-52,-12)*{\scriptstyle \vdots};
"DT"+(-52,-16)*{\scriptstyle \vdots};
"DT"+(-53,-36)*{\scriptstyle n};
"DD"+(-8,2); "DD"+(0,-6) **\dir{-};
"DD"+(6,0); "DD"+(0,-6) **\dir{-};
"DD"+(6,0); "DD"+(12,-6) **\dir{-};
"DD"+(33,15); "DD"+(12,-6) **\dir{-};
"DD"+(-8,2); "DD"+(16,26) **\dir{.};
"DD"+(32,16); "DD"+(22,26) **\dir{.};
"DD"+(6,0); "DD"+(27,21) **\dir{.};
"DD"+(32,26); "DD"+(27,21) **\dir{.};
"DD"+(32,26); "DD"+(36,26) **\crv{"DD"+(34,28)};
"DD"+(34,29)*{\scriptstyle 2};
"DD"+(36,26); "DD"+(40,22) **\dir{.};
"DD"+(33,15); "DD"+(40,22) **\dir{.};
"DD"+(42,22)*{\scriptstyle \kappa};
"DD"+(37,22)*{\scriptstyle {\rm (a)}  };
"DD"+(6,0); "DD"+(-2,8) **\dir{.};
"DD"+(-20,26); "DD"+(-2,8) **\dir{.};
"DD"+(-20,26); "DD"+(-24,26) **\crv{"DD"+(-22,28)};
"DD"+(-22,29)*{\scriptstyle 2};
"DD"+(-30,22)*{\scriptstyle \kappa};
"DD"+(-25,22)*{\scriptstyle {\rm (b)}  };
"DD"+(-24,26); "DD"+(-28,22) **\dir{.};
"DD"+(-28,22); "DD"+(-8,2) **\dir{.};
"DD"+(16,26); "DD"+(22,26) **\crv{"DD"+(19,28)};
"DD"+(19,29)*{\scriptstyle 2};
"DD"+(-8,2)*{\bullet};
"DL"+(18,-1)*{{\rm(x)'}};
"DD"+(-5,2)*{\scriptstyle \be};
"DD"+(-2,8)*{\bullet};
"DD"+(-2,10)*{\scriptstyle \eta_2};
"DD"+(33,15)*{\bullet};
"DD"+(30,15)*{\scriptstyle \al};
"DD"+(27.5,18)*{\scriptstyle  \eta_1 };
"DD"+(27,21)*{\scriptstyle \bullet};
"DT"+(2,-36)*{\scriptstyle \bullet};
"DT"+(2,-38)*{\scriptstyle \tau_1};
"DT"+(-10,-36)*{\scriptstyle \bullet};
"DT"+(-10,-38)*{\scriptstyle  \zeta_1};
"DT"+(-4,-30)*{\scriptstyle \bullet};
"DT"+(-4,-32)*{\scriptstyle \ga};
\endxy}}
\end{align*}

Assume~\eqref{it: x a}. Then as in the previous cases, we have a homogeneous $R$-module homomorphism
$$
q^{p_{\eta_1,\kappa}-(\eta_1,\kappa)} S_Q(\al) \conv S_Q(\be) \hookrightarrow S_Q(\eta_1) \conv S_Q(\kappa) \conv S_Q(\be).
$$
Since the \prq $\pair{\kappa ,\be}$ is $[Q]$-minimal  for $\eta_2$ and of the form ${\rm (iii)'}$, we have a non-zero composition of $R$-module homomorphisms
$$
q^{p_{\eta_1,\kappa}-(\eta_1,\kappa)} S_Q(\al) \conv S_Q(\be) \hookrightarrow S_Q(\eta_1) \conv S_Q(\kappa) \conv S_Q(\be) \twoheadrightarrow
q^{-p_{\be,\kappa}} S_Q(\eta_1)  \conv   S_Q(\eta_2).
$$
Note that the homomorphism is surjective by Proposition~\ref{prop: simple socle} and~\eqref{eq: two non-zero}. Hence
we can obtain a non-zero  $R$-module homomorphism
$$
q^{p_{\eta_1,\kappa}-(\eta_1,\kappa)}  S_Q(\al) \conv S_Q(\be)  \twoheadrightarrow  q^{-p_{\be,\kappa}-p_{\eta_2,\eta_1}} S_Q(\ga),
$$
since $S_Q(\eta_1)  \hconv   S_Q(\eta_2) \simeq  q^{-p_{\eta_2,\eta_1}} S_Q(\ga)$.
Thus we have
\begin{align*}
\tLa(S_Q(\al),S_Q(\be)) = p_{\be,\kappa}+p_{\eta_2,\eta_1} + p_{\eta_1,\kappa}-(\eta_1,\kappa)
\end{align*}
and
\begin{align*}
\de(S_Q(\al),S_Q(\be)) = p_{\be,\kappa}+p_{\eta_2,\eta_1} + p_{\eta_1,\kappa}-(\eta_1,\kappa)-(\al,\be).
\end{align*}

Since $p_{\be,\kappa}=p_{\eta_2,\eta_1} = p_{\eta_1,\kappa}=0$ and $(\eta_1,\kappa)=(\al,\be)=-2$, we have
\begin{align}
\de(S_Q(\al),S_Q(\be)) = 4 = \dg_Q(\al,\be) \times \max(\sfd_\al,\sfd_\be)=\de_{i,j}[p-s].
\end{align}
Similarly, we have the same results for $\tLa(S_Q(\al),S_Q(\be)) $ and $\de(S_Q(\al),S_Q(\be))$ in  ~\eqref{it: x b} case.

Let us compute $\de(S_Q(\al),S_Q(\be))$ in case ${\rm (xi)}'$: As in case ${\rm (x)}'$, we have
\bna
\item \label{it: xi a} the \prq $\pair{\zeta,\eta_1}$ is  $[Q]$-minimal for $\al$ and of the form ${\rm (i)'}$, or
\item \label{it: xi b} the \prq $\pair{\al,\zeta}$ is $[Q]$-minimal  for $\eta_1$ and of the form ${\rm (iii)'}$.
\ee
Assume~\eqref{it: x a}. As in case ${\rm (x)'}$, we have a non-zero composition of $R$-module homomorphisms
$$
q^{p_{\eta_1,\zeta}-(\eta_1,\zeta)} S_Q(\al) \conv S_Q(\be) \hookrightarrow S_Q(\eta_1) \conv S_Q(\zeta) \conv S_Q(\be) \twoheadrightarrow
q^{-p_{\be,\zeta}} S_Q(\eta_1)  \conv   S_Q(\eta_2).
$$
Since the \prq $\pair{\eta_1,\eta_2}$ corresponds to case ${\rm (ii)'}$, \eqref{eq: common socle} tells that we can obtain a non-zero  $R$-module homomorphism
$$
q^{p_{\eta_1,\zeta}-(\eta_1,\zeta)} S_Q(\al) \conv S_Q(\be)  \twoheadrightarrow  q^{-p_{\be,\zeta}-\tLa(S_Q(\eta_1),S_Q(\eta_2))} S_Q(s_1) \conv S_Q(s_2),
$$
which is surjective. Thus we have
\begin{align*}
\tLa(S_Q(\al),S_Q(\be)) = p_{\be,\zeta}+\tLa(S_Q(\eta_1),S_Q(\eta_2) )+ p_{\eta_1,\zeta}-(\eta_1,\zeta)
\end{align*}
and hence
\begin{align*}
\de(S_Q(\al),S_Q(\be)) = p_{\be,\zeta}+\tLa(S_Q(\eta_1),S_Q(\eta_2) )+ p_{\eta_1,\zeta}-(\eta_1,\zeta)-(\al,\be).
\end{align*}

Since the \prq $\pair{\eta_1,\eta_2}$ corresponds to case ${\rm (ii)'}$, we have
$$\tLa(S_Q(\eta_1),S_Q(\eta_2) ) =\de(S_Q(\eta_1),S_Q(\eta_2) ) +(\eta_1,\eta_2) =2.$$
By Table~\ref{table: al,be B}, and the facts that $p_{\be,\zeta}=p_{\eta_1,\zeta}=0$ $(\eta_1,\zeta)=-2$ and $(\al,\be)=0$, we have
$$
\de(S_Q(\al),S_Q(\be))  = 4 = \dg_Q(\al,\be) \times \max(\sfd_\al,\sfd_\be)=\de_{i,j}[p-s].
$$
Thus the assertions in Theorem~\ref{thm: determimant} hold for cases ${\rm (x)}'$ and ${\rm (xi)}'$.

\smallskip

Let us consider the cases ${\rm (v)}'$ and ${\rm (vi)}'$, in which contain $m_i =2$ in the head $\um$ of $\pair{\al,\be}$.
\begin{equation*}
\begin{aligned}
& \scalebox{0.79}{{\xy
(-20,0)*{}="DL";(-10,-10)*{}="DD";(0,20)*{}="DT";(10,10)*{}="DR";
"DT"+(-30,-4); "DT"+(60,-4)**\dir{.};
"DD"+(-20,-6); "DD"+(70,-6) **\dir{.};
"DT"+(-32,-4)*{\scriptstyle 1};
"DT"+(-32,-8)*{\scriptstyle 2};
"DT"+(-32,-12)*{\scriptstyle \vdots};
"DT"+(-32,-16)*{\scriptstyle \vdots};
"DT"+(-34,-36)*{\scriptstyle n};
%%%%%%%%%%%%%%%%%%%%%%%%%
"DD"+(-11,4); "DD"+(-1,-6) **\dir{-};
"DD"+(5,0); "DD"+(-1,-6) **\dir{-};
"DD"+(-11,4); "DD"+(11,26) **\dir{-};
"DD"+(24,19); "DD"+(17,26) **\dir{-};
"DD"+(5,0); "DD"+(24,19) **\dir{-};
"DD"+(-11,4)*{\bullet};
"DL"+(16,0)*{{\rm (v)'}};
"DD"+(-7,4)*{\scriptstyle \be};
"DD"+(11,26); "DD"+(17,26) **\crv{"DD"+(14,28)};
"DD"+(24,19)*{\bullet};
"DD"+(24,17)*{\scriptstyle \al};
"DD"+(-1,-6)*{\bullet};
"DD"+(-1,-8)*{\scriptstyle 2 s};
"DD"+(14,29)*{\scriptstyle 2};
 %%%%%%%%%%%%%%%%%%%%%%%%%
"DD"+(23,4); "DD"+(33,-6) **\dir{-};
"DD"+(39,0); "DD"+(33,-6) **\dir{-};
"DD"+(23,4); "DD"+(42,23) **\dir{-};
"DD"+(52,13); "DD"+(42,23) **\dir{-};
"DD"+(39,0); "DD"+(52,13) **\dir{-};
"DD"+(23,4)*{\bullet};
"DD"+(42,23)*{\bullet};
"DL"+(50,0)*{{\rm (vi)'}};
"DD"+(27,4)*{\scriptstyle \be};
"DD"+(42,25)*{\scriptstyle s_2};
"DD"+(52,13)*{\bullet};
"DD"+(52,11)*{\scriptstyle \al};
"DD"+(33,-6)*{\bullet};
"DD"+(33,-8)*{\scriptstyle 2 s_1};
\endxy}} 
\end{aligned}
\end{equation*}

Consider ${\rm (v)}'$ case. As in the previous cases,
there exists $\eta \in \Phi^+$ such that
\bna
\item \label{it: va} the \prq $\pair{\eta,s}$ is $[Q]$-minimal for $\al$ and of the form ${\rm (ix)}'$ or
\item \label{it: vb} the \prq $\pair{s,\eta}$ is $[Q]$-minimal for $\be$ and of the form ${\rm (ix)}'$. 
\ee
Assume~\eqref{it: va}. Then
we have a homogeneous $R$-module monomorphism
$$  q^{p_{s,\eta} - (s,\eta)} S_Q(\al)\conv  S_Q(\be)   \hookrightarrow  S_{Q}(s) \conv  S_Q(\eta) \conv    S_Q(\be).  $$
Note that $\pair{\eta,\be}$ is a $[Q]$-minimal \prq for $s$ and of the form ${\rm (viii)}'$. Thus the composition
$$
 q^{p_{s,\eta} - (s,\eta)}  S_Q(\al) \conv  S_Q(\be)  \hookrightarrow S_{Q}(s) \conv  S_Q(\eta) \conv    S_Q(\be)
 \twoheadrightarrow q^{-p_{\be,\eta}}   S_{Q}(s)^{ \conv 2}
$$
does not vanish by Proposition~\ref{prop: not vanishing} and hence $S_Q(\al) \hconv  S_Q(\be) \simeq S_{Q}(s)^{ \conv 2}$ up to a grading shift.
Note that  $ S_{Q}(s)^{ \conv 2}  $ is not self-dual simple, while $ S_{Q}(2s) \seteq q S_{Q}(s)^{ \conv 2} $ is  self-dual simple. Thus we have
$$
S_Q(\al) \conv  S_Q(\be)  \twoheadrightarrow q^{-p_{\be,\eta}-p_{s,\eta} + (s,\eta)-1}   S_{Q}(2s)
$$
which tells
\begin{align*}
 \tLa(S_Q(\al), S_Q(\be) ) =p_{\be,\eta}+p_{s,\eta} - (s,\eta)+1
\end{align*}
and hence
\begin{align*}
\de(S_Q(\al), S_Q(\be) ) =p_{\be,\eta}+p_{s,\eta} - (s,\eta)-(\al,\be)+1.
\end{align*}
Since $p_{\be,\eta}=1$ and $p_{s,\eta}= (s,\eta)=(\al,\be)=0$, we have
\begin{align}\label{eq: tLa de v B}
\de(S_Q(\al), S_Q(\be) ) =2 = \max(\sfd_\al,\sfd_\be)=\de_{i,j}[p-s].
\end{align}
By Proposition~\ref{prop: length 2}, $\ell(S_Q(\al) \conv  S_Q(\be)) =2$.
We can
apply the similar arguments to~\eqref{it: vb}.

Consider ${\rm (vi)}'$ case. As in ${\rm (v)}'$-case, there exists $\eta \in \Phi^+$ such that
\bna
\item \label{it: vi a} the \prq $\pair{\eta,s_2}$ is $[Q]$-minimal for $\al$ and of the form ${\rm (ix)}'$,  or
\item \label{it: vi b} the \prq $\pair{s_2,\eta}$ is $[Q]$-minimal for $\be$ and of the form ${\rm (ix)}'$. 
\ee

Assume~\eqref{it: vi a}. Then we have
$$  q^{p_{s_2,\eta} - (s_2,\eta)} S_Q(\al)\conv  S_Q(\be)   \hookrightarrow  S_{Q}(s_2) \conv  S_Q(\eta) \conv    S_Q(\be).  $$
Then  the \prq $\pair{\eta,\be}$ corresponds to ${\rm (vii)}'$. Thus we have a non-zero  composition of homogeneous $R$-module homomorphisms
\begin{align*}
& q^{p_{s_2,\eta} - (s_2,\eta)} S_Q(\al)\conv  S_Q(\be)     \hookrightarrow  S_{Q}(s_2) \conv  S_Q(\eta) \conv    S_Q(\be) \\
& \ \twoheadrightarrow q^{-\tLa( S_Q(\eta), S_Q(\be))}  S_{Q}(s_2) \conv S_{Q}(s_2) \conv S_Q(s_3) \simeq   q^{-\tLa( S_Q(\eta), S_Q(\be))-1}   S_{Q}(2s_2) \conv S_Q(s_3),
\end{align*}
and hence $ S_Q(\al)\conv  S_Q(\be)  \simeq  S_{Q}(2s_2) \conv S_Q(s_3)$ up to a grading shift.
Furthermore, we have
$$
S_Q(\al) \conv  S_Q(\be)  \twoheadrightarrow q^{-p_{s_2,\eta} +(s_2,\eta) -\tLa( S_Q(\eta), S_Q(\be))-1}   S_{Q}(2s_2) \conv S_Q(s_3)
$$
which tells
\begin{align*}
 \tLa(S_Q(\al), S_Q(\be) ) = p_{s_2,\eta} -(s_2,\eta) +\tLa( S_Q(\eta), S_Q(\be))+1
\end{align*}
and hence
\begin{align*}
\de(S_Q(\al), S_Q(\be) ) =p_{s_2,\eta} +\tLa( S_Q(\eta), S_Q(\be))-(s_2,\eta) -(\al,\be)+1.
\end{align*}
Since the \prq $\pair{\eta,\be}$ corresponds to ${\rm (vii)}'$, we have
$$
\tLa( S_Q(\eta), S_Q(\be)) = \tLa( S_Q(\eta), S_Q(\be)) +(\eta,\be) = 2 + 0= 2.
$$
Then the facts that $p_{s_2,\eta}=1$, $(s_2,\eta)=0$ and
$(\al,\be)=2$ tell that we have
\begin{align}
\de(S_Q(\al), S_Q(\be) ) = 2 = \max(\sfd_\al,\sfd_\be)=\de_{i,j}[ p-s].
\end{align}
We can
apply the similar arguments to~\eqref{it: vi b}.  Thus the assertions  in Theorem~\ref{thm: determimant} hold for cases ${\rm (v)}'$ and ${\rm (vi)}'$.

\subsubsection{$C_n$ case} Now we shall consider the $C_n$-cases.
Comparing with $B_n$-case, there is no $C_n$-analogue of Proposition~\ref{prop: many pairs}; i.e., for a \prq $\up=\pair{\al,\be}$ with $\dg_Q(\al,\be)=2$,
there exists a unique sequence $\um$ such that
$$   \hd_Q(\up)  \prec_Q^{\ttb} \um  \prec_Q^{\ttb} \up. $$
Thus we can not apply Proposition~\ref{prop: inj} for the cases ${\rm (x)}''$ and ${\rm (xi)}''$.  For the cases except ${\rm (x)}''$ and ${\rm (xi)}''$, we can apply the same strategies of $B_n$ type to show Theorem~\ref{thm: determimant}.

\smallskip

Let us consider the ${\rm (x)}''$-case. Before we consider the general ${\rm (x)}''$-case, let us observe the AR-quiver $\Gamma_Q$ of type $C_3$ described in Example~\ref{ex: AR Q BCFG}.
In this case, $\pair{\al,\be}\seteq \pair{\srt{1,-3},\srt{2,3}}$ is the unique \prq corresponding to ${\rm (x)}''$-case.

 Now we want to show
$$\de(S_Q(\srt{1,-3}),S_Q(\srt{2,3})) =2.$$
Since \prq $\pair{\srt{2,-3} ,\srt{3,3} }$ is $[Q]$-minimal  for $\srt{2,3}$, we have
$$S_Q(\srt{2,3}) \simeq S_Q(\srt{2,-3}) \hconv S_Q(\srt{3,3} ) \quad \text{(up to grading shift)}.$$

Let $L=S_Q(\srt{1,-3})$, $M=S_Q(\srt{2,-3})$ and $N=S_Q(\srt{3,3})$. Since $L$ and $M$ commute, the sequence
$(L,M,N)$ is a normal sequence. On the other hand,
we can compute
$$ \La(M,L)=-1 \quad \La(N,L) = 2 \quad \text{ and }  \quad \La(M \hconv N,L)= 1  $$
since $$\srt{1,-3} \prec_Q \srt{2,-3}, \quad \srt{1,-3} \prec_Q \srt{3,3} \quad \text{ and } \quad \srt{1,-3} \prec_Q \srt{2,3}.$$
Then Lemma~\ref{lem: normality} implies $(M,N,L)$ is normal and hence
$$\de(L,M\hconv N) = \de(L,M)+\de(L,N) = 0+2 =2,$$
by Lemma~\ref{lem: de additive}. Here we use the fact  that $\de(L,N)=2$ followed by Proposition~\ref{prop: minimal pair ga de}.
Moreover, we have
\begin{align*}
L \hconv (M\hconv N) \simeq \head(L \conv M \conv N) &\simeq \hd(M \conv L \conv N) \simeq  \hd(M \conv (L \hconv N) ) \\
& \simeq S_Q(\srt{2,-3}) \hconv S_Q(\srt{1,3}) \simeq S_Q(\srt{1,2})
\end{align*}
as we desired. Since  there exists a non-zero $R$-module homomorphism
$$    S_Q(\srt{1,3}) \conv  S_Q(\srt{2,-3})  \to S_Q(\srt{2,3}) \conv S_Q(1,-3),$$
$\ell(S_Q(\srt{1,3}) \conv  S_Q(\srt{2,-3}))=2$ and they have common simple socle, the $R$-module homomorphism
is injective. Thus  we have
$$\ell( S_Q(\srt{2,3}) \conv S_Q(1,-3)) > 2.$$
Thus we can check the assertions in Theorem~\ref{thm: determimant}  hold for the \prq.

\medskip

Let us consider the general ${\rm (x)}''$-case. Then there exists $\kappa \in \Phi^+$ such that
\bna
\item \label{it: Cx a} $\pair{\kappa,\eta_1}$  is a $[Q]$-minimal \prq for $\al$ and of the form ${\rm (i)}''$, or     
\item \label{it: Cx b} $\pair{\al,\kappa}$  is a $[Q]$-minimal \prq for $\eta_1$ and of the form ${\rm (iii)}''$.  
\ee

\begin{align*} 
\scalebox{0.79}{{\xy
(-20,0)*{}="DL";(-10,-10)*{}="DD";(0,20)*{}="DT";(10,10)*{}="DR";
"DT"+(-50,-4); "DT"+(70,-4)**\dir{.};
"DD"+(-40,-6); "DD"+(80,-6) **\dir{.};
"DT"+(-52,-4)*{\scriptstyle 1};
"DT"+(-52,-8)*{\scriptstyle 2};
"DT"+(-52,-12)*{\scriptstyle \vdots};
"DT"+(-52,-16)*{\scriptstyle \vdots};
"DT"+(-53,-36)*{\scriptstyle n};
"DD"+(-8,2); "DD"+(0,-6) **\dir{-};
"DD"+(6,0); "DD"+(0,-6) **\dir{-};
"DD"+(6,0); "DD"+(12,-6) **\dir{-};
"DD"+(33,15); "DD"+(12,-6) **\dir{-};
"DD"+(-8,2); "DD"+(16,26) **\dir{.};
"DD"+(32,16); "DD"+(22,26) **\dir{.};
"DD"+(6,0); "DD"+(27,21) **\dir{.};
"DD"+(32,26); "DD"+(27,21) **\dir{.};
"DD"+(32,26); "DD"+(36,26) **\crv{"DD"+(34,28)};
"DD"+(34,29)*{\scriptstyle 2};
"DD"+(36,26); "DD"+(40,22) **\dir{.};
"DD"+(33,15); "DD"+(40,22) **\dir{.};
"DD"+(42,22)*{\scriptstyle \kappa};
"DD"+(37,22)*{\scriptstyle {\rm (a)}  };
"DD"+(6,0); "DD"+(-2,8) **\dir{.};
"DD"+(-20,26); "DD"+(-2,8) **\dir{.};
"DD"+(-20,26); "DD"+(-24,26) **\crv{"DD"+(-22,28)};
"DD"+(-22,29)*{\scriptstyle 2};
"DD"+(-30,22)*{\scriptstyle \kappa};
"DD"+(-25,22)*{\scriptstyle {\rm (b)}  };
"DD"+(-24,26); "DD"+(-28,22) **\dir{.};
"DD"+(-28,22); "DD"+(-8,2) **\dir{.};
"DD"+(16,26); "DD"+(22,26) **\crv{"DD"+(19,28)};
"DD"+(19,29)*{\scriptstyle 2};
"DD"+(-8,2)*{\bullet};
"DL"+(18,-1)*{{\rm(x)''}};
"DD"+(-5,2)*{\scriptstyle \be};
"DD"+(-2,8)*{\bullet};
"DD"+(-2,10)*{\scriptstyle \eta_2};
"DD"+(33,15)*{\bullet};
"DD"+(30,15)*{\scriptstyle \al};
"DD"+(27.5,18)*{\scriptstyle  \eta_1 };
"DD"+(27,21)*{\scriptstyle \bullet};
"DT"+(2,-36)*{\scriptstyle \bullet};
"DT"+(-10,-36)*{\scriptstyle \bullet};
"DT"+(-4,-30)*{\scriptstyle \bullet};
"DT"+(-4,-32)*{\scriptstyle \ga};
"DT"+(2,-38)*{\scriptstyle \tau_1};
"DT"+(-10,-38)*{\scriptstyle  \tau_2};
\endxy}}
\end{align*}
Here $\tau_1+\tau_2 = 2\ga = 2(\al+\be)$.

\smallskip

Assume~\eqref{it: Cx a}.  Then we have
\begin{eqnarray} &&
\parbox{75ex}{
\ben
\item $S_Q(\kappa) \hconv S_Q(\eta_1) \simeq S_Q(\al)$,
\item the \prq $\pair{\kappa,\be}$ is  $[Q]$-minimal for $\eta_2$ of the form ${\rm (iii)}''$,
\item the \prq $\pair{\alpha,\eta_2}$ is $[Q]$-minimal for $\tau_1$ of the form ${\rm (v)}''$,
\item the \prq $\pair{\eta_1,\beta}$ is $[Q]$-minimal for $\tau_2$ of the form ${\rm (v)}''$,
\item the \prq $\pair{\kappa,\tau_2}$ is $[Q]$-minimal for $\ga$ of the form ${\rm (viii)}''$.
\ee
}\label{eq: C X a}
\end{eqnarray}

Now we shall prove the following claims:
$$
\text{ The sequences $(S_Q(\be),S_Q(\kappa),S_Q(\eta_1))$ and $(S_Q(\kappa),S_Q(\eta_1),S_Q(\be))$ are normal.}
$$

\noindent
(I)  Note that
$$\text{$\La( S_Q(\be),S_Q(\kappa) )=-(\be,\kappa)$, $\La( S_Q(\be),S_Q(\eta_1) )=-(\be,\eta_1)$ and $\La( S_Q(\be),S_Q(\al) )=-(\be,\al)$,}$$
since $ \al,\eta_1,\kappa \prec_Q \be$. Thus we have
$$  \La( S_Q(\be),  S_Q(\kappa) \hconv S_Q(\eta_1) )=  \La( S_Q(\be), S_Q(\kappa) )+ \La( S_Q(\be), S_Q(\eta_1) )$$
which implies that the sequence $(S_Q(\be),S_Q(\kappa),S_Q(\eta_1))$ is normal.

\noindent
(II) We have
\begin{align*}
 \La( S_Q(\kappa),S_Q(\eta_1) ) & = 2\de( S_Q(\kappa),S_Q(\eta_1) ) + (\kappa,\eta_1)  \overset{{\rm (A)}}{=} 2+ (\kappa,\eta_1),  \\
 \La( S_Q(\kappa),S_Q(\be) ) & = 2\de( S_Q(\kappa),S_Q(\beta) ) + (\kappa,\be)  \overset{{\rm (B)}}{=}  2+ (\kappa,\be), \\
 \La( S_Q(\kappa),S_Q(\tau_2) ) & = 2\de( S_Q(\kappa),S_Q(\tau_2) ) + (\kappa,\tau_2)  \overset{{\rm (C)}}{=}  4+ (\kappa,\tau_2).
\end{align*}
Here we use the facts $$\text{$\de( S_Q(\kappa),S_Q(\eta_1) ) \overset{{\rm (A)}}{=}  1$, $\de( S_Q(\kappa),S_Q(\beta) ) \overset{{\rm (B)}}{=}  1$ and $\de( S_Q(\kappa),S_Q(\tau_2) ) \overset{{\rm (C)}}{=}  2$}$$  that follow from Proposition~\ref{prop: minimal pair ga de}
and~\eqref{eq: C X a}.
Thus we have
$$
 \La( S_Q(\kappa),S_Q(\eta_1)  \hconv S_Q(\be)) =  \La( S_Q(\kappa),S_Q(\tau_2) ) =  \La( S_Q(\kappa),S_Q(\eta_1) )+ \La( S_Q(\kappa),S_Q(\be) )
$$
which implies that the sequence $(S_Q(\kappa),S_Q(\eta_1),S_Q(\be))$ is normal.

\medskip

By (I) and (II), Lemma~\ref{lem: de additive} tells that
$$
\de(S_Q(\al),S_Q(\be) ) = \de (S_Q(\kappa) \hconv  S_Q(\eta_1) ,S_Q(\be)) =  \de (S_Q(\kappa)  ,S_Q(\be)) + \de (S_Q(\eta_1)  ,S_Q(\be))  =2,
$$
as we desired.  Moreover, we have
\begin{align*}
S_Q(\al) \hconv S_Q(\be) & \simeq \hd( S_Q(\kappa) \conv   S_Q(\eta_1) \conv S_Q(\be) )  \\
& \simeq S_Q(\kappa) \hconv (S_Q(\eta_1) \hconv S_Q(\be) ) \simeq S_Q(\kappa) \hconv S_Q(\tau_2) \\
& \simeq S_Q(\ga),
\end{align*}
up to grading shifts.  Finally we have
$$  \ell( S_Q(\al) \conv S_Q(\be))  >2,$$
as in the particular case of type $C_3$.  For the case~\eqref{it: Cx b} and ${\rm (xi)}''$, we can apply the similar argument.
Thus the assertions in Theorem~\ref{thm: determimant} hold for cases ${\rm (x)}''$ and ${\rm (xi)}''$.

\subsubsection{$G_2$ case}
Note that there exist only $2$-reduced expressions of $w_0$ of type $G_2$, which are all adapted to the two Dynkin quivers (up to constant).
We shall only consider $Q$ and $\Gamma_Q$ in Section~\ref{subsec: G2 degree}. Then we have
\begin{align*}
&\be_1 = \al_2 =  (1,-2,1),  && \be_2 = \al_1+\al_2 =  (1,-1,0), && \be_3 = 3\al_1+\al_2 = (2,-1,-1), \\
&\be_4 = 2\al_1+\al_2 =  (1,0,-1),  && \be_5 = 3\al_1+\al_2 =  (1,1,-2), && \be_6 = \al_1 = (0,1,-1).
\end{align*}
Note that non $[Q]$-simple \prqs are given as follows:
\begin{equation} \label{eq: G2 regular}
\begin{aligned}
&\be_3 \prec_Q^\ttb \pair{\be_2,\be_4}, && \be_5 \prec_Q^\ttb \pair{\be_4,\be_6}, && \be_4 \prec_Q^\ttb \pair{\be_2,\be_6}, \\
&2\be_2 \prec_Q^\ttb \pair{\be_1,\be_4}, && 2\be_4 \prec_Q^\ttb \pair{\be_3,\be_6}, &&2\be_3 \prec_Q^\ttb \pair{\be_2,\be_5},
\end{aligned}
\end{equation}
and
\begin{align}\label{eq: G2 special}
&{\rm (i)} \ 3\be_2 \prec_Q^\ttb \pair{\be_1,\be_3}, && {\rm (ii)} \  3\be_4 \prec_Q^\ttb \pair{\be_3,\be_5},  &&{\rm (iii)} \  \be_3 \prec_Q^\ttb \pair{\be_2,\be_4}  \prec_Q^\ttb \pair{\be_5,\be_1}.
\end{align}

The cases in~\eqref{eq: G2 regular} can be proved as in $B_n$ or $C_n$-cases. So let us consider the cases in~\eqref{eq: G2 special}.

At first, we shall deal with (iii) in \eqref{eq: G2 special}.  Take
$L= S_Q(\be_1)$, $M= S_Q(\be_4)$, $N= S_Q(\be_6)$ and hence $M
\hconv N \simeq  S_Q(\be_5)$ and $L \hconv M \simeq
S_Q(\be_2)^{\circ 2}$ up to grading shifts. Since we know
$\de(S_Q(\be_2),N)=2$,  $\de(L,N)=3$ and  $\de(M,N)=1$ by
Proposition~\ref{prop: tLa}, we know
\begin{align*}
2 \times 3 = 2 \La(S_Q(\be_2),N)& = \La(L\hconv M,N)  \\
&= \La(L,N)+\La(M,N) =3+3,
\end{align*}
which implies that the sequence $(L,M,N)$ is normal by Lemma~\ref{lem: normality}. By the convex order $\prec_Q$, the sequence $(M,N,L)$ is also normal. Thus
$$\de(L,M\hconv N) =\de(L,M)+\de(L,N)=6,$$
by Lemma~\ref{lem: de additive}.  Furthermore, we have
\begin{align*}
  L \hconv (M \hconv N) & \simeq (L \hconv M)\hconv N \simeq  S_Q(\be_2)^{\circ 2} \hconv N \\
& \simeq S_Q(\be_2) \hconv (S_Q(\be_2) \hconv N) \simeq S_Q(\be_2) \hconv S_Q(\be_4)  \simeq S_Q(\be_3).
\end{align*}
Since we can construct a non-zero homomorphism
$$  S_Q(\be_4) \conv  S_Q(\be_2)  \to  S_Q(\be_5)  \conv  S_Q(\be_1) $$
and they have common simple socle, we have
$$  S_Q(\be_4) \conv  S_Q(\be_2)  \hookrightarrow  S_Q(\be_5)  \conv  S_Q(\be_1) $$
and hence
$$  \ell(S_Q(\be_5)  \conv  S_Q(\be_1)) >2.$$

Now let us consider (i) in~\eqref{eq: G2 special}. Since $\pair{\be_2,\be_4}$ is a minimal
\prq for $\be_5$, we have a non-zero composition of  $R$-module
homomorphisms
$$
S_Q(\be_1) \conv S_Q(\be_3) \hookrightarrow S_Q(\be_1) \conv S_Q(\be_4) \conv S_Q(\be_2)  \twoheadrightarrow  S_Q(\be_2)^{\conv 3},
$$
which implies
$$
S_Q(\be_1) \conv S_Q(\be_3) \simeq   S_Q(\be_2)^{\conv 3}.
$$

The assertion (ii) in~\eqref{eq: G2 special} can be proved in the same way.

\subsubsection{$F_4$ case}
 In $F_4$-case, we can prove Theorem~\ref{thm: determimant} by using techniques we have used in the previous subsections to each \prq $\pair{\al,\be}$.
In this subsection, we shall present several cases, instead of
considering all cases.  Let us use $\Gamma_Q$  in~\eqref{eq: F_4Q}
and assume that  Theorem~\ref{thm: determimant} holds for \prqs
$\pair{\theta,\mu}$ with $\dg_Q(\theta,\mu) \le 1$.

\smallskip

\noindent
(1) Let us compute $\de(S_Q(\al'),S_Q(\be') )$ when $\al'=\sprt{0,1,0,0}$ and $\be'= \sprt{\frac{1}{2},-\frac{1}{2},\frac{1}{2},-\frac{1}{2}}$. In this case, we have
\bna
\item $\phi_Q(3,2)=(\al',0)$, $\phi_Q(4,-5)=(\be',0)$, $\tde_{3,4}[7]= 2$,
\item there exists a unique \prq $\up = \pair{\sprt{\frac{1}{2},\frac{1}{2},-\frac{1}{2},-\frac{1}{2}},\sprt{0,0,1,0}  }=\pair{\al^{(1)},\be^{(1)}} $ such that
$$\ga\seteq \al'+\be' \prec_Q^\ttb \up   \prec_Q^\ttb \pair{\al',\be'}.$$
\ee

Note that $\pair{\be^{(1)},\eta \seteq \sprt{\frac{1}{2},-\frac{1}{2},-\frac{1}{2},-\frac{1}{2}}}$ is a minimal \prq for $\be'$ and $\pair{\al',\be^{(1)}}$ is a minimal \prq for $\zeta = \sprt{0,1,1,0}$. Setting $L=S_Q(\al')$, $M=S_Q(\be^{(1)})$, $N= S_Q(\eta)$ and $T=S_Q(\zeta)$,
we have $ \de(S_Q(\al'),S_Q(\be')) = \de(L,M\hconv N)$ and $L \hconv M \simeq T$. Since $\al' \prec_Q  \be^{(1)}  \prec_Q   \be \prec_Q \eta$, the sequence $(M,N,L)$ is normal.

On the other hand, we have
$$ \La(L \hconv M,N) = \La(T,N) =  2\;\de(T,N) +(\zeta,\eta) = 2\time 2 -2 =2 $$
and
$$ \La(L,N) = 2 \de(L,N) +  (\al',\eta) = 2\times 1 -1 =1 , \ \  \La(M,N) = 2  \de(M,N) + (\be^{(1)},\eta) =2 \times 1 -1 =1.$$
Thus the sequence $(L,M,N)$ is also normal. Thus we have
$$\de(L,M\hconv N)=\de(L,M)+\de(L,N)= 1 + 1 =2.$$

Finally, we have
\begin{align*}
L \hconv(M\hconv N) \simeq (L\hconv M) \hconv N \simeq T \hconv N \simeq S_Q(\ga)
\end{align*}
and hence $\ell(S_Q(\al')\conv S_Q(\be'))>2$.

\smallskip

\noindent
(2)  Now let us keep the notations in (1) and  compute $\de(S_Q(\al),S_Q(\be) )$ when $\al=\sprt{0,1,0,-1}$ and $\be= \sprt{\frac{1}{2},-\frac{1}{2},\frac{1}{2},\frac{1}{2}}$ in~\eqref{eq: F_4 de not tde}.
As we have seen in~\eqref{eq: F_4 de not tde}, we have
$$
\ga =\al+\be             \prec_Q^\ttb
\pair{\al^{(1)},\be^{(1)} }  \prec_Q^\ttb
\pair{\al',\be' }  \prec_Q^\ttb \pair{\al,\be}.
$$
However, we have another chain given as follows:
$$
\ga =\al+\be             \prec_Q^\ttb
\pair{\al^{(1)},\be^{(1)} }  \prec_Q^\ttb
\pair{\al'',\be'' }  \prec_Q^\ttb \pair{\al,\be},
$$
where $\al''=\sprt{0,0,1,-1}$ and $\be''= \sprt{\frac{1}{2},\frac{1}{2},-\frac{1}{2},\frac{1}{2}}$. As in (1), we can prove that $\de(J,K)=2$ and $J \hconv K \simeq S_Q(\ga)$, where
$J \seteq S_Q(\al'')$ and $K \seteq S_Q(\be'')$. Let $\kappa = \sprt{0,1,-1,0}$, and set $P \seteq S_Q(\kappa)$, $R=S_Q(\al)$, $X=S_Q(\be')$  and $U=S_Q(\be)$. Then
$ \de(R,U) = \de(P \hconv J, U)  $, since $\pair{\al'',\kappa}$ is a minimal \prq of $\al$.  Note that $J \hconv U \simeq M \conv X \simeq X \conv M$. Since
$$  \kappa \prec_Q  \al'' \prec_Q \be^{(1)} \prec_Q \be' \prec_Q \be,$$
the sequence $(J,U,P)$ is normal.

On the other hand, we have
\begin{align*}
\La(P,J\hconv U) & = \La(P, X \conv M) = \La(P,X)+\La(P,M) \\
& = (2\de(P,X)+(\kappa,\be')) + (2\de(P,M)+(\kappa,\be^{(1)})) = (2 \times 2 - 2 ) +  (2 \times 2 -2) =4
\end{align*}
and
\begin{align*}
\La(P,J) &=   2\de(P,J)+(\kappa,\al'') = 2 \times 2 -2 =2,\\
\La(P,U) &= 2\de(P,U)+(\kappa,\be) =2 \times 2 -2 =2.
\end{align*}
Thus the sequence $(P,J,U)$ is also normal. Hence
$$ \de(P\hconv J, U) = \de(P,U)+\de(J,U) =2+2=4 = \tde_{1,3}[9].$$

Consequently, we have
\begin{align*}
(P \hconv J) \hconv U &\simeq P \hconv (J\hconv U)  \simeq  P \hconv (X\conv M) \simeq (P \hconv X) \hconv M \\
& \simeq L \hconv M \simeq S_Q(\ga).
\end{align*}
Thus we have injective $R$-module homomorphisms
$$  S_Q(\be'') \conv  S_Q(\al'')  \hookrightarrow S_Q(\be) \conv S_Q(\al) \quad \text{ and }  \quad  S_Q(\be') \conv  S_Q(\al')  \hookrightarrow S_Q(\be) \conv S_Q(\al),$$
since  they have the common socle $S_Q(\ga)$, which appears once in their composition series.
Thus we have $\ell(S_Q(\al) \conv S_Q(\be)) >2$.

\subsection{Ending Remark}
We can also  apply the techniques in the previous subsections to prove \eqref{eq: de ADE} for $ADE$-types. 
Also the result for $\dg_Q(\al,\be)=1$ can be written in the form of~\eqref{eq: BKMc}.

\begin{example}
For a reduced expression $s_1s_2s_3s_1s_2s_3s_1s_2s_3$ of $w_0$ of type $C_3$, adapted to $Q$ in Example~\ref{ex: AR Q BCFG},  the fact that
$$ S_Q(\srt{1,1} ) \hconv S_Q(\srt{2,2} )  \simeq S_Q(\srt{1,2} )^{\conv 2} \quad \text{ (up to  a grading shift)}$$
corresponds to
$$   r_{[Q]}(\srt{1,1}) r_{[Q]}(\srt{2,2}) - r_{[Q]}(\srt{2,2}) r_{[Q]}(\srt{1,1}) = (q^{-1}-q^3) r_{[Q]}(\srt{1,2})^2.$$
\end{example}

\appendix

\section{$\tde_{i,j}(t)$ for $E_7$ and $E_8$} \label{appeA: tde}

\subsection{$E_7$}   Here is the list of $\tde_{i,j}(t)$ for $E_7$.
\begin{align*}
&\tde_{1,1}(t) = t^1+t^{7}+t^{11}+t^{17},  \hspace{17.4ex}\tde_{1,2}(t) = t^4+t^{8}+t^{10}+t^{14}, \allowdisplaybreaks \\
&\tde_{1,3}(t)=t^2+t^{6}+t^{8}+t^{10}+t^{12}+t^{16}, \ \hspace{7ex} \tde_{1,4}(t)=t^3+t^{5}+t^{7}+2t^{9}+t^{11}+t^{13}+t^{15},\allowdisplaybreaks \\
&\tde_{1,6}(t)=t^5+t^{7}+t^{11}+t^{13}, \hspace{17.8ex}  \tde_{1,7}(t)=t^6+t^{12}, \allowdisplaybreaks\\
&\tde_{2,2}(t)=t^1+t^{5}+t^{7}+t^{9}+t^{11}+t^{13}+t^{17}, \hspace{3.2ex}  \tde_{2,3}(t)=\tde_{1,4}(t)  \allowdisplaybreaks\\
&\tde_{2,4}(t)=t^2+t^{4}+2t^{6}+2t^{8}+2t^{10}+2t^{12}+t^{14}+t^{16}, \allowdisplaybreaks\\
&\tde_{2,5}(t)=t^3+t^{5}+2t^{7}+t^{9}+2t^{11}+t^{13}+t^{15}, \  \tde_{2,7}(t)=t^{5}+t^{9}+t^{13}, \allowdisplaybreaks\\
&\tde_{2,6}(t)=t^{4}+t^{6}+t^{8}+t^{10}+t^{12}+t^{14}, \allowdisplaybreaks\\
&\tde_{3,3}(t) = t^1+t^{3}+t^{5}+2t^{7}+2t^{9}+2t^{11}+t^{13}+t^{15}+t^{17}, \allowdisplaybreaks\\
&\tde_{3,4}(t) = t^2+2t^{4}+2t^{6}+3t^{8}+3t^{10}+2t^{12}+2t^{14}+t^{16}, \allowdisplaybreaks\\
&\tde_{3,5}(t) = t^{3}+2t^{5}+2t^{7}+2t^{9}+2t^{11}+2t^{13}+t^{15}, \allowdisplaybreaks\\
&\tde_{3,6}(t) = t^{4}+2t^{6}+t^{8}+t^{10}+2t^{12}+t^{14}, \hspace{6ex}   \tde_{3,7}(t) = t^{5}+t^{7}+t^{11}+t^{13}, \allowdisplaybreaks\\
&\tde_{4,4}(t) = t^1+2t^{3}+3t^{5}+4t^{7}+4t^{9}+4t^{11}+3t^{13}+2t^{15}+t^{17}, \allowdisplaybreaks\\
&\tde_{4,5}(t) = t^2+2t^{4}+3t^{6}+3t^{8}+3t^{10}+3t^{12}+2t^{14}+t^{16}, \allowdisplaybreaks\\
&\tde_{4,6}(t) =\tde_{3,5}(t),  \hspace{29ex}   \tde_{4,7}(t) = t^{4}+t^{6}+t^{8}+t^{10}+ t^{12}+ t^{14}, \allowdisplaybreaks\\
&\tde_{5,5}(t) = t^1+t^{3}+2t^{5}+2t^{7}+3t^{9}+2t^{11}+2t^{13}+t^{15}+t^{17}, \allowdisplaybreaks\\
&\tde_{5,6}(t) = t^2+t^{4}+t^{6}+2t^{8}+2t^{10}+t^{12}+t^{14}+t^{16}, \quad \tde_{5,7}(t) = t^{3}+t^{7}+t^{11}+t^{15}, \allowdisplaybreaks\\
&\tde_{6,6}(t) = t^1+t^{3}+t^{7}+2t^{9}+t^{11}+t^{15}+t^{17},  \hspace{4ex}   \tde_{6,7}(t) = t^2+t^{8}+t^{10}+t^{16}, \allowdisplaybreaks\\
&\tde_{7,7}(t) = t^1+t^{9}+t^{17} \hspace{10.65ex}  \text{ and }  \hspace{10.45ex}  \tde_{i,j}(t) =   \tde_{j,i}(t).
\end{align*}

\subsection{$E_8$}   Here is the list of $\tde_{i,j}(t)$ for $E_8$.
\begin{align*}
\tde_{1,1}(t) &= t^1+t^{7}+t^{11}+t^{13}+t^{17}+t^{19}+t^{23}+t^{29}, \allowdisplaybreaks\\
\tde_{1,2}(t) &= t^4+t^{8}+t^{10}+t^{12}+t^{14}+t^{16}+t^{18}+t^{20}+t^{22}+t^{26}, \allowdisplaybreaks\\
\tde_{1,3}(t) &= t^2+t^{6}+t^{8}+t^{10}+2t^{12}+t^{14}+t^{16}+2t^{18}+t^{20}+t^{22}+t^{24}+t^{28}, \allowdisplaybreaks\\
\tde_{1,4}(t) &= t^3+t^{5}+t^{7}+2t^{9}+2t^{11}+2t^{13}+2t^{15}+2t^{17}+2t^{19}+2t^{21}+t^{23}+t^{25}+t^{27}, \allowdisplaybreaks\\
\tde_{1,5}(t) &= t^4+t^{6}+t^{8}+2t^{10}+t^{12}+2t^{14}+2t^{16}+t^{18}+2t^{20}+t^{22}+t^{24}+t^{26}, \allowdisplaybreaks\\
\tde_{1,6}(t) &= t^{5}+t^{7}+t^{9}+t^{11}+t^{13}+2t^{15}+t^{17}+t^{19}+t^{21}+t^{23}+t^{25}, \allowdisplaybreaks\\
\tde_{1,7}(t) &= t^{6}+t^{8}+t^{12}+t^{14}+t^{16}+t^{18}+t^{22}+t^{24},  \allowdisplaybreaks\\
\tde_{1,8}(t) &= t^{7}+t^{13}+t^{17}+t^{23}, \allowdisplaybreaks\\
\tde_{2,2}(t) &= t^1+t^{5}+t^{7}+t^{9}+2t^{11}+t^{13}+2t^{15}+t^{17}+2t^{19}+t^{21}+t^{23}+t^{25}+t^{29}, \allowdisplaybreaks\\
\tde_{2,3}(t) &= t^3+t^{5}+t^{7}+2(t^{9}+t^{11}+t^{13}+t^{15}+t^{17}+t^{19}+t^{21})+t^{23}+t^{25}+t^{27}, \allowdisplaybreaks\\
\tde_{2,4}(t) &= t^2+t^{4}+2t^{6}+2t^{8}+3(t^{10}+t^{12}+t^{14}+t^{16}+t^{18}+t^{20})+2t^{22}+2t^{24}+t^{26}+t^{28}, \allowdisplaybreaks\\
\tde_{2,5}(t) &= t^3+t^{5}+2t^{7}+2t^{9}+2t^{11}+3t^{13}+3t^{15}+3t^{17}+2t^{19}+2t^{21}+2t^{23}+t^{25}+t^{27}, \allowdisplaybreaks\\
\tde_{2,6}(t) &= t^4+t^{6}+2t^{8}+t^{10}+2t^{12}+2t^{14}+2t^{16}+2t^{18}+t^{20}+2t^{22}+t^{24}+t^{26}, \allowdisplaybreaks\\
\tde_{2,7}(t) &=\tde_{1,6}(t), \qquad  \qquad  \qquad \tde_{2,8}(t) = t^{6}+t^{10}+t^{14}+t^{16}+t^{20}+t^{24},  \allowdisplaybreaks\\
\tde_{3,3}(t) &= t^1+t^{3}+t^{5}+2t^{7}+2t^{9}+3t^{11}+3t^{13}+2t^{15}+3t^{17}+3t^{19}+2t^{21}+2t^{23} \allowdisplaybreaks\\
                         & \hspace{2ex}+t^{25}+t^{27}+t^{29}, \allowdisplaybreaks\\
\tde_{3,4}(t) &= t^2+2(t^{4}+t^{6})+3t^{8}+4(t^{10}+t^{12}+t^{14}+t^{16}+t^{18}+t^{20})+3t^{22}+2(t^{24}+t^{26})+t^{28}, \allowdisplaybreaks\\
\tde_{3,5}(t) &= t^3+2t^{5}+2t^{7}+3t^{9}+3t^{11}+3t^{13}+4t^{15}+3t^{17}+3t^{19}+3t^{21}+2t^{23}+2t^{25}+t^{27}, \allowdisplaybreaks\\
\tde_{3,6}(t) &= t^4+2t^{6}+2t^{8}+2t^{10}+2t^{12}+3t^{14}+3t^{16}+2t^{18}+2t^{20}+2t^{22}+2t^{24}+t^{26}, \allowdisplaybreaks\\
\tde_{3,7}(t) &= t^{5}+2t^{7}+t^{9}+t^{11}+2t^{13}+2t^{15}+2t^{17}+t^{19}+t^{21}+2t^{23}+t^{25}, \allowdisplaybreaks\\
\tde_{3,8}(t) &= t^{6}+t^{8}+t^{12}+t^{14}+t^{16}+t^{18}+t^{22}+t^{24}, \allowdisplaybreaks\\
\tde_{4,4}(t) &= t^1+2t^{3}+3t^{5}+4t^{7}+5t^{9}+6t^{11}+6t^{13}+6t^{15}+6t^{17}+6t^{19}+5t^{21}\allowdisplaybreaks\\
                         & \hspace{2ex}+4t^{23} +3t^{25}+2t^{27}+t^{29}, \allowdisplaybreaks\\
\tde_{4,5}(t) &= t^2+2t^{4}+3t^{6}+4t^{8}+4t^{10}+5t^{12}+5t^{14}+5t^{16}+5t^{18}+4t^{20}+4t^{22}\allowdisplaybreaks\\
                         & \hspace{2ex}+3t^{24}+2t^{26}+t^{28}, \allowdisplaybreaks\\
\tde_{4,6}(t) &= t^3+2t^{5}+3t^{7}+3t^{9}+3t^{11}+4t^{13}+4t^{15}+4t^{17}+3t^{19}+3t^{21}+3t^{23}+2t^{25}+t^{27}, \allowdisplaybreaks\\
\tde_{4,7}(t) &= \tde_{3,6}(t), \ \
\tde_{4,8}(t) = t^{5}+t^{7}+t^{9}+t^{11}+t^{13}+2t^{15}+t^{17}+t^{19}+t^{21}+t^{23}+t^{25}, \allowdisplaybreaks\\
\tde_{5,5}(t) &= t^1+t^{3}+2t^{5}+3t^{7}+3t^{9}+4t^{11}+4t^{13}+4t^{15}+4t^{17}+4t^{19}+3t^{21}\allowdisplaybreaks\\
                         & \hspace{2ex}+3t^{23}+2t^{25}+t^{27}+t^{29}, \allowdisplaybreaks\\
\tde_{5,6}(t) &= t^2+t^{4}+2t^{6}+2t^{8}+3t^{10}+3t^{12}+3t^{14}+3t^{16}+3t^{18}+3t^{20}+2t^{22}\allowdisplaybreaks\\
                         & \hspace{2ex}+2t^{24}+t^{26}+t^{28}, \allowdisplaybreaks\\
\tde_{5,7}(t) &= t^3+t^{5}+t^{7}+2t^{9}+2t^{11}+2t^{13}+2t^{15}+2t^{17}+2t^{19}+2t^{21}+t^{23}+t^{25}+t^{27}, \allowdisplaybreaks\\
\tde_{5,8}(t) &= t^{4}+t^{8}+t^{10}+t^{12}+t^{14}+t^{16}+t^{18}+t^{20}+t^{22}+t^{26}, \allowdisplaybreaks\\
\tde_{6,6}(t) &= t^1+t^{3}+t^{5}+t^{7}+2t^{9}+3t^{11}+2t^{13}+2t^{15}+2t^{17}+3t^{19}+2t^{21}\allowdisplaybreaks\\
                         & \hspace{2ex}+t^{23}+t^{25}+t^{27}+t^{29}, \allowdisplaybreaks\\
\tde_{6,7}(t) &= t^2+t^{4}+t^{8}+2t^{10}+2t^{12}+t^{14}+t^{16}+2t^{18}+2t^{20}+t^{22}+t^{26}+t^{28}, \allowdisplaybreaks\\
\tde_{6,8}(t) &= t^3+t^{9}+t^{11}+t^{13}+t^{17}+t^{19}+t^{21}+t^{27}, \allowdisplaybreaks\\
\tde_{7,7}(t) &= t^1+t^{3}+t^{9}+2t^{11}+t^{13}+t^{17}+2t^{19}+t^{21}+t^{27}+t^{29}, \allowdisplaybreaks\\
\tde_{7,8}(t) &= t^2+t^{10}+t^{12}+t^{18}+t^{20}+t^{28}, \allowdisplaybreaks\\
\tde_{8,8}(t) &= t^1+t^{11}+t^{19}+t^{29} \qquad\qquad
\qt{and} \qquad\qquad\quad
 \tde_{i,j}(t) =   \tde_{j,i}(t).
\end{align*}

\end{document}